\newcommand{\Si}{ {\Sigma} }
\newcommand{\bC}{ {\mathbb{C}} }
\newcommand{\bF}{\mathbb{F}}
\newcommand{\bK}{\mathbb{K}}
\newcommand{\bL}{\mathbb{L}}
\newcommand{\bP}{\mathbb{P}}
\newcommand{\bQ}{\mathbb{Q}}
\newcommand{\bR}{\mathbb{R}}
\newcommand{\bS}{\mathbb{S}}
\newcommand{\bT}{\mathbb{T}}
\newcommand{\bZ}{\mathbb{Z}}
\newcommand{\cB}{\mathcal{B}}
\newcommand{\cC}{\mathcal{C}}
\newcommand{\cD}{\mathcal{D}}
\newcommand{\cE}{\mathcal{E}}
\newcommand{\cF}{\mathcal{F}}
\newcommand{\cH}{\mathcal{H}}
\newcommand{\cI}{\mathcal{I}}
\newcommand{\cL}{\mathcal{L}}
\newcommand{\cM}{\mathcal{M}}
\newcommand{\cO}{\mathcal{O}}
\newcommand{\cP}{\mathcal{P}}
\newcommand{\cQ}{\mathcal{Q}}
\newcommand{\cR}{\mathcal{R}}
\newcommand{\cS}{\mathcal{S}}
\newcommand{\cV}{\mathcal{V}}
\newcommand{\cW}{\mathcal{W}}
\newcommand{\cX}{\mathcal{X}}
\newcommand{\cY}{\mathcal{Y}}
\newcommand{\cZ}{\mathcal{Z}}
\newcommand{\age}{\mathrm{age}}
\newcommand{\CR}{ {\mathrm{CR}} }
\newcommand{\Hom}{\mathrm{Hom}}
\newcommand{\Res}{\mathrm{Res}}
\newcommand{\Spec}{\mathrm{Spec}}
\newcommand{\eff}{ {\mathrm{eff}} }
\newcommand{\pt}{\mathrm{pt}}
\newcommand{\Nef}{{\mathrm{Nef}}}
\newcommand{\NE}{{\mathrm{NE}}}
\newcommand{\rank}{\mathrm{rank}}
\newcommand{\Tot}{\mathrm{Tot}}
\newcommand{\Vol}{\mathrm{Vol}}
\renewcommand{\Box}{\mathrm{Box}}
\newcommand{\HV}{\mathrm{HV}}
\newcommand{\one}{\mathbf{1}}
\newcommand{\btau}{\boldsymbol{\tau}}
\newcommand{\bSi}{\mathbf{\Si}}
\newcommand{\bXi}{\mathbf{\Xi}}
\newcommand{\bw}{\mathbf{w}}
\newcommand{\fa}{\mathfrak{a}}
\newcommand{\fl}{\mathfrak{l}}
\newcommand{\fm}{\mathfrak{m}}
\newcommand{\fo}{\mathfrak{o}}
\newcommand{\fp}{\mathfrak{p}}
\newcommand{\fr}{\mathfrak{r}}
\newcommand{\fs}{\mathfrak{s}}
\newcommand{\fu}{\mathfrak{u}}
\newcommand{\fb}{\mathfrak{b}}
\newcommand{\su}{\mathsf{u}}
\newcommand{\sX}{\mathsf{X}} 
\newcommand{\tDelta}{\widetilde{\Delta}}
\newcommand{\tSi}{\widetilde{\Sigma}}
\newcommand{\trho}{\widetilde{\rho}}
\newcommand{\tbL}{\widetilde{\bL}}
\newcommand{\tD}{\widetilde{D}}
\newcommand{\tH}{\widetilde{H}}
\newcommand{\tM}{\widetilde{M}}
\newcommand{\tN}{\widetilde{N}}
\newcommand{\tT}{\widetilde{T}}
\newcommand{\tU}{\widetilde{U}}
\newcommand{\tX}{\widetilde{X}}
\newcommand{\tb}{\widetilde{b}}
\renewcommand{\th}{\widetilde{h}}
\newcommand{\tk}{\widetilde{k}}
\newcommand{\tl}{\widetilde{l}}
\newcommand{\tm}{\widetilde{m}}
\newcommand{\tq}{\widetilde{q}}
\newcommand{\tu}{\widetilde{u}}
\newcommand{\tv}{\widetilde{v}}
\newcommand{\tGamma}{\widetilde{\Gamma}}
\newcommand{\tgamma}{\widetilde{\gamma}}
\newcommand{\tsi}{\widetilde{\sigma}}
\newcommand{\tbeta}{\widetilde{\beta}}
\newcommand{\tpsi}{\widetilde{\psi}}
\newcommand{\ttau}{\widetilde{\tau}}
\newcommand{\talpha}{\widetilde{\alpha}}
\newcommand{\tbtau}{\widetilde{\btau}}
\newcommand{\tkappa}{\widetilde{\kappa}}
\newcommand{\tOmega}{\widetilde{\Omega}}
\newcommand{\tomega}{\widetilde{\omega}}
\newcommand{\tpi}{\widetilde{\pi}}
\newcommand{\tbw}{\widetilde{\bw}}
\newcommand{\tNef}{\widetilde{\Nef}}
\newcommand{\tNE}{\widetilde{\NE}}
\newcommand{\tcD}{\widetilde{\cD}}
\newcommand{\tcH}{\widetilde{\cH}}
\newcommand{\tcP}{\widetilde{\cP}}
\newcommand{\tcX}{\widetilde{\cX}}
\newcommand{\tbSi}{\widetilde{\bSi}}
\newcommand{\vzero}{\vec{0}}
\newcommand{\inner}[1]{\langle  #1 \rangle}
\newcommand{\floor}[1]{\lfloor  #1 \rfloor}
\newcommand{\ceil}[1]{\lceil  #1 \rceil}
\newtheorem{dummy}{dummy}[section]
\newtheorem{lemma}[dummy]{Lemma}
\newtheorem{theorem}[dummy]{Theorem}
\newtheorem{corollary}[dummy]{Corollary}
\newtheorem{proposition}[dummy]{Proposition}
\newtheorem{remark}[dummy]{Remark}
\newtheorem{definition}[dummy]{Definition}
\newtheorem{example}[dummy]{Example}
\newtheorem{notation}[dummy]{Notation}
\newtheorem{assumption}[dummy]{Assumption}
\begin{document}
\title{Hodge-theoretic Open/Closed Correspondence}

\author{Song Yu}
\address{Yau Mathematical Sciences Center, Tsinghua University, Beijing, 100084, China}
\email{song-yu@tsinghua.edu.cn}


\begin{abstract}
We continue the B-model development of the open/closed correspondence proposed by Mayr and Lerche-Mayr, complementing the A-model study in the preceding joint works with Liu and providing a Hodge-theoretic perspective. Given a corresponding pair of open geometry on a toric Calabi-Yau 3-orbifold $\mathcal{X}$ relative to a framed Aganagic-Vafa brane $\mathcal{L}$ and closed geometry on a toric Calabi-Yau 4-orbifold $\widetilde{\mathcal{X}}$, we consider the Hori-Vafa mirrors $\mathcal{X}^\vee$ and $\widetilde{\mathcal{X}}^\vee$, where the mirror of $\mathcal{L}$ can be given by a family of hypersurfaces $\mathcal{Y} \subset \mathcal{X}^\vee$. We show that the Picard-Fuchs system associated to $\widetilde{\mathcal{X}}$ extends that associated to $\mathcal{X}$ and characterize the full solution space in terms of the open string data. Furthermore, we construct a correspondence between integral 4-cycles in $\widetilde{\mathcal{X}}^\vee$ and relative 3-cycles in $(\mathcal{X}^\vee, \mathcal{Y})$ under which the periods of the former match the relative periods of the latter. On the dual side, we identify the variations of mixed Hodge structures on the middle-dimensional cohomology of $\widetilde{\mathcal{X}}^\vee$ with that on the middle-dimensional relative cohomology of $(\mathcal{X}^\vee, \mathcal{Y})$ up to a Tate twist. 
\end{abstract}

\maketitle

\setcounter{tocdepth}{1}
\tableofcontents


\section{Introduction}\label{sect:Intro}

\subsection{Open/closed correspondence and mirror symmetry}\label{sect:Overview}
This work is a sequel to the joint works \cite{LY21,LY22} with Liu where we aim to build a mathematical theory for the \emph{open/closed correspondence} proposed by Mayr \cite{Mayr01} and Lerche-Mayr \cite{LM01}. The correspondence is a class of dualities between open strings on a Calabi-Yau 3-fold background and closed strings on a dual Calabi-Yau 4-fold. When the open geometry is a \emph{toric} Calabi-Yau 3-fold $\cX$ with boundary condition specified by a Lagrangian $\cL$ known as an \emph{Aganagic-Vafa A-brane} \cite{AV00,AKV02}, the dual closed geometry is a toric Calabi-Yau 4-fold $\tcX$.

The correspondence provides a multifaceted relationship between the two geometries that spans across both the A- and B-models of mirror symmetry, in genus zero and in the classical sense of e.g. \cite{CdGP91,Givental98,LLY97,CK99}. The present non-compact setup also fits into considerations of \emph{local} mirror symmetry \cite{HV00,HIV00,CKYZ99,Hosono06}. On the A-side, the correspondence is interpreted via genus-zero Gromov-Witten theory and equates the open (specifically disk) invariants of $(\cX, \cL)$ and the closed invariants of $\tcX$. This is the subject of \cite{LY21,LY22}, generalizing the case studied by Bousseau-Brini-van Garrel \cite{BBvG20,BBvG20b} where the geometries arise from \emph{Looijenga pairs}. The correspondence passes through the relative Gromov-Witten invariants of a certain relative geometry that bridges the open and closed sides, which provides examples of the \emph{log-local principle} of van Garrel-Graber-Ruddat \cite{vGGR19} in the non-compact setting. The Gromov-Witten correspondence leads to structural results on the open invariants \cite{YZ23} as well as a correspondence of open/closed BPS invariants which are shown to have integrality properties \cite{BBvG20,Yu24}.

On the B-side, in the toric Calabi-Yau setting, consider the Hori-Vafa \cite{HV00} mirror families
$$
  (\cX^\vee_q, \Omega_q), \qquad (\tcX^\vee_{\tq}, \tOmega_{\tq}).
$$
In the closed sector, periods of the holomorphic volume forms $\Omega_q, \tOmega_{\tq}$ along middle-dimensional cycles recover the genus-zero closed Gromov-Witten theory of $\cX, \tcX$ respectively under mirror symmetry. From a Hodge-theoretic point of view, the periods are solutions to a \emph{Picard-Fuchs} system of differential equations which govern the variations of the volume form in cohomology and are mirror to the quantum differential equations. Solutions to the Picard-Fuchs system can be obtained explicitly as components of a hypergeometric function, called the \emph{$I$-function}, which is identified under mirror symmetry with the \emph{$J$-function} coming from genus-zero closed Gromov-Witten theory.

In the open sector, taking the Lagrangian $\cL \subset \cX$ into consideration, Aganagic-Vafa \cite{AV00} and Aganagic-Klemm-Vafa \cite{AKV02} proposed a version of open mirror symmetry which recovers the disk invariants of $(\cX, \cL)$ from \emph{relative} periods of $\Omega_q$ on $\cX^\vee_q$. Specifically, one integrates over relative 3-cycles with boundary in the so-called \emph{B-branes} which are certain 2-cycles in $\cX^\vee_q$ mirror to the A-brane $\cL$. As studied in \cite{GZ02,FL13,FLT12}, such relative periods produce a hypergeometric function, which we call the \emph{B-model disk function}, that is identified under open mirror symmetry with the generating function of disk invariants of $(\cX, \cL)$, which we call the \emph{A-model disk function}. As suggested in \cite{LMW02a,LMW02b}, the boundary condition may be changed from the B-branes to a family of divisors $\cY_{q,x}$ in $\cX^\vee_q$. Moreover, the relative periods can be characterized via the variation of $\Omega_q$ in the \emph{relative cohomology}. For the quintic 3-fold and other compact Calabi-Yau hypersurfaces in toric varieties, relative periods and the variation formalism have been studied in \cite{AHMM09,JS09,MW09,LLY12}; see also \cite{Walcher07,Walcher08,PSW08}.

A key observation by Lerche and Mayr \cite{Mayr01,LM01} for the B-model open/closed correspondence is that the relative periods on the mirror side of the open geometry $(\cX, \cL)$ are solutions to an \emph{extension} of the Picard-Fuchs system of $\cX$, and the extended system \emph{coincides} with the usual Picard-Fuchs system of the closed geometry $\tcX$. The additional solutions to the extended Picard-Fuchs system encode the open mirror map and the B-model disk function. This has been a central guideline in the study of open mirror symmetry and Gromov-Witten theory of $(\cX, \cL)$ \cite{FL13,FLT12,KZ15,GRZ22}. The extended system should also govern the variation of $\Omega_q$ in the relative cohomology of $(\cX^\vee_q, \cY_{q,x})$ considered as a mixed Hodge structure, which should thus be identified with the variation of $\tOmega_{\tq}$ in the usual cohomology of $\tcX^\vee_{\tq}$ \cite{LMW02a,LMW02b}. Dually, there should be an explicit correspondence between relative 3-cycles on $\cX^\vee_q$ and absolute 4-cycles on $\tcX^\vee_{\tq}$ under which the periods are identified. 

In this work, we fulfill the above proposals of the B-model correspondence for a general toric Calabi-Yau 3-orbifold $\cX$, complementing the A-model development in \cite{LY21,LY22} and emphasizing a Hodge-theoretic perspective. This expands and contextualizes a first step made in \cite{LY22} where we showed that the B-model disk function of $(\cX, \cL)$ can be recovered from the $I$-function of $\tcX$. We now give a more detailed account of our main results.

\subsection{Extended Picard-Fuchs equations and solutions from open strings}
Consider the open geometry on a semi-projective toric Calabi-Yau 3-orbifold $\cX$, presented as a GIT quotient stack
$$
  \cX = \left[\bC^R \sslash (\bC^*)^{R-3} \right],
$$
and an Aganagic-Vafa outer brane $\cL \subset \cX$ equipped with a parameter $f \in \bQ$ called the framing. The corresponding closed geometry is a semi-projective toric Calabi-Yau 4-orbifold, presented as a GIT quotient stack
$$
  \tcX = \left[\bC^{R+2} \sslash (\bC^*)^{R-2} \right].
$$
The geometric relation between $(\cX, \cL, f)$ and $\tcX$ is manifested in that the linear action of $(\bC^*)^{R-2}$ on $\bC^{R+2}$ is determined by the linear action of $(\bC^*)^{R-3}$ on $\bC^R$ and the input of $(\cL, f)$. In the simple case where $\cX$ is smooth and $f \in \bZ$, under suitable bases, the action of $(\bC^*)^{R-2}$ is described by an $(R-2)$-by-$(R+2)$ \emph{charge matrix} of form
$$
  \begin{bmatrix}
    & \begin{array}{:cccc:}
      \hdashline
      \star & \phantom{f} & \phantom{-f-1} & \phantom{0 - \cdots -} \star\\
      \vdots & \phantom{f} & \phantom{-f-1} & \phantom{0 - \cdots -}  \vdots\\
      \star & \phantom{f} & \phantom{-f-1} & \phantom{0 - \cdots -} \star\\
      \hdashline
    \end{array} 
    & \begin{array}{cc}
      0 & \phantom{1}0\\
      \vdots & \phantom{1}\vdots\\
      0 & \phantom{1}0
    \end{array}\\
    & \begin{array}{cccccc}
      1 & f & -f-1 & 0 \phantom{\displaystyle \frac{1}{1}} \cdots \phantom{\displaystyle \frac{1}{1}} 0 
    \end{array} & 
    \begin{array}{cc}
      1 & -1
    \end{array}
  \end{bmatrix}
$$
where the first $(R-3)$-by-$R$ block is the charge matrix of the action of $(\bC^*)^{R-3}$ and the indexing of the first three columns is determined by the position of $\cL$ in $\cX$.

\begin{example}\rm{
For $\cX = \bC^3$ and $f=1$, we have $\tcX = \Tot(\cO_{\bP^2}(-2) \oplus \cO_{\bP^2}(-1))$ (cf. \cite[Section 2.6.1]{LY22}). The charge matrix above consists only of the additional charge vector
$$
  \begin{bmatrix}
    1 & 1 & -2 & 1 & -1
  \end{bmatrix}.
$$
We will use this example as the running example in this paper; see Examples \ref{ex:C3}, \ref{ex:C3PicardFuchs}, \ref{ex:C3Cycles}, \ref{ex:C3MHS}. The main example studied in \cite{Mayr01,LM01} is $\cX = K_{\bP^2}$ (cf. \cite[Section 2.6.2]{LY22}). For $f \in \bZ$, the charge matrix above is
$$
  \begin{bmatrix}
    -3 & 1 & 1 & 1 & 0 & 0\\
    1 & f & -f-1 & 0 & 1 & -1
  \end{bmatrix}.
$$
}
\end{example}

Let
$$
  \cP, \qquad \tcP
$$
denote the \emph{Picard-Fuchs system} of differential equations of $\cX$, $\tcX$ respectively. They are defined by the toric data and can be written down explicitly using the charge matrices above (see Section \ref{sect:PicardFuchsDef}). The relation between the charge matrices implies that $\tcP$ is an extension of $\cP$ in the sense that solutions to $\cP$ are also solutions to $\tcP$; this is verified in detail in Proposition \ref{prop:SolnSubspace}.

Solutions to the Picard-Fuchs systems can be given by the components of the \emph{$I$-functions}
$$
  I_{\cX}(q, z) = 1 + z^{-1}\btau_2(q) + o(z^{-1}), \qquad I_{\tcX}(\tq, z) = 1 + z^{-1}\tbtau_2(\tq) + o(z^{-1})
$$
which are hypergeometric functions valued in Chen-Ruan orbifold cohomology \cite{Givental98,Iritani09} (see Section \ref{sect:IFunction} for the precise definition). They depend on the complex moduli parameters
$$
  q = (q_1, \dots, q_{R-3}), \qquad \tq = (\tq_1, \dots, \tq_{R-2})
$$
and an additional variable $z$. Components in the $z^{-1}$-coefficient $\btau_2 = \btau_2(q)$, $\tbtau_2 = \tbtau_2(\tq)$ give the \emph{closed mirror maps} under which the $I$-functions are identified with the $J$-functions from genus-zero Gromov-Witten theory \cite{Givental98,CCK15,CCIT15}.  

Now consider the framed Aganagic-Vafa outer brane $(\cL, f)$ in the open geometry. We have $\pi_1(\cL) \cong \bZ \times \mu_{\fm}$ where the latter component is the cyclic group of some order $\fm \in \bZ_{>0}$. We write $f \in \bQ$ as $f = \frac{\fb}{\fa}$ for coprime integers $\fa \in \bZ_{>0}$ and $\fb \in \bZ$. Under open mirror symmetry \cite{AV00,AKV02,GZ02,FL13,FLT12}, the \emph{B-model disk function}
$$
  W^{\cX, (\cL, f)}_{\tk}(q, x)
$$
of $(\cX, \cL, f)$ is a hypergeometric function that recovers the generating function of disk Gromov-Witten invariants (see Section \ref{sect:DiskFunction} for the precise definition). It depends on the closed moduli parameters $q$ and an additional open moduli parameter $x$. It is also indexed by an element $\tk \in \mu_{\fa\fm}$ which classifies the winding and monodromy of the disk invariants. Open mirror symmetry passes through the closed mirror map $\btau_2(q)$ above and an additional \emph{open mirror map}
$$
  \log \sX = \log \sX(q, x).
$$

Lerche and Mayr \cite{Mayr01,LM01} observed that the extended Picard-Fuchs system $\tcP$ of the closed geometry $\tcX$ governs the data from the open geometry $(\cX, \cL, f)$, which is by now a folklore result. Specifically, the open mirror map is a solution to $\tcP$ and that the B-model disk function satisfies the recursion relations provided by $\tcP$. Building on these observations and the first steps made in \cite{LY22}, we provide a description of the full solution space to $\tcP$. The description involves the disk functions of not only $(\cL, f)$ but also nearby framed Aganagic-Vafa outer branes $(\cL^1, f_1), \dots, (\cL^S, f_S)$ from which the same closed geometry $\tcX$ arises (cf. Remark \ref{rem:MultiPhases}). From the perspective of phase shifts or wall-crossings (cf. \cite[Section 1.6.3]{LY21}), the closed phase $\tcX$ corresponds to all these different open phases.


\begin{proposition}[See Propositions \ref{prop:SolnSubspace}, \ref{prop:OpenMirrorMapSoln}, \ref{prop:DiskFnSoln}]\label{prop:IntroExtendedPF}
The solution space of $\tcP$ is spanned by that of $\cP$ and the following additional solutions (under $\tq_a = q_a$, $a = 1, \dots, R-3$, and $\tq_{R-2} = x$):
\begin{itemize}
  \item the open mirror map $\log \sX(q,x)$, appearing in the $z^{-1}$-coefficient of $I_{\tcX}$ and identified with the additional closed mirror map of $\tcX$;

  \item components in the $z^{-2}$-coefficient of $I_{\tcX}$ of which the $\tq_{R-2}$-dependence of the power series part is given by the B-model disk functions of $(\cL^1, f_1), \dots, (\cL^S, f_S)$.
\end{itemize}
\end{proposition}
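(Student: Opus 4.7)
The plan is to exhibit the solution space of $\tcP$ as spanned by three explicit families of solutions: those inherited from $\cP$, one distinguished solution of order $z^{-1}$ (the open mirror map), and a collection of solutions of order $z^{-2}$ whose $\tq_{R-2}$-dependence is captured by the B-model disk functions of the various phases $(\cL^s,f_s)$. The universal source of solutions will be the $I$-function $I_{\tcX}(\tq,z)$, whose components in a Chen-Ruan basis of $\tcX$ are, by the standard hypergeometric theory for toric stacks (cf.\ \cite{Givental98,Iritani09,CCIT15}), solutions of $\tcP$ that span the full solution space.

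First I would establish the inclusion $\cP \subset \tcP$ at the level of differential operators. Since the $(R-3)\times R$ charge matrix of $\cX$ appears as the upper-left block of the $(R-2)\times(R+2)$ charge matrix of $\tcX$, with the last two columns vanishing in the first $R-3$ rows, every GKZ-type operator defining $\cP$ lifts, under $\tq_a = q_a$ for $a=1,\dots,R-3$, to an operator of $\tcP$. Hence any $x$-independent solution of $\cP$ is a solution of $\tcP$, giving the first summand. Dually, in the partition of a Chen-Ruan basis of $\tcX$, the classes pulled back from $\cX$ yield components of $I_{\tcX}$ that recover $I_\cX$ and thus reproduce all solutions of $\cP$.

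Next, I would isolate the $z^{-1}$-component of $I_{\tcX}$ dual to the Kähler class corresponding to the new charge vector. A direct inspection, using the form of the extra charge vector $(1,f,-f-1,0,\dots,0,1,-1)$ and the Gamma-factor structure of $I_{\tcX}$, shows that this component equals $\log\tq_{R-2}$ plus a series in $q,x$ that matches $\log\sX(q,x)$ on the nose; this essentially repackages the computation of \cite{LY22} identifying the additional closed mirror map with the open mirror map. The main obstacle is then the $z^{-2}$ stratum. Expansion of $I_{\tcX}$ in the new charge direction produces a sum of hypergeometric series indexed by boxes of the stacky fan of $\tcX$ lying over the distinguished ray; these must be matched bijectively with pairs $(s,\tk)$ with $s\in\{1,\dots,S\}$ and $\tk\in\mu_{\fa_s\fm_s}$, and then one verifies termwise that each such series, as a function of $x=\tq_{R-2}$, equals $W^{\cX,(\cL^s,f_s)}_{\tk}(q,x)$.

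The key computational input for that matching is an identity between the Pochhammer-type factors arising from the new charge vector and those appearing in the definition of the B-model disk functions in Section~\ref{sect:DiskFunction}; the framings $f_s$ and the cyclic groups $\mu_{\fa_s\fm_s}$ will emerge from the torsion and residue contributions in the Gamma factors. I expect this combinatorial identification to be the genuine technical heart of the argument, since it must simultaneously account for all phases of the open geometry compatible with $\tcX$. Once it is done, a dimension count comparing the rank of the solution space of $\tcP$ with the total number of contributions of types (i)--(iii) shows that the asserted solutions are linearly independent and exhaust the space, completing the proof.
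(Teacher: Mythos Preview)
Your overall architecture matches the paper's: both extract solutions of $\tcP$ as components of $I_{\tcX}$, identify the $z^{-1}$ contribution with the open mirror map via \cite{LY22}, and handle the $z^{-2}$ stratum using the B-model disk functions. The inclusion $\cP\subset\tcP$ and the dimension count are also treated the same way.

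The route diverges at the $z^{-2}$ level. You propose to expand $I_{\tcX}$ directly and match each resulting hypergeometric series termwise with some $W^{\cX,(\cL^s,f_s)}_{\tk}$. The paper instead works through the \emph{compactly supported} Poincar\'e pairing: it fixes the explicit basis \eqref{eqn:CompactH4Basis} of $H^4_{\CR,c}(\tcX;\bC)$, realizes each additional solution as $[z^{-2}](I_{\tcX},\tgamma_c)_{\tcX}$ for a basis element $\tgamma_c$, then lifts to the $\tT'$-equivariant pairing and uses fixed-point localization under the weight restriction $\su_4=0$, $\su_2-f\su_1=0$ to rewrite this pairing in terms of the classes $\tgamma_{\tk}$ of \eqref{eqn:ExtraInsertionOuter}. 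At that point Theorem~\ref{thm:HyperCorr} is invoked as a black box, so no new Pochhammer identity needs to be proved. Your direct approach would in effect re-derive Theorem~\ref{thm:HyperCorr} for every phase, which is heavier.

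There is also a structural point your proposal does not anticipate: the untwisted-sector solutions paired with $\tcD_{i_2(\tsi^s)}\tcD_{R+2}$ localize to \emph{two} adjacent fixed points $\tsi^{s-1},\tsi^s$, so their $\tq_{R-2}$-dependence is a signed combination of the disk functions of two neighboring framed branes, not a single $W^{\cX,(\cL^s,f_s)}_1$ (see Proposition~\ref{prop:DiskFnSoln}(i)(ii)). Your bijection ``series $\leftrightarrow$ pairs $(s,\tk)$'' would need to be modified accordingly; the equivariant-localization viewpoint makes this two-term structure transparent, which is one concrete advantage of the paper's method.
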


The identification of the open mirror map $\log \sX$ and the additional closed mirror map of $\tcX$ is given in \cite[Proposition 6.14]{LY22}. The study of the second type of additional solutions is based on the open/closed correspondence of hypergeometric functions established in \cite[Theorem 1.4]{LY22}. In more detail, the B-model disk function of $(\cX, \cL, f)$ can be recovered from the (equivariant) $I$-function of $\tcX$ as
$$
    W^{\cX, (\cL, f)}_{\tk}(q, x) = [z^{-2}] \left(I_{\tcX}^{\tT'}(\tq, z), \tgamma_{\tk} \right)_{\tcX}^{\tT'} \bigg|_{T_f}
$$
for any $\tk \in \mu_{\fa\fm}$. Here, $\tT'$ denotes the Calabi-Yau 3-torus of $\tcX$ and $\big|_{T_f}$ denotes the weight restriction to a 1-subtorus $T_f$ determined by $f$. Moreover, $\left(-, \tgamma_{\tk} \right)_{\tcX}^{\tT'}$ is the $\tT'$-equivariant Poincar\'e pairing on cohomology with a specific class $\tgamma_{\tk}$ and $[z^{-2}]$ stands for taking the $z^{-2}$-coefficient. This result is applied to the $S$ different Aganagic-Vafa branes.


We note that an additional solution of the second type and in the untwisted sector in general involves disk functions of \emph{two} neighboring branes. Moreover, even when $\cX$ is smooth and $f$ is an integer, in general $\tcX$ is an orbifold and the corresponding framings $f_s$ of the relevant branes above are rational numbers, and it is thus necessary to treat the generality of orbifolds with rational framings, as is done in \cite{LY22}.

\subsection{Hori-Vafa mirrors, integral cycles, and periods}
To contextualize the open/closed correspondence of Picard-Fuchs systems and hypergeometric functions, we consider the B-model theory on the Hori-Vafa mirrors of $\cX$, $\tcX$, which are the families of hypersurfaces
\begin{align*}
  & \cX^\vee_q := \{(u,v, X, Y) \in \bC^2 \times (\bC^*)^2: uv = H(X, Y, q) \},\\
  & \tcX^\vee_{\tq} := \{(u,v, X, Y, Z) \in \bC^2 \times (\bC^*)^3: uv = \tH(X, Y, Z, \tq) \}
\end{align*}
parameterized by $q$, $\tq$ respectively. Here, $H(X,Y,q)$ (resp. $\tH(X, Y, Z, \tq)$) is a Laurent polynomial in $X, Y$ (resp. $X,Y,Z$) whose Newton polytope is the 2- (resp. 3-) dimensional polytope $\Delta$ (resp. $\tDelta$) that is the convex hull of the generators of 1-cones of the fan of $\cX$ (resp. $\tcX$). It holds that the powers of $Z$ in $\tH(X, Y, Z, \tq)$ are non-negative, and under $\tq_a = q_a$ for $a = 1, \dots, R-3$ we have
$$
  \tH(X, Y, Z, \tq) \big|_{Z=0} = H(X, Y, q).
$$
Thus the projection $Z: \tcX^\vee_{\tq} \to \bC^*$ defines a family of deformations of $\cX^\vee_q$ which can be viewed as the central fiber over $Z=0$.

It is well-known that period integrals of the holomorphic volume form $\Omega_q$ (resp. $\tOmega_{\tq}$) over integral 3-cycles on the Hori-Vafa mirror $\cX^\vee_q$ (resp. 4-cycles on $\tcX^\vee_{\tq}$) generate the solution space of the Picard-Fuchs system $\cP$ (resp. $\tcP$) \cite{Hosono06,KM10,CLT13}. As we view $\tcP$ as the extended system of $\cP$ by the open string data, we show that the additional solutions to $\tcP$ (described in Proposition \ref{prop:IntroExtendedPF}) can be recovered from periods over \emph{relative} 3-cycles on $\cX^\vee_q$. To be more specific, we consider relative cycles with boundary in the family of divisors 
$$
  \cY_{q,x} := \cX^\vee_q \cap \{X^{\fa}Y^{\fb} = -x^{\fa}\} = \{(u,v, Y) \in \bC^2 \times \bC^*: uv = H_0(Y, q, x) := H(X, Y, q) \big|_{X^{\fa}Y^{\fb} = -x^{\fa}} \}
$$
in $\cX^\vee_q$ parameterized by the open moduli parameter $x$. On its own, $\cY_{q,x}$ can be viewed as the Hori-Vafa mirror of a semi-projective toric Calabi-Yau surface (2-orbifold) $\cX_0$ that connects the 3- and 4-dimensional geometries. The Newton polytope of the restricted Laurent polynomial $H_0(Y, q, x)$ is a closed interval $\Delta_0$ that is the convex hull of the generators of 1-cones of the fan of $\cX_0$.

We establish the following correspondence of integral cycles and periods for the families $(\cX^\vee_q, \cY_{q,x})$ and $\tcX^\vee_{\tq}$ over a suitable domain $(q,x) = \tq \in \tU_\epsilon$ where the Laurent polynomials involved the definitions above satisfy certain regularity properties (see Section \ref{sect:Laurent}).

\begin{theorem}[See Theorem \ref{thm:CycleCorr}]\label{thm:IntroCycleCorr}
For $(q,x) = \tq \in \tU_\epsilon$, there is an injective map
$$
  \iota: H_3(\cX^\vee_q, \cY_{q,x}; \bZ) \to H_4(\tcX^\vee_{\tq}; \bZ)
$$
such that for any $\Gamma \in H_3(\cX^\vee_q, \cY_{q,x}; \bZ)$, we have
$$
  \int_{\Gamma}  \Omega_q = \frac{1}{2\pi\sqrt{-1}} \int_{\iota(\Gamma)}  \tOmega_{\tq}.
$$
Moreover, $\iota$ is an isomorphism over $\bQ$.
\end{theorem}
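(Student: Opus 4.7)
The plan is a three-step argument: construct $\iota$ via a tube/spinning construction along the $Z$-direction of the 4-fold; verify the period identity by a Fubini-and-residue computation; deduce injectivity from the period identity and upgrade to a $\bQ$-isomorphism by a rank count anchored in Proposition~\ref{prop:IntroExtendedPF}.

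For the construction, I would use that $\tH|_{Z=0} = H$, together with the non-negativity of $Z$-powers in $\tH$, which realizes $\cX^\vee_q$ as the genuine central fiber of the family $\{\tcX^\vee_{\tq,z}\}$ for the projection $Z : \tcX^\vee_{\tq} \to \bC$. Given $\Gamma \in H_3(\cX^\vee_q, \cY_{q,x}; \bZ)$, fix small $\epsilon > 0$ (permissible on $\tU_\epsilon$), deform $\Gamma$ to a nearby 3-chain $\Gamma_\epsilon$ in the slice $\tcX^\vee_{\tq,\epsilon}$ using the smooth family structure, transport it by an Ehresmann connection over the loop $|Z|=\epsilon$ to form a 4-chain, and add a capping 3-chain annihilating the swept boundary $\partial\Gamma_\epsilon \times S^1_\epsilon$. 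The existence of the capping is the key geometric input: the brane-encoding monomial in $\tH$ (coming from the additional charge vector $[1, f, -f-1, 0, \dots, 0, 1, -1]$) realizes $\cY_{q,x}$ as a vanishing locus under the $Z \to 0$ degeneration, so that $\cY_{q,x}$-type 2-cycles fibered over $S^1_\epsilon$ bound explicit 3-chains in $\tcX^\vee_{\tq}$. Independence of choices (basepoint, $\epsilon$, representative of $\Gamma$) at the level of homology classes reduces to standard homotopy invariance.

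For the period identity, writing $\tOmega_{\tq} = \Omega_{\tq,Z} \wedge \tfrac{dZ}{Z}$, where $\Omega_{\tq,Z}$ is the holomorphic volume form on the slice $\tcX^\vee_{\tq,Z}$ with $\Omega_{\tq,0} = \Omega_q$, Fubini gives
\[
\int_{\iota(\Gamma)} \tOmega_{\tq} \;=\; \oint_{|Z|=\epsilon} F(Z)\,\tfrac{dZ}{Z} \;+\; (\text{capping term}), \qquad F(Z) := \int_{\Gamma_Z} \Omega_{\tq,Z}.
\]
Since $F(Z)$ extends holomorphically across $Z=0$ with $F(0) = \int_\Gamma \Omega_q$, Cauchy's formula evaluates the contour integral to $2\pi\sqrt{-1} \int_\Gamma \Omega_q$. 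The capping term vanishes by a type/dimension argument: the capping 3-chain sits in a subvariety of the brane-divisor family along which $\tOmega_{\tq}$ has insufficient transverse holomorphic differentials to contribute, the defining equation of $\cY_{q,x}$ having absorbed one. Injectivity of $\iota$ then follows from the period identity combined with the fact (a consequence of Proposition~\ref{prop:IntroExtendedPF}) that the relative period map $\Gamma \mapsto \int_\Gamma \Omega_q$ on $H_3(\cX^\vee_q, \cY_{q,x}; \bQ)$ spans $\Sol(\tcP)$ injectively. To upgrade to a $\bQ$-isomorphism, I would compare ranks: by standard results on Hori-Vafa mirrors of toric Calabi-Yau orbifolds \cite{Hosono06,KM10,CLT13}, $\dim_\bQ H_4(\tcX^\vee_{\tq}; \bQ) = \dim_\bC \Sol(\tcP)$; by Proposition~\ref{prop:IntroExtendedPF}, $\dim_\bQ H_3(\cX^\vee_q, \cY_{q,x}; \bQ) = \dim_\bC \Sol(\tcP)$ as well; hence injectivity plus equal rank forces an isomorphism.

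The hardest step I expect is making the capping construction rigorous: producing the explicit 3-chain in $\tcX^\vee_{\tq}$ that bounds $\partial\Gamma_\epsilon \times S^1_\epsilon$ and verifying independence of the auxiliary choices. A natural route is to combine a local picture near $|Z|=\epsilon$ (where $\tH \approx H$ and $\tcX^\vee_{\tq}$ looks like $\cX^\vee_q \times S^1_\epsilon$) with an explicit description of how $\cY_{q,x}$ opens up (becomes null-homologous) once the $Z$-direction is inserted, using the brane monomial in $\tH$ dictated by the charge vector. Alternatively, one might proceed via a Leray or Wang exact sequence for the fibration $Z : \tcX^\vee_{\tq} \to \bC$ with central fiber $\cX^\vee_q$, reducing the cycle correspondence to a statement about nearby-fiber cohomology modulo the vanishing cycles attached to the brane locus.
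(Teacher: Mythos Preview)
Your architecture (spin over $|Z|=\epsilon'$, cap the resulting boundary, evaluate by residue) is the same as the paper's, but two of your steps have genuine gaps that the paper fills with structure you did not invoke.

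\textbf{The capping step.} Your justification for the cap is that ``$\cY_{q,x}$ is a vanishing locus under the $Z\to 0$ degeneration, so $\cY_{q,x}$-type 2-cycles fibered over $S^1_\epsilon$ bound.'' This is not what happens: $\cY_{q,x}\subset\cX^\vee_q$ extends to $\cY_{q,x}\times\bC^*\subset\tcX^\vee_{\tq}$ and does not vanish; there is no obvious reason why $\partial\Gamma\times S^1_\epsilon$ should be null-homologous in $\tcX^\vee_{\tq}$. The paper avoids this by two devices you did not use. First, it proves a \emph{split} short exact sequence $0\to H_3(\cX^\vee_q;\bZ)\to H_3(\cX^\vee_q,\cY_{q,x};\bZ)\to H_2(\cY_{q,x};\bZ)\to 0$ and constructs $\iota$ separately on the two summands; on absolute $3$-cycles no capping is needed. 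Second, for the section $s(\Gamma_0)$ the construction is carried out entirely at the level of the \emph{conic fibration base}: one works with relative chains in $((\bC^*)^3,S_{\tq})$ and uses the isomorphism $\talpha:H_3((\bC^*)^3,S_{\tq};\bZ)\xrightarrow{\sim}H_4(\tcX^\vee_{\tq};\bZ)$ induced by the circle fibration. The ``cap'' is then an explicit homotopy $\tgamma_2$ in the $Z$-coordinate only, moving the deformed boundary of $\gamma_0$ into $S_{\tq}$; since the circle fiber degenerates over $S_{\tq}$, the lifted chain closes up automatically. The vanishing of the integral over $\tgamma_2$ is not a type/dimension argument but simply that $\tomega=\frac{dX}{X}\wedge\frac{dY}{Y}\wedge\frac{dZ}{Z}$ restricts to zero on a chain where $(X,Y)$ are constrained to a curve and only $Z$ varies.

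\textbf{Injectivity and rank.} You derive both from Proposition~\ref{prop:IntroExtendedPF}, but that proposition describes $\Sol(\tcP)$ in terms of $I$-functions and disk functions; it says nothing about the relative period map $H_3(\cX^\vee_q,\cY_{q,x};\bQ)\to\Sol(\tcP)$ being injective, nor does it compute $\dim_{\bQ}H_3(\cX^\vee_q,\cY_{q,x};\bQ)$. The paper instead gets injectivity of $\iota_1$ from Theorem~\ref{thm:PeriodSoln} (absolute periods of $\Omega_q$ are linearly independent), and injectivity of $\iota_2\circ s$ by first applying $x\partial_x$ via a Stokes-type identity (Lemma~\ref{lem:StokesRel3Cycles}) to reduce to periods of $\Omega^0_{q,x}$ on $\cY_{q,x}$, which are again linearly independent. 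The rank equality comes from the combinatorial identity $\Vol(\tDelta)=\Vol(\Delta)+\Vol(\Delta_0)$ (equation~\eqref{eqn:PolyVolSum}) translated via \eqref{eqn:RankSum}, not from the Picard--Fuchs solution count.
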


The construction of the map $\iota$ in Theorem \ref{thm:IntroCycleCorr} is based on the structure of $\tcX^\vee_{\tq}$ as a family of deformations of $\cX^\vee_q$ over $Z \in \bC^*$, with $\cX^\vee_q$ the central fiber over $Z=0$. On a high level, given a relative 3-cycle $\Gamma \in H_3(\cX^\vee_q, \cY_{q,x}; \bZ)$ included in the central fiber, we deform it over a small circle $\{|Z| = \epsilon'\}$ and make necessary adjustments to obtain the desired absolute 4-cycle $\iota(\Gamma)$ without boundary. The construction also makes use of the structures of the Hori-Vafa mirrors $\cX^\vee_q$, $\tcX^\vee_{\tq}$, $\cY_{q,x}$ as conic fibrations over the algebraic tori $(\bC^*)^2$, $(\bC^*)^3$, $\bC^*$ respectively whose discriminant loci are the affine hypersurfaces
$$
  C_q := \{H(X, Y, q) = 0\}, \qquad S_{\tq} := \{\tH(X, Y, Z, \tq) = 0\}, \qquad P_{q,x} := \{H_0(Y, q, x) = 0 \}
$$
defined by the Laurent polynomials. Periods of the holomorphic volume forms on the Hori-Vafa mirrors are known to correspond to periods of the standard holomorphic volume forms of the algebraic tori over relative cycles with boundary in the hypersurfaces \cite{DK11,CLT13}.

\begin{remark}\rm{
We conjecture that the map $\iota$ in Theorem \ref{thm:IntroCycleCorr} is surjective and is thus an isomorphism over $\bZ$.
}\end{remark}


As noted before, the original proposal of \cite{Mayr01,LM01} considered relative cycles on $\cX^\vee_q$ with boundary on certain 2-cycles parameterized by $x$, known as Aganagic-Vafa B-branes and considered to be mirror to the A-brane $\cL$ \cite{AV00,AKV02}. It was suggested by \cite{LMW02a,LMW02b} that the boundary condition can be changed from the B-branes to the divisors. We provide a detailed account of the relationship between the two types of boundary conditions in Section \ref{sect:AVBBranes}.

\subsection{Variations of mixed Hodge structures}
On the dual side of Theorem \ref{thm:IntroCycleCorr} on integral cycles, we study the holomorphic volume forms on the Hori-Vafa mirrors in the context of (variations of) mixed Hodge structures ((V)MHS) on cohomology. In particular, following the ideas in \cite{LMW02a,LMW02b,LLY12}, the relative periods of $\Omega_q$ are governed by its variations in the relative cohomology $H^3(\cX^\vee_q, \cY_{q,x}; \bC)$. We establish the following open/closed correspondence of VMHS.

\begin{theorem}[See Theorem \ref{thm:MHSCorr}]\label{thm:IntroMHSCorr}
Over $(q,x) = \tq \in \tU_\epsilon$, there is an isomorphism of VMHS
$$
  H^3(\cX^\vee_q, \cY_{q,x}; \bC) \cong H^4(\tcX^\vee_{\tq}; \bC) \otimes T(1)
$$
that identifies $[\Omega_q]$ with $[\tOmega_{\tq}]$.
\end{theorem}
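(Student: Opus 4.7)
The plan is to upgrade the cycle correspondence of Theorem \ref{thm:IntroCycleCorr} to an isomorphism of variations of mixed Hodge structures by Poincar\'e-dualizing it and then realizing the resulting dual map geometrically as a Leray residue along a divisor in a log-smooth compactification of $\tcX^\vee_{\tq}$.

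To begin, I would dualize $\iota$ via the perfect pairings between (relative) cohomology and homology to obtain a $\bC$-linear isomorphism
\[
\phi\colon H^4(\tcX^\vee_{\tq};\bC) \xrightarrow{\sim} H^3(\cX^\vee_q,\cY_{q,x};\bC).
\]
The period identity of Theorem \ref{thm:IntroCycleCorr} forces $\phi([\tOmega_{\tq}]) = 2\pi\sqrt{-1}\cdot[\Omega_q]$. Since $2\pi\sqrt{-1}$ is the $\bC$-generator of the Tate twist $T(1)$, the volume-form correspondence $[\tOmega_{\tq}]\leftrightarrow[\Omega_q]$ under the prescribed twist follows once $\phi$ is known to induce an MHS isomorphism $H^4(\tcX^\vee_{\tq};\bC)\otimes T(1)\simeq H^3(\cX^\vee_q,\cY_{q,x};\bC)$.

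To establish MHS compatibility, I would realize $\phi$ geometrically as a Poincar\'e/Leray residue mirroring the construction of $\iota$. Recall that $\iota$ extends $\tcX^\vee_{\tq}$ to a one-parameter family in $Z$ whose central fiber at $Z=0$ is $\cX^\vee_q$, and spreads a relative 3-cycle with boundary on $\cY_{q,x}$ over a small loop $|Z|=\epsilon'$. Dually, I would construct a log-smooth partial compactification of $\tcX^\vee_{\tq}$ in which $\{Z=0\}$ and a second boundary divisor (arising from the leading-order-in-$Z$ correction term of $\tH$) meet transversally along $\cY_{q,x}$. Applying Deligne's residue on the associated logarithmic de Rham complex then produces a morphism of MHS
\[
\mathrm{Res}\colon H^4(\tcX^\vee_{\tq};\bC) \to H^3(\cX^\vee_q,\cY_{q,x};\bC)(-1).
\]
Pairing $\mathrm{Res}(\alpha)$ against any integral cycle $\Gamma\in H_3(\cX^\vee_q,\cY_{q,x};\bZ)$ and applying Fubini on the tube around $\{Z=0\}$ yields $\int_\Gamma \mathrm{Res}(\alpha) = \frac{1}{2\pi\sqrt{-1}}\int_{\iota(\Gamma)}\alpha$, matching the defining property of $\phi$; perfectness of the period pairing over $\bQ$ then identifies $\mathrm{Res}$ with $\phi$ up to the prescribed Tate twist, so $\phi$ is indeed a morphism of MHS of the required Tate type. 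The VMHS statement over $\tq\in\tU_\epsilon$ follows because the Newton-nondegeneracy hypotheses of Section \ref{sect:Laurent} allow the compactification to vary holomorphically in $\tq$, and the residue construction is compatible with the Gauss-Manin connection.

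The principal obstacle is the explicit construction of the log-smooth compactification and the verification that the second boundary component meets $\{Z=0\}$ exactly along $\cY_{q,x}$, so that the Leray residue genuinely lands in $H^3(\cX^\vee_q,\cY_{q,x};\bC)(-1)$ rather than in plain $H^3(\cX^\vee_q;\bC)(-1)$. This is a local computation with the explicit Laurent polynomial $\tH(X,Y,Z,\tq)$: the divisor $\{X^{\fa}Y^{\fb}=-x^{\fa}\}$ defining $\cY_{q,x}$ must be identified with the zero locus of the leading-order-in-$Z$ correction to $H(X,Y,q)$. Once this geometric picture is pinned down, the identity $\mathrm{Res}_{Z=0}\tOmega_{\tq}=\Omega_q$ follows directly from the shape of the holomorphic volume forms, and the remaining MHS compatibilities reduce to standard properties of Deligne's logarithmic de Rham complex.
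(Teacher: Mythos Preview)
Your approach is genuinely different from the paper's, and it has a real gap.

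The paper does not dualize the cycle map at all. Instead, it works with Batyrev's combinatorial presentation of the VMHS: the Jacobian-type rings $\cR_H$, $\cR_{\tH}$, $\cR_{H_0}$ together with their $\cE$- and $\cI$-filtrations (Theorems \ref{thm:BatMHS}, \ref{thm:HVMHS}). The heart of the argument is Proposition \ref{prop:RExtension}, which builds by hand an exact sequence $0\to\cR_{H_0}\to\cR_{\tH}\to\cR_H\to 0$ compatible with both filtrations; this is a monomial-by-monomial construction (Lemmas \ref{lem:MHSSurjection}, \ref{lem:MHSInjection}) that exploits the explicit shape of $\tH=H+Z(1+x^{\fa}X^{-\fa}Y^{-\fb})$. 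Translating via Theorems \ref{thm:BatMHS}, \ref{thm:HVMHS} yields an extension of VMHS with the same outer terms as the relative-cohomology sequence of $(\cX^\vee_q,\cY_{q,x})$; a comparison of extensions then gives the isomorphism. No compactification, no residue map, no appeal to Theorem \ref{thm:CycleCorr} is used.

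Your residue strategy is reasonable in spirit, but the step ``applying Fubini on the tube around $\{Z=0\}$ yields $\int_\Gamma\mathrm{Res}(\alpha)=\frac{1}{2\pi\sqrt{-1}}\int_{\iota(\Gamma)}\alpha$'' is where it breaks. This would be fine if $\iota(\Gamma)$ were literally $\Gamma\times\{|Z|=\epsilon'\}$, and for absolute cycles $\Gamma\in H_3(\cX^\vee_q;\bZ)$ that is indeed how $\iota_1$ is built (Lemma \ref{lem:Closed4Cycles}). But for relative cycles with boundary on $\cY_{q,x}$ the construction of $\iota_2$ (Lemma \ref{lem:Open4Cycles}) is not a tube: one must add a second $3$-chain $\tgamma_2$ (a homotopy in the $Z$-coordinate pushing the boundary into $S_{\tq}$) before the result even closes up to a cycle. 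So the dual of the paper's $\iota$ is not a priori the Leray residue along $\{Z=0\}$, and your identification $\mathrm{Res}=\phi$ is not justified. Relatedly, the ``second boundary divisor meeting $\{Z=0\}$ along $\cY_{q,x}$'' is only a sketch: you have correctly spotted that the coefficient $1+x^{\fa}X^{-\fa}Y^{-\fb}$ of $Z$ in $\tH$ vanishes exactly on the $\cY$-locus, but turning this into a log-smooth normal-crossings compactification of the conic bundle $\tcX^\vee_{\tq}$ whose logarithmic residue lands in \emph{relative} $H^3$ (rather than absolute $H^3(\cX^\vee_q)(-1)$) is nontrivial and is not carried out. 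Without that, you do not have an MHS morphism to compare to $\phi$, and the bare $\bC$-linear dual of $\iota$ carries no Hodge information on its own.
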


Here $\otimes T(1)$ denotes the Tate twist by 1. Similar to Theorem \ref{thm:IntroCycleCorr}, we use the conic fibration structure of the Hori-Vafa mirrors over the algebraic tori. The VMHS on the cohomology of the affine hypersurfaces in tori are explicitly computed by Batyrev \cite{Batyrev93} via the combinatorial data of the Laurent polynomials and their Newton polytopes, in a way similar to the construction of \emph{Jacobian rings}. Based on this result, Stienstra \cite{Stienstra98} and Konishi-Minabe \cite{KM10} obtained a similar description for the VMHS on the cohomology of the tori relative to the hypersurfaces. As for the VMHS on the cohomology of Hori-Vafa mirrors, Konishi-Minabe \cite{KM10} gave an analogous combinatorial description for 2-dimensional \emph{reflexive} polytopes, which has been generalized in upcoming joint work \cite{AY25} with Aleshkin for general polytopes of any dimensions. The relation between the two sides is seen through the isomorphism of VMHS
$$
  H^2((\bC^*)^2, C_q; \bC) \cong H^3(\cX^\vee_q; \bC) \otimes T(1)
$$
and similarly for the 4- and 2-dimensional geometries.

We construct the isomorphism in Theorem \ref{thm:IntroMHSCorr} explicitly using the combinatorial presentations. A key technical step is the combinatorial construction of the following extensions of VMHS
$$
  \xymatrix{
        0 \ar[r] & H^1(\bC^*, P_{q,x}; \bC) \ar[r] & H^3((\bC^*)^3, S_{\tq}; \bC) \otimes T(1) \ar[r] & H^2((\bC^*)^2, C_q; \bC) \ar[r] & 0,
    }
$$
$$    
  \xymatrix{
        0 \ar[r] & H^2(\cY_{q,x}; \bC) \ar[r] & H^4(\tcX^\vee_{\tq}; \bC) \otimes T(1) \ar[r] & H^3(\cX^\vee_q; \bC) \ar[r] & 0.
    }
$$
See Corollary \ref{cor:MHSExtension}.

\subsection{Related and future works}
We conclude the introduction by a brief discussion of related and future works.

\subsubsection{The case of the quintic 3-fold}
The open/closed correspondence for the quintic 3-fold relative to the real quintic has been studied by Aleshkin-Liu \cite{AL23}, following \cite{Walcher07,Walcher08,PSW08}. In this case, they showed that the extended Picard-Fuchs system satisfied by the B-model disk function is not the Picard-Fuchs system of any Calabi-Yau variety, but rather originates from a particular \emph{gauged linear sigma model (GLSM)} that serves as the corresponding closed geometry. The B-model open/closed correspondence in this case is also shown to be compatible with the Landau-Ginzburg/Calabi-Yau correspondence \cite{CR10}.

\subsubsection{Hodge-theoretic open mirror symmetry} Hodge-theoretic mirror symmetry for toric stacks \cite{KKP08, Iritani09,CCIT20} identifies the A-model quantum $D$-module with the B-model Gelfand-Kapranov-Zelevinsky-type $D$-module that packages the Picard-Fuchs equations and the VMHS. The Hodge-theoretic aspects of the B-model open/closed correspondence studied in this paper may be developed for the A-model as well. As a starting point, in joint work \cite{YZ23} with Zong, we used the Witten-Dijkgraaf-Verlinde-Verlinde (WDVV) equations of $\tcX$ to deduce a system of open WDVV equations for $(\cX, \cL)$, which may be used to construct a $D$-module for the open A-model. Combining the ingredients above would lead to a Hodge-theoretic version of the open mirror symmetry for $(\cX, \cL)$.

\subsection{Organization of the paper}
In Section \ref{sect:Geometry}, we review the constructions of the 3-dimensional open geometry $(\cX, \cL)$ and the 4-dimensional closed geometry $\tcX$ and introduce the 2-dimensional geometry $\cX_0$ that relates the two sides. The subsequent sections concern the following levels of the B-model open/closed correspondence respectively:
\begin{itemize}
  \item Section \ref{sect:Hyper}: hypergeometric functions (as obtained in \cite{LY22});
  
  \item Section \ref{sect:PicardFuchs}: extended Picard-Fuchs system and additional solutions (Proposition \ref{prop:IntroExtendedPF});
  
  \item Section \ref{sect:Cycles}: integral cycles and periods (Theorem \ref{thm:IntroCycleCorr});
  
  \item Section \ref{sect:MHS}: VMHS (Theorem \ref{thm:IntroMHSCorr}).
\end{itemize}


\subsection{Acknowledgments}
I would like to thank Chiu-Chu Melissa Liu for the encouragement of this work and for many stimulating discussions and ideas. I would like to thank Konstantin Aleshkin for valuable discussions on VMHS. A preliminary version of the results in this work appeared in the author's doctoral dissertation \cite{Yu23} and I would like to thank Mohammed Abouzaid, Mina Aganagic, and Andrei Okounkov for serving on the defense committee and offering illuminating feedback.


\section{Toric Calabi-Yau geometries}\label{sect:Geometry}
In this section, we recall the 3-dimensional open geometry and the 4-dimensional closed geometry on the two sides of the open/closed correspondence, following \cite{LY22}. We further introduce a 2-dimensional geometry that bridges the two sides. We work over $\bC$.

\subsection{Notations on toric Calabi-Yau orbifolds}\label{sect:Notations}
Let $\cZ$ be an $r$-dimensional \emph{toric orbifold}, or smooth toric Deligne-Mumford stack \cite{BCS05,FMN10} with trivial generic stabilizer. The geometry of $\cZ$ is combinatorially specified by a \emph{stacky fan} \cite{BCS05} $\bXi' = (\bZ^r, \Xi, \alpha')$, where $\Xi$ is a simplicial fan in $\bR^r = \bZ^r \otimes \bR$ and $\alpha':\bZ^{R'} \to \bZ^r$ is the homomorphism determined by the primitive generators $b_1, \dots, b_{R'} \in \bZ^r$ of the 1-dimensional cones (or rays) in $\Xi$. The coarse moduli space of $\cZ$ is the simplicial toric variety $Z$ determined by $\Xi$. The Deligne-Mumford torus acting on $\cZ$ (and $Z$) is given by $\bT := \bZ^r \otimes \bC^* = (\bC^*)^r$. In this paper, we will work with the following two conditions:
\begin{itemize}
    \item \emph{Calabi-Yau} condition - the canonical bundle $K_Z$ of $Z$ is trivial. In terms of the fan, this means that there exists $u \in \Hom(\bZ^r, \bZ)$ such that $\inner{u,b_i} =1$ for $i = 1, \dots, R'$, where $\inner{-,-}$ is the natural pairing. 
    
    \item \emph{Semi-projectivity} - $Z$  has at least one $\bT$-fixed point and is projective over its affinization $\Spec(H^0(Z, \cO_Z))$. In terms of the fan, this means that the support of $\Xi$ is $r$-dimensional and convex. 
\end{itemize}

The above two conditions imply that the support of the fan $\Xi$ is the cone over a convex lattice polytope $\Delta$ in the hyperplane of $\bR^r$ defined by $\inner{u,-} = 1$, and $\Xi$ is specified by a regular triangulation of $\Delta$. In this case, if we write
$$
    \Delta \cap \bZ^r = \{b_1, \dots, b_{R'}, \dots, b_R\}
$$
and let $\alpha:\bZ^{R} \to \bZ^r$ be the surjective homomorphism determined by $b_1, \dots, b_R$, then $\bXi = (\bZ^r, \Xi, \alpha)$ is an \emph{extended stacky fan} \cite{Jiang08} of $\cZ$. In this paper, we will always work with this extended stacky fan where all lattice points in $\Delta$ are selected.

The element $u \in \Hom(\bZ^r, \bZ)$ given by the Calabi-Yau condition above defines a character $\su: \bT \to \bC^*$ whose kernel $\bT' \cong (\bC^*)^{r-1}$ is referred to as the \emph{Calabi-Yau subtorus} of $\bT$.

\subsubsection{Torus orbits and stabilizers}
For $d = 0, \dots, r$, let $\Xi(d)$ denote the set of $d$-dimensional cones in $\Xi$. For each $\sigma \in \Xi(d)$, we set
$$
    I_\sigma' := \{ i \in \{1, \dots, R'\} : b_i \in \sigma \}, \qquad I_\sigma := \{1, \dots, R\} \setminus I_\sigma'.
$$
Let $\cV(\sigma) \subseteq \cZ$ denote the codimension-$d$ $\bT$-invariant closed substack of $\cZ$ corresponding to $\sigma$ and
$$
    \iota_\sigma: \cV(\sigma) \to \cZ
$$
denote the inclusion\footnote{The letter ``$\iota$'' will be used to denote various types of inclusion maps in this paper.}. The action of $\bT'$ on $\cZ$ (and $Z$) has the same codimension-$d$ orbits as that of $\bT$ for $d = 2, \dots, r$.

Let $G_\sigma$ denote the group of generic stabilizers of $\cV(\sigma)$ which is a finite abelian group. There is an identification
$$
    G_\sigma \cong \left. \left( \bZ^r \cap \sum_{i \in I_\sigma'} \bR b_i \right) \middle/ \sum_{i \in I_\sigma'} \bZ b_i \right.
$$
under which a set of representatives for $G_\sigma$ is given by
$$
    \Box(\sigma) := \left\{ v \in \bZ^r: v = \sum_{i \in I_\sigma'} c_ib_i \text{ for some } 0 \le c_i < 1, c_i \in \bQ \right\}.
$$
For $v = \sum_{i \in I_\sigma'} c_i(v)b_i \in \Box(\sigma)$, we set
$$
    \age(v) := \sum_{i \in I_\sigma'} c_i(v).
$$

\subsubsection{Chen-Ruan orbifold cohomology and pairings}
The inertia stack of the toric orbifold $\cZ$ is
$$
    \cI \cZ = \bigsqcup_{j \in \Box(\cZ)} \cZ_j
$$
where the index set of the components is
$$
    \Box(\cZ) := \bigcup_{\text{cone $\sigma$ in $\Xi$}} \Box(\sigma) = \bigcup_{\sigma \in \Xi(r)} \Box(\sigma).
$$
For each $j \in \Box(\cZ)$, if $\sigma$ is the minimal cone such that $j \in \Box(\sigma)$, then $\cZ_j = \cV(\sigma)$. In particular, the untwisted sector is $\cZ_{\vzero} = \cZ$.

Let $\bF = \bQ$ or $\bC$. The \emph{Chen-Ruan orbifold cohomology} \cite{CR04} of $\cZ$ with $\bF$-coefficients is
$$
    H_{\CR}^*(\cZ; \bF) := \bigoplus_{j \in \Box(\cZ)} H^*(\cZ_j; \bF)[2\age(j)]
$$
where $[2\age(j)]$ denotes a degree shift by $2\age(j)$. Let $\one_j$ denote the unit of $H^*(\cZ_j; \bF)$ which is an element of $H_{\CR}^{2\age(j)}(\cZ; \bF)$. The Chen-Ruan orbifold cohomology \emph{with compact support} is
$$
    H_{\CR, c}^*(\cZ; \bF) := \bigoplus_{j \in \Box(\cZ)} H^*_c(\cZ_j; \bF)[2\age(j)].
$$
There is a perfect pairing
\begin{equation}\label{eqn:CpctPairing}
    \left(-, - \right)_{\cZ}: H^*_{\CR}(\cZ; \bF) \times H^{2r-*}_{\CR,c}(\cZ; \bF) \to \bF.
\end{equation}

In addition, for a subtorus $\mathbb{S} \subseteq \bT$, the \emph{$\mathbb{S}$-equivariant} Chen-Ruan cohomology group of $\cZ$ is
$$
    H_{\CR, \mathbb{S}}^*(\cZ; \bF) := \bigoplus_{j \in \Box(\cZ)} H^*_{\mathbb{S}}(\cZ_j; \bF)[2\age(j)],
$$
which is a module over $H^*_{\mathbb{S}}(\pt;\bF)$. Specializing to the Calabi-Yau subtorus $\mathbb{S} = \bT'$, the $\bT'$-equivariant Poincar\'e pairing
\begin{equation}\label{eqn:EquivPairing}
    \left(-, - \right)_{\cZ}^{\bT'}: H^*_{\CR, \bT'}(\cZ; \bF) \times H^{*}_{\CR, \bT'}(\cZ; \bF) \to \cQ_{\bT', \bF}
\end{equation}
is perfect, where $\cQ_{\bT', \bF}$ denotes the field of fractions of $H^*_{\bT'}(\pt; \bF)$.


\subsubsection{Fixed points, invariant lines, flags}
A maximal cone $\sigma \in \Xi(r)$ corresponds to a $\bT$-fixed point $\fp_{\sigma} := \cV(\sigma)$ in $\cZ$, and a codimension-1 cone $\tau \in \Xi(r-1)$ corresponds to a $\bT$-invariant line $\fl_\tau := \cV(\tau)$ whose coarse moduli space is either $\bC^1$ or $\bP^1$. Let $\Xi(r-1)_c$ denote the subset of cones in $\Xi(r-1)$ whose corresponding lines are compact.

A \emph{flag} in the fan $\Xi$ is a pair $(\tau, \sigma) \in \Xi(r-1) \times \Xi(r)$ such that $\tau$ is a facet of $\sigma$. This corresponds to the inclusion of the $\bT$-fixed point $\fp_{\sigma}$ into the $\bT$-invariant line $\fl_{\tau}$. Let $F(\Xi)$ denote the set of flags in $\Xi$. We set
$$
    \fr(\tau, \sigma):= \frac{|G_\sigma|}{|G_\tau|}.
$$
There is an exact sequence
$$
    \xymatrix{
        1 \ar[r] & G_\tau \ar[r] & G_\sigma \ar[r] & \mu_{\fr(\tau, \sigma)} \ar[r] & 1
    }
$$
where for $a \in \bZ_{>0}$, $\mu_a$ denotes the cyclic group of $a$-th roots of unity.

\subsection{3-Dimensional open geometry}\label{sect:OpenGeometry}
Let $\cX$ be a toric Calabi-Yau 3-orbifold with semi-projective coarse moduli space $X$. Let $\bSi = (N, \Sigma, \alpha)$ be the extended stacky fan of $\cX$ defined as in Section \ref{sect:Notations}, where $N \cong \bZ^3$. Let $T := N \otimes \bC^* \cong (\bC^*)^3$ be the Deligne-Mumford torus of $\cX$. Let $M := \Hom(N, \bZ) = \Hom(T, \bC^*)$ and $u_3 \in M$ be given by the Calabi-Yau condition, with corresponding character $\su_3$. Let $T':= \ker(\su_3) \cong (\bC^*)^2$ denote the Calabi-Yau subtorus of $T$.

Let $\Delta$ be the 2-dimensional convex lattice polytope associated to $\cX$, and write
$$
    \Delta \cap N = \{b_1, \dots, b_{R'}, \dots, b_R\}
$$
among which $b_1, \dots, b_{R'}$ are the primitive generators of the rays of $\Sigma$:
$$
    \Sigma(1) = \{\rho_1, \dots, \rho_{R'}\}, \qquad \rho_i := \bR_{\ge 0}b_i.
$$
We have $\inner{u_3, b_i} = 1$ for all $i = 1, \dots, R$. This list of vectors characterizes the surjective homomorphism $\alpha: \bZ^R \to N$. Let $\bL := \ker(\alpha) \cong \bZ^{R-3}$, which fits into the exact sequence
\begin{equation}\label{eqn:XSES}
    \xymatrix{
        0 \ar[r] & \bL \ar[r]^\psi & \bZ^R \ar[r]^\alpha & N \ar[r] & 0.
    }
\end{equation}
The inclusion map $\psi$ above gives a linear action of the connected algebraic torus $\bL \otimes \bC^* \cong (\bC^*)^{R-3}$ on $\bC^R$ under which $\cX$ arises as a GIT quotient. The map $\psi$ and the induced action can be described in coordinates in terms of a collection of \emph{charge vectors}. Specifically, let $\{e_1, \dots, e_R\}$ be a basis of $\bZ^R$, so that $\alpha(e_i) = b_i$, and let $\{\epsilon_1, \dots, \epsilon_{R-3}\}$ be a basis of $\bL$. For $a = 1, \dots, R-3$ we have the charge vector
$$
    l^{(a)} = (l^{(a)}_1, \dots, l^{(a)}_R) := \psi(\epsilon_a) \in \bZ^R.
$$

In the 3-dimensional case, the stabilizer group $G_\tau$ for any 2-cone $\tau \in \Sigma(2)$ is cyclic. For any flag $(\tau, \sigma) \in F(\Sigma)$ we denote 
$$
    \fm(\tau, \sigma):= |G_\tau|.
$$

The boundary condition for the open geometry is a Lagrangian suborbifold $\cL \subset \cX$ of \emph{Aganagic-Vafa type}; see e.g. \cite{AV00,FLT12,LY22}. It is invariant under the action of the maximal compact subgroup $T'_{\bR} \cong U(1)^2$ of the Calabi-Yau torus $T'$. Moreover, it intersects a unique $T$-invariant line $\fl_{\tau_0}$ in $\cX$, for some $\tau_0 \in \Sigma(2)$. We have
$$
    \pi_1(\cL) = H_1(\cL; \bZ) \cong \bZ \times G_{\tau_0}.
$$
Let $L \subset X$ denote the coarse moduli space of $\cL$.

As in \cite{LY22}, we assume that $\cL$ is an \emph{outer brane}, i.e. $\tau_0 \not \in \Sigma(2)_c$. Then $\tau_0$ is contained in a unique 3-cone $\sigma_0 \in \Sigma(3)$. Without loss of generality, we take the indexing
$$
    I'_{\tau_0} = \{2, 3\}, \qquad I'_{\sigma_0} = \{1, 2, 3\}
$$
such that $b_1, b_2, b_3$ appear in $\Delta$ in counterclockwise order. Let $\tau_2, \tau_3 \in \Sigma(2)$ be the other two facets of $\sigma_0$ with
\begin{equation}\label{eqn:OtherFacets}
    I'_{\tau_2} = \{1, 3\}, \qquad I'_{\tau_3} = \{1, 2\}.
\end{equation}
Let
$$
    \fr := \fr(\tau_0, \sigma_0), \qquad \fm := \fm(\tau_0, \sigma_0) = |G_{\tau_0}|.
$$
The flag $(\tau_0, \sigma_0)$ determines a preferred basis $\{v_1, v_2, v_3\}$ of $N$ under which we have the following coordinates:
$$
    b_1 = (\fr, -\fs, 1), \qquad b_2 = (0, \fm, 1), \qquad b_3 = (0, 0, 1)
$$
where $\fs \in \{0, \dots, \fr-1\}$. We use this basis to introduce coordinates to the other vectors as
$$
    b_i = (m_i, n_i, 1), \qquad i = 1, \dots, R.
$$
Moreover, let $\{u_1, u_2, u_3\}$ be the dual basis of $M$ (where $u_3$ comes from the Calabi-Yau condition as before), and $\{\su_1, \su_2, \su_3\}$ denote the corresponding characters of $T$. We have
$$
    H^*_T(\pt; \bZ) \cong \bZ[\su_1, \su_2, \su_3], \qquad H^*_{T'}(\pt; \bZ) \cong \bZ[\su_1, \su_2].
$$

We introduce an additional rational parameter $f \in \bQ$ called the \emph{framing} of the Aganagic-Vafa brane $\cL$. We write $f$ as the fraction
$$
    f = \frac{\fb}{\fa}
$$
where $\fa \in \bZ_{>0}, \fb \in \bZ$ are coprime. This determines a 1-dimensional subtorus $T_f := \ker(\fa\su_1 - \fb\su_2) \subset T'$. We have
$$
    H^*_{T_f}(\pt; \bZ) \cong \bZ[\su]
$$
where $\su$ is a generator of $H^2_{T_f}(\pt; \bZ) \cong \bZ$. The natural restriction $H^*_{T'}(\pt; \bZ) \to H^*_{T_f}(\pt; \bZ)$ is given by
$$
    \su_1 \mapsto \fa\su, \qquad \su_2 \mapsto \fb\su.
$$



\subsection{4-Dimensional closed geometry}\label{sect:ClosedGeometry}
The closed geometry corresponding to the open geometry $(\cX, \cL, f)$ is a toric Calabi-Yau 4-orbifold $\cX$ with semi-projective coarse moduli space $\tX$. The extended stacky fan is $\tbSi = (\tSi, \tN, \talpha)$ where $\tN := N \oplus \bZ v_4$ is a 4-dimensional lattice and the surjective homomorphism
$$
    \talpha: \bZ^{R+2} = \bigoplus_{i = 1}^{R+2} \bZ e_i \to \tN = \bZ v_1 \oplus \bZ v_2 \oplus \bZ v_3 \oplus \bZ v_4, \qquad e_i \mapsto \tb_i := \talpha(e_i)
$$
is defined by the coordinates
$$
    \tb_i = (m_i, n_i, 1, 0), \qquad i = 1, \dots, R,
$$
$$
    \tb_{R+1} = (-\fa, -\fb, 1, 1), \quad \tb_{R+2} = (0, 0, 1, 1).
$$
The map $\talpha$ fits into the exact sequence in the second row of the following commutative diagram, in which the first row is \eqref{eqn:XSES}:
\begin{equation}\label{eqn:tXSES}
    \xymatrix{
        0 \ar[r] & \bL \ar[r]^\psi \ar[d] & \bZ^R \ar[r]^\alpha \ar[d] & N \ar[r] \ar[d] & 0\\
        0 \ar[r] & \tbL \ar[r]^{\tpsi} & \bZ^{R+2} \ar[r]^{\talpha} & \tN \ar[r] & 0.
    }
\end{equation}
Here $\tbL := \ker(\talpha) \cong \bZ^{R-2}$ contains $\bL$ as a sublattice. Similar to the case of $\cX$, the inclusion map $\tpsi$ gives a linear action of $\tbL \otimes \bC^* \cong (\bC^*)^{R-2}$ on $\bC^{R+2}$ under which $\tcX$ arises as a GIT quotient. Extending the basis $\{\epsilon_1, \dots, \epsilon_{R-3}\}$ of $\bL$ to a basis of $\tbL$ by adding an additional vector $\epsilon_{R-2} \in \tbL$, we can describe the action by charge vectors
$$
    \tl^{(a)} := \tpsi(\epsilon_a) \in \bZ^{R+2}, \qquad a = 1, \dots, R-2
$$
where for $a = 1, \dots, R-3$ we have
$$
    \tl^{(a)} = (l_1^{(a)}, \dots, l_{R}^{(a)}, 0, 0).
$$

Let $\tT := \tN \otimes \bC^* \cong (\bC^*)^4$ be the Deligne-Mumford torus of $\tcX$ and $\tM := \Hom(\tN, \bZ) = \Hom(\tT, \bC^*)$. If $\{u_1, u_2, u_3, u_4\}$ is the basis of $\tM$ dual to $\{v_1, v_2, v_3, v_4\}$, then $u_3$ is the element specifying the Calabi-Yau condition: $\inner{u_3, \tb_i} = 1$ for all $i = 1, \dots, R+2$. Let $\{\su_1, \su_2, \su_3, \su_4\}$ denote the corresponding characters of $\tT$. Let $\tT' := \ker(\su_3) \cong (\bC^*)^3$ denote the Calabi-Yau subtorus of $\tT$. We have
$$
    H^*_{\tT}(\pt; \bZ) \cong \bZ[\su_1, \su_2, \su_3, \su_4], \qquad H^*_{\tT'}(\pt; \bZ) \cong \bZ[\su_1, \su_2, \su_4].
$$


Now we describe the fan $\tSi$. Let $\tDelta$ be the convex hull of $\Delta \cup \{\tb_{R+1}, \tb_{R+2}\}$ which is the 3-dimensional convex lattice polytope associated to $\tcX$. Note that $\tDelta$ contains $\Delta$ as the facet defined by $u_4 = 0$, and
$$
    \tDelta \cap \tN = \{\tb_1, \dots, \tb_{R+2}\}.
$$
Take the unique triangulation of $\tDelta$ that extends the triangulation of the convex hull of $\Delta \cup \{\tb_{R+2}\}$ given by the cone over the triangulation of $\Delta$ at the vertex $\tb_{R+2}$. The fan $\tSi$ is specified by the cone over this triangulation at the origin in $\tN \otimes \bR$. Note that it contains $\Sigma$ as a subfan, which corresponds to the inclusion
$$
    \iota: \cX \to \cX \times \bC \to \tcX.
$$
In terms of cones, $\tSi(d) \supset \Sigma(d)$ for $d = 0, 1, 2, 3$. In particular, the set of rays is given by
$$
    \tSi(1) = \{\trho_{1}, \dots, \trho_{R'}, \trho_{R+1}, \trho_{R+2}\}, \qquad \trho_i:= \bR_{\ge 0} \tb_i.
$$
In addition, for $d = 0,1,2,3$, there is an injective map
\begin{equation}\label{eqn:ConeMapIota}
    \iota: \Sigma(d) \to \tSi(d+1)
\end{equation}
defined by
$$
    I_{\iota(\sigma)}' = I_{\sigma}' \sqcup \{R+2\}.
$$
In particular, it identifies the set $\Sigma(3)$ of maximal cones in $\Sigma$ as a subset of the set $\tSi(4)$ of maximal cones in $\tSi$, and thus also the corresponding torus fixed points. For any cone $\sigma$ in $\Sigma$, we have
$$
    G_{\iota(\sigma)} = G_{\sigma}, \qquad \Box(\iota(\sigma)) = \Box(\sigma).
$$

\subsection{Additional 4-cones}\label{sect:ExtraCones}
We focus on the additional maximal cones in $\tSi$. For any $\tsi \in \tSi(4) \setminus \iota(\Sigma(3))$, there exists a 2-cone $\delta_0(\tsi) \in \Sigma(2) \setminus \Sigma(2)_c$ such that
$$
    I'_{\tsi} = I'_{\delta_0(\tsi)} \sqcup \{R+1, R+2\}.
$$
Note that $\iota(\delta_0(\tsi)) \in \tSi(3)_c$. We denote
$$
    I'_{\delta_0(\tsi)} = \{i_2(\tsi), i_3(\tsi)\}
$$
such that $i_2(\tsi), i_3(\tsi) \in \{1, \dots, R'\}$ appears on the boundary of $\Delta$ in counterclockwise order. The facets of $\tsi$ are $\iota(\delta_0(\tsi))$, $\delta_2(\tsi)$, $\delta_3(\tsi)$, $\delta_4(\tsi) \in \tSi(3)$ described by
\begin{align*}
    & I'_{\iota(\delta_0(\tsi))} = \{i_2(\tsi), i_3(\tsi), R+2\}, && I_{\delta_4(\tsi)}' = \{i_2(\tsi), i_3(\tsi), R+1\},\\
    & I_{\delta_2(\tsi)}' = \{i_3(\tsi), R+1, R+2\}, && I_{\delta_3(\tsi)}' = \{i_2(\tsi), R+1, R+2\}.
\end{align*}
The structure of the stabilizer group $G_{\tsi}$ is described by \cite[Lemma 2.4]{LY22} as follows.

\begin{lemma}[\cite{LY22}]\label{lem:ExtraStab}
For any $\tsi \in \tSi(4) \setminus \iota(\Sigma(3))$, we have:
\begin{itemize}
    \item $G_{\tsi}$ is a cyclic group of order
        $$
            |G_{\tsi}| = (\fa n_{i_2(\tsi)} - \fb m_{i_2(\tsi)}) - (\fa n_{i_3(\tsi)} - \fb m_{i_3(\tsi)}).
        $$
    \item Elements of age at most 1 are precisely those contained in the cyclic subgroup
    $$
        G_{\delta_0(\tsi)} \cong \mu_{\gcd(|m_{i_2(\tsi)} - m_{i_3(\tsi)}|, |n_{i_2(\tsi)} - n_{i_3(\tsi)}|)}.
    $$

    \item Elements in $G_{\tsi} \setminus G_{\delta_0(\tsi)}$ have age 2. 

\end{itemize}
\end{lemma}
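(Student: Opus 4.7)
The plan is to compute everything directly from the quotient description $G_{\tsi} \cong \tN / \Lambda$, where $\Lambda = \bZ\tb_{i_2} + \bZ\tb_{i_3} + \bZ\tb_{R+1} + \bZ\tb_{R+2}$, exploiting the shapes of $\tb_{R+1} = (-\fa, -\fb, 1, 1)$ and $\tb_{R+2} = (0, 0, 1, 1)$ together with the coprimality $\gcd(\fa, \fb) = 1$. For the order, I would evaluate the determinant of the $4 \times 4$ matrix with these four vectors as rows: after replacing $\tb_{R+1}$ by $\tb_{R+1} - \tb_{R+2} = (-\fa, -\fb, 0, 0)$ and expanding along the fourth column, the determinant collapses to $(\fa n_{i_3} - \fb m_{i_3}) - (\fa n_{i_2} - \fb m_{i_2})$. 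The counterclockwise ordering of $i_2(\tsi), i_3(\tsi)$ on $\partial\Delta$, combined with the geometric placement of $\tb_{R+1}, \tb_{R+2}$ above the facet $\Delta \subset \tDelta$, makes the absolute value equal to the positive expression stated in the lemma.

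For cyclicity, the same row operations together with $\tb_{i_2} \mapsto \tb_{i_2} - \tb_{i_3}$ let me peel off $\tb_{R+2}$ and $\tb_{i_3}$ to eliminate the third and fourth coordinates. This identifies $G_{\tsi}$ with $\bZ^2$ modulo the rank-$2$ sublattice generated by $(-\fa, -\fb)$ and $(m_{i_2} - m_{i_3}, n_{i_2} - n_{i_3})$, which is cyclic because the gcd of the four entries is $\gcd(\fa, \fb) = 1$. For $G_{\delta_0(\tsi)}$, I would write $\tb_{i_2} - \tb_{i_3} = d \cdot (m', n', 0)$ with $d = \gcd(|m_{i_2} - m_{i_3}|, |n_{i_2} - n_{i_3}|)$ and $\gcd(m', n') = 1$; using that $\tb_{i_3}$ is primitive, one obtains $N \cap (\bR \tb_{i_2} + \bR \tb_{i_3}) = \bZ \tb_{i_3} + \bZ(m', n', 0)$, so the quotient by $\bZ \tb_{i_2} + \bZ \tb_{i_3}$ is visibly $\mu_d$. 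The canonical face inclusion $\iota(\delta_0(\tsi)) \prec \tsi$ then embeds $G_{\delta_0(\tsi)} \hookrightarrow G_{\tsi}$.

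For the age claims, I would analyze a Box representative $v = c_{i_2}\tb_{i_2} + c_{i_3}\tb_{i_3} + c_{R+1}\tb_{R+1} + c_{R+2}\tb_{R+2}$ with $c_\bullet \in [0,1) \cap \bQ$ through two coordinates. The Calabi-Yau condition $\inner{u_3, \tb_i} = 1$ makes the third coordinate of $v$ equal to $\age(v)$, which is therefore an integer in $\{0, 1, 2, 3\}$. The fourth coordinate of $v$ equals $c_{R+1} + c_{R+2} \in \{0, 1\}$, and vanishes exactly when $v$ lies in the image of $G_{\delta_0(\tsi)}$; in that case $\age(v) = c_{i_2} + c_{i_3} \in \{0, 1\}$ as claimed. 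When the fourth coordinate is $1$, ruling out age $1$ amounts to showing $c_{i_2} = c_{i_3} = 0$ is impossible: integrality of the first two coordinates of $v = c_{R+1}\tb_{R+1} + c_{R+2}\tb_{R+2}$ forces $c_{R+1}\fa, c_{R+1}\fb \in \bZ$, and by B\'ezout with $\gcd(\fa, \fb) = 1$ we obtain $c_{R+1} \in \bZ \cap [0, 1) = \{0\}$, hence $c_{R+2} = 1$, contradicting $c_{R+2} < 1$. So such $v$ has $\age(v) = 2$.

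The main obstacle I anticipate is the sign and orientation bookkeeping in the determinant computation, in particular cleanly translating the counterclockwise ordering $(i_2(\tsi), i_3(\tsi))$ on $\partial \Delta$ into the positivity of $(\fa n_{i_2} - \fb m_{i_2}) - (\fa n_{i_3} - \fb m_{i_3})$ rather than its negative. Once that orientation convention is pinned down, the remainder of the argument is elementary lattice arithmetic organized around the coprimality of $\fa$ and $\fb$.
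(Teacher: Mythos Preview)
The paper does not prove this lemma; it is imported verbatim from \cite[Lemma 2.4]{LY22} and stated without argument, so there is no proof here to compare against. Your approach is the standard and correct one: compute $|G_{\tsi}|$ as the absolute value of the $4\times 4$ determinant, reduce cyclicity to the $2\times 2$ quotient via the obvious row operations and invoke $\gcd(\fa,\fb)=1$, and read off ages from the third (Calabi--Yau) and fourth coordinates of a Box representative.

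Two small points worth tightening. First, you should also explicitly rule out $\age(v)=3$ in the case of fourth coordinate equal to $1$: since $c_{i_2}+c_{i_3}<2$ and $c_{R+1}+c_{R+2}=1$, the third coordinate is strictly less than $3$, so $\age(v)\in\{1,2\}$, and then your B\'ezout argument eliminates $\age(v)=1$. Second, for the sign of the determinant, the cleanest way to pin down the orientation is to note that the ray $\tb_{R+1}-\tb_{R+2}=(-\fa,-\fb,0,0)$ must lie on the same side of the hyperplane through $\tb_{i_2},\tb_{i_3},\tb_{R+2}$ as the interior of $\tsi$, which (using that $b_{i_2},b_{i_3}$ are counterclockwise on $\partial\Delta$) forces the linear functional $(m,n)\mapsto \fa n-\fb m$ to be larger at $b_{i_2}$ than at $b_{i_3}$; this is exactly the positivity you need.
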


In particular, let $\tsi_0 \in \tSi(4) \setminus \iota(\Sigma(3))$ denote the 4-cone with $$
    I_{\tsi_0}' = \{2, 3, R+1, R+2\},
$$
that is, $\delta_0(\tsi_0) = \tau_0$, $i_2(\tsi_0) = 2$, and $i_3(\tsi_0) = 3$. Then $G_{\tsi_0}$ is a cyclic group of order $\fa \fm$ and the subgroup of elements of age at most 1 is $G_{\tau_0} \cong \mu_{\fm}$. 

Let 
$$
    S:= |\tSi(4) \setminus \iota(\Sigma(3))|
$$
and consider the total ordering 
\begin{equation}\label{eqn:ConeList}
  \tsi^1, \dots, \tsi^S
\end{equation}
of the cones in $\tSi(4) \setminus \iota(\Sigma(3))$ under which $b_{i_2(\tsi^1)}, \dots, b_{i_2(\tsi^S)}$ appear on the boundary of $\Delta$ in counterclockwise order. We denote
$$
  \tau^s := \delta_0(\tsi^s), \quad i_2^s := i_2(\tsi^s), \quad i_3^s := i_3(\tsi^s), \qquad \text{for } s = 1, \dots, S
$$
so that $I'_{\tau^s} = \{i_2^s, i_3^s\}$. Then, 
$$
  i_3^s = i_2^{s+1} \qquad \text{for } s = 1, \dots, S-1.
$$
We define
$$
    c_0 := \fa n_{i_2^1} - \fb m_{i_2^1}, \qquad c_s := \fa n_{i_3^s} - \fb m_{i_3^s} \qquad \text{for } s = 1, \dots, S.
$$
Then the integers $c_0, \dots, c_S$ are strictly monotone decreasing and define a subdivision of the interval
\begin{equation}\label{eqn:PolyDelta0}
    \Delta_0 := [c_S = \fa n_{i_3^S} - \fb m_{i_3^S}, c_0 = \fa n_{i_2^1} - \fb m_{i_2^1}]
\end{equation}
into $S$ subintervals. The interval $\Delta_0$ may be viewed as the image of the polytope $\Delta$ under the projection
$$
    \bR^2 \to \bR, \qquad (m, n) \mapsto \fa n - \fb m,
$$
where the domain $\bR^2$ is identified with the hyperplane of $N \otimes \bR \cong \bR^3$ defined by $\inner{u_3, -} = 1$. See Figure \ref{fig:OrderingExtraCones} for an illustration.

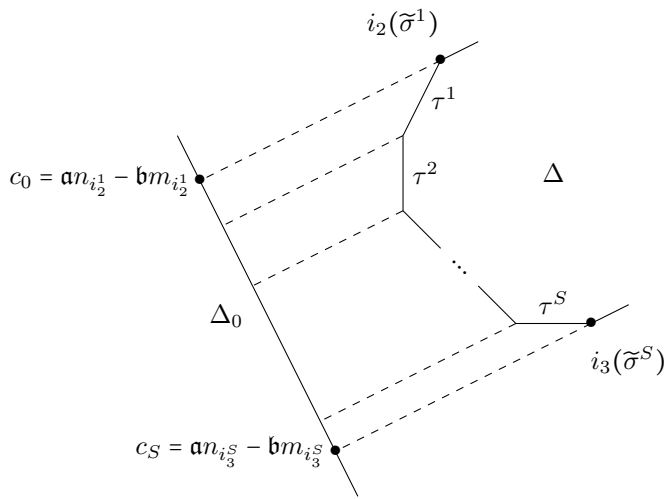
\begin{figure}[h]
\begin{center}
    \begin{tikzpicture}[scale=1]
        
        \coordinate (1) at (0.5, 2);
        \coordinate (2) at (0, 1);        
        \coordinate (3) at (0, 0);
        \coordinate (4) at (1.5, -1.5);        
        \coordinate (5) at (2.5, -1.5);

        \draw (1, 2.25) -- (1) -- (2) -- (3) -- (0.5,-0.5);
        \draw (1, -1) -- (4) -- (5) -- (3, -1.25);

        \node at (0.75, -0.75) {$\ddots$};

        \node at (2, 0.5) {$\Delta$};

        \node[right] at (0.25,1.5) {$\tau^1$};
        \node[right] at (0,0.5) {$\tau^2$};
        \node[above] at (2,-1.5) {$\tau^S$};

        \node at (1) {$\bullet$};
        \node at (5) {$\bullet$};
        \node at (0, 2.5) {$i_2(\tsi^1)$};
        \node at (3, -2) {$i_3(\tsi^S)$};

        \coordinate (min) at (-2.7, 0.4);
        \coordinate (max) at (-0.9, -3.2);

        \draw (-3, 1) -- (-0.6, -3.8);

        \node[left] at (-2, -1.4) {$\Delta_0$};

        \node at (min) {$\bullet$};
        \node at (max) {$\bullet$};

        \node[left] at (min) {$c_0 = \fa n_{i_2^1}- \fb m_{i_2^1}$};
        \node[left] at (max) {$c_S = \fa n_{i_3^S}- \fb m_{i_3^S}$};

        \draw[dashed] (1) -- (min);
        \draw[dashed] (5) -- (max); 

        \draw[dashed] (2) -- (-2.4, -0.2);
        \draw[dashed] (3) -- (-2, -1);
        \draw[dashed] (4) -- (-1.1, -2.8);

    \end{tikzpicture}
\end{center}

    \caption{The projection relating $\Delta$ and $\Delta_0$.}
    \label{fig:OrderingExtraCones}
\end{figure}

Let $\Vol(\Delta)$, $\Vol(\tDelta)$, $\Vol(\Delta_0)$ be the normalized volumes\footnote{This means that the volume of a standard simplex of any dimension is normalized to 1.} of the polytopes $\Delta$, $\tDelta$, $\Delta_0$ respectively. We have
$$
    \Vol(\tDelta) = \sum_{\tsi \in \tSi(4)} |G_{\tsi}| = \sum_{\sigma \in \Sigma(3)}|G_\sigma| + \sum_{\tsi \in \tSi(4) \setminus \iota(\Sigma(3))} |G_{\tsi}| 
   = \Vol(\Delta) + \sum_{s=1}^S (c_{s-1} - c_s)
   = \Vol(\Delta) + (c_0 - c_S),
$$
that is,
\begin{equation}\label{eqn:PolyVolSum}
    \Vol(\tDelta) = \Vol(\Delta) + \Vol(\Delta_0).
\end{equation}

\subsection{A 2-dimensional geometry}\label{sect:2DGeometry}
The data of the additional 4-cones in $\tSi(4) \setminus \iota(\Sigma(3))$ and specifically the interval $\Delta_0$ with the induced subdivision can be packaged into a toric Calabi-Yau 2-orbifold $\cX_0$ with semi-projective coarse moduli space. In more detail, the stacky fan of $\cX_0$ is a triple $\bSi'_0 = (N_0, \Sigma_0, \alpha'_0)$ where $N_0 \cong \bZ^2$. The fan $\Sigma_0$ contains $S+1$ rays spanned by the lattice points
$$
    (c_0, 1), \dots, (c_S, 1)
$$
in $N_0$ respectively where the coordinates are taken under a $\bZ$-basis. Moreover, $\Sigma_0$ contains $S$ 2-cones each spanned by a pair of neighboring rays, and for $s = 1, \dots, S$, the 2-cone spanned by $(c_{s-1}, 1)$ and $(c_s, 1)$ has a cyclic stabilizer group isomorphic to $G_{\tsi^s}$. 

\begin{example}\label{ex:C3} \rm{
In this paper we will use $\cX = \bC^3$, $\cL$ an outer brane, and $f=1$ as the running example; see Figure \ref{fig:ExC3} for an illustration of the geometries in terms of the respective triangulated polytopes, and \cite[Section 2.6]{LY22} for additional examples. The parameters in this example are $\fa = \fb = \fr = \fm = 1$, $\fs = 0$. The corresponding toric Calabi-Yau 4-fold is $\tcX = \Tot(\cO_{\bP^2}(-2) \oplus \cO_{\bP^2}(-1))$. We have $S = 2$ and the two 4-cones $\tsi^1, \tsi^2$ in the ordering \eqref{eqn:ConeList} are described by
$$
    I'_{\tsi^1} = \{2, 3, 4, 5\}, \qquad I'_{\tsi^2} = \{1, 3, 4, 5\}.
$$
We have $\Delta_0 = [c_2 = -1, c_0 = 1]$, $c_1 = 0$, and the induced toric Calabi-Yau surface is $\cX_0 = \Tot(K_{\bP^1})$. The volumes of the polytopes are
$$
    \Vol(\Delta) = 1, \qquad \Vol(\tDelta) = 3, \qquad \Vol(\Delta_0) = 2.
$$

\begin{figure}[h]
\begin{center}
    \begin{tikzpicture}[scale=1.5]
        \coordinate (01) at (-3.1, 0);
        \coordinate (02) at (-3.9, 0.8);        
        \coordinate (03) at (-3.9, 0);
                
        \node at (01) {$\bullet$};
        \node at (02) {$\bullet$};
        \node at (03) {$\bullet$};

        \node at (-2.6, -0.4) {$b_1 = (1, 0, 1)$};
        \node at (-3.9, 1.2) {$b_2 = (0, 1, 1)$};
        \node at (-4.4, -0.4) {$b_3 = (0, 0, 1)$};

        \node[left] at (-3.9, 0.4) {$\tau^1 = \tau_0$};
        \node[below] at (-3.5, 0) {$\tau^2$};

        \draw (01) -- (02) -- (03) -- (01);

        \node at (-3.6, -1.1) {$\Delta \leftrightarrow \cX$};


        \coordinate (1) at (1.1, -0.1);
        \coordinate (2) at (0.6, 0.7);        
        \coordinate (3) at (0, -0.1);
        \coordinate (4) at (-0.7, 0.5);        
        \coordinate (5) at (0, 0.9);

        \node at (1) {$\bullet$};
        \node at (2) {$\bullet$};
        \node at (3) {$\bullet$};
        \node at (4) {$\bullet$};
        \node at (5) {$\bullet$};

        \node at (2.1, 0) {$\tb_1 = (1, 0, 1, 0)$};
        \node at (1.7, 0.7) {$\tb_2 = (0, 1, 1, 0)$};
        \node at (0.2, -0.3) {$\tb_3 = (0, 0, 1, 0)$};
        \node at (-1.5, 0.8) {$\tb_4 = (-1, -1, 1, 1)$};
        \node at (0, 1.3) {$\tb_5 = (0, 0, 1, 1)$};

        \draw (1) -- (3) -- (4) -- (5) -- (2) -- (1) -- (4);
        \draw (1) -- (5);
        \draw[dashed] (5) -- (3) -- (2) -- (4);

        \node at (0.3, -1.1) {$\tDelta \leftrightarrow \tcX$};

        \coordinate (11) at (4.3, -0.2);
        \coordinate (12) at (3.5, 0.6);        
        \coordinate (13) at (3.9, 0.2);
                
        \node at (11) {$\bullet$};
        \node at (12) {$\bullet$};
        \node at (13) {$\bullet$};

        \node at (4.7, -0.1) {$(-1, 1)$};
        \node at (3.9, 0.7) {$(1, 1)$};
        \node at (4.3, 0.3) {$(0, 1)$};

        \draw (11) -- (13) -- (12);

        \node at (4.1, -1.1) {$\Delta_0 \leftrightarrow \cX_0$};

    \end{tikzpicture}
\end{center}

    \caption{Triangulated polytopes in the example of $\cX = \bC^3$ and $f=1$.}
    \label{fig:ExC3}
\end{figure}
}\end{example}

\begin{remark}\label{rem:MultiPhases} \rm{
We note that we will arrive at the same 4-dimensional geometry $\tcX$ and 2-dimensional geometry $\cX_0$ if we start with $\cX$ and choose an Aganagic-Vafa outer brane corresponding to any of the 2-cones $\tau^1, \dots, \tau^S$, with appropriate framing. To be more specific, for any $s = 1, \dots, S$, we may instead consider the Aganagic-Vafa outer brane $\cL^s$ that intersects $\fl_{\tau^s}$ and pick the framing $f_s \in \bQ$ in a way that the framing 1-torus is the same as the torus $T_f$ defined by $(\cL, f)$. To explicitly compute $f_s$, let $\sigma^s \in \Sigma(3)$ be the unique 3-cone that contains $\tau^s$ and write $I'_{\sigma^s} = I'_{\tau^s} \sqcup \{i_1^s\}$. We then consider the $\bZ$-basis $\{v_1^s, v_2^s, v_3^s\}$ of $N$ under which we have the coordinates:
$$
    b_{i_1^s} = (\fr(\tau^s, \sigma^s), -\fs(\tau^s, \sigma^s), 1), \qquad b_{i_2^s} = (0, \fm(\tau^s, \sigma^s), 1), \qquad b_{i_3^s} = (0, 0, 1)
$$
where $\fs(\tau^s, \sigma^s) \in \{0, \dots, \fr(\tau^s, \sigma^s)-1\}$. Under the $\bZ$-basis $\{v_1^s, v_2^s, v_3^s, v_4\}$ of $\tN$, the vector $\tb_{R+1}$ now has coordinates
$
    (-\fa_s, -\fb_s, 1, 1)
$
for some $\fa_s \in \bZ_{>0}, \fb_s \in \bZ$. Then we have
$$
    f_s = \frac{\fb_s}{\fa_s}.
$$
In the example of $\cX = \bC^3$ and $f > 0$, if $(\cL^2, f_2)$ is the framed brane corresponding to the 2-cone $\tau^2$ (with $I'_{\tau^2} = \{1, 3\}$), we have
$$
  f_2 = \frac{-f-1}{f}.
$$
}\end{remark}


\section{Hypergeometric functions}\label{sect:Hyper}
In this section, we recall the results from \cite[Section 6]{LY22} on the open/closed correspondence of B-model hypergeometric functions. We set up additional definitions along the way.

\subsection{Chen-Ruan cohomology and preferred bases}
We start by describing the Chen-Ruan cohomology of $\cX$ and $\tcX$ and fixing preferred choices of bases and their equivariant lifts.

\subsubsection{Chen-Ruan cohomology}
As in \cite[Section 2.5.4]{LY22}, we may use the results of \cite{BCS05,BH06,Jiang08,JT08} to obtain the following description of the Chen-Ruan cohomology of the semi-projective toric orbifolds $\cX$ and $\tcX$: For $\bF = \bQ$ or $\bC$, as graded $\bF$-algebras, $H^*_{\CR}(\cX; \bF)$ is generated by the divisor classes
$$
    \cD_i := [\cV(\rho_i)] \in H^2_{\CR}(\cX; \bZ), \qquad i = 1, \dots, R'
$$
and the units $\{\one_j : j \in \Box(\cX)\}$, while $H^*_{\CR}(\tcX; \bF)$ is generated by the divisor classes 
$$
    \tcD_i := [\cV(\trho_i)] \in H^2_{\CR}(\tcX; \bZ), \qquad i = 1, \dots, R', R+1, R+2
$$
and the units $\{\one_j : j \in \Box(\tcX)\}$. The homogenous $\bF$-algebra homomorphism $\iota^*: H^*_{\CR}(\tcX; \bF) \to H^*_{\CR}(\cX; \bF)$ induced by the inclusion $\iota: \cX \to \tcX$ is given by
$$
        \tcD_i \mapsto \cD_i \quad \text{ for } i = 1, \dots, R', \qquad \tcD_{R+1}, \tcD_{R+2} \mapsto 0,
$$
$$
    \one_j \mapsto \begin{cases}
        \one_j & \text{if } j \in \Box(\cX),\\
        0 &  \text{if } j \in \Box(\tcX) \setminus \Box(\cX).
        \end{cases}
$$

\subsubsection{Second cohomology and divisor classes}
Now we focus on second cohomology and consider the commutative diagram
$$
    \xymatrix{
        0 \ar[r] & \tM \ar[r]^{\talpha^\vee} \ar[d] & \bZ^{R+2} \ar[r]^{\tpsi^\vee} \ar[d] & \tbL^\vee \ar[r] \ar[d] & 0\\
        0 \ar[r] & M \ar[r]^{\alpha^\vee} & \bZ^R \ar[r]^{\psi^\vee} & \bL^\vee \ar[r] & 0
    }
$$
obtained by applying $\Hom(-, \bZ)$ to \eqref{eqn:tXSES}, where the rows are exact. We define
\begin{align*}
    & D_i := \psi^\vee(e_i^\vee) \in \bL^\vee, \qquad \text{for } i = 1, \dots, R;\\
    & \tD_i := \tpsi^\vee(e_i^\vee) \in \tbL^\vee, \qquad \text{for } i = 1, \dots, R+2,
\end{align*}
where $\{e_1^\vee, \dots, e_R^\vee\}$ (resp. $\{e_1^\vee, \dots, e_{R+2}^\vee\}$) is the basis of $\bZ^R$ (resp. $\bZ^{R+2}$) dual to $\{e_1, \dots, e_R\}$ (resp. $\{e_1, \dots, e_{R+2}\}$). There is a canonical identification $H^2_{\CR}(\cX;\bQ) \cong \bL^\vee_{\bQ} := \bL^\vee \otimes \bQ$ under which $\cD_i$ is identified with $D_i$ for each $i = 1, \dots, R'$, and $\one_{j(i)}$ with $D_i$ if $b_i \in N$ is a representative of $j(i) \in \Box(\cX)$. Similarly, there is a canonical identification $H^2_{\CR}(\tcX;\bQ) \cong \tbL^\vee_{\bQ} := \tbL^\vee \otimes \bQ$ under which $\tcD_i$ is identified with $\tD_i$ for each $i = 1, \dots, R', R+1, R+2$, and $\one_{j(i)}$ with $\tD_i$ if $\tb_i \in \tN$ is a representative of $j(i) \in \Box(\tcX)$. The projection $\tbL^\vee_{\bQ} \to \bL^\vee_{\bQ}$ is canonically identified with $\iota^*: H^2_{\CR}(\tcX; \bQ) \to H^2_{\CR}(\cX; \bQ)$, under which $\tD_i$ projects to $D_i$ for $i = 1, \dots, R$. Moreover, we have
$$
    \tD_{R+1} = -\tD_{R+2}  \qquad \in \bZ_{\neq 0} \epsilon_{R-2}^\vee
$$
and both project to $0 \in \bL^\vee$, where $\{\epsilon_1^\vee, \dots, \epsilon_{R-2}^\vee\}$ is the basis of $\tbL^\vee$ dual to $\{\epsilon_1, \dots, \epsilon_{R-2}\}$.

Moreover, recall that $\cX$ is a GIT quotient of $\bC^R$ by the action of $\bL \otimes \bC^* \cong (\bC^*)^{R-3}$ and $\tcX$ is a GIT quotient of $\bC^{R+2}$ by the action of $\tbL \otimes \bC^* \cong (\bC^*)^{R-2}$. We have the following surjective \emph{Kirwan maps}
\begin{align*}
    & \kappa: \bL^\vee \cong H^2_{(\bC^*)^{R-3}}(\bC^R; \bZ) \to H^2(\cX; \bZ),\\
    & \tkappa: \tbL^\vee \cong H^2_{(\bC^*)^{R-2}}(\bC^{R+2}; \bZ) \to H^2(\tcX; \bZ).
\end{align*}

We will also need equivariant versions of the divisor classes. For $i = 1, \dots, R'$, we set
$$
    \cD_i^{T'} := [\cV(\rho_i)] \in H^2_{\CR,T'}(\cX; \bQ)
$$
whose non-equivariant limit is $\cD_i$. For $i = 1, \dots, R', R+1, R+2$, we set
$$
    \tcD_i^{\tT'} := [\cV(\trho_i)] \in H^2_{\CR,\tT'}(\tcX; \bQ)
$$
whose non-equivariant limit is $\tcD_i$.

\subsubsection{Extended nef cones}
For $\sigma \in \Sigma(3)$, the extended $\sigma$-nef cone is defined as
$$
    \tNef(\sigma) := \sum_{i \in I_\sigma} \bR_{\ge 0} D_i
$$
which is a top-dimensional cone in $\bL^\vee_{\bR} := \bL^\vee \otimes \bR$. The \emph{extended nef cone} of $\cX$ is defined as
$$
    \tNef(\cX) := \bigcap_{\sigma \in \Sigma(3)} \tNef(\sigma).
$$
Similarly, for $\tsi \in \tSi(4)$, the extended $\tsi$-nef cone is defined as
$$
    \tNef(\tsi) := \sum_{i \in I_\sigma} \bR_{\ge 0} \tD_i
$$
which is a top-dimensional cone in $\tbL^\vee_{\bR} := \tbL^\vee \otimes \bR$. The \emph{extended nef cone} of $\tcX$ is defined as
$$
    \tNef(\tcX) := \bigcap_{\tsi \in \tSi(4)} \tNef(\tsi).
$$
It is checked in \cite[Lemma 6.2]{LY22} that $\tD_{R+1} \in \tNef(\tcX)$.

\subsection{Preferred bases and equivariant lifts}\label{sect:Basis} 
As in \cite[Section 6.2]{LY22}, we fix a choice of elements
$$
    H_1, \dots, H_{R-3} \in \bL^\vee \cap \tNef(\cX), \qquad \tH_1, \dots, \tH_{R-2} \in \tbL^\vee \cap \tNef(\tcX)
$$
for the rest of the paper. The elements are required to satisfy the following conditions:
\begin{itemize}
    \item $\{H_1, \dots, H_{R-3}\}$ is a $\bQ$-basis of $\bL^\vee_{\bQ}$ and $\{\tH_1, \dots, \tH_{R-2}\}$ is a $\bQ$-basis of $\tbL^\vee_{\bQ}$.
    
    \item The images of $H_1, \dots, H_{R'-3}$ under the Kirwan map $\kappa$ form a $\bQ$-basis of $H^2(\cX; \bQ)$ and the images of $\tH_1, \dots, \tH_{R'-3}, \tH_{R-2}$ under the Kirwan map $\tkappa$ form a $\bQ$-basis of $H^2(\tcX; \bQ)$.
    
    \item For $a = 1, \dots, R'-3$, $\tH_a$ is the lift of $H_a$ under the projection $\tbL^\vee \to \bL^\vee$ that is contained in the $\bZ$-span of $\tD_1, \dots, \tD_{R'}$.

    \item For $a = R'-2, \dots, R-3$, we choose $H_a = D_{3+a}$ and $\tH_a = \tD_{3+a}$.

    \item We choose $\tH_{R-2} = \fa \tD_{R+1}$.
\end{itemize}

For $a = 1, \dots, R-3$, we write $H_a$ in terms of the $\bQ$-basis $\{D_4, \dots, D_R\}$ of $\bL^\vee_{\bQ}$ as
$$
  H_a = \sum_{i = 4}^R s_{ai}D_i
$$
for $s_{ai} \in \bQ_{\ge 0}$. By rescaling if necessary, we may assume that $\{H_a\}$ is chosen such that $s_{ai} \in \bZ_{\ge 0}$ for any $a = 1, \dots, R-3$, $i = 4, \dots, R$. By construction, we can write $\{\tH_a\}$ in terms fo the $\bQ$-basis $\{\tD_4, \dots, \tD_R, \tD_{R+1}\}$ of $\tbL^\vee_{\bQ}$ as
$$
  \tH_a = \sum_{i = 4}^R s_{ai}\tD_i, \quad a = 1, \dots, R-3,\qquad  \tH_{R-2} = \fa \tD_{R+1}.
$$

We use the above elements to define the following second cohomology classes:
\begin{align*}
    & u_a := \kappa(H_a), && a = 1, \dots, R'-3;\\
    & \tu_a := \tkappa(\tH_a), && a = 1, \dots, R'-3, R-2;\\
    & u_a = \tu_a := \one_{j(3+a)}, && a = R'-2, \dots, R-3
\end{align*}
where $j(3+a) \in \Box(\cX) \subseteq \Box(\tcX)$ is the age-1 element represented by $b_{3+a}$ or $\tb_{3+a}$. Then $\{u_1, \dots, u_{R-3}\}$ is a $\bQ$-basis of $H^2_{\CR}(\cX; \bQ)$ and $\{\tu_1, \dots, \tu_{R-2}\}$ is a $\bQ$-basis of $H^2_{\CR}(\tcX; \bQ)$.

Moreover, we fix the following equivariant lifts $\{u_1^{T'}, \dots, u_{R-3}^{T'}\}$, $\{\tu_1^{\tT'}, \dots, \tu_{R-2}^{\tT'}\}$ of the bases elements above:
\begin{itemize}
    \item For $a = 1, \dots, R'-3$, let $u_a^{T'} \in H^2_{\CR, T'}(\cX;\bQ)$ be the unique lift of $u_a$ such that $\iota_{\sigma_0}^*(u_a^{T'}) = 0$, and $\tu_a^{\tT'} \in H^2_{\CR, \tT'}(\tcX;\bQ)$ be the unique lift of $\tu_a$ such that $\iota_{\iota(\sigma_0)}^*(\tu_a^{\tT'}) = \iota_{\tsi_0}^*(\tu_a^{\tT'}) = 0$.
    
    \item Let $\tu_{R-2}^{\tT'} \in H^2_{\CR, \tT'}(\tcX;\bQ)$ be the unique lift of $\tu_{R-2}$ such that $\iota_{\tsi_0}^*(\tu_{R-2}^{\tT'}) = 0$.
    
    \item For $a = R'-2, \dots, R-3$, let $u_a^{T'} = \tu_a^{\tT'} := \one_{j(3+a)}$.
\end{itemize}

\subsection{Extended Mori cones and effective classes}
For $\bF = \bQ$ or $\bR$, let $\bL_{\bF} := \bL \otimes \bF$ and $\tbL_{\bF} := \tbL \otimes \bF$. Let $\inner{-,-}$ denote the pairings between $\bL^\vee$ and $\bL$ and between $\tbL^\vee$ and $\tbL$ as well as extensions over $\bF$.

\subsubsection{Extended Mori cones}
For $\sigma \in \Sigma(3)$, the extended $\sigma$-Mori cone is the dual cone of $\tNef(\sigma)$:
$$
    \tNE(\sigma) := \{\beta \in \bL_{\bR} : \inner{D, \beta} \ge 0 \text{ for all } D \in \tNef(\sigma) \}.
$$
The \emph{extended Mori cone} of $\cX$ is defined as
$$
    \tNE(\cX) := \bigcup_{\sigma \in \Sigma(3)} \tNE(\sigma).
$$
Similarly, for $\tsi \in \tSi(4)$, the extended $\tsi$-Mori cone is the dual cone of $\tNef(\tsi)$:
$$
    \tNE(\tsi) := \{\tbeta \in \tbL_{\bR} : \inner{\tD, \tbeta} \ge 0 \text{ for all } \tD \in \tNef(\tsi) \}.
$$
The \emph{extended Mori cone} of $\tcX$ is defined as
$$
    \tNE(\tcX) := \bigcup_{\tsi \in \tSi(4)} \tNE(\tsi).
$$

\subsubsection{Effective classes}
For $\sigma \in \Sigma(3)$, consider
$$
    \bK^\vee_\sigma := \sum_{i \in I_\sigma} \bZ D_i
$$
which is a sublattice of $\bL^\vee$ of finite index. The dual lattice
$$
    \bK_\sigma := \{ \beta \in \bL_{\bQ} : \inner{D, \beta} \in \bZ \text{ for all } D \in \bK^\vee_\sigma \}
$$
is an overlattice of $\bL$ in $\bL_{\bQ}$. The map
$$
    v: \bK_\sigma \to N, \qquad \beta \mapsto \sum_{i = 1}^R \ceil{\inner{D_i, \beta}} b_i
$$
induces a bijection $\bK_\sigma / \bL \to \Box(\sigma)$. We set
$$
    \bK_{\eff,\sigma} := \bK_\sigma \cap \tNE(\sigma).
$$
Taking into account of all maximal cones, we set
$$
    \bK_{\eff}(\cX) := \bigcup_{\sigma \in \Sigma(3)} \bK_{\eff, \sigma}.
$$
Similarly, for $\tsi \in \tSi(4)$, consider
$$
    \bK^\vee_{\tsi} := \sum_{i \in I_{\tsi}} \bZ \tD_i
$$
which is a sublattice of $\tbL^\vee$ of finite index. The dual lattice
$$
    \bK_{\tsi} := \{ \tbeta \in \tbL_{\bQ} : \inner{\tD, \tbeta} \in \bZ \text{ for all } \tD \in \bK^\vee_{\tsi} \}
$$
is an overlattice of $\tbL$ in $\tbL_{\bQ}$. The map
$$
    \tv: \bK_{\tsi} \to \tN, \qquad \tbeta \mapsto \sum_{i = 1}^{R+2} \ceil{\inner{\tD_i, \tbeta}} \tb_i
$$
induces a bijection $\bK_{\tsi} / \tbL \to \Box(\tsi)$. We set
$$
    \bK_{\eff,\tsi} := \bK_{\tsi} \cap \tNE(\tsi).
$$
Taking into account of all maximal cones, we set
$$
    \bK_{\eff}(\tcX) := \bigcup_{\tsi \in \tSi(4)} \bK_{\eff, \tsi}.
$$

\subsection{Small $I$-functions}\label{sect:IFunction}
We introduce formal variables $z$ and
$$
    q = (q_1, \dots, q_{R-3}), \qquad \tq = (\tq_1, \dots, \tq_{R-2}).
$$
For $\beta \in \bK(\cX)$, $\tbeta \in \bK(\tcX)$ we set
$$
    q^\beta := q_1^{\inner{H_1, \beta}} \cdots q_{R-3}^{\inner{H_{R-3}, \beta}}, \qquad \tq^{\tbeta} := \tq_1^{\inner{\tH_1, \tbeta}} \cdots \tq_{R-2}^{\inner{\tH_{R-2}, \tbeta}}.
$$
The condition that $s_{ai} \in \bZ_{\ge 0}$ for all $a, i$ implies that the above are monomials.

Now we define the \emph{small $I$-functions}. The non-equivariant small $I$-function of $\cX$ is
\begin{align*}
    I_{\cX}&(q, z) := e^{\frac{1}{z}\left(\sum_{a \in \{1, \dots, R'-3\}}u_a \log q_a \right)} \\
    & \cdot \sum_{\beta \in \bK_{\eff}(\cX)} q^{\beta} \prod_{i \in \{1, \dots, R'\}} \frac{\prod_{m = \ceil{\inner{D_i, \beta}}}^\infty (\frac{\cD_i}{z} + \inner{D_i, \beta} - m)}{\prod_{m = 0}^\infty (\frac{\cD_i}{z} + \inner{D_i, \beta} - m)}
    \cdot \prod_{i = R'+1}^R \frac{\prod_{m = \ceil{\inner{D_i, \beta}}}^\infty (\inner{D_i, \beta} - m)}{\prod_{m = 0}^\infty (\inner{D_i, \beta} - m)} \frac{\one_{v(\beta)}}{z^{\age(v(\beta))}}.
\end{align*}
Similarly, the non-equivariant small $I$-function of $\tcX$ is
\begin{align*}
    I_{\tcX}&(\tq, z) := e^{\frac{1}{z}\left(\sum_{a \in \{1, \dots, R'-3, R-2\}}\tu_a \log \tq_a \right)} \\
    & \cdot \sum_{\tbeta \in \bK_{\eff}(\tcX)} \tq^{\tbeta} \prod_{i \in \{1, \dots, R', R+1, R+2\}} \frac{\prod_{m = \ceil{\inner{\tD_i, \tbeta}}}^\infty (\frac{\tcD_i}{z} + \inner{\tD_i, \tbeta} - m)}{\prod_{m = 0}^\infty (\frac{\tcD_i}{z} + \inner{\tD_i, \tbeta} - m)}
    \cdot \prod_{i = R'+1}^R \frac{\prod_{m = \ceil{\inner{\tD_i, \tbeta}}}^\infty (\inner{\tD_i, \tbeta} - m)}{\prod_{m = 0}^\infty (\inner{\tD_i, \tbeta} - m)} \frac{\one_{\tv(\tbeta)}}{z^{\age(\tv(\tbeta))}}.
\end{align*}
Moreover, the $\tT'$-equivariant small $I$-function of $\tcX$ is
\begin{align*}
    I_{\tcX}^{\tT'}&(\tq, z) := e^{\frac{1}{z}\left(\sum_{a \in \{1, \dots, R'-3, R-2\}}\tu_a^{\tT'} \log \tq_a \right)} \\
    & \cdot \sum_{\tbeta \in \bK_{\eff}(\tcX)} \tq^{\tbeta} \prod_{i \in \{1, \dots, R', R+1, R+2\}} \frac{\prod_{m = \ceil{\inner{\tD_i, \tbeta}}}^\infty (\frac{\tcD_i^{\tT'}}{z} + \inner{\tD_i, \tbeta} - m)}{\prod_{m = 0}^\infty (\frac{\tcD_i^{\tT'}}{z} + \inner{\tD_i, \tbeta} - m)}
    \cdot \prod_{i = R'+1}^R \frac{\prod_{m = \ceil{\inner{\tD_i, \tbeta}}}^\infty (\inner{\tD_i, \tbeta} - m)}{\prod_{m = 0}^\infty (\inner{\tD_i, \tbeta} - m)} \frac{\one_{\tv(\tbeta)}}{z^{\age(\tv(\tbeta))}}.
\end{align*}

\subsection{The B-model disk function}\label{sect:DiskFunction}
Recall that the Aganagic-Vafa outer brane $\cL$ in $\cX$ determines a preferred flag $(\tau_0, \sigma_0) \in F(\Sigma)$. Define the quantities
$$
    w_0 := \frac{1}{\fr}, \qquad w_2 := \frac{\fs+ \fr f}{\fr\fm}, \qquad w_3:= -\frac{\fm + \fs + \fr f}{\fr\fm}
$$
which are obtained from tangent $\tT'$-weights at the fixed point $\fp_{\sigma_0}$ as in \cite[Section 3.4.1]{LY22}. Let
$$
    \bK_{\eff}(\cX, \cL) := \{(\beta, d) \in \bK_{\eff, \sigma_0} \times \bZ_{\neq 0} : \inner{D_1, \beta} + dw_0 \in \bZ_{\ge 0}\}.
$$
The proof of \cite[Lemma 6.7]{LY22} shows that $\bK_{\eff}(\cX, \cL)$ is in bijection with $\bK_{\eff, \tsi_0} \setminus \bL_{\bQ}$.

Consider the surjective group homomorphisms
$$
    h: \bZ \times G_{\tau_0} \cong \pi_1(\fo_{\tau_0}) \to \pi_1(\fu_{(\tau_0, \sigma_0)}) \cong G_{\sigma_0},
$$
$$
    \th: \bZ \times G_{\tau_0} \cong \pi_1(\fo_{\iota(\tau_0)}) \to \pi_1(\fu_{(\iota(\tau_0), \tsi_0)}) \cong G_{\tsi_0} \cong \mu_{\fa\fm}
$$
defined by the induced maps on fundamental groups as in \cite[Section 2.1.3]{LY22}, where $\fo_{\tau_0} = \fo_{\iota(\tau_0)} \cong \bC^* \times \cB G_{\tau_0}$ denotes the (open) torus orbit corresponding to the cone $\tau_0$ or $\iota(\tau_0)$, and $\fu_{(\tau_0, \sigma_0)} = \fo_{\tau_0} \cup \fp_{\sigma_0}$, $\fu_{(\iota(\tau_0), \tsi_0)} = \fo_{\iota(\tau_0)} \cup \fp_{\tsi_0}$. For $(\beta, d) \in \bK_{\eff}(\cX, \cL)$, there is a unique element $\lambda(\beta, d) \in G_{\tau_0}$ such that
$$
    h(d, \lambda(\beta, d)) = v(\beta).
$$
Let $\epsilon_2(\beta, d), \epsilon_3(\beta, d) \in \bQ \cap [0,1)$ such that $h(d, \lambda(\beta, d))$ acts on $T_{\fp_{\sigma_0}}\fl_{\tau_2}$ and $T_{\fp_{\sigma_0}}\fl_{\tau_3}$ (see \eqref{eqn:OtherFacets}) by multiplication by $e^{2\pi\sqrt{-1}\epsilon_2(\beta, d)}$ and $e^{2\pi\sqrt{-1}\epsilon_3(\beta, d)}$ respectively. Moreover, we define
$$
    \tk(\beta, d) := \th(d, \lambda(\beta, d)) \in G_{\tsi_0} \cong \mu_{\fa\fm}.
$$

Now we define the \emph{B-model disk function} of $(\cX, \cL, f)$ following \cite{FL13,FLT12,LY22}. Let $x$ be a formal variable for the open sector. For $\tk \in \mu_{\fa\fm}$, define
$$
    W^{\cX, (\cL, f)}_{\tk}(q, x) = \sum_{\substack{(\beta, d) \in \bK_{\eff}(\cX, \cL)\\ \tk = \tk(\beta,d)}} q^\beta x^d \frac{(-1)^{\floor{dw_3-\epsilon_3(\beta, d)}+ \ceil{\frac{d}{\fa}}}}{\fm d (\inner{D_1, \beta} + dw_0)! \prod_{i = 4}^R \inner{D_i, \beta}!} \cdot \frac{\prod_{m = 1}^\infty (-\inner{D_3, \beta} - dw_3 - m)}{\prod_{m = 0}^\infty (\inner{D_2, \beta} + dw_2 - m)}.
$$

\subsection{Open/closed correspondence of hypergeometric functions}
The main result of \cite{LY22} on the B-model open/closed correspondence is that the B-model disk function of the open geometry $(\cX, \cL, f)$ can be retrieved from the equivariant $I$-function of the corresponding closed geometry $\tcX$ by taking the equivariant Poincar\'e pairing with certain cohomology classes. For $\tk \in G_{\tsi_0} \cong \mu_{\fa \fm}$, we define 
\begin{equation}\label{eqn:ExtraInsertionOuter}
    \tgamma_{\tk} := \begin{cases}
        \frac{\tcD_2^{\tT'}\tcD_3^{\tT'}\tcD_{R+1}^{\tT'}}{\frac{f}{\fm}\su_1 - \frac{1}{\fm}\su_2 - \su_4} & \text{if } \tk = 1,\\\
        \tcD_{R+1}^{\tT'} \one_{\tk^{-1}} & \text{if } \age(\tk) = 1,\\
        \one_{\tk^{-1}} & \text{if } \age(\tk) = 2
    \end{cases}
    \qquad \in H^4_{\CR, \tT'}(\tcX; \bQ).
\end{equation}

\begin{notation}\label{not:ZCoefficient}\rm{
Let $\bS$ be a commutative ring and consider a power series
$$
    h(z) = \sum_{n = 0}^\infty h_nz^{-n}    \qquad \in \bS\llbracket z^{-1} \rrbracket
$$
in $z^{-1}$ with coefficients $h_n \in \bS$. Then for $n \in \bZ_{\ge 0}$ we set
$$
    [z^{-n}]h(z) := h_n.
$$
}
\end{notation}

Now we state the correspondence of hypergeometric functions obtained in \cite[Theorem 1.4]{LY22}.

\begin{theorem}[\cite{LY22}]\label{thm:HyperCorr}
For $\tk \in G_{\tsi_0} \cong \mu_{\fa \fm}$, we have
$$
    W^{\cX, (\cL, f)}_{\tk}(q, x) = [z^{-2}] \left(I_{\tcX}^{\tT'}(\tq, z), \tgamma_{\tk} \right)_{\tcX}^{\tT'} \bigg|_{\su_4 = 0, \su_2 - f\su_1 = 0}
$$
under the change of variables $\tq_a = q_a$ for $a = 1, \dots, R-3$ and $\tq_{R-2} = x$.
\end{theorem}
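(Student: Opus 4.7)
My plan is to prove Theorem~\ref{thm:HyperCorr} by expanding the right-hand side using the definition of $I_{\tcX}^{\tT'}$ and the equivariant Poincar\'e pairing, and matching the result term by term with the explicit formula for $W^{\cX, (\cL, f)}_{\tk}$ from Section~\ref{sect:DiskFunction}. The first step is to compute the pairing $\left( I_{\tcX}^{\tT'}(\tq, z), \tgamma_{\tk} \right)_{\tcX}^{\tT'}$ via $\tT'$-equivariant localization at the $\tT'$-fixed points of $\tcX$. The class $\tgamma_{\tk}$ defined in \eqref{eqn:ExtraInsertionOuter} is engineered so that, after the weight restriction $\su_4 = 0,\ \su_2 - f\su_1 = 0$, only the fixed point $\fp_{\tsi_0}$ contributes: in the untwisted case $\tk = 1$ the divisor product $\tcD_2^{\tT'}\tcD_3^{\tT'}\tcD_{R+1}^{\tT'}$ vanishes at every other fixed point, while for $\age(\tk) \geq 1$ the factor $\one_{\tk^{-1}}$ is already supported on the twisted sector $\cV(\tsi_0)_{\tk^{-1}}$.

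The second step is to translate the resulting sum over $\tbeta \in \bK_{\eff, \tsi_0}$ into a sum over $(\beta, d) \in \bK_{\eff}(\cX, \cL)$. The purely closed part $\tbeta \in \bL$ gives classes $\tv(\tbeta) \in \Box(\cX) \subseteq \Box(\tcX)$ whose support does not intersect $\tk^{-1}$ for any $\tk \in G_{\tsi_0} \setminus G_{\tau_0}$, and I would argue that for $\tk \in G_{\tau_0}$ the contribution is killed by the choice of equivariant lift of $\tu_{R-2}^{\tT'}$ (which vanishes at $\fp_{\tsi_0}$) together with a vanishing due to $\tD_{R+1} = 0$ on such $\tbeta$; thus the relevant contributions are indexed by $\bK_{\eff, \tsi_0} \setminus \bL_{\bQ}$, which is precisely the domain of summation for $W^{\cX, (\cL, f)}_{\tk}$. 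The monomial identification $\tq_a = q_a$, $\tq_{R-2} = x$, combined with $\tH_{R-2} = \fa\tD_{R+1}$, converts $\tq^{\tbeta}$ into $q^\beta x^d$ using $d = \fa \inner{\tD_{R+1}, \tbeta}$.

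The third and most technical step is the combinatorial comparison of hypergeometric factors. The factors in $I_{\tcX}^{\tT'}$ indexed by $i = 1, \dots, R$ reduce under $\tD_i \leftrightarrow D_i$ and the weight restriction at $\fp_{\tsi_0}$ to the building blocks of the disk function: the Pochhammer ratio for $i = 1$ yields the $(\inner{D_1, \beta} + dw_0)!$ denominator (using $w_0 = 1/\fr$), the ratios for $i = 2, 3$ yield the $w_2, w_3$ factors (these combine with the equivariant denominator $\tfrac{f}{\fm}\su_1 - \tfrac{1}{\fm}\su_2 - \su_4$ from $\tgamma_1$), and the factors for $i \geq 4$ yield $\prod_{i = 4}^R \inner{D_i, \beta}!$. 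The new factors for $i = R+1, R+2$, evaluated on $\inner{\tD_{R+1}, \tbeta} = d/\fa$ and $\inner{\tD_{R+2}, \tbeta} = -d/\fa$, produce via the $\Gamma$-function reflection formula $\Gamma(s)\Gamma(1-s) = \pi/\sin(\pi s)$ both the sign $(-1)^{\floor{dw_3 - \epsilon_3(\beta, d)} + \ceil{d/\fa}}$ and the factor $1/(\fm d)$ appearing in $W^{\cX, (\cL, f)}_{\tk}$. The $[z^{-2}]$ extraction selects the correct age contribution: age $0$ pairs with the $z^{-2}$ coefficient of the $\tbeta$-sum (the untwisted case), age $1$ with $z^{-1}$ (and the $\one_{\tk^{-1}}$ of $\tgamma_{\tk}$ supplies the remaining $z^{-1}$), and age $2$ with $z^0$.

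The hardest step will be the $\Gamma$-function bookkeeping that produces the precise sign and the factor $1/(\fm d)$, which requires tracking how $\inner{\tD_i, \tbeta}$ for $i = 2, 3, R+1, R+2$ interacts with the fractional parts $\epsilon_2(\beta, d), \epsilon_3(\beta, d)$ and the age of $\tv(\tbeta)$. An auxiliary subtlety is confirming that the exponential prefactor $\exp\!\bigl(\tfrac{1}{z}\sum_a \tu_a^{\tT'} \log \tq_a\bigr)$ contributes trivially to the pairing after localization at $\fp_{\tsi_0}$: this follows from the defining property of the equivariant lifts $\tu_a^{\tT'}$ in Section~\ref{sect:Basis} which vanish at $\iota_{\tsi_0}$, so that the only potentially nontrivial exponential contribution, from $\tu_{R-2}^{\tT'}\log\tq_{R-2}$, also vanishes by construction.
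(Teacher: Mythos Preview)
The paper does not contain a proof of this statement: Theorem~\ref{thm:HyperCorr} is quoted verbatim as \cite[Theorem 1.4]{LY22} and is used here as a black box (e.g.\ in the proof of Proposition~\ref{prop:DiskFnSoln}). There is therefore no ``paper's own proof'' to compare your proposal against.

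Your outline is a reasonable sketch of how such a result is proved---localization of the equivariant Poincar\'e pairing at $\tT'$-fixed points, reduction to the single fixed point $\fp_{\tsi_0}$ via the support properties of $\tgamma_{\tk}$, identification of the summation index with $\bK_{\eff}(\cX,\cL)$, and a term-by-term Pochhammer/Gamma match---and this is indeed the strategy carried out in \cite{LY22}. If you want feedback on the details (e.g.\ the sign bookkeeping or the handling of the exponential prefactor), you should consult Section~6 of \cite{LY22} directly rather than the present paper.
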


Here, $(-,-)_{\tcX}^{\tT'}$ is the $\tT'$-equivariant Poincar\'e pairing of $\tcX$ (see \eqref{eqn:EquivPairing}) and $\big|_{\su_4 = 0, \su_2 - f\su_1 = 0}$ denotes the weight restriction to the subtorus $T_f \subset \tT'$.

\begin{remark}\label{rem:Mirror}\rm{
As discussed in Section \ref{sect:Overview}, Theorem \ref{thm:HyperCorr} has an A-model counterpart in terms of generating functions of Gromov-Witten invariants, that is, the the A-model disk function of $(\cX, \cL, f)$ can be retrieved from the equivariant $J$-function of $\tcX$ by taking the Poincar\'e pairing with the classes $\tgamma_{\tk}$. We refer to \cite{LY22} for details.
}\end{remark}



\section{Extended Picard-Fuchs system and solutions from open strings}\label{sect:PicardFuchs}
In this section, we compare the Picard-Fuchs systems associated to $\cX$, $\tcX$ and show that the latter is an extension of the former (Proposition \ref{prop:SolnSubspace}), following \cite{Mayr01,LM01}. We further use the open/closed correspondence of hypergeometric functions (Theorem \ref{thm:HyperCorr}) to describe the additional solutions to the extended system in terms of the open string data on $\cX$ (Propositions \ref{prop:OpenMirrorMapSoln}, \ref{prop:DiskFnSoln}).

\subsection{Extended Picard-Fuchs system}\label{sect:PicardFuchsDef}
We start by defining the Picard-Fuchs systems. We express $D_1, \dots, D_R \in \bL^\vee$ (resp. $\tD_1, \dots, \tD_{R+2} \in \tbL^\vee$) in terms of the preferred basis $\{H_1, \dots, H_{R-3}\}$ (resp. $\{\tH_1, \dots, \tH_{R-2}\}$) as follows:
$$
  D_i = \sum_{a = 1}^{R-3} m_i^{(a)}H_a, \quad i = 1, \dots, R; \qquad \qquad
  \tD_i = \sum_{a = 1}^{R-2} \tm_i^{(a)}\tH_a, \quad i = 1, \dots, R+2.
$$
(See Section \ref{sect:Basis} for the definition of the bases.) The structural coefficients $m_i^{(a)}$, $\tm_i^{(a)}$ are described by the following slight extension of \cite[Lemma 6.11]{LY22}.

\begin{lemma}[\cite{LY22}]\label{lem:Charges}
We have
\begin{enumerate}[label=(\roman*)]
  \item $(\tm_{R+1}^{(1)}, \dots, \tm_{R+1}^{(R-2)}) = -(\tm_{R+2}^{(1)}, \dots, \tm_{R+2}^{(R-2)}) = \left(0, \dots, 0, \frac{1}{\fa} \right)$; 
  \item $(\tm_1^{(R-2)}, \dots, \tm_{R+2}^{(R-2)}) = \left(w_0, w_2, w_3, 0, \dots, 0, \frac{1}{\fa}, -\frac{1}{\fa} \right)$;
  \item $m_i^{(a)} = \tm_i^{(a)}$ for $i = 1, \dots, R$ and $a = 1, \dots, R-3$.
\end{enumerate}
\end{lemma}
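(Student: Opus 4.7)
The plan is to extract the three statements by combining the explicit expressions of $\{\tH_a\}$ in terms of the $\tD_i$'s (from Section \ref{sect:Basis}) with the linear relations among the $\tD_i$'s coming from the second row of \eqref{eqn:tXSES}. Two ingredients will be used throughout: (a) the decompositions
\[
  \tH_a = \sum_{i=4}^R s_{ai}\tD_i \quad (a = 1, \dots, R-3), \qquad \tH_{R-2} = \fa\,\tD_{R+1},
\]
which make the transition matrix from $\{\tD_4, \dots, \tD_R, \tD_{R+1}\}$ to $\{\tH_1, \dots, \tH_{R-2}\}$ block lower triangular with an invertible $(R-3)\times(R-3)$ block $(s_{ai})$ and a final diagonal entry $\fa$; and (b) the relations $\sum_{i=1}^{R+2}\inner{u_k,\tb_i}\tD_i = 0$ for $k = 1,2,3,4$, which by the coordinates of the $\tb_i$'s in Section \ref{sect:ClosedGeometry} read
\[
  \fr\tD_1 + \sum_{i=4}^R m_i\tD_i - \fa\tD_{R+1} = 0,
\]
\[
  -\fs\tD_1 + \fm\tD_2 + \sum_{i=4}^R n_i\tD_i - \fb\tD_{R+1} = 0,
\]
\[
  \sum_{i=1}^R\tD_i + \tD_{R+1} + \tD_{R+2} = 0, \qquad \tD_{R+1} + \tD_{R+2} = 0.
\]

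Statement (i) is immediate: $\tH_{R-2} = \fa\tD_{R+1}$ forces $\tD_{R+1} = \tfrac{1}{\fa}\tH_{R-2}$, and the $k=4$ relation yields $\tD_{R+2} = -\tD_{R+1} = -\tfrac{1}{\fa}\tH_{R-2}$. For statement (iii), I apply the projection $\pi: \tbL^\vee \to \bL^\vee$ (the right vertical in \eqref{eqn:tXSES} dualised) to the decomposition $\tD_i = \sum_{a=1}^{R-2}\tm_i^{(a)}\tH_a$ for $i \le R$; since $\pi(\tD_i) = D_i$, $\pi(\tH_a) = H_a$ for $a \le R-3$, and $\pi(\tH_{R-2}) = \fa\,\pi(\tD_{R+1}) = 0$, I get $D_i = \sum_{a=1}^{R-3}\tm_i^{(a)}H_a$, which compared with the defining $D_i = \sum_{a=1}^{R-3}m_i^{(a)}H_a$ forces $\tm_i^{(a)} = m_i^{(a)}$ since $\{H_a\}$ is a basis.

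For statement (ii), I first invert the block-triangular change of basis in (a): $\tD_i$ for $i = 4, \dots, R$ becomes a $\bQ$-linear combination of $\tH_1, \dots, \tH_{R-3}$ only, so $\tm_i^{(R-2)} = 0$, accounting for the middle zeros. I then solve the first three relations in (b) for $\tD_1, \tD_2, \tD_3$ and extract the $\tH_{R-2}$-coefficient using $\tD_{R+1} = \tfrac{1}{\fa}\tH_{R-2}$ together with the vanishing just noted: the $k=1$ relation gives $\tm_1^{(R-2)} = \tfrac{1}{\fr} = w_0$; substituting into the $k=2$ relation and using $f = \fb/\fa$ yields $\tm_2^{(R-2)} = \tfrac{\fs\fa + \fr\fb}{\fr\fm\fa} = \tfrac{\fs + \fr f}{\fr\fm} = w_2$; and the $k=3$ relation (reduced by $\tD_{R+1} + \tD_{R+2} = 0$) produces $\tm_3^{(R-2)} = -w_0 - w_2 = -\tfrac{\fm + \fs + \fr f}{\fr\fm} = w_3$. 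The argument is pure linear algebra; the only piece requiring care is the simplification in the $\tm_2^{(R-2)}$ step, where the substitution $\fr\fb = \fa\fr f$ is needed to match the definition of $w_2$, but no serious obstacle arises.
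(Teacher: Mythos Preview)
Your proof is correct. For item (iii) you use exactly the projection argument the paper uses; for items (i) and (ii) the paper simply cites \cite[Lemma 6.11]{LY22}, whereas you supply the explicit linear-algebra computation (reading off the relations $\sum_i \inner{u_k,\tb_i}\tD_i = 0$ and inverting the block-triangular change of basis), so your argument is a self-contained version of what the paper outsources rather than a different route. One minor remark: the transition matrix you describe is in fact block \emph{diagonal}, not merely block lower triangular, but this only strengthens your conclusion that $\tm_i^{(R-2)} = 0$ for $i = 4,\dots,R$.
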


\begin{proof}
Items (i) and (ii) are \cite[Lemma 6.11]{LY22}. Item (iii) follows from (ii) and the construction that under the projection $\tbL^\vee \to \bL^\vee$, $\tD_i$ projects to $D_i$ for $i = 1, \dots, R$, $\tD_{R+1}$ and $\tD_{R+2}$ projects to $0$, and $\tH_a$ projects to $H_a$ for $a = 1, \dots, R-3$.
\end{proof}

Indeed, when $\cX$ is smooth and $f \in \bZ$ ($\fa = 1$), item (ii) above has integer entries and is the extended charge vector obtained in \cite{Mayr01,LM01} from the open string sector. In general the extended charge vector is a scalar multiple.

For $\beta \in \bL$, we define the differential operator
$$
  \cP_\beta := q^\beta \prod_{\substack{i \in \{1, \dots, R\} \\ \inner{D_i, \beta} < 0}} \prod_{m=0}^{-\inner{D_i, \beta}-1} (\cP_i - m) - \prod_{\substack{i \in \{1, \dots, R\} \\ \inner{D_i, \beta} > 0}} \prod_{m=0}^{-\inner{D_i, \beta}-1} (\cP_i - m)
$$ 
where for $i = 1, \dots, R$,
$$
  \cP_i := \sum_{a = 1}^{R-3} m_i^{(a)}q_a\frac{\partial}{\partial q_a}.
$$
Similarly, for $\tbeta \in \tbL$, we define the differential operator
$$
  \tcP_{\tbeta} := \tq^{\tbeta} \prod_{\substack{i \in \{1, \dots, R+2\} \\ \inner{\tD_i, \tbeta} < 0}} \prod_{m=0}^{-\inner{\tD_i, \tbeta}-1} (\tcP_i - m) - \prod_{\substack{i \in \{1, \dots, R+2\} \\ \inner{\tD_i, \tbeta} > 0}} \prod_{m=0}^{-\inner{\tD_i, \tbeta}-1} (\tcP_i - m)
$$ 
where for $i = 1, \dots, R+2$,
$$
  \tcP_i := \sum_{a = 1}^{R-2} \tm_i^{(a)}\tq_a\frac{\partial}{\partial \tq_a}.
$$

The \emph{Picard-Fuchs} systems associated to $\cX$ and $\tcX$ are given by
$$
  \cP := \{\cP_\beta\}_{\beta \in \bL}, \qquad \tcP := \{\tcP_{\tbeta}\}_{\tbeta \in \tbL}
$$
respectively. We refer to $\tcP$ as the \emph{extended} Picard-Fuchs system following \cite{Mayr01,LM01} since, as shown below, every solution to $\cP$ is also a solution to $\tcP$.

\begin{proposition}\label{prop:SolnSubspace}
Suppose $F(q_1, \dots, q_{R-3})$ is such that
$
  \cP_\beta F(q_1, \dots, q_{R-3}) = 0
$
for all $\beta \in \bL$. Then
$
  \tcP_{\tbeta} F(\tq_1, \dots, \tq_{R-3}) = 0
$
for all $\tbeta \in \tbL$.
\end{proposition}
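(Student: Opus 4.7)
The plan is to split the analysis into two cases depending on whether $\tbeta \in \tbL$ lies in the sublattice $\bL \subset \tbL$ coming from diagram \eqref{eqn:tXSES}, and in each case reduce $\tcP_{\tbeta} F$ to zero using Lemma \ref{lem:Charges}. As preliminaries, I would first note that on any $F = F(\tq_1, \dots, \tq_{R-3})$ independent of $\tq_{R-2}$, the operators $\tcP_i$ for $i = 1, \dots, R$ act as $\cP_i$ (the extra summand $\tm_i^{(R-2)} \tq_{R-2} \partial_{\tq_{R-2}}$ annihilates $F$), while $\tcP_{R+1}$ and $\tcP_{R+2}$ themselves vanish on $F$ (both equal $\pm \fa^{-1} \tq_{R-2} \partial_{\tq_{R-2}}$ by Lemma \ref{lem:Charges}(i)). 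Moreover, for $\beta \in \bL$ one has $\tq^\beta = q^\beta$ under the identification $\tq_a = q_a$, since $\inner{\tH_a, \beta} = \inner{H_a, \beta}$ for $a \le R-3$ and $\inner{\tH_{R-2}, \beta} = \fa\inner{\tD_{R+1}, \beta} = 0$ (as $\tD_{R+1}$ projects to $0 \in \bL^\vee$).

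The key structural identification I need is $\bL = \ker(\inner{\tD_{R+1}, -}: \tbL \to \bZ)$. The forward inclusion follows from $\tD_{R+1} \mapsto 0$ in $\bL^\vee$; for the reverse, the snake lemma applied to \eqref{eqn:tXSES} yields $\tbL/\bL \cong \ker(\bZ^2 \to \bZ v_4) = \bZ(e_{R+1} - e_{R+2})$, which shows simultaneously that $\bL$ is saturated in $\tbL$ and that $\inner{\tD_{R+1}, \epsilon_{R-2}} = \pm 1$, so $\inner{\tD_{R+1}, -}$ is surjective and its kernel has the same rank as $\bL$.

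For $\tbeta \in \bL$, the pairings $\inner{\tD_{R+1}, \tbeta}$ and $\inner{\tD_{R+2}, \tbeta}$ both vanish, so $i = R+1, R+2$ drop out of both products in the definition of $\tcP_{\tbeta}$; combined with $\inner{\tD_i, \tbeta} = \inner{D_i, \tbeta}$ for $i \le R$ and the preliminary compatibility (used iteratively, since each $\cP_i$ preserves the subspace of $\tq_{R-2}$-independent functions), $\tcP_{\tbeta} F$ reduces to $\cP_{\tbeta} F$, which vanishes by hypothesis. For $\tbeta \not\in \bL$, one has $\inner{\tD_{R+1}, \tbeta} \ne 0$ and, using $\tD_{R+2} = -\tD_{R+1}$, exactly one of $R+1, R+2$ appears in each of the two products, contributing the $m = 0$ factor $\tcP_{R+1}$ or $\tcP_{R+2}$ respectively. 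Since all $\tcP_i$ are linear combinations of the commuting Euler operators $\tq_a \partial_{\tq_a}$, they commute pairwise, so these factors can be moved to the innermost position within each product and applied first to $F$, yielding zero; the remaining outer operators and the monomial $\tq^{\tbeta}$ then act on zero, killing both terms in $\tcP_{\tbeta} F$.

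The main obstacle, such as it is, is the structural step identifying $\bL$ as a saturated sublattice cut out by $\tD_{R+1}$; once that is in hand, the rest is bookkeeping with the charges of Lemma \ref{lem:Charges} and the pairwise commutativity of the first-order operators. If the saturation of $\bL$ in $\tbL$ has already been used implicitly elsewhere in the paper, this step collapses to a remark.
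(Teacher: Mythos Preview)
Your proof is correct and follows essentially the same approach as the paper: both arguments split into the cases $\tbeta \in \bL$ and $\tbeta \notin \bL$, reduce the first to $\cP_{\tbeta}F = 0$ via Lemma~\ref{lem:Charges}, and kill the second using that each product in $\tcP_{\tbeta}$ contains $\tcP_{R+1}$ or $\tcP_{R+2}$ as a factor. The paper does not spell out the saturation of $\bL$ in $\tbL$ (it is implicit in the earlier statement $\tD_{R+1} \in \bZ_{\neq 0}\epsilon_{R-2}^\vee$), and instead of invoking commutativity it simply notes that each $\tcP_i$ preserves $\tq_{R-2}$-independence, but these are minor variations on the same argument.
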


\begin{proof}
Under the identification $\tq_a = q_a$ for $a = 1, \dots, R-3$, we have by Lemma \ref{lem:Charges} that
$$
  \tcP_1 = \cP_1 + w_0\tq_{R-2}\frac{\partial}{\partial \tq_{R-2}}, \quad \tcP_2 = \cP_2 + w_2\tq_{R-2}\frac{\partial}{\partial \tq_{R-2}}, \quad \tcP_3 = \cP_3 + w_3\tq_{R-2}\frac{\partial}{\partial \tq_{R-2}},
$$
$$
  \tcP_i = \cP_i \qquad \text{for } i = 4, \dots, R,
$$
$$
  \tcP_{R+1} = \frac{1}{\fa} \tq_{R-2}\frac{\partial}{\partial \tq_{R-2}}, \quad \tcP_{R+2} = -\frac{1}{\fa}\tq_{R-2}\frac{\partial}{\partial \tq_{R-2}}.
$$
Therefore, for any function $G=G(q_1, \dots, q_{R-3})$ that only depends on the first $R-3$ variables, we have
\begin{equation}\label{eqn:PiActionSame}
  \tcP_iG(\tq_1, \dots, \tq_{R-3}) = \cP_iG(q_1, \dots, q_{R-3}) \qquad \text{for } i = 1, \dots, R,
\end{equation}
\begin{equation}\label{eqn:ExtraPiVanish}
  \tcP_{R+1}G = \tcP_{R+2}G  = 0.
\end{equation}

Now we consider the operator $\tcP_{\tbeta}$ corresponding to some $\tbeta \in \tbL$. If $\tbeta \in \bL$, observe that $\inner{\tD_i, \tbeta} = \inner{D_i, \tbeta}$ for $i = 1, \dots, R$ and $\inner{\tD_{R+1}, \tbeta} = -\inner{\tD_{R+2}, \tbeta} = 0$. This together with \eqref{eqn:PiActionSame} implies that
$$
  \tcP_{\tbeta}F(\tq_1, \dots, \tq_{R-3}) = \cP_{\tbeta}F(q_1, \dots, q_{R-3}),
$$
which is 0 by assumption. On the other hand, if $\tbeta \not \in \bL$, then $\inner{\tD_{R+1}, \tbeta} = -\inner{D_{R+2}, \tbeta} \neq 0$. It follows that one of the two products in $\tcP_{\tbeta}$ contains $\tcP_{R+1}$ as a multiplicative factor, and the other contains $\tcP_{R+2}$ as a multiplicative factor. Then \eqref{eqn:ExtraPiVanish} implies that $\tcP_{\tbeta}F = 0$.
\end{proof}

Given Proposition \ref{prop:SolnSubspace}, our goal is to understand the additional solutions to the extended system $\tcP$ that do not come from solutions to $\cP$. By Iritani \cite{Iritani09}, we have the following characterization of the dimensions of solution spaces (over $\bC$).

\begin{theorem}[\cite{Iritani09}]\label{thm:PFSolnDimension}
The dimension of the solution space to the system $\cP$ is
$$
  \dim_{\bC} H^*_{\CR}(\cX; \bC) = \Vol(\Delta)
$$
and the dimension of the solution space to the extended system $\tcP$ is
$$
  \dim_{\bC} H^*_{\CR}(\tcX; \bC) = \Vol(\tDelta).
$$
\end{theorem}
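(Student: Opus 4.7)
The plan is to prove the two dimension statements in parallel, since the arguments for $\cX$ and $\tcX$ are entirely analogous, and to split each into a combinatorial dimension count for Chen-Ruan cohomology and an analytic/D-module identification of the Picard-Fuchs solution space with that cohomology.

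First I would establish the rightmost equalities $\dim_{\bC} H^*_{\CR}(\cX; \bC) = \Vol(\Delta)$ and $\dim_{\bC} H^*_{\CR}(\tcX; \bC) = \Vol(\tDelta)$ purely combinatorially. Decomposing
\[
  H^*_{\CR}(\cX; \bC) = \bigoplus_{j \in \Box(\cX)} H^*(\cX_j; \bC),
\]
and using the fact that for a semi-projective toric Calabi-Yau orbifold each twisted sector $\cX_j$ is itself a toric (sub)orbifold with cohomology controlled by the star of the minimal cone containing $j$, I would reorganize the dimensional sum by maximal cones. Each $\sigma \in \Sigma(3)$ contributes exactly $|G_\sigma|$ basis elements (one for each $v \in \Box(\sigma)$, packaged together with the cohomology of the fixed stratum), so
\[
  \dim_{\bC} H^*_{\CR}(\cX; \bC) = \sum_{\sigma \in \Sigma(3)} |G_\sigma| = \Vol(\Delta),
\]
the last equality being the standard identity that the normalized volume of a simplex spanned by lattice vectors $b_i$ equals the order of the quotient group $N/\sum \bZ b_i$. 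The identical argument with $\tSi(4)$ and $\tDelta$ gives $\dim_{\bC} H^*_{\CR}(\tcX; \bC) = \Vol(\tDelta)$.

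For the leftmost equalities, I would invoke Iritani's mirror theorem for semi-projective toric Calabi-Yau orbifolds \cite{Iritani09}. The system $\cP$ is a reduction of the GKZ hypergeometric system attached to the stacky data $(N, \alpha)$ at the Calabi-Yau parameter, and the components of the $H^*_{\CR}(\cX;\bC)$-valued $I$-function $I_{\cX}(q,z)$ (expanded against a basis of Chen-Ruan cohomology, with each basis element giving one scalar solution) provide explicit solutions to $\cP$. Iritani shows these components span the full solution space, which in particular pins down its dimension as $\dim_{\bC} H^*_{\CR}(\cX;\bC)$; the same result applied to $\tcX$ handles $\tcP$.

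The main substantive point, and the one I would expect to be the real obstacle if one were to write a self-contained proof, is the spanning/linear-independence assertion: one must show that the scalar components of $I_{\cX}$ (and $I_{\tcX}$) in a Chen-Ruan basis are linearly independent and exhaust the solution space. The standard route, which I would follow, is to pass to the associated GKZ $D$-module on the Kähler moduli, observe that its holonomic rank equals the normalized volume of the Newton polytope by the classical result of Gelfand-Kapranov-Zelevinsky, match this count with the Chen-Ruan dimension above, and then use Iritani's $\bGa$-integral structure to realize the $I$-function components as a full flat basis of solutions. With these ingredients in place, both equalities in the statement follow at once.
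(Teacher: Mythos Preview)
The paper does not supply its own proof of this statement: it is recorded as a theorem attributed to \cite{Iritani09} and used as a black box, with the only follow-up being the computation of the difference $\Vol(\tDelta)-\Vol(\Delta)$ via \eqref{eqn:PolyVolSum}. Your sketch is therefore not competing with any argument in the paper; it is an outline of the proof in the cited reference, and it is broadly correct.

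A couple of comments on the details. Your combinatorial count is right but the justification is slightly loose: the clean way to see $\dim_{\bC} H^*_{\CR}(\cX;\bC) = \sum_{\sigma\in\Sigma(3)}|G_\sigma|$ is to note that for each $j\in\Box(\cX)$ the sector $\cX_j$ is a semi-projective toric orbifold whose cohomology has dimension equal to the number of maximal cones $\sigma$ with $j\in\Box(\sigma)$, and then interchange the order of summation; your phrasing ``each $\sigma$ contributes exactly $|G_\sigma|$ basis elements'' is the outcome of this, not the starting point. On the analytic side, the holonomic rank equals $\Vol(\Delta)$ statement is the GKZ/Adolphson rank theorem (at the Calabi-Yau, hence nonresonant, parameter), and Iritani's contribution is to match this with Chen-Ruan cohomology via the $I$-function; your description of this is accurate. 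So your proposal is a faithful reconstruction of the cited result, which is all one can ask here.
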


By \eqref{eqn:PolyVolSum}, the difference in the dimensions of solutions spaces to $\cP$ and $\tcP$ is
\begin{equation}\label{eqn:DimDiff}
  \Vol(\tDelta) - \Vol(\Delta) = \Vol(\Delta_0) = \sum_{s=1}^S |G_{\tsi^s}|.
\end{equation}

\subsection{Solutions as coefficients of non-equivariant \texorpdfstring{$I$}{I}-functions}

To characterize the additional solutions to the extended system $\tcP$, we use the description due to Givental \cite{Givental98} of the solutions to Picard-Fuchs systems as coefficients of the non-equivariant small $I$-functions
$$
  I_{\cX}(q, z), \qquad I_{\tcX}(\tq, z)
$$
of $\cX$ and $\tcX$. (See also \cite[Lemma 4.6]{Iritani09}; see \cite[Section 3.9--3.11]{FLZ20} for a discussion for toric Calabi-Yau 3-orbifolds.) The $I$-functions take value in $H^*_{\CR}(\cX; \bQ)$, $H^*_{\CR}(\tcX; \bQ)$ respectively. Under the inclusion $\iota: \cX \to \tcX$, we have
\begin{equation}\label{eqn:IPullback}
  \iota^*\left(I_{\tcX}(\tq, z)\right) = I_{\cX}(q, z)
\end{equation}
with $\tq_a = q_a$, $a = 1, \dots, R-3$. (Here, all terms involving $\tq_{R-2}$ pull back to 0.) To extract coefficients of the $I$-functions, we use the perfect pairing \eqref{eqn:CpctPairing}. There is a pushforward map
$$
  \iota_*: H^*_{\CR,c}(\cX; \bC) \to H^{*+2}_{\CR,c}(\tcX; \bC), \qquad \gamma_c \mapsto \gamma_c\tcD_{R+2}.
$$
For any $\tgamma \in H^*_{\CR}(\tcX; \bC)$ and $\gamma_c \in H^*_{\CR,c}(\cX; \bC)$, we have
\begin{equation}\label{eqn:CohPairingEqual}
  \left(\iota^*(\tgamma), \gamma_c \right)_{\cX} = \left(\tgamma, \iota_*(\gamma_c) \right)_{\tcX} = \left(\tgamma, \gamma_c\tcD_{R+2} \right)_{\tcX}.
\end{equation}

\begin{theorem}[\cite{Givental98}]\label{thm:SolnICoeff}
A basis of the solution space to the Picard-Fuchs system $\cP$ consists of
$$
  [z^{-\deg(\gamma_c)/2}]\left(I_{\cX}(q, z), \gamma_c \right)_{\cX}
$$
where $\gamma_c$ ranges through any homogeneous basis of $H^*_{\CR,c}(\cX; \bC)$. A basis of the solution space to the Picard-Fuchs system $\tcP$ consists of
$$
  [z^{-\deg(\tgamma_c)/2}]\left(I_{\tcX}(\tq, z), \tgamma_c \right)_{\tcX}
$$
where $\tgamma_c$ ranges through any homogeneous basis of $H^*_{\CR,c}(\tcX; \bC)$.
\end{theorem}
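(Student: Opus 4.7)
The plan is to establish the statement in three steps, handling $\cX$ in detail since the argument for $\tcX$ is entirely parallel. First, I would verify that the cohomology-valued $I$-function $I_{\cX}(q,z)$ itself solves the Picard-Fuchs system, i.e., $\cP_\beta I_{\cX}(q,z) = 0$ as a formal cohomology-valued series for every $\beta \in \bL$. This is a direct term-by-term computation on the hypergeometric series: using Lemma \ref{lem:Charges} we can write $\cP_i = \sum_a m_i^{(a)} q_a \partial_{q_a}$, and this operator acts on the exponential prefactor $e^{\frac{1}{z}\sum_a u_a \log q_a}$ by multiplication by $\cD_i/z$ and on each monomial $q^\delta$ by multiplication by $\inner{D_i,\delta}$, which together shift the indices in the Pochhammer-style products. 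The two halves of $\cP_\beta$ then telescope against one another to yield the claimed vanishing. This computation is standard (see \cite[Lemma 4.6]{Iritani09} and \cite[Sections 3.9--3.11]{FLZ20} for the toric Calabi-Yau 3-orbifold setting).

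Second, since $\cP_\beta$ differentiates only in the $q$-variables, it commutes with the $\bC$-linear operations of pairing against a fixed class $\gamma_c$ via \eqref{eqn:CpctPairing} and of extracting a fixed $z$-coefficient. Thus every scalar function $[z^{-\deg(\gamma_c)/2}](I_{\cX}(q,z),\gamma_c)_{\cX}$ is a solution to $\cP$. Third, to check that the family forms a basis, I count dimensions and then establish linear independence. By Theorem \ref{thm:PFSolnDimension} and the perfect pairing \eqref{eqn:CpctPairing}, the solution space has dimension $\dim_{\bC} H^*_{\CR}(\cX; \bC) = \dim_{\bC} H^*_{\CR,c}(\cX; \bC)$, which matches the cardinality of any homogeneous basis of $H^*_{\CR,c}(\cX;\bC)$. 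Linear independence is obtained by examining the asymptotics as $q \to 0$: the exponential prefactor $e^{\frac{1}{z}\sum_a u_a \log q_a}$ contributes polynomials in $\log q_a$ with untwisted-sector coefficients, while nonzero $\beta \in \bK_{\eff}(\cX)$ produce leading monomials $q^\beta$ paired with the twisted-sector units $\one_{v(\beta)}$; after pairing with a homogeneous $\gamma_c$ and taking the correct $z$-coefficient, these leading terms collectively realize the basis of $H^*_{\CR,c}(\cX; \bC)^\vee$ dual to $\{\gamma_c\}$.

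The main obstacle I expect is this last step, namely the asymptotic analysis disentangling contributions from different twisted sectors of $\cX$. The untwisted contributions reduce to the classical observation that monomials $(\log q_a)^k$ pair in a linearly independent way with the divisor-class components of the basis. The twisted contributions, from $v(\beta) \neq \vzero$, require tracking the leading $q^\beta$-monomial in each Box sector and exploiting the $\one_{v(\beta)}$-dependence to separate different elements of $\Box(\cX)$. A triangular-matrix argument, after ordering $\Box(\cX)$ compatibly with the stratification of $\bK_{\eff}(\cX)$ via $\beta \mapsto v(\beta)$ and within each sector ordering by the cohomological degree of $\gamma_c$, then yields independence. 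The argument for $\tcX$ follows by replacing $(\cX, \bL, \cP, H_a, D_i)$ with $(\tcX, \tbL, \tcP, \tH_a, \tD_i)$ throughout.
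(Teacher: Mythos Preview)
The paper does not provide its own proof of this theorem: it is stated as a known result attributed to Givental \cite{Givental98}, with pointers to \cite[Lemma 4.6]{Iritani09} and \cite[Sections 3.9--3.11]{FLZ20} for the orbifold setting. Your proposal is a reasonable outline of how such a proof goes and cites exactly the same sources the paper does, so there is nothing to compare against.

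That said, a brief comment on your sketch: steps one and two are fine and standard. In step three, the linear-independence argument is the genuinely nontrivial part, and your description is a bit loose. The cleanest way to organize it is to observe that, after pairing with a homogeneous basis $\{\gamma_c\}$ and extracting the appropriate $z$-power, the leading behaviour as $q \to 0$ of the resulting functions is governed by the matrix of pairings $(\one_j \cdot (\text{polynomial in } u_a), \gamma_c)_{\cX}$, which is nondegenerate precisely because \eqref{eqn:CpctPairing} is perfect. The ordering you propose (by Box sector, then by cohomological degree) does make this matrix block-triangular with invertible diagonal blocks. You should also be careful that the ``leading monomial $q^\beta$'' in a given twisted sector is well-defined: this uses that $\bK_{\eff}(\cX)$ meets each coset of $\bL$ in a set with a unique minimal element with respect to $\tNef(\cX)$, which in turn relies on the semi-projectivity assumption.
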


See Notation \ref{not:ZCoefficient} for the notation $[z^{-n}]$. We now consider the expansion of the $I$-functions in powers of $z^{-1}$.

\subsubsection{Zeroth-order terms}
The zeroth-order terms in the expansion are
$$
  [z^0]I_{\cX}(q, z) = 1 = \left(I_{\cX}(q, z), [\pt] \right)_{\cX}, \qquad [z^0]I_{\tcX}(\tq, z) = 1 = \left(I_{\tcX}(\tq, z), [\pt] \right)_{\tcX}.
$$

\subsubsection{First-order terms}
Under the bases $\{u_1, \dots, u_{R-3}\}$, $\{\tu_1, \dots, \tu_{R-2}\}$ defined in Section \ref{sect:Basis}, the first-order terms in the expansion can be written as
$$
  [z^{-1}]I_{\cX}(q, z) = \sum_{a = 1}^{R-3} \tau_a(q) u_a, \qquad 
  [z^{-1}]I_{\tcX}(\tq, z) = \sum_{a = 1}^{R-2} \ttau_a(\tq) \tu_a.
$$
The coefficients $\tau_1(q), \dots, \tau_{R-3}(q)$ are solutions to $\cP$ and $\ttau_1(\tq), \dots, \ttau_{R-2}(\tq)$ are solutions to $\tcP$. (To obtain these solutions from the pairings as in Theorem \ref{thm:SolnICoeff}, we may pair the $I$-functions with the dual bases of $H^4_{\CR,c}(\cX;\bC)$, $H^6_{\CR,c}(\tcX;\bC)$ respectively. These coefficients specify the \emph{(closed) mirror maps} under which the $I$-functions are identified with the $J$-functions from Gromov-Witten theory. They are explicitly calculated in \cite[Sections 6.4, 6.5]{LY22}. In particular, \cite[Proposition 6.14]{LY22} gives the identification 
$$
  \tau_a(q) = \ttau_a(\tq), \qquad a = 1, \dots, R-3
$$
of solutions, and further identifies the additional solution $\ttau_{R-2}(\tq)$ to $\tcP$ with the \emph{open mirror map} under which the $B$-model disk function is identified with the $A$-model disk function. The open mirror map is explicitly calculated in \cite[Section 6.5]{LY22}. We will return to this point in Proposition \ref{prop:OpenMirrorMapSoln} below.

\subsubsection{Second-order terms}
Now we turn to the second-order terms in the expansion. Note that there are no higher-order terms since $H^{\ge 6}_{\CR}(\cX;\bC) = H^{\ge 6}_{\CR}(\tcX;\bC) = 0$. For the remainder of this section, let $A :=  \dim_{\bC} H^4_{\CR}(\cX;\bC)$. Let $\{u_{1,c}, \dots, u_{A,c}\}$ be a basis of $H^2_{\CR,c}(\cX;\bC)$. Then the coefficients of the dual basis of $H^4_{\CR}(\cX;\bC)$ in $I_{\cX}(q, z)$ are
$$
  \left(I_{\cX}(q, z), u_{a,c} \right)_{\cX}, \qquad a = 1, \dots, A,
$$
which are the remaining solutions to $\cP$.

From the above basis, we can construct the following basis of $H^4_{\CR,c}(\tcX;\bC) \cong H^4_{\CR}(\tcX;\bC)$:
\begin{equation}\label{eqn:CompactH4Basis}
\begin{aligned}
  &\{\iota_*(u_{1,c}), \dots, \iota_*(u_{A,c})\} \sqcup \{\tcD_{i_2(\tsi^s)}\tcD_{R+2} : s = 2, \dots, S \} \\
  & \sqcup \{\tcD_{R+2} \one_j : j \in \Box(\tau^s), s=1, \dots, S, j \neq \vzero\}
  \sqcup \{\one_j : j \in \Box(\tcX) \setminus \Box(\cX)\}.
\end{aligned}
\end{equation}
Here, observe that:
\begin{itemize}
  \item $\{i_2(\tsi^2), \dots, i_2(\tsi^S)\}$ is the set of indices $i$ such that the divisor $\cV(\rho_i)$ of $\cX$ is non-compact but codimension-2 $\tT$-invariant closed substack $\cV(\iota(\rho_i))$ of $\tcX$ is compact. In terms of fans, $\rho_i$ lies on the boundary of the support of $\Sigma$ while $\trho_i$ lies in the interior of the support of $\tSi$ (cf. Figure \ref{fig:OrderingExtraCones}). 
  
  \item $\bigcup_{s=1}^S \Box(\tau^s) \setminus \{\vzero\}$ is the set of age-1 elements $j \in \Box(\cX) \subseteq \Box(\tcX)$ such that if $j \in \Box(\tau^s)$, then the line $\fl_{\tau^s}$ in the twisted sector $\cX_j$ is non-compact but the line $\fl_{\iota(\tau^s)}$ in $\tcX_j$ is compact.
  
  \item Recall from Lemma \ref{lem:ExtraStab} that any element $j \in \Box(\tcX) \setminus \Box(\cX)$ has age 2 and thus $\one_j$ is a degree-4 class.
\end{itemize}
By \eqref{eqn:IPullback} and \eqref{eqn:CohPairingEqual},
$$
  \left(I_{\tcX}(\tq, z), \iota_*(u_{a,c}) \right)_{\tcX} = \left(I_{\cX}(q, z), u_{a,c} \right)_{\cX}
$$
for $a = 1, \dots, A$ are the coefficients of $I_{\tcX}(\tq, z)$ corresponding to those of $I_{\cX}(q, z)$. The additional solutions result from pairing $I_{\tcX}$ with the additional basis elements and will be described in Proposition \ref{prop:DiskFnSoln} below. We check that the number of such solutions is equal to
$$
  \left| \bigcup_{s=1}^S \Box(\tau^s) \setminus \{\vzero\} \right| + \left| \Box(\tcX) \setminus \Box(\cX) \right| = \sum_{s=1}^S |G_{\tsi^s}| - 1,
$$
which is consistent with the discussion in Section \ref{sect:PicardFuchsDef} and specifically \eqref{eqn:DimDiff} above.

\subsection{Additional solutions from open strings}
In this subsection, we characterize the additional solutions to the extended Picard-Fuchs system $\tcP$ by the open geometry. First, as mentioned above, \cite[Proposition 6.14]{LY22} directly translates into the following result.

\begin{proposition}\label{prop:OpenMirrorMapSoln}
The additional solution $\ttau_{R-2}(\tq)$ is given by the open mirror map
$$
  \log \sX(\tq_1, \dots, \tq_{R-3}, \tq_{R-2})
$$
of $(\cX, \cL, f)$. This solution has form
$$
  \log \tq_{R-2} + \text{power series in }\tq_1, \dots, \tq_{R-3}.
$$
\end{proposition}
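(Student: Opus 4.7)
The proof is a direct recontextualization of \cite[Proposition 6.14]{LY22} in the language of solutions to the Picard-Fuchs system $\tcP$. I outline the plan.

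First, I would make precise the identification of $\ttau_{R-2}(\tq)$ with a coefficient of the non-equivariant $I$-function. By the expansion
\[
[z^{-1}]I_{\tcX}(\tq, z) = \sum_{a=1}^{R-2} \ttau_a(\tq)\, \tu_a
\]
and the fact that $\{\tu_1, \dots, \tu_{R-2}\}$ is a $\bQ$-basis of $H^2_{\CR}(\tcX; \bQ)$, the coefficient $\ttau_{R-2}(\tq)$ is extracted by pairing $I_{\tcX}$ with the element of the dual basis of $H^4_{\CR,c}(\tcX; \bC)$ corresponding to $\tu_{R-2}$, in the sense of Theorem \ref{thm:SolnICoeff}. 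This expresses $\ttau_{R-2}(\tq)$ as an explicit function of $\tq$ computable from the hypergeometric series.

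Next, I would invoke \cite[Proposition 6.14]{LY22}, which performs this computation and identifies the resulting function with the open mirror map $\log \sX(q, x)$ of $(\cX, \cL, f)$ of \cite{FL13, FLT12} under the change of variables $\tq_a = q_a$ for $a = 1, \dots, R-3$ and $\tq_{R-2} = x$. This identification is the heart of the statement; its proof in \cite{LY22} rests on direct manipulation of the hypergeometric summand, with Lemma \ref{lem:Charges} as the essential combinatorial input relating the closed-geometry charges of $\tcX$ to the open-geometry data of $(\cX, \cL, f)$.

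Finally, the form $\log \tq_{R-2} + \text{(power series in } \tq_1, \dots, \tq_{R-3})$ can be read directly off the explicit formula. The leading $\log \tq_{R-2}$ piece comes from the prefactor $e^{\frac{1}{z}\sum_a \tu_a \log \tq_a}$. For the power-series remainder contributed by the sum over $\tbeta \in \bK_\eff(\tcX)$, the absence of $\tq_{R-2}$-dependence is the manifestation of the following fact: by Lemma \ref{lem:Charges}(i) the component along $\tu_{R-2}$ in the untwisted power-series part is isolated by classes with $\inner{\tD_{R+1}, \tbeta} = 0$, equivalently $\tbeta \in \bL \subset \tbL$, and since $\inner{\tH_{R-2}, \tbeta} = \fa\inner{\tD_{R+1}, \tbeta} = 0$ for such $\tbeta$, the corresponding monomials $\tq^{\tbeta}$ involve only $\tq_1, \dots, \tq_{R-3}$.

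The main obstacle is the bookkeeping in the second step --- extracting the $\tu_{R-2}$-component from the hypergeometric sum and matching it with the B-model open mirror map computed via disk periods --- but this is precisely the content of \cite[Section 6.5]{LY22}, so in the present paper the argument reduces to citation and translation.
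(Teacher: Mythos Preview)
Your proposal is correct and matches the paper exactly: the paper's proof of this proposition is nothing more than the one-line observation that \cite[Proposition~6.14]{LY22} ``directly translates into'' the stated result, and your plan is precisely this citation together with the supporting context from Theorem~\ref{thm:SolnICoeff} and the explicit computations in \cite[Section~6.5]{LY22}.
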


Now, we describe the additional solutions resulting from pairing the $I$-function of $\tcX$ with degree-4 classes, using our open/closed correspondence for hypergeometric functions (Theorem \ref{thm:HyperCorr}). Here, we focus on solutions that are related to the distinguished cone $\tsi_0$ determined by the brane $\cL$. To describe the solutions related to another cone $\tsi^s \in \tSi(4) \setminus \iota(\Sigma(3))$ and thereby recover the full solution space, it suffices to adapt the discussion below to an outer brane $\cL^s$ that corresponds to $\tau^s$ and is framed by $f_s \in \bQ$ chosen such that the same corresponding closed geometry $\tcX$ arises; see Remark \ref{rem:MultiPhases}.

For $\tsi_0$, we show that when the ray $\trho_2$ or $\trho_3$ is shared with another cone in $\tSi(4) \setminus \iota(\Sigma(3))$, which happens for certain framings $f$, the power series part of the corresponding solution has a description involving the B-model disk function $W^{\cX, (\cL, f)}$ in the untwisted sector. Moreover, for any non-trivial $j \in \Box(\tsi_0)$, we show that the corresponding solution can be described in terms of $W^{\cX, (\cL, f)}$ in the corresponding twisted sector.

We introduce some notations. For the rest of this section, let $s \in \{1, \dots, S\}$ be the order of $\tsi_0$ in the list \eqref{eqn:ConeList}, i.e. $\tsi_0 = \tsi^s$. When $s>1$, $\tsi_0$ shares the ray $\trho_2$ with $\tsi^{s-1}$ (i.e. $i_3(\tsi^{s-1}) = 2$) and $\tcD_2\tcD_{R+2}$ is an element of the basis \eqref{eqn:CompactH4Basis} of $H^4_{\CR,c}(\tcX;\bC)$. In this case we define
$$
  \tgamma_1^- :=  \frac{\tcD_2^{\tT'}\tcD_{i_2(\tsi^{s-1})}^{\tT'}\tcD_{R+1}^{\tT'}}{\tbw(\delta_3(\tsi^{s-1}), \tsi^{s-1})} \in H^4_{\CR, \tT'}(\tcX; \bQ),
$$
which is the analog of the class $\tgamma_1$ \eqref{eqn:ExtraInsertionOuter} in the untwisted sector $\tk = 1$ but defined for the cone $\tsi^{s-1}$. Let $(\cL^{s-1}, f_{s-1})$ be a framed outer brane that corresponds to $\tau^{s-1}$ and gives rise to the same closed geometry $\tcX$. Then applying Theorem \ref{thm:HyperCorr} gives
$$
    W^{\cX, (\cL^{s-1}, f_{s-1})}_1(q, x) = [z^{-2}] \left(I_{\tcX}^{\tT'}(\tq, z), \tgamma_1^- \right)_{\tcX}^{\tT'} \bigg|_{\su_4 = 0, \su_2 - f\su_1 = 0}.
$$
Similarly, when $s<S$, $\tsi_0$ shares the ray $\trho_3$ with $\tsi^{s+1}$ (i.e. $i_2(\tsi^{s+1}) = 3$) and $\tcD_3\tcD_{R+2}$ is an element of the basis \eqref{eqn:CompactH4Basis}. In this case we define
$$
  \tgamma_1^+ :=  \frac{\tcD_3^{\tT'}\tcD_{i_3(\tsi^{s+1})}^{\tT'}\tcD_{R+1}^{\tT'}}{\tbw(\delta_3(\tsi^{s+1}), \tsi^{s+1})} \in H^4_{\CR, \tT'}(\tcX; \bQ)
$$
which is the analog of the class $\tgamma_1$ \eqref{eqn:ExtraInsertionOuter} in the untwisted sector but defined for the cone $\tsi^{s+1}$. Let $(\cL^{s+1}, f_{s+1})$ be a framed outer brane that corresponds to $\tau^{s+1}$ and gives rise to the same closed geometry $\tcX$. Then applying Theorem \ref{thm:HyperCorr} gives
$$
    W^{\cX, (\cL^{s+1}, f_{s+1})}_1(q, x) = [z^{-2}] \left(I_{\tcX}^{\tT'}(\tq, z), \tgamma_1^+ \right)_{\tcX}^{\tT'} \bigg|_{\su_4 = 0, \su_2 - f\su_1 = 0}.
$$

\begin{proposition}\label{prop:DiskFnSoln}
We have the following description of the additional solutions to $\tcP$:
\begin{enumerate}[label=(\roman*)]
    \item When $s>1$, we have
    \begin{align*}
      [z^{-2}]\left(I_{\tcX}(\tq, z), \tcD_2\tcD_{R+2} \right)_{\tcX} = & c^-(\log \tq_{R-2})^2 + (\log \tq_{R-2})G_1^-(\tq_1, \dots, \tq_{R-3}) + G_2^-(\tq_1, \dots, \tq_{R-3})\\
      & - W^{\cX, (\cL, f)}_1(\tq_1, \dots, \tq_{R-3}, \tq_{R-2})
      + W^{\cX, (\cL^{s-1}, f_{s-1})}_1(\tq_1, \dots, \tq_{R-3}, \tq_{R-2})
    \end{align*}
    for some constant $c^- \in \bQ$ and functions $G_1^-, G_2^-$ in $\tq_1, \dots, \tq_{R-3}$. 

    \item When $s<S$, we have
    \begin{align*}
      [z^{-2}]\left(I_{\tcX}(\tq, z), \tcD_3\tcD_{R+2} \right)_{\tcX} = & c^+(\log \tq_{R-2})^2 + (\log \tq_{R-2})G_1^+(\tq_1, \dots, \tq_{R-3}) + G_2^+(\tq_1, \dots, \tq_{R-3})\\
      & + W^{\cX, (\cL, f)}_1(\tq_1, \dots, \tq_{R-3}, \tq_{R-2})
      - W^{\cX, (\cL^{s+1}, f_{s+1})}_1(\tq_1, \dots, \tq_{R-3}, \tq_{R-2})
    \end{align*}
    for some constant $c^+ \in \bQ$ and functions $G_1^+, G_2^+$ in $\tq_1, \dots, \tq_{R-3}$. 

    \item For any $j \in \Box(\tau_0)$, $j \neq \vzero$, we have
    $$
      [z^{-2}]\left(I_{\tcX}(\tq, z), \tcD_{R+2} \one_j  \right)_{\tcX} = G_j(\tq_1, \dots, \tq_{R-3})- W^{\cX, (\cL, f)}_{\tk}(\tq_1, \dots, \tq_{R-3}, \tq_{R-2})
    $$
    for some function $G_j$ in $\tq_1, \dots, \tq_{R-3}$, where $\tk^{-1} \in G_{\tau_0}$ corresponds to $j$.

    \item For any $j \in \Box(\tsi_0) \setminus \Box(\tau_0)$, we have
    $$
      [z^{-2}]\left(I_{\tcX}(\tq, z), \one_j \right)_{\tcX} = G_j(\tq_1, \dots, \tq_{R-3}) + W^{\cX, (\cL, f)}_{\tk}(\tq_1, \dots, \tq_{R-3}, \tq_{R-2})
    $$
    for some function $G_j$ in $\tq_1, \dots, \tq_{R-3}$, where $\tk^{-1} \in G_{\tsi_0} \setminus G_{\tau_0}$ corresponds to $j$.

\end{enumerate}
\end{proposition}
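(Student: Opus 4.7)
The plan is to apply Theorem \ref{thm:SolnICoeff}, which presents each additional solution to $\tcP$ as the $z^{-2}$-coefficient of a non-equivariant pairing $\left(I_{\tcX}(\tq, z), \tgamma_c \right)_{\tcX}$ against a basis element of $H^4_{\CR, c}(\tcX; \bC)$, and then to match the resulting pairings with the equivariant pairings of Theorem \ref{thm:HyperCorr} for the brane $(\cL, f)$ and, where relevant, for the neighboring branes of Remark \ref{rem:MultiPhases}. The central algebraic input is the divisor identity
\begin{equation*}
  \tcD_{R+1}^{\tT'} + \tcD_{R+2}^{\tT'} = \su_4
\end{equation*}
in $H^2_{\CR, \tT'}(\tcX; \bQ)$, coming from the character $u_4 \in \tM$ (whose only non-zero pairings are $\inner{u_4, \tb_{R+1}} = \inner{u_4, \tb_{R+2}} = 1$). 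Non-equivariantly this reads $\tcD_{R+2} = -\tcD_{R+1}$, so $\tcD\,\tcD_{R+2} = -\tcD\,\tcD_{R+1}$ for any class $\tcD$ and any $\one_j$. I will also use the following compatibility: for the pairings in Theorem \ref{thm:HyperCorr}, the $T_f$-restriction $\su_4 = 0, \su_2 = f\su_1$ yields a $\tq$-power-series in $\bQ[\![\tq]\!]$ independent of the remaining parameter $\su_1$, so it agrees with the non-equivariant limit $\su_1 = \su_2 = \su_4 = 0$.

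Parts (iii) and (iv) follow essentially by inspection. For part (iv), the equivariant lift $\one_j^{\tT'}$ of $\one_j$ (for age-$2$ $j$) equals $\tgamma_{\tk}$ of \eqref{eqn:ExtraInsertionOuter} with $\tk^{-1} = j$, so Theorem \ref{thm:HyperCorr} gives $[z^{-2}](I_{\tcX}, \one_j)_{\tcX} = +W^{\cX, (\cL, f)}_{\tk}$ in the non-equivariant limit. For part (iii), $\tcD_{R+2}\one_j = -\tcD_{R+1}\one_j$ and $\tcD_{R+1}\one_j$ is the non-equivariant limit of $\tgamma_{\tk} = \tcD_{R+1}^{\tT'}\one_{\tk^{-1}}$, so Theorem \ref{thm:HyperCorr} gives $[z^{-2}](I_{\tcX}, \tcD_{R+2}\one_j)_{\tcX} = -W^{\cX, (\cL, f)}_{\tk}$. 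In each case the residual $G_j(\tq_1, \dots, \tq_{R-3})$ absorbs any discrepancy between the two limits; it is $\tq_{R-2}$-independent because the equivariant pairing $(I^{\tT'}, \one_j^{\tT'})^{\tT'}$ has no pole in $\su_4$, its localization at the two $\tT'$-fixed points on the $1$-dimensional twisted sector $\tcX_j = \fl_{\iota(\tau_0)}$ involving only $\su_1$-valued tangent weights.

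For parts (i) and (ii), I use $\tcD_a\tcD_{R+2} = -\tcD_a\tcD_{R+1}$ for $a = 2, 3$ and analyze $(I^{\tT'}, \tcD_a^{\tT'}\tcD_{R+1}^{\tT'})^{\tT'}$ via localization. In the untwisted sector, only fixed points $\fp_{\tsi}$ with $\trho_a, \trho_{R+1} \subset \tsi$ contribute; since $\trho_{R+1}$ lies only in the additional cones $\tSi(4) \setminus \iota(\Sigma(3))$ and each boundary vertex of $\Delta$ appears in exactly two consecutive cones in the list \eqref{eqn:ConeList}, the relevant cones are $\tsi_0 = \tsi^s$ together with $\tsi^{s-1}$ (for $a = 2$ when $s > 1$) or $\tsi^{s+1}$ (for $a = 3$ when $s < S$). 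The $\fp_{\tsi_0}$-contribution matches the $\tgamma_1$-pairing on $T_f$: at $\fp_{\tsi_0}$ both tangent weights $\chi_2 = \frac{1}{\fm}(\su_2 - f\su_1)$ and $\chi_3 = \frac{f}{\fm}\su_1 - \frac{1}{\fm}\su_2 - \su_4$ vanish on $T_f$, and the double pole of the bare localization is cancelled by matching factors in $\iota_{\tsi_0}^*(I^{\tT'})$, producing $\pm W^{\cX, (\cL, f)}_1$ with opposite signs for $a = 2$ versus $a = 3$ (from the regularized ratio $\chi_a\chi_{R+1}/(\chi_2\chi_3\chi_{R+1}\chi_{R+2})$ on $T_f$). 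The $\fp_{\tsi^{s\pm 1}}$-contribution similarly matches the $\tgamma_1^\pm$-pairing via the analogous weight computation at the neighboring cone (using the $T_f$-compatible framings $f_{s\pm 1}$ of Remark \ref{rem:MultiPhases}), producing $\mp W^{\cX, (\cL^{s\pm 1}, f_{s\pm 1})}_1$. Combined with the overall sign from $\tcD_a\tcD_{R+2} = -\tcD_a\tcD_{R+1}$, this yields the stated combinations. The polynomial part $c^\pm(\log \tq_{R-2})^2 + (\log \tq_{R-2})G_1^\pm + G_2^\pm$ collects the classical contributions coming from the prefactor $e^{\tu_{R-2}^{\tT'}\log \tq_{R-2}/z}$ in $I_{\tcX}^{\tT'}$ (recall $\tu_{R-2} = \fa\tcD_{R+1}$ non-equivariantly) paired with $\tcD_a$, together with the twisted-sector residuals at $\fp_{\tsi_0}, \fp_{\tsi^{s\pm 1}}$, all of which are $\tq_{R-2}$-independent in the $z^{-2}$-coefficient.

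The main technical obstacle is the careful tracking of signs and weight cancellations. At $\fp_{\tsi_0}$, the opposite signs of the $a = 2$ and $a = 3$ contributions come from the regularized ratio $\chi_a\chi_{R+1}/(\chi_2\chi_3\chi_{R+1}\chi_{R+2})$ on $T_f$; at $\fp_{\tsi^{s\pm 1}}$, the agreement with the $\tgamma_1^\pm$-pairing requires repeating the $\chi_3$-type weight computation under a change of basis of $\tN$ adapted to the neighboring cone, and verifying that the ratio of tangent weights, including the denominator $\tbw(\delta_3(\tsi^{s\pm 1}), \tsi^{s\pm 1})$, restricts to $\pm 1$ on $T_f$. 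The resulting asymmetric sign pattern between parts (i) and (ii) reflects the reversal of orientation between the $i_2$ and $i_3$ indices in the counterclockwise ordering of \eqref{eqn:ConeList}.
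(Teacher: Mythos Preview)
Your approach is essentially the same as the paper's: lift to the $\tT'$-equivariant pairing, use the divisor relation $\tcD_{R+1}^{\tT'}+\tcD_{R+2}^{\tT'}=\su_4$, localize to the relevant cones in $\tSi(4)\setminus\iota(\Sigma(3))$, and invoke Theorem~\ref{thm:HyperCorr}. Two points deserve tightening.

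First, your ``compatibility'' shortcut (that the $T_f$-restriction agrees with the non-equivariant limit because the result is $\su_1$-free) is the right idea, but your justification for why the residual $G_j$ is $\tq_{R-2}$-independent is muddled: the twisted sector $\tcX_j$ for $j\in\Box(\tau_0)$ is the \emph{surface} $\cV(\tau_0)$, not the line $\fl_{\iota(\tau_0)}$, and ``$\su_1$-valued tangent weights'' is not a meaningful condition here. The paper instead splits the $I$-function sum into $\tbeta\in\bL_\bQ$ (automatically $\tq_{R-2}$-independent, hence absorbed into $G_j$) and $\tbeta\notin\bL_\bQ$, and only for the latter does it match the localization term-by-term with $-\tgamma_{\tk}$. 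This cleanly separates the two pieces without needing to argue about poles.

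Second, for (i) and (ii) your sign analysis via a ``regularized ratio $\chi_a\chi_{R+1}/(\chi_2\chi_3\chi_{R+1}\chi_{R+2})$'' is not an argument. The actual content is the pair of restriction identities the paper writes down: $\iota_{\tsi_0}^*(\tcD_2^{\tT'}\tcD_{R+2}^{\tT'})\big|_{\su_4=0}=-\iota_{\tsi_0}^*(\tgamma_1)\big|_{\su_4=0}$ and $\iota_{\tsi^{s-1}}^*(\tcD_2^{\tT'}\tcD_{R+2}^{\tT'})\big|_{\su_4=0}=+\iota_{\tsi^{s-1}}^*(\tgamma_1^-)\big|_{\su_4=0}$ (and the analogues for $a=3$). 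These follow by one-line weight computations once you observe that the denominator of $\tgamma_1$ is precisely $\iota_{\tsi_0}^*(\tcD_3^{\tT'})$ and that of $\tgamma_1^-$ is $\iota_{\tsi^{s-1}}^*(\tcD_2^{\tT'})$; you should state them explicitly rather than gesture at orientation reversal. Likewise, the paper does not attempt to identify the log-polynomial part beyond noting it comes from $\tbeta\in\bL$ and the exponential prefactor; your description of it is fine as a description but is not a proof either.
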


\begin{proof}
We first prove (i), and (ii) will follow from a similar proof. We consider $\left(I_{\tcX}(\tq, z), \tcD_2\tcD_{R+2} \right)_{\tcX}$ as the non-equivariant limit of the $\tT'$-equivariant Poincar\'e pairing
$$
  \left(I_{\tcX}^{\tT'}(\tq, z), \tcD_2^{\tT'}\tcD_{R+2}^{\tT'} \right)_{\tcX}^{\tT'},
$$
and we may consider the weight restriction $\su_4 = 0, \su_2 - f\su_1 = 0$. As we focus on the power series part of the solution, it suffices to consider the pairing of $\tcD_2^{\tT'}\tcD_{R+2}^{\tT'}$ with
$$
  \sum_{\tbeta \in \bK_{\eff}(\tcX)} \tq^{\tbeta} \prod_{i \in \{1, \dots, R', R+1, R+2\}} \frac{\prod_{m = \ceil{\inner{\tD_i, \tbeta}}}^\infty (\frac{\tcD_i^{\tT'}}{z} + \inner{\tD_i, \tbeta} - m)}{\prod_{m = 0}^\infty (\frac{\tcD_i^{\tT'}}{z} + \inner{\tD_i, \tbeta} - m)}
  \cdot \prod_{i = R'+1}^R \frac{\prod_{m = \ceil{\inner{\tD_i, \tbeta}}}^\infty (\inner{\tD_i, \tbeta} - m)}{\prod_{m = 0}^\infty (\inner{\tD_i, \tbeta} - m)} \frac{\one_{\tv(\tbeta)}}{z^{\age(\tv(\tbeta))}}.
$$
Let $I_{\tbeta}(\tq, z)$ denote the summand corresponding to $\tbeta$ in the above. Moreover, our focus is on the untwisted sector and on terms that depend on $\tq_{R-2}$. Since $\tq^{\tbeta}$ is independent of $\tq_{R-2}$ for any $\tbeta \in \bL$, it suffices to consider the pairing of $\tcD_2^{\tT'}\tcD_{R+2}^{\tT'}$ with each $I_{\tbeta}(\tq, z)$ for $\tbeta$ in
$$
  \{\tbeta \in \bK_{\eff}(\tcX) \setminus \bL : \tv(\tbeta) = \vzero\} = \bK_{\eff}(\tcX) \cap (\tbL \setminus \bL).
$$
Fix such a $\tbeta$. We may assume that the degree of $I_{\tbeta}(\tq, z)$ is 4 as the pairing is otherwise 0. Since $\inner{\tD_{R+1}, \tbeta} = -\inner{\tD_{R+2}, \tbeta} \neq 0$, $[z^{-2}]I_{\tbeta}(\tq, z)$ has form
$$
  c\tq^{\tbeta}\tcD_i^{\tT'}\tcD_{R+1}^{\tT'} \qquad \text{or} \qquad c\tq^{\tbeta}\tcD_i^{\tT'}\tcD_{R+2}^{\tT'}
$$
for some $i \in \{1, \dots, R'\}$ and $c \in \bQ$, $c \neq 0$. Note that for any $\tsi \in \iota(\Sigma(3))$, $\iota_{\tsi}^*(\tcD_{R+1}^{\tT'}) = 0$ and $\iota_{\tsi}^*(\tcD_{R+2}^{\tT'}) = \su_4$. Moreover, the only cones in $\tSi(4) \setminus \iota(\Sigma(3))$ that contain the ray $\trho_2$ are $\tsi_0$ and $\tsi^{s-1}$. It then follows that
\begin{align*}
\left(I_{\tbeta}(\tq, z), \tcD_2^{\tT'}\tcD_{R+2}^{\tT'} \right)_{\tcX}^{\tT'} \bigg|_{\su_4 = 0} &= \sum_{\tsi \in \tSi(4)} \frac{\iota_{\tsi}^*\left(\tcD_2^{\tT'}\tcD_{R+2}^{\tT'}I_{\tbeta}(\tq, z)\right)}{|G_{\tsi}|e_{\tT'}(T_{\fp_{\tsi}}\tcX)} \bigg|_{\su_4 = 0}\\
& =  \frac{\iota_{\tsi_0}^*\left(\tcD_2^{\tT'}\tcD_{R+2}^{\tT'}I_{\tbeta}(\tq, z)\right)}{|G_{\tsi_0}|e_{\tT'}(T_{\fp_{\tsi_0}}\tcX)} \bigg|_{\su_4 = 0} + \frac{\iota_{\tsi^{s-1}}^*\left(\tcD_2^{\tT'}\tcD_{R+2}^{\tT'}I_{\tbeta}(\tq, z)\right)}{|G_{\tsi^{s-1}}|e_{\tT'}(T_{\fp_{\tsi^{s-1}}}\tcX)} \bigg|_{\su_4 = 0}\\
& = \left(I_{\tbeta}(\tq, z), -\tgamma_1\right)_{\tcX}^{\tT'} \bigg|_{\su_4 = 0} + \left(I_{\tbeta}(\tq, z), \tgamma_1^-\right)_{\tcX}^{\tT'} \bigg|_{\su_4 = 0}.
\end{align*}
Here, the last equality follows from that
$$
  \iota_{\tsi_0}^*\left(\tcD_2^{\tT'}\tcD_{R+2}^{\tT'}\right) \big|_{\su_4 = 0} = -\iota_{\tsi_0}^*\left(\tgamma_1\right) \big|_{\su_4 = 0}, \qquad \iota_{\tsi^{s-1}}^*\left(\tcD_2^{\tT'}\tcD_{R+2}^{\tT'}\right) \big|_{\su_4 = 0} = \iota_{\tsi^{s-1}}^*\left(\tgamma_1^-\right) \big|_{\su_4 = 0}
$$
and that $\iota_{\tsi}^*\left(\tgamma_0\right) = 0$ for any $\tsi \neq \tsi_0$, $\iota_{\tsi}^*\left(\tgamma_1^-\right) = 0$ for any $\tsi \neq \tsi^{s-1}$. Statement (i) of the proposition then follows from Theorem \ref{thm:HyperCorr}.

Now we prove (iii), and (iv) will follow from a similar proof. Again we consider $\left(I_{\tcX}(\tq, z), \one_j\tcD_{R+2} \right)_{\tcX}$ as the non-equivariant limit of the $\tT'$-equivariant Poincar\'e pairing
$$
  \left(I_{\tcX}^{\tT'}(\tq, z), \tcD_{R+2}^{\tT'} \one_j \right)_{\tcX}^{\tT'},
$$
and we may consider the weight restriction $\su_4 = 0, \su_2 - f\su_1 = 0$. Since we are considering the twisted sector specified by $j$, the above pairing is equal to the pairing of $\one_j\tcD_{R+2}^{\tT'}$ with
$$
  \sum_{\substack{\tbeta \in \bK_{\eff}(\tcX) \\ \tv(\tbeta) = -j}} I_{\tbeta}(\tq, z).
$$
Again, since we focus on terms that depend on $\tq_{R-2}$, we consider the pairing of $\one_j\tcD_{R+2}^{\tT'}$ with each $I_{\tbeta}(\tq, z)$ for $\tbeta$ in
$$
  \{\tbeta \in \bK_{\eff}(\tcX) \setminus \bL_{\bQ} : \tv(\tbeta) = -j\}.
$$
Fix such a $\tbeta$. We may assume that the degree of $I_{\tbeta}(\tq, z)$ is 4, as the pairing is otherwise 0. Since $\inner{\tD_{R+1}, \tbeta} = -\inner{\tD_{R+2}, \tbeta} \neq 0$, $[z^{-2}]I_{\tbeta}(\tq, z)$ has form
$$
  c\tq^{\tbeta}\tcD_{R+1}^{\tT'}\one_j \qquad \text{or} \qquad c\tq^{\tbeta}\tcD_{R+2}^{\tT'}\one_j
$$
for some $c \in \bQ$, $c \neq 0$. Note that $j$ only belongs to the $\Box$ of the 4-cones $\iota(\sigma_0)$ and $\tsi_0$, and $\iota_{\iota(\sigma_0)}^*(\tcD_{R+1}^{\tT'}) = 0$, $\iota_{\iota(\sigma_0)}^*(\tcD_{R+2}^{\tT'}) = \su_4$. It then follows that
$$
\left(I_{\tbeta}(\tq, z), \tcD_{R+2}^{\tT'} \one_j\right)_{\tcX}^{\tT'} \bigg|_{\su_4 = 0} =  \frac{\iota_{\tsi_0}^*\left(I_{\tbeta}(\tq, z) \tcD_{R+2}^{\tT'} \one_j\right)}{|G_{\tsi_0}|e_{\tT'}(T_{\fp_{\tsi_0}}\tcX_j)} \bigg|_{\su_4 = 0} = \left(I_{\tbeta}(\tq, z), -\tgamma_{\tk} \right)_{\tcX}^{\tT'} \bigg|_{\su_4 = 0}.
$$
Here, the last equality follows from that
$$
  \iota_{\tsi_0}^*\left(\tcD_{R+2}^{\tT'} \one_j\right) \big|_{\su_4 = 0} = -\iota_{\tsi_0}^*\left(\tgamma_{\tk} \right) \big|_{\su_4 = 0}
$$
and that $\iota_{\iota(\sigma_0)}^*\left(\tgamma_{\tk}\right) = 0$. Statement (iii) of the proposition then follows from Theorem \ref{thm:HyperCorr}.
\end{proof}

\begin{example}\label{ex:C3PicardFuchs}
\rm{
Let $\cX = \bC^3$, $\cL$ be an outer brane, and $f=1$, as in Example \ref{ex:C3}. The non-equivariant $I$-function of $\tcX = \Tot(\cO_{\bP^2}(-2) \oplus \cO_{\bP^2}(-1))$ is
$$
  I_{\tcX}(\tq_1, z) = e^{\frac{\tH_1\log \tq_1}{z}} \left(1 + \sum_{d \in \bZ_{> 0}} \tq_1^d \frac{2\tH_1^2}{z^2} \frac{(-1)^d(2d-1)!}{d \cdot (d!)^2} \right)
  = 1 + \frac{\tH_1}{z} \log \tq_1 + \frac{\tH_1^2}{z^2}\left(\frac{1}{2} (\log \tq_1)^2 + 2W^{\cX, (\cL, 1)}_{1}(\tq_1) \right).
$$
The three coefficients give a basis of solutions to $\tcP$. Among them, the constant $1$ is the only solution to $\cP$. The solution $\log \tq_1$ in the $[z^{-1}]$ part is the open mirror map (Proposition \ref{prop:OpenMirrorMapSoln}). The power series part of the solution in the $[z^{-2}]$ part is a scalar multiple of the B-model disk function $W^{\cX, (\cL, 1)}_{1}$ (Proposition \ref{prop:DiskFnSoln}). 
In this example, the disk function $W^{\cX, (\cL^2, -2)}_{1}$ of the neighboring framed outer brane $(\cL^2, f_2 = -2)$ (as in Remark \ref{rem:MultiPhases}) happens to be equal to $-W^{\cX, (\cL, 1)}_{1}$, which then completely describes the $\tq_{R-2}$-dependence of the power series part of the solution. This phenomenon arises in certain other examples as well, including the case $\cX = \Tot(K_{\bP^2})$ as considered in \cite{Mayr01,LM01}.
}
\end{example}

\section{Integral cycles and periods}\label{sect:Cycles}
In this section, we study the open/closed correspondence from the perspective of Hori-Vafa mirror families, contextualizing the results on hypergeometric functions and Picard-Fuchs systems in the previous sections. We establish a correspondence between integral relative 3-cycles on the Hori-Vafa mirror of $\cX$ and integral 4-cycles on that of $\tcX$ under which the periods are preserved (Theorem \ref{thm:CycleCorr}).

\subsection{Laurent polynomials}\label{sect:Laurent}
We start by introducing the Laurent polynomials that will be used to define the mirror families. Consider the integer coefficients $s_{ai} \in \bZ_{\ge 0}$ defined in Section \ref{sect:Basis}. For $i = 4, \dots, R$, define
$$
  s_i(q) := \prod_{a = 1}^{R-3} q_a^{s_{ai}}
$$
which is a monomial in $q_1, \dots, q_{R-3}$. For convenience we set $s_1(q) = s_2(q) = s_3(q) := 1$. We will write $s_i(\tq)$ as the same monomial but in $\tq_1, \dots, \tq_{R-3}$. We define the following Laurent polynomials parameterized by $q$ and $\tq$:
\begin{align*}
  &H(X, Y, q) := X^\fr Y^{-\fs} +  Y^{\fm} + 1 + \sum_{i = 4}^R s_i(q)X^{m_i}Y^{n_i} && \in \bC[X^{\pm 1}, Y^{\pm 1}],\\
  &\tH(X, Y, Z, \tq) := X^\fr Y^{-\fs} +  Y^{\fm} + 1 + \sum_{i = 4}^R s_i(\tq)X^{m_i}Y^{n_i} + \tq_{R-2}^{\fa}X^{-\fa}Y^{-\fb}Z + Z && \in \bC[X^{\pm 1}, Y^{\pm 1}, Z^{\pm 1}].
\end{align*}
Note that under $\tq_a = q_a$ for $a = 1, \dots, R-3$, we have
\begin{equation}\label{eqn:DefEqnRelation}
  \tH(X, Y, Z, \tq) = H(X,Y,q) + \tq_{R-2}^{\fa}X^{-\fa}Y^{-\fb}Z + Z,
\end{equation}
where the two additional terms correspond to the two additional vectors $\tb_{R+1} = (-\fa, -\fb, 1, 1)$, $\tb_{R+2} = (0, 0, 1, 1)$. Moreover, we use the open parameter $x$ to define the following restriction of $H(X, Y, q)$:
\begin{align*}
  H_0(Y_0, q, x) &:= H(X, Y, q) \big|_{X^{\fa}Y^{\fb} = -x^{\fa}} = H(e^{\pi\sqrt{-1}/\fa}xY_0^{-\fb}, Y_0^{\fa}, q) \\
   & =  e^{\pi\sqrt{-1}\fr/\fa}x^\fr Y_0^{-\fa\fs - \fb\fr} +  Y_0^{\fa\fm} + 1 + \sum_{i = 4}^R s_i(q)e^{\pi\sqrt{-1}m_i/\fa}x^{m_i}Y_0^{\fa n_i-\fb m_i} \qquad \in \bC[Y_0^{\pm 1}].
\end{align*}

We consider the following domains for the parameters $q, x, \tq$:
\begin{definition}\rm{
Let $\epsilon > 0$. Let $U_\epsilon \subset (\bC^*)^{R'-3} \times \bC^{R-R'}$ be the set of $(x_1, \dots, x_{R-3})$ such that
\begin{itemize}
  \item $x_1, \dots, x_{R'-3} \in \bC^*$, $x_{R'-2}, \dots, x_{R-3} \in \bC$;

  \item $|x_a| < \epsilon$ for all $a = 1, \dots, R-3$.
\end{itemize}
Moreover, Let $\tU_\epsilon \subset (\bC^*)^{R'-2} \times \bC^{R-R'}$ be the set of $(x_1, \dots, x_{R-2})$ such that
\begin{itemize}
  \item $(x_1, \dots, x_{R-3}) \in U_\epsilon$;

  \item $x_{R-2} \in \bC^*$, $|x_{R-2}| < \epsilon$.
\end{itemize}
}\end{definition}

Given $q \in U_\epsilon$, $\tq \in \tU_\epsilon$, the Newton polytopes\footnote{The \emph{Newton polytope} of a Laurent polynomial $F(X_1, \dots, X_r) \in \bC[X_1^{\pm1}, \dots, X_r^{\pm1}]$ is the convex hull in $\bR^r$ of lattice points $(a_1, \dots, a_r) \in \bZ^r$ such that the coefficient of $X_1^{a_1}\cdots X_r^{a_r}$ in $F$ is non-zero.} of $H(X, Y, q)$ and $\tH(X, Y, Z, \tq)$ are $\Delta \subset \bR^2$ and $\tDelta \subset \bR^3$ respectively. Moreover, for $\epsilon$ sufficiently small and $(q,x) \in \tU_\epsilon$, the Newton polytope of $H_0(Y_0, q, x)$ is $\Delta_0 \subset \bR$ (see \eqref{eqn:PolyDelta0}). In this paper, we will choose a sufficiently small radius $\epsilon$ for the parameter domains such that the Laurent polynomials satisfy the following regularity condition of Batyrev \cite{Batyrev93} (see also \cite{Dwork62,Dwork64,GKZ89}).

\begin{definition}\label{def:DeltaReg}
\rm{
Let 
$$
  F(X_1, \dots, X_r) = \sum_{a = (a_1, \dots, a_r) \in \bZ^r} c_aX_1^{a_1}\cdots X_r^{a_r} \qquad \in \bC[X_1^{\pm1}, \dots, X_r^{\pm1}]
$$
be a Laurent polynomial whose Newton polytope $\Delta_F \subset \bR^r$ is $r$-dimensional. $F$ is said to be \emph{$\Delta_F$-regular} if for every $0 < d \le r$ and every $d$-dimensional face $\Delta' \subseteq \Delta_F$, the functions
$$
  F^{\Delta'}:= \sum_{a \in \Delta' \cap \bZ^r} c_aX_1^{a_1}\cdots X_r^{a_r}, \qquad X_1\frac{\partial F^{\Delta'}}{\partial X_1}, \quad \dots, \quad  X_r\frac{\partial F^{\Delta'}}{\partial X_r}
$$
have no common zeroes in $(\bC^*)^r$.
}\end{definition}

\begin{assumption}\label{assump:Regular}\rm{
We fix a sufficiently small $\epsilon > 0$ such that for any $(q,x), \tq \in \tU_\epsilon = U_\epsilon \times \{c \in \bC^* : |c|<\epsilon\}$, the following hold:
\begin{itemize}
  \item $H(X, Y, q)$ is $\Delta$-regular;
  \item $\tH(X, Y, Z, \tq)$ is $\tDelta$-regular;
  \item $H_0(Y_0, q, x)$ has Newton polytope $\Delta_0$ and is $\Delta_0$-regular.
\end{itemize}
}\end{assumption}

The existence of such an $\epsilon$ is implied by \cite[Lemma 3.8]{Iritani09}, which studies a closely related non-degeneracy condition of Kouchnirenko \cite{Kouch76}. Assumption \ref{assump:Regular} will only be used in this section to ensure the smoothness of the mirror families, and will be used in full in Section \ref{sect:MHS} when we study variations of mixed Hodge structures.

\subsection{Hori-Vafa mirrors}\label{sect:HoriVafa}
The Hori-Vafa mirror family of $\cX$ is $(\cX^\vee_q, \Omega_q)$ defined over $q \in U_\epsilon$, where
$$
  \cX^\vee_q := \{(u,v, X, Y) \in \bC^2 \times (\bC^*)^2: uv = H(X, Y, q) \}
$$
is a non-compact Calabi-Yau 3-fold and
$$
  \Omega_q := \Res_{\cX^\vee_q} \frac{du \wedge dv \wedge \frac{dX}{X} \wedge \frac{dY}{Y}}{H(X, Y, q) - uv}
$$
is a holomorphic 3-form on $\cX^\vee_q$. Similarly, the Hori-Vafa mirror family of $\tcX$ is $(\tcX^\vee_{\tq}, \tOmega_{\tq})$ defined over $\tq \in \tU_\epsilon$, where
$$
  \tcX^\vee_{\tq} := \{(u,v, X, Y, Z) \in \bC^2 \times (\bC^*)^3: uv = \tH(X, Y, Z, \tq) \}
$$
is a non-compact Calabi-Yau 4-fold and
$$
  \tOmega_{\tq} := \Res_{\tcX^\vee_{\tq}} \frac{du \wedge dv \wedge \frac{dX}{X} \wedge \frac{dY}{Y} \wedge \frac{dZ}{Z}}{\tH(X, Y, Z, \tq) - uv}
$$
is a holomorphic 4-form on $\tcX^\vee_{\tq}$. Note that the projection $Z: \tcX^\vee_{\tq} \to \bC^*$ given by the $Z$-coordinate can be extended over $Z = 0$, and by relation \eqref{eqn:DefEqnRelation}, the fiber over $Z=0$ is isomorphic to $\cX^\vee_{(\tq_1, \dots, \tq_{R-3})}$.

Moreover, we consider a family $(\cY_{q,x}, \Omega_{q,x}^0)$ defined over $(q,x) \in \tU_\epsilon$, where
$$
  \cY_{q,x} := \{(u,v, Y_0) \in \bC^2 \times \bC^*: uv = H_0(Y_0, q, x) \}
$$
is a non-compact Calabi-Yau surface and
$$
  \Omega_{q,x}^0 := \Res_{\cY_{q,x}} \frac{du \wedge dv \wedge \frac{dY_0}{Y_0}}{H_0(Y_0, q, x) - uv}
$$
is a holomorphic 2-form on $\cY_{q,x}$. The family $(\cY_{q,x}, \Omega_{q,x}^0)$ is the pullback of the Hori-Vafa mirror family of $\tcX_0$ to the base $\tU_\epsilon$.

Consider the inclusion
\begin{equation}\label{eqn:Y0Include}
  \bC^* \to (\bC^*)^2, \qquad Y_0 \mapsto \left(e^{\pi\sqrt{-1}/\fa}xY_0^{-\fb}, Y_0^{\fa} \right)
\end{equation}
which is indeed injective since $\fa$, $\fb$ are coprime. Under \eqref{eqn:Y0Include}, for any $q$, $\cY_{q,x}$ can be identified with the family of hypersurfaces 
$$
  \cX^\vee_q \cap \{X^{\fa}Y^{\fb} = -x^{\fa}\}
$$
parameterized by $x$. Moreover, $\cY_{q,x} \times \bC^*$ can be identified with the hypersurface
$$
  \tcX^\vee_{\tq} \cap \{X^{\fa}Y^{\fb} = -\tq_{R-2}^{\fa}\}
$$
given by $((u,v,Y_0),Z) \mapsto (u,v,e^{\pi\sqrt{-1}/\fa}xY_0^{-\fb}, Y_0^{\fa}, Z)$ under $\tq_{a} = q_a$, $a = 1, \dots, R-3$ and $\tq_{R-2} = x$. Under the extension of $Z: \tcX^\vee_{\tq} \to \bC^*$ over $Z = 0$, this recovers the embedding of $\cY_{q,x}$ in $\cX^\vee_q$.

By Assumption \ref{assump:Regular}, in particular the regularity condition applied to the top-dimensional faces of the polytopes themselves, the three families $\cX^\vee_q$, $\tcX^\vee_{\tq}$, and $\cY_{q,x}$ are smooth over $(q,x), \tq \in \tU_\epsilon$.

\subsection{Open/closed correspondence of integral cycles and periods}
It is a well-known fact that period integrals of the holomorphic form over middle-dimensional integral cycles on the Hori-Vafa mirror generate the solution space to the Picard-Fuchs system.

\begin{theorem}[Cf. \cite{Hosono06,KM10,CLT13}]\label{thm:PeriodSoln}
The period integrals
$$
  \int_{\Gamma}  \Omega_q, \qquad \Gamma \in H_3(\cX^\vee_q; \bZ)
$$
give a $\bC$-basis of the solution space to the Picard-Fuchs system $\cP$ as $\Gamma$ ranges through a basis. The period integrals
$$
  \int_{\tGamma}  \tOmega_{\tq}, \qquad \tGamma \in H_4(\tcX^\vee_{\tq}; \bZ)
$$
give a $\bC$-basis of the solution space to the Picard-Fuchs system $\tcP$ as $\tGamma$ ranges through a basis.
\end{theorem}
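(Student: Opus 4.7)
\smallskip
\noindent \textbf{Proof proposal.} The two assertions have parallel structures, so I describe the argument for $\cX$ and note the 4-dimensional case is analogous. The plan is to show (a) that every period integral is a solution to $\cP$, and (b) that the period map $H_3(\cX^\vee_q;\bC) \to \mathrm{Sol}(\cP)$, $\Gamma \mapsto \int_\Gamma \Omega_q$, is an isomorphism of $\bC$-vector spaces, which I would deduce from a dimension match together with non-degeneracy.

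For (a), I would differentiate under the integral sign using the Gauss-Manin connection on the relative de Rham cohomology of the family $\cX^\vee_q \to U_\epsilon$. The Laurent polynomial $H(X,Y,q)$ is a GKZ-type hypergeometric object whose Newton polytope is $\Delta$ and whose coefficients are monomials in $q$ built from the charge matrix of $\cX$. A direct computation (as in Hosono's treatment and in \cite{CLT13}) shows that for each $\beta \in \bL$, applying $\cP_\beta$ to $\Omega_q$ yields an exact form on $\cX^\vee_q$ relative to its boundary at infinity, because the two monomial operators in the definition of $\cP_\beta$ produce the same multiple of $H^{-1}du\wedge dv \wedge \frac{dX}{X}\wedge \frac{dY}{Y}$ modulo total derivatives in $X, Y, u, v$. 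Hence $\cP_\beta \int_\Gamma \Omega_q = 0$ for every $\Gamma$.

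For (b), I would first match dimensions. By Theorem \ref{thm:PFSolnDimension} the solution space of $\cP$ has dimension $\Vol(\Delta)$. On the geometric side, the conic fibration $\cX^\vee_q \to (\bC^*)^2$, $(u,v,X,Y)\mapsto (X,Y)$, has discriminant the affine hypersurface $C_q = \{H(X,Y,q)=0\}$, and a standard retraction-plus-Poincar\'e-Lefschetz argument identifies $H_3(\cX^\vee_q;\bC) \cong H^2((\bC^*)^2, C_q;\bC)$ up to a Tate twist. Batyrev's theorem \cite{Batyrev93}, applicable because $H(X,Y,q)$ is $\Delta$-regular under Assumption \ref{assump:Regular}, computes $\dim H^2((\bC^*)^2, C_q;\bC) = \Vol(\Delta)$ via a Jacobian-ring presentation. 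The analog for $\tcX^\vee_{\tq}$ uses the conic fibration over $(\bC^*)^3$ with discriminant $S_{\tq}$ and gives $\dim H_4(\tcX^\vee_{\tq};\bC) = \Vol(\tDelta)$, again matching Theorem \ref{thm:PFSolnDimension}. With dimensions matched, it remains to show that the period map has trivial kernel. I would argue this by pairing with the cohomology class $[\Omega_q]$: the class is a cyclic vector for the GKZ $\cD$-module under the Gauss-Manin connection (this is the content of the analogous statement in \cite{KM10} and the upcoming \cite{AY25}), so no non-zero cycle $\Gamma$ can pair trivially with all derivatives of $\Omega_q$, and hence no non-zero cycle produces the zero solution.

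The main obstacle I anticipate is the combinatorial identification of the middle Betti number of the Hori-Vafa mirror with $\Vol(\Delta)$, together with the compatible identification of the period pairing with the Batyrev/Stienstra/Konishi-Minabe presentation of the relevant (V)MHS. This is precisely the content that Section \ref{sect:MHS} will develop, and I would invoke those combinatorial isomorphisms to close both the dimension count and the cyclicity argument.
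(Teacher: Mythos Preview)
The paper does not prove this theorem; it is stated with references \cite{Hosono06,KM10,CLT13} as a known result, and the only proof-adjacent remark appears after Theorem~\ref{thm:RelPeriod}, where the paper notes that the results of \cite{Batyrev93,Stienstra98,KM10} combined with \cite[Proposition~14]{CLT13} and the conic-fibration isomorphisms $\alpha,\talpha$ yield the statement. Your outline is a faithful reconstruction of this standard argument: part (a) is the GKZ computation that $\cP_\beta[\Omega_q]=0$ in cohomology, and part (b) is exactly the dimension count via $H_3(\cX^\vee_q;\bC)\cong H^2((\bC^*)^2,C_q;\bC)$ (the paper's $\alpha$) together with Batyrev's $\dim=\Vol(\Delta)$ and Iritani's Theorem~\ref{thm:PFSolnDimension}, followed by injectivity from cyclicity of $[\Omega_q]$.

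One small refinement: your injectivity argument (``no non-zero cycle produces the zero solution'') is correct in spirit but you should be explicit that cyclicity of $[\Omega_q]$ for the Gauss--Manin $\cD$-module means the derivatives $\nabla^I[\Omega_q]$ span $H^3(\cX^\vee_q;\bC)$, so a cycle pairing to zero with all of them must be zero in homology by non-degeneracy of the period pairing; this is exactly what \cite{KM10} (and the forthcoming \cite{AY25} in the paper's Section~\ref{sect:MHS}) establish combinatorially. With that clarification your proposal is complete and matches the route the paper points to.
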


\begin{remark}\label{rem:SmallPFSystem}\rm{
Similarly, as $\Gamma_0$ ranges through a basis of $H_2(\cY_{q,x}; \bZ)$, the period integrals
$$
  \int_{\Gamma_0}  \Omega_{q,x}^0
$$
give a $\bC$-basis of the solution space of a Picard-Fuchs system defined by the polytope $\Delta_0$. The dimension of the solution space is $\Vol(\Delta_0)$, which is the difference between those of $\tcP$ and $\cP$ (see Theorem \ref{thm:PFSolnDimension} and \eqref{eqn:PolyVolSum}). In particular, Equation \eqref{eqn:PolyVolSum} translates into
\begin{equation}\label{eqn:RankSum}
  \rank(H_4(\tcX^\vee_{\tq}; \bZ)) = \rank(H_3(\cX^\vee_q; \bZ)) + \rank(H_2(\cY_{q,x}; \bZ)).
\end{equation}
}\end{remark}

Our main result of this section is that periods of $\tOmega_{\tq}$ over 4-cycles on $\tcX^\vee_{\tq}$, and thus solutions to the extended system $\tcP$, can be recovered from period of $\Omega_q$ over \emph{relative} 3-cycles on $\cX^\vee_q$ with boundary on the hypersurface $\cY_{q,x}$. This gives the open/closed correspondence on the level of cycles and periods.

\begin{theorem}\label{thm:CycleCorr}
For $(q,x) = \tq \in \tU_\epsilon$, there is an injective map
$$
  \iota: H_3(\cX^\vee_q, \cY_{q,x}; \bZ) \to H_4(\tcX^\vee_{\tq}; \bZ)
$$
such that for any $\Gamma \in H_3(\cX^\vee_q, \cY_{q,x}; \bZ)$, we have
$$
  \int_{\Gamma}  \Omega_q = \frac{1}{2\pi\sqrt{-1}} \int_{\iota(\Gamma)}  \tOmega_{\tq}.
$$
Moreover, $\iota$ is an isomorphism over $\bQ$.
\end{theorem}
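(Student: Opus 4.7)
The plan is to realize $\iota$ concretely using the structure of $\tcX^\vee_{\tq}$ as the complement of a smooth divisor in the enlarged smooth family
$$
\hat{\tcX} := \{(u,v,X,Y,Z)\in\bC^2\times(\bC^*)^2\times\bC : uv = H(X,Y,q) + Z(1 + \tq_{R-2}^{\fa}X^{-\fa}Y^{-\fb})\},
$$
where $\tcX^\vee_{\tq} = \hat{\tcX}\cap\{Z\ne 0\}$ and the central fiber $\hat{\tcX}\cap\{Z=0\}$ is identified with $\cX^\vee_q$. The deformation term vanishes precisely along $\{X^{\fa}Y^{\fb} = -x^{\fa}\}$, so $\cY_{q,x}$ sits inside every $Z$-fiber and $\cY_{q,x}\times\bC^*$ embeds in $\tcX^\vee_{\tq}$ as a hypersurface (cf.\ Section \ref{sect:HoriVafa}); this simultaneous embedding is the key structural fact driving the construction.

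\textbf{Construction of $\iota$.} Given a relative 3-cycle $\Gamma\in H_3(\cX^\vee_q,\cY_{q,x};\bZ)$ realized as a smooth singular chain, the feature that $\partial \Gamma \subset \cY_{q,x}$ lies in every fiber allows a continuous extension $\{\Gamma_Z\}_{|Z|\le\epsilon'}$ to relative 3-chains in the nearby $Z$-fibers with $\partial\Gamma_Z \equiv \partial\Gamma$ fixed, for $\epsilon'>0$ sufficiently small. The tube
$$
T_\Gamma := \bigcup_{|Z|=\epsilon'}\Gamma_Z
$$
is a 4-chain in $\tcX^\vee_{\tq}$ with boundary $\partial\Gamma\times S^1_{\epsilon'}$ inside $\cY_{q,x}\times\bC^*$. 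A diagram chase in the long exact sequence of $(\cX^\vee_q,\cY_{q,x})$ combined with the conic fibration presentations of the Hori-Vafa mirrors over their base tori shows that $[\partial\Gamma\times S^1_{\epsilon'}]$ vanishes in $H_3(\tcX^\vee_{\tq};\bZ)$; I fix a consistent choice of bounding 4-chain $B_\Gamma$ in $\tcX^\vee_{\tq}$ and set $\iota(\Gamma) := T_\Gamma - B_\Gamma$, checking that $[\iota(\Gamma)] \in H_4(\tcX^\vee_{\tq};\bZ)$ is independent of the choices.

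\textbf{Period equality.} The form $\tOmega_{\tq}$ extends meromorphically to $\hat{\tcX}$ with a simple pole along $\{Z=0\}$ whose Poincar\'e residue equals $\Omega_q$. Applying Cauchy's theorem to integration of $\tOmega_{\tq}$ around the circle $|Z|=\epsilon'$ gives
$$
\int_{T_\Gamma}\tOmega_{\tq} = 2\pi\sqrt{-1}\int_{\Gamma}\Omega_q.
$$
The correction $B_\Gamma$ can be arranged to lie on the hypersurface $\cY_{q,x}\times\bC^*$, a complex 3-fold along which the holomorphic top-form $\tOmega_{\tq}$ pulls back to zero for dimensional reasons, so it contributes zero to the period.

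\textbf{Injectivity, rank matching, and main obstacle.} Injectivity over $\bZ$ is established by constructing a left inverse $r:H_4(\tcX^\vee_{\tq};\bZ)\to H_3(\cX^\vee_q,\cY_{q,x};\bZ)$ via intersection with a transverse slice $\{Z=\epsilon'\}$: for a 4-cycle $\tGamma$, the intersection $\tGamma\cap\{Z=\epsilon'\}$ is a 3-chain in the deformed fiber with boundary on $\cY_{q,x}\cap\{Z=\epsilon'\}\cong\cY_{q,x}$; identifying the nearby fiber with $\cX^\vee_q$ yields $r(\tGamma)$, and $r\circ\iota = \mathrm{id}$ follows by construction. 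The rank identity \eqref{eqn:RankSum} together with careful analysis of the long exact sequence of the pair (using $H_3(\cY_{q,x}) = 0$ from affine Lefschetz and the conic fibration presentations to control $H_2(\cY_{q,x})\to H_2(\cX^\vee_q)$) then gives $\rank H_3(\cX^\vee_q,\cY_{q,x};\bZ) = \rank H_4(\tcX^\vee_{\tq};\bZ)$, so $\iota$ is a $\bQ$-isomorphism. The hardest step throughout is proving $[\partial\Gamma\times S^1_{\epsilon'}] = 0$ in $H_3(\tcX^\vee_{\tq};\bZ)$ and producing a canonical enough $B_\Gamma$ so that both the class $[\iota(\Gamma)]$ and the period computation are transparent; this requires a delicate interplay between the $Z$-family structure of $\tcX^\vee_{\tq}$ and the conic fibration structures of the three Hori-Vafa mirrors.
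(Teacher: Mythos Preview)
Your tube-plus-correction strategy is the paper's idea in outline, but the claim that $B_\Gamma$ can be taken on the hypersurface $\cY_{q,x}\times\bC^*$ is false whenever $[\partial\Gamma]\ne 0$ in $H_2(\cY_{q,x};\bZ)$. By K\"unneth and $H_3(\cY_{q,x};\bZ)=0$ (Lemma~\ref{lem:CycleInjection}) one has $H_3(\cY_{q,x}\times\bC^*;\bZ)\cong H_2(\cY_{q,x};\bZ)\otimes H_1(\bC^*;\bZ)$, and under this identification $[\partial\Gamma\times S^1]$ corresponds to $[\partial\Gamma]$; since the connecting map $\partial$ is surjective (Lemma~\ref{lem:SectionExists}), this class is nonzero precisely for the relative cycles that are not already absolute. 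So $\partial\Gamma\times S^1$ cannot bound inside $\cY_{q,x}\times\bC^*$, which undermines both your period computation and your retraction $r$. The retraction has a second, independent problem: the transverse intersection of a \emph{closed} 4-cycle with the slice $\{Z=\epsilon'\}$ is itself a closed 3-cycle with empty boundary, so $r$ as described lands in $H_3(\cX^\vee_q;\bZ)$, not in $H_3(\cX^\vee_q,\cY_{q,x};\bZ)$, and cannot invert $\iota$ on classes with nontrivial $\partial$.

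The paper resolves the boundary issue by first splitting $H_3(\cX^\vee_q,\cY_{q,x};\bZ)\cong H_3(\cX^\vee_q;\bZ)\oplus s(H_2(\cY_{q,x};\bZ))$ via an explicit section $s$ built from the surjectivity of $H_1(C_q;\bZ)\to H_1((\bC^*)^2;\bZ)$ (Proposition~\ref{prop:RelHomologySES}, Lemmas~\ref{lem:MCH1Surjective}--\ref{lem:SectionExists}). On absolute cycles the tube is already closed and the residue argument gives the period identity directly (Lemma~\ref{lem:Closed4Cycles}). On $s(H_2(\cY_{q,x};\bZ))$ the paper descends to the base tori via the conic fibrations of Section~\ref{sect:Conic} and constructs the correction $\tgamma_2$ as an explicit homotopy in $(\bC^*)^3$ whose boundary lands in the discriminant surface $S_{\tq}$; the vanishing of $\int_{\tgamma_2}\tomega$ follows from the specific shape of this homotopy (it varies only the $Z$-coordinate while $(X,Y)$ trace a one-parameter curve), not from $\tgamma_2$ sitting on a complex hypersurface (Lemma~\ref{lem:Open4Cycles}). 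Injectivity is then read off from linear independence of the resulting periods (Theorem~\ref{thm:PeriodSoln} and Lemma~\ref{lem:StokesRel3Cycles}), not from a geometric left inverse.
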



\subsection{Conic and circle fibrations}\label{sect:Conic}
Before proving Theorem \ref{thm:CycleCorr}, we first review the structure of Hori-Vafa mirrors as conic fibrations over algebraic tori, following \cite[Section 4.2]{CLT13}; see also \cite[Section 5.1]{DK11}. We detail the construction for $\cX^\vee_q$ (see also \cite[Section 4.4]{FLZ20} for this case) and the constructions for $\tcX^\vee_{\tq}$, $\cY_{q,x}$ are similar.

Consider the following families of affine hypersurfaces in algebraic tori of different dimensions, defined over $(q,x), \tq \in \tU_\epsilon$:
\begin{equation}\label{eqn:MirrorCurveDef}
\begin{aligned}
  & C_q := \{(X,Y) \in (\bC^*)^2 : H(X, Y, q) = 0\},\\
  & S_{\tq} := \{(X,Y,Z) \in (\bC^*)^3 : \tH(X, Y, Z, \tq) = 0 \},\\
  & P_{q,x} := \{Y_0 \in \bC^* : H_0(Y_0, q, x) = 0 \}.
\end{aligned}
\end{equation}
By Assumption \ref{assump:Regular}, $C_q$ (resp. $S_{\tq}$) is a family of smooth algebraic curves (resp. surfaces) and $P_{q,x}$ is a family of $\Vol(\Delta_0)$ distinct points. By \eqref{eqn:DefEqnRelation}, the projection $Z: S_{\tq} \to \bC^*$ can be extended over $Z = 0$ and the fiber over $Z=0$ is isomorphic to $C_{(q_1, \dots, q_{R-3})}$. Under \eqref{eqn:Y0Include}, for any $q$, $P_{q,x}$ can be identified with the family of point sets
$$
  C_q \cap \{X^{\fa}Y^{\fb} = -x^{\fa}\}.
$$
parameterized by $x$. Moreover, $P_{q,x} \times \bC^*$ can be identified with the hypersurface
$$
  S_{\tq} \cap \{X^{\fa}Y^{\fb} = -\tq_{R-2}^{\fa}\}.
$$
Under the extension of $Z: S_{\tq} \to \bC^*$ over $Z = 0$, this recovers the embedding of $P_{q,x}$ in $C_q$.

Consider the following Hamiltonian $U(1)$-action
$$
  e^{\theta\sqrt{-1}} \cdot (u, v, X, Y) := (e^{\theta\sqrt{-1}}u, e^{-\theta\sqrt{-1}}v, X, Y)
$$
on $\cX^\vee_q$ whose moment map is given by
$$
  \mu: \cX^\vee_q \to \bR, \qquad (u, v, X, Y) \mapsto \frac{1}{2} \left(|u|^2-|v|^2\right).
$$
Now consider the conic fibration
$$
  \cX^\vee_q \to (\bC^*)^2, \qquad (u, v, X, Y) \mapsto (X,Y)
$$
whose discriminant locus is the curve $C_q$ \eqref{eqn:MirrorCurveDef}. Restricting the above to the level set
$$
  \mu^{-1}(0) = \{ (u, v, X, Y) \in \cX^\vee_q : |u| = |v|\},
$$
we obtain a circle fibration
$$
  \pi: \mu^{-1}(0) \to (\bC^*)^2
$$
where the fiber degenerates to a point precisely over $(X,Y) \in C_q$. The map $\gamma \mapsto \pi^{-1}(\gamma)$ induces an isomorphism
$$
  \alpha: H_2((\bC^*)^2, C_q; \bZ) \xrightarrow{\sim} H_3(\cX^\vee_q; \bZ).
$$

By identical constructions, the conic fibrations
$$
  \tcX^\vee_{\tq} \to (\bC^*)^3, \qquad \cY_{q,x} \to (\bC^*)
$$
restrict to circle fibrations
$$
  \tpi: \mu^{-1}(0) \subset \tcX^\vee_{\tq} \to (\bC^*)^3, \qquad \pi_0: \mu^{-1}(0) \subset \cY_{q,x} \to (\bC^*)
$$
whose discriminant loci are $S_{\tq}$, $P_{q,x}$ respectively. There are induced isomorphisms
$$
  \talpha: H_3((\bC^*)^3, S_{\tq}; \bZ) \xrightarrow{\sim} H_4(\tcX^\vee_{\tq}; \bZ), \qquad 
  \alpha_0: H_1(\bC^*, P_{q,x}; \bZ) \xrightarrow{\sim} H_2(\cY_{q,x}; \bZ).
$$

The standard holomorphic forms
\begin{equation}\label{eqn:FormsOnTori}
  \omega:= \frac{dX}{X} \wedge \frac{dY}{Y}, \qquad \tomega:= \frac{dX}{X} \wedge \frac{dY}{Y} \wedge \frac{dZ}{Z}, \qquad \omega^0 := \frac{dY_0}{Y_0}
\end{equation}
on $(\bC^*)^2$, $(\bC^*)^3$, $\bC^*$ respectively vanish on the hypersurfaces $C_q$, $S_{\tq}$, $P_{q,x}$ and thus represent relative cohomology classes in
$$
  H^2((\bC^*)^2, C_q; \bC), \qquad H^3((\bC^*)^3, S_{\tq}; \bC), \qquad H^1(\bC^*, P_{q,x}; \bC).
$$
The relative periods of these relative forms are known to match periods of the holomorphic forms on the Hori-Vafa mirrors via the isomorphisms $\alpha$, $\talpha$, $\alpha_0$ on homology.

\begin{theorem}[\cite{DK11,CLT13}]\label{thm:RelPeriod}
We have
\begin{align*}
  & \int_{\gamma} \omega = \frac{1}{2\pi\sqrt{-1}} \int_{\alpha(\gamma)} \Omega_q && \text{for all } \gamma \in H_2((\bC^*)^2, C_q; \bZ);\\
  & \int_{\tgamma} \tomega = \frac{1}{2\pi\sqrt{-1}} \int_{\talpha(\tgamma)} \tOmega_{\tq} && \text{for all } \tgamma \in H_3((\bC^*)^3, S_{\tq}; \bZ);\\
  & \int_{\gamma_0} \omega^0 = \frac{1}{2\pi\sqrt{-1}} \int_{\alpha_0(\gamma_0)} \Omega^0_{q,x} && \text{for all } \gamma_0 \in H_1(\bC^*, P_{q,x}; \bZ).
\end{align*}
\end{theorem}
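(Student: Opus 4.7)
\medskip

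\noindent\textbf{Proof proposal.} The three identities have essentially the same proof, differing only in the dimension of the base torus; the plan is to treat the first one in detail and indicate how to transport the argument to the other two. The strategy is a fiberwise computation along the circle fibration $\pi\colon \mu^{-1}(0)\to(\bC^*)^2$, combined with Fubini.

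First, I would establish a local formula for $\Omega_q$ on the conic fibers of the projection $\cX^\vee_q\to(\bC^*)^2$. Writing $f:=H(X,Y,q)-uv$, so that $df=dH-v\,du-u\,dv$, I claim that on the chart $\{u\neq 0\}\subset\cX^\vee_q$ the Poincar\'e residue is
\[
  \Omega_q \;=\; \frac{du}{u}\wedge\frac{dX}{X}\wedge\frac{dY}{Y}.
\]
This is the defining identity
\[
  du\wedge dv\wedge\tfrac{dX}{X}\wedge\tfrac{dY}{Y} \;=\; df\wedge\Omega_q,
\]
which I would verify directly: after wedging $df=dH-v\,du-u\,dv$ with $\tfrac{du}{u}\wedge\tfrac{dX}{X}\wedge\tfrac{dY}{Y}$, the $dH$-piece contributes zero (since $dH\in\langle dX,dY\rangle$) and the $v\,du$-piece also vanishes, leaving exactly the top form. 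Analogously $\Omega_q=-\tfrac{dv}{v}\wedge\tfrac{dX}{X}\wedge\tfrac{dY}{Y}$ on $\{v\neq 0\}$, and these agree on the overlap.

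Next I would parameterize the circle fibers. Over $(X,Y)\in(\bC^*)^2\smallsetminus C_q$, the fiber of $\pi$ is
\[
  \bigl\{(u,v):\,uv=H(X,Y,q),\ |u|=|v|\bigr\} \;=\; \bigl\{u=\sqrt{|H|}\,e^{\sqrt{-1}\theta},\ v=H/u\ :\ \theta\in[0,2\pi)\bigr\},
\]
a smooth circle on which $\tfrac{du}{u}=\sqrt{-1}\,d\theta$; over $C_q$ the fiber degenerates to a point. Fibering $\alpha(\gamma)=\pi^{-1}(\gamma)$ over $\gamma$ (which is allowed since $\partial\gamma\subset C_q$ and the circle shrinks there), Fubini gives
\begin{align*}
  \int_{\alpha(\gamma)}\Omega_q
  &= \int_{\gamma}\left(\int_{\pi^{-1}(X,Y)}\frac{du}{u}\right)\frac{dX}{X}\wedge\frac{dY}{Y}\\
  &= \int_{\gamma}\Bigl(\int_{0}^{2\pi}\sqrt{-1}\,d\theta\Bigr)\,\omega
  \;=\; 2\pi\sqrt{-1}\int_{\gamma}\omega,
\end{align*}
which is the desired identity. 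The only subtlety is the behavior near $\partial\gamma\subset C_q$ where the fiber collapses: since the fiber integral $\int_{\text{circle}}\tfrac{du}{u}=2\pi\sqrt{-1}$ is the constant function of $(X,Y)\in(\bC^*)^2\smallsetminus C_q$, and the vanishing circle has measure zero in the total space, the Fubini computation extends across $C_q$; this is the one step where I would proceed with care, choosing a tubular neighborhood of $C_q$ and taking a limit. By Assumption \ref{assump:Regular}, $C_q$ is smooth, so the conic fibration near $C_q$ is locally an ordinary node degeneration, and the standard vanishing-cycle analysis (e.g.\ Lefschetz pencil near a Morse singularity of $H$) justifies the limit.

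Finally, the identical argument proves the other two cases: for $\tcX^\vee_{\tq}\to(\bC^*)^3$ the fibers of the conic projection are the same conics $uv=\tH$, so the fiber is again a circle of integral $2\pi\sqrt{-1}$, and Fubini over the 3-chain $\tgamma$ yields $\int_{\talpha(\tgamma)}\tOmega_{\tq}=2\pi\sqrt{-1}\int_{\tgamma}\tomega$; likewise for $\cY_{q,x}\to\bC^*$ and $\gamma_0$. The main obstacle throughout is the behavior at the discriminant locus, and it is handled uniformly by the $\Delta$-regularity assumption which guarantees that each discriminant $C_q$, $S_{\tq}$, $P_{q,x}$ is smooth and that the conic degeneration is the standard nodal one.
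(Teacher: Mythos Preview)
The paper does not prove this theorem; it is stated with attribution to \cite{DK11,CLT13} and invoked later with the remark ``see Theorem \ref{thm:RelPeriod} and its proof in \cite[Lemma 13]{CLT13}.'' Your argument --- computing the Poincar\'e residue as $\tfrac{du}{u}\wedge\omega$ on $\{u\neq 0\}$, then integrating out the circle fiber to pick up the factor $2\pi\sqrt{-1}$ --- is correct and is exactly the proof given in \cite[Lemma 13]{CLT13}, so there is nothing to compare on the level of strategy.

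One small comment on execution: in the Fubini step, the pullback of $\tfrac{du}{u}$ to $\pi^{-1}(\gamma)$ is $\sqrt{-1}\,d\theta$ plus terms in the base parameters (coming from $d\sqrt{|H|}$), not just $\sqrt{-1}\,d\theta$; these extra terms die upon wedging with $\tfrac{dX}{X}\wedge\tfrac{dY}{Y}$ because $\gamma$ is only real $2$-dimensional, but you should say so explicitly rather than writing ``$\tfrac{du}{u}=\sqrt{-1}\,d\theta$'' on the fiber alone. Your treatment of the degeneration over $C_q$ is adequate: since the fiber integral is the constant $2\pi\sqrt{-1}$ on the complement of a measure-zero set, no delicate limit is actually needed.
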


As pointed out in the proof of \cite[Proposition 14]{CLT13}, the results in \cite{Batyrev93,Stienstra98,KM10} (cf. Section \ref{sect:MHS}) imply that the relative periods of $\omega$, $\tomega$ also generate the solution spaces to the Picard-Fuchs systems $\cP$, $\tcP$ respectively (cf. Theorem \ref{thm:PeriodSoln}). A similar remark holds for relative periods of $\omega^0$ (see Remark \ref{rem:SmallPFSystem}).

\subsection{Decomposing relative 3-cycles}\label{sect:Decompose3Cycles}
Now we start to prove Theorem \ref{thm:CycleCorr}. Fix $(q,x) = \tq \in \tU_\epsilon$. Consider the long exact sequence of integral homology associated to the pair $(\cX^\vee_q, \cY_{q,x})$:
\begin{equation}\label{eqn:RelHomologyLES}
  \cdots \to H_3(\cY_{q,x}; \bZ) \to H_3(\cX^\vee_q; \bZ) \to H_3(\cX^\vee_q, \cY_{q,x}; \bZ) \to H_2(\cY_{q,x}; \bZ) \to H_2(\cX^\vee_q; \bZ) \to \cdots.
\end{equation}
Our goal of this subsection is to show the following.

\begin{proposition}\label{prop:RelHomologySES}
The following sequence induced by \eqref{eqn:RelHomologyLES}
\begin{equation}\label{eqn:RelHomologySES}
  \xymatrix{
    0 \ar[r] & H_3(\cX^\vee_q; \bZ) \ar[r] & H_3(\cX^\vee_q, \cY_{q,x}; \bZ) \ar[r] & H_2(\cY_{q,x}; \bZ) \ar[r] & 0
  }
\end{equation}
is a \emph{split} short exact sequence.
\end{proposition}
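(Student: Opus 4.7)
The plan is to derive the short exact sequence \eqref{eqn:RelHomologySES} from the long exact sequence \eqref{eqn:RelHomologyLES} by showing that the two flanking terms $H_3(\cY_{q,x};\bZ)$ and $H_2(\cX^\vee_q;\bZ)$ both vanish, and then to observe that the quotient is free abelian so the extension automatically splits.

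First I would establish the two vanishings. Both are instances of the structural fact that the Hori-Vafa mirror of a semi-projective toric Calabi-Yau $n$-orbifold of our type has reduced integral homology concentrated in the middle dimension $n$. The underlying mechanism is an extension of the middle-dimensional isomorphism $\alpha$ (resp.\ $\alpha_0$) of Section \ref{sect:Conic} to all degrees: using the deformation retraction of the mirror onto $\mu^{-1}(0)$ and the circle fibration $\pi$ (a principal $S^1$-bundle away from the discriminant $C$, collapsing the fiber to a point over $C$), the construction $\gamma \mapsto \pi^{-1}(\gamma)$ yields an isomorphism
$$
  H_{k-1}(T, C;\bZ) \xrightarrow{\sim} H_k(\cZ^\vee;\bZ) \qquad \text{for all } k \ge 1.
$$
This can be proved by a Mayer--Vietoris/Gysin argument on the open cover consisting of $\pi^{-1}(T \setminus C)$ (a principal $S^1$-bundle, to which the Gysin sequence applies) and a tubular neighborhood of $\pi^{-1}(C) \cong C$ in $\mu^{-1}(0)$ (which retracts onto $C$); compare \cite[Section 4]{CLT13} and \cite[Section 5]{DK11}. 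Applied to $\cX^\vee_q$ this gives $H_2(\cX^\vee_q;\bZ) \cong H_1(T, C_q;\bZ)$, which vanishes because the long exact sequence of $(T, C_q)$ combined with surjectivity of $H_1(C_q;\bZ) \to H_1(T;\bZ) \cong \bZ^2$ (a consequence of the two-dimensionality of $\Delta$, which forces $C_q$ to wrap nontrivially around both factors of $T$) yields $H_1(T, C_q;\bZ) = 0$. Applied to $\cY_{q,x}$ it gives $H_3(\cY_{q,x};\bZ) \cong H_2(\bC^*, P_{q,x};\bZ)$, which is immediately zero from $H_2(\bC^*;\bZ) = 0$ and $H_1(P_{q,x};\bZ) = 0$ via the long exact sequence of $(\bC^*, P_{q,x})$.

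With both vanishings, the long exact sequence \eqref{eqn:RelHomologyLES} collapses to the short exact sequence \eqref{eqn:RelHomologySES}. For the splitting, the isomorphism $\alpha_0: H_1(\bC^*, P_{q,x};\bZ) \xrightarrow{\sim} H_2(\cY_{q,x};\bZ)$ together with the computation that $H_1(\bC^*, P_{q,x};\bZ)$ is free abelian of rank $|P_{q,x}| = \Vol(\Delta_0)$ (obtained from the long exact sequence of the pair, using $H_1(P_{q,x};\bZ) = 0$) exhibits $H_2(\cY_{q,x};\bZ)$ as free abelian. Any short exact sequence of abelian groups with free quotient splits.

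The hard part will be justifying the extension of $\alpha$ to non-middle degrees, since the middle-dimensional version is the only one stated in Section \ref{sect:Conic} and a careful Mayer--Vietoris argument is needed to handle the degeneration of the circle fibration along the discriminant. As a quicker alternative, one may invoke the folklore fact that the Hori-Vafa mirror of a semi-projective toric Calabi-Yau $n$-orbifold of our type is homotopy equivalent to a bouquet of middle-dimensional spheres $S^n$, which delivers both vanishings simultaneously and, along the way, the explicit rank count \eqref{eqn:RankSum} of Remark \ref{rem:SmallPFSystem}.
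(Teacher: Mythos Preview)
Your argument has a genuine error: the claim $H_2(\cX^\vee_q;\bZ)=0$ is false. In fact the paper itself records, in the proof of Lemma \ref{prop:RelCohomologySES}, that the Danilov--Khovanski\^{i} Lefschetz theorem gives $H^2(\cX^\vee_q;\bC)\cong H^2(\bC^2\times(\bC^*)^2;\bC)\cong\bC$, generated by the class of $\tfrac{dX}{X}\wedge\tfrac{dY}{Y}$. By universal coefficients $H_2(\cX^\vee_q;\bZ)$ has rank~$1$. Consequently your asserted all-degree extension of $\alpha$ cannot hold below the middle dimension, and the ``bouquet of $n$-spheres'' folklore you invoke is likewise incorrect for these Hori--Vafa mirrors. (Your computation $H_1((\bC^*)^2,C_q;\bZ)=0$ from Lemma \ref{lem:MCH1Surjective} is fine; it is the identification with $H_2(\cX^\vee_q;\bZ)$ that fails.) The vanishing $H_3(\cY_{q,x};\bZ)=0$ is true, but your justification via the same extended isomorphism is equally unsupported; the paper proves it directly by Mayer--Vietoris (Lemma \ref{lem:CycleInjection}).

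The paper does not attempt to kill the right-hand flanking term. Instead it constructs an \emph{explicit geometric section} $s:H_2(\cY_{q,x};\bZ)\to H_3(\cX^\vee_q,\cY_{q,x};\bZ)$ of the boundary map (Lemma \ref{lem:SectionExists}): given $\Gamma_0=\pi_0^{-1}(\gamma_0)$, one uses the surjectivity of $H_1(C_q,P_{q,x};\bZ)\to H_1((\bC^*)^2,P_{q,x};\bZ)$ (Lemma \ref{lem:MCRelH1Surjective}) to push $\gamma_0$ into $C_q$ and then lift a bounding $2$-chain through the circle fibration $\pi$. This single construction yields both surjectivity of $\partial$ and the splitting in one stroke. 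It is also not a luxury: this specific section is what makes the subsequent period computations in Lemmas \ref{lem:StokesRel3Cycles} and \ref{lem:Open4Cycles} work, so an abstract splitting from freeness of the quotient, even had your route to it been valid, would not suffice for the proof of Theorem \ref{thm:CycleCorr}.
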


We prove Proposition \ref{prop:RelHomologySES} in a series of lemmas below.

\begin{lemma}\label{lem:CycleInjection}
We have
$$
  H_3(\cY_{q,x}; \bZ) = 0.
$$
\end{lemma}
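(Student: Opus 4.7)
The plan is to exploit the fact that $\cY_{q,x}$ is a smooth complex affine variety of complex dimension 2. By definition, $\cY_{q,x}$ is cut out by the single equation $uv = H_0(Y_0, q, x)$ inside the affine variety $\bC^2 \times \bC^*$, so it is itself affine, and Assumption \ref{assump:Regular} (the $\Delta_0$-regularity of $H_0$) guarantees that it is smooth over $\tU_\epsilon$. Hence $\cY_{q,x}$ is a Stein manifold of complex dimension $2$.

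By the Andreotti--Frankel theorem, a Stein manifold of complex dimension $n$ has the homotopy type of a CW complex of real dimension at most $n$. Applied here, $\cY_{q,x}$ has the homotopy type of a CW complex of real dimension at most $2$, and therefore $H_i(\cY_{q,x}; \bZ) = 0$ for all $i \ge 3$. The case $i=3$ is the claimed vanishing.

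If one preferred a more hands-on argument, the conic fibration structure developed in Section \ref{sect:Conic} offers an alternative: $\cY_{q,x}$ deformation retracts onto the real 3-manifold $\mu^{-1}(0)$, which fibers over $\bC^*$ via $\pi_0$ with $S^1$ fibers away from $P_{q,x}$ and point fibers over $P_{q,x}$. A Leray spectral sequence or a Mayer--Vietoris decomposition (small disks around each point of $P_{q,x}$ versus their complement, whose base is homotopy equivalent to a wedge of circles) would then yield $H_i(\cY_{q,x}; \bZ) = 0$ for $i \ge 3$ directly. I would present the Stein argument since it is shorter and, as a bonus, uniformly delivers the analogous vanishing statements ($H_4(\cX^\vee_q; \bZ) = 0$, $H_2(P_{q,x}; \bZ) = 0$, etc.) that are likely to reappear in the subsequent lemmas establishing Proposition \ref{prop:RelHomologySES}. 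The only subtlety is verifying smoothness of $\cY_{q,x}$ in order to invoke the Stein property, but this is exactly what Assumption \ref{assump:Regular} was set up to provide, so I anticipate no real obstacle.
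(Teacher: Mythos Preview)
Your primary argument via Andreotti--Frankel is correct and is a genuinely different route from the paper's proof. The paper instead carries out explicitly the Mayer--Vietoris decomposition you sketch as an alternative: it covers $\bC^*$ by small open disks $B_0$ around the points of $P_{q,x}$ and the complement $A_0$ of slightly smaller closed disks, pulls back to $A = \pi^{-1}(A_0)$ and $B = \pi^{-1}(B_0)$ under the conic fibration, and checks that $H_3(A) = H_3(B) = 0$ while $H_2(A\cap B) \to H_2(A)$ is injective. Your Stein argument is shorter and, as you note, immediately reusable; indeed the paper itself invokes exactly this observation later (in the proof of Lemma~\ref{prop:RelCohomologySES}) when it needs the cohomological analogue $H^3(\cY_{q,x};\bC)=0$, remarking that it follows ``alternatively by that $\cY_{q,x}$ is a smooth affine algebraic variety of dimension~2.'' The explicit Mayer--Vietoris computation, on the other hand, sets up concrete chain-level pictures of the fibration that feed into the subsequent constructions in Section~\ref{sect:Decompose3Cycles} and~\ref{sect:Construct4Cycles}, so there is some narrative value in the paper's choice even if it is logically redundant for this lemma alone.
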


\begin{proof}
Consider the conic fibration $\pi: \cY_{q,x} \to \bC^*$. We write
$$
  \bC^* = A_0 \cup B_0,
$$ 
where $B_0$ is the union of disjoint open disks, one around each point in $P_{q,x}$, and $A_0$ is the complement in $\bC^*$ of disjoint closed disks, one around each point in $P_{q,x}$. Note that $B_0$ deformation retracts to $P_{q,x}$ while $A_0$ is homotopy equivalent to $\bC^* \setminus P_{q,x}$. Their intersection $A_0 \cap B_0$ deformation retracts to a disjoint union of circles, one around each point in $P_{q,x}$.

Then $\cY_{q,x}$ is covered by the sets $A = \pi^{-1}(A_0)$ and $B = \pi^{-1}(B_0)$, and Mayer-Vietoris gives
$$
  \cdots \to H_3(A;\bZ) \oplus H_3(B;\bZ) \to H_3(\cY_{q,x}; \bZ) \to H_2(A \cap B; \bZ) \to H_2(A;\bZ) \oplus H_2(B;\bZ) \to \cdots.
$$
The fiber of $\pi$ over each point in $P_{q,x}$ is $\{uv = 0\} = \bC \cup \bC \subset \bC^2$. Thus $B$ deformation retracts to $P_{q,x}$, which implies that $H_3(B;\bZ) = H_2(B; \bZ) = 0$. Moreover, $A$ is a trivial $\bC^*$-bundle over $A_0$ with a trivialization given by
$$
  A \cong \bC^* \times A_0, \qquad \left(u, v = \frac{H_0(Y_0, q, x)}{u}, Y_0\right) \xleftrightarrow{} (u,Y_0),
$$
and $A \cap B$ is a subbundle over $A_0 \cap B_0$. Thus $H_3(A; \bZ) = 0$ and moreover, the map $H_2(A \cap B) \to H_2(A)$ induced by inclusion is injective by the injectivity of the map $H_1(A_0 \cap B_0) \to H_1(A_0)$ induced by inclusion.
\end{proof}

\begin{lemma}\label{lem:MCH1Surjective}
The map
$$
      H_1(C_q; \bZ) \to H_1((\bC^*)^2; \bZ)
$$
induced by inclusion is surjective.
\end{lemma}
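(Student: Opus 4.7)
The plan is to show $i_*: H_1(C_q;\bZ) \to H_1((\bC^*)^2;\bZ) = \bZ^2$ is surjective in two stages: first over $\bQ$ by a residue argument, then over $\bZ$ by producing explicit integral generators.

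For $\bQ$-surjectivity, I would show the dual map $i^*: H^1((\bC^*)^2;\bC) \to H^1(C_q;\bC)$ is injective. A class $\alpha\,dX/X + \beta\,dY/Y$ on $(\bC^*)^2$ restricts to a meromorphic form on the smooth projective compactification $\bar C_q$ (smooth by $\Delta$-regularity; the punctures $\bar C_q \setminus C_q$ lie on the toric divisors of the normal-fan toric surface of $\Delta$, intersecting transversally by $\Delta$-regularity). At a puncture $p$ on the toric divisor for an edge $e$ of $\Delta$ with primitive inner normal $n_e = (n_{e,1}, n_{e,2})$, the residue equals $\alpha n_{e,1} + \beta n_{e,2}$. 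If this class is exact on $C_q$, all residues vanish, but since $\Delta$ is $2$-dimensional, the normals $\{n_e\}$ span $\bR^2$, forcing $\alpha = \beta = 0$. Hence $i^*$ is injective and $i_*$ has rank-$2$ image.

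The integral generator $(1,0)$ comes from the outer condition on $\tau_0$: the edge $b_2 b_3$ from $(0,0)$ to $(0,\fm)$ lies on $\partial\Delta$ (because $\tau_0 \in \Sigma(2) \setminus \Sigma(2)_c$), has primitive inner normal $(1,0)$, and has lattice length $\fm$. By $\Delta$-regularity applied to this edge, $H(0, Y, q) = Y^{\fm} + 1 + \sum_{i:\, m_i = 0} s_i(q) Y^{n_i}$ has $\fm$ simple roots in $\bC^*$, giving $\fm$ punctures of $C_q$ as $X \to 0$. Near any one such puncture, $C_q$ is parametrized smoothly by $X$, so the loop $\{|X| = \epsilon\} \subset C_q$ winds once around $X = 0$ and stays bounded away from $Y = 0$, mapping to $(1,0) \in H_1((\bC^*)^2;\bZ)$.

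It remains to find a second cycle on $C_q$ mapping to some $(a, 1) \in \bZ^2$, since then $(0, 1) = (a,1) - a(1,0)$ is in the image and $\bZ^2$ is generated. My plan is to lift a small loop in $\bC^*_Y$ around $Y = 0$ to a chain in $C_q$ along the projection $\pi_Y: C_q \to \bC^*_Y$ (whose generic degree is the width of $\Delta$ in the $X$-direction and whose monodromy around $Y = 0$ is dictated by the lowest-order terms $X^{\fr} Y^{-\fs}$ and $1$ of $H$), close it into a cycle using the cyclic monodromy structure, and then subtract integer multiples of the puncture loop from the second step to eliminate any unwanted $X$-winding. The main obstacle is guaranteeing $Y$-winding exactly $\pm 1$: puncture loops from the edge $b_3 b_1$ only give $\gcd(\fs, \fr)$ copies of the primitive inner normal $(\fs, \fr)/\gcd(\fs, \fr)$, so when $\gcd(\fs, \fr) > 1$ the primitive inner normals alone do not generate $\bZ^2$ and one must exploit the interior cycles of $\bar C_q$ (whose existence is governed by the interior lattice points of $\Delta$ via the Khovanskii genus formula) together with the Danilov-Khovanskii description of $H^1(C_q)$ to extract a cycle of winding one.
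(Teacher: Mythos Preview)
Your rank argument and your construction of the class $(1,0)$ are fine, but step 3 is not a proof: you correctly identify that when $\gcd(\fs,\fr)<\fr$ (equivalently $\fs>0$) the puncture loops around the edges of $\Delta$ incident to $b_3$ only hit the sublattice $\bZ(1,0)+\bZ(\fs/d,\fr/d)$ with $d=\gcd(\fs,\fr)$, and you then gesture at ``interior cycles of $\bar C_q$'' and Danilov--Khovanski\u{\i} without saying how to extract a cycle of $Y$-winding $\pm 1$ from them. This obstacle is real: for instance with $(\fr,\fs,\fm)=(4,2,1)$ the triangle $\Delta$ has edge normals $(1,0),(1,2),(3,4)$ spanning only an index-$2$ sublattice of $\bZ^2$, so puncture loops from \emph{all} edges still miss $(0,1)$, and $\bar C_q$ has genus $1$. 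Your monodromy-lift idea for $\pi_Y$ does not close up to a loop with $Y$-winding $1$ either (the sheets over small $|Y|$ permute in cycles of length $\fr/d$), so something genuinely new is needed and you have not supplied it.

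The paper avoids this entirely by a different mechanism. First, for $\fs=0$ it quotes from \cite{FLZ20} explicit vanishing cycles $\gamma^{(\tau_0,\sigma_0)},\gamma^{(\tau_2,\sigma_0)}\in H_1(C_q;\bZ)$ coming from the nodal degeneration at $q\to 0$; a direct computation of their pairings with $\tfrac{dX}{2\pi\sqrt{-1}X},\tfrac{dY}{2\pi\sqrt{-1}Y}$ gives the identity matrix. Second, for $\fs>0$ it does not try to build cycles on $C_q$ at all: it passes to the crepant partial resolution $\cX'$ obtained by inserting the ray through $(1,0,1)\in\sigma_0$, for which the corresponding parameter $\fs$ vanishes. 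The two families of mirror curves for $\cX$ and $\cX'$ sit in a single global family over a connected base $\cM$ (as in \cite{Yu20}), and surjectivity of $\iota_*:H_1(C_q;\bZ)\to H_1((\bC^*)^2;\bZ)$ is preserved under parallel transport of the local system $H_1(C_q;\bZ)$ over $\cM$. This reduction-by-resolution is the key idea your proposal is missing; without it (or an equally concrete substitute) the integral surjectivity for $\fs>0$ remains open in your argument.
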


We note that \cite[Section 4.4]{FLZ20} studies the above map and shows that it has finite cokernel. We confirm that the cokernel is indeed trivial.

\begin{proof}
Denote the above map by $\iota_*$. We use the analysis of the family of curves $C_q$ in \cite[Sections 4.4, 5.8]{FLZ20}. First we consider the case $n_1 = \fs = 0$. Given a 1-cycle $\gamma \in H_1(C_q; \bZ)$, we will consider the pairing of $\iota_*(\gamma)$ with the basis of 1-forms $\frac{dX}{2\pi\sqrt{-1}X}, \frac{dY}{2\pi\sqrt{-1}Y} \in H^1((\bC^*)^2; \bZ)$. For any flag $(\tau,\sigma) \in F(\Sigma)$, \cite{FLZ20} constructs an element $\gamma^{(\tau,\sigma)} \in H_1(C_q; \bZ)$ by considering the degeneration of $C_q$ in the limit $q \to 0$ and taking the vanishing cycles on the irreducible component corresponding to $\sigma$ around punctures corresponding to $\tau$. \cite{FLZ20} further computes the pairing of $\iota_*(\gamma^{(\tau,\sigma)})$ with $\frac{dX}{2\pi\sqrt{-1}X}, \frac{dY}{2\pi\sqrt{-1}Y}$. In our present case $\fs = 0$, a direct computation shows that for the cycles $\gamma^{(\tau_0,\sigma_0)}$, $\gamma^{(\tau_2,\sigma_0)}$ corresponding to flags $(\tau_0,\sigma_0)$, $(\tau_2, \sigma_0)$ respectively, we have
\begin{align*}
  &\inner{\iota_*(\gamma^{(\tau_0,\sigma_0)}), \tfrac{dX}{2\pi\sqrt{-1}X}} = 1, \qquad \inner{\iota_*(\gamma^{(\tau_0,\sigma_0)}), \tfrac{dY}{2\pi\sqrt{-1}Y}} = 0, \\
  &\inner{\iota_*(\gamma^{(\tau_2,\sigma_0)}), \tfrac{dX}{2\pi\sqrt{-1}X}} = 0, \qquad \inner{\iota_*(\gamma^{(\tau_2,\sigma_0)}), \tfrac{dY}{2\pi\sqrt{-1}Y}} = 1.
\end{align*}
This implies that $\iota_*(\gamma^{(\tau_0,\sigma_0)})$, $\iota_*(\gamma^{(\tau_2,\sigma_0)})$ form a basis of $H_1((\bC^*)^2; \bZ)$ and the surjectivity of $\iota_*$ follows.

Now we consider the case $n_1 = - \fs <0$. Note that the vector
$$
  (1,0,1) = \frac{1}{\fr}b_1 + \frac{\fs}{\fr\fm}b_2 + \left(1 - \frac{1}{\fr} - \frac{\fs}{\fr\fm}  \right)b_3
$$
always lies inside the cone $\sigma_0$ (recall that $\fs \le \fr-1$). Let $\cX'$ be the crepant partial resolution of $\cX$ whose fan is obtained from $\Sigma$ by taking the star subdivision along a new ray generated by $(1,0,1)$. By \cite[Section 5.1]{Yu20}, the family of curves $\{C_q\}_{q \in U_{\epsilon}}$ and the analogous family defined by the toric Calabi-Yau 3-orbifold $\cX'$ fit into a global family of affine curves in $(\bC^*)^2$ over a base $\cM$. Moreover, $H_1(C_q;\bZ)$ forms a local system of lattices over $\cM$ and maps to the trivial system $H_2((\bC^*)^2;\bZ)$ via inclusion. Now, note that $\cX'$ satisfies the condition $\fs = 0$ of the previous case and $\iota_*$ is surjective for the family of curves defined by $\cX'$. It follows that $\iota_*$ is also surjective for $\{C_q\}_{q \in U_{\epsilon}}$ since surjectivity is preserved under parallel transport over $\cM$.
\end{proof}

\begin{lemma}\label{lem:MCRelH1Surjective}
The map
$$
      H_1(C_q, P_{q,x}; \bZ) \to H_1((\bC^*)^2, P_{q,x}; \bZ)
$$
induced by inclusion is surjective.
\end{lemma}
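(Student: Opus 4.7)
The plan is to compare the long exact sequences of the two pairs $(C_q, P_{q,x})$ and $((\bC^*)^2, P_{q,x})$ via a commutative ladder, reducing surjectivity of the middle vertical map to Lemma \ref{lem:MCH1Surjective} plus a connectedness statement. Since $P_{q,x}$ is a finite (in fact $\Vol(\Delta_0)$-point) subset, $H_1(P_{q,x}; \bZ) = 0$. Thus both long exact sequences collapse to short exact sequences of the form
\begin{equation*}
  0 \to H_1(C_q; \bZ) \to H_1(C_q, P_{q,x}; \bZ) \to \ker\!\bigl(H_0(P_{q,x};\bZ) \to H_0(C_q;\bZ)\bigr) \to 0,
\end{equation*}
and similarly with $C_q$ replaced by $(\bC^*)^2$. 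These assemble into a commutative ladder in which the left-hand vertical map is surjective by Lemma \ref{lem:MCH1Surjective}.

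The only remaining task is to identify the right-hand vertical map. Both $C_q$ and $(\bC^*)^2$ are connected, so $H_0$ of each is $\bZ$ and the two kernels above coincide with the augmentation kernel $\ker\!\bigl(H_0(P_{q,x};\bZ) \to \bZ\bigr)$; the right vertical map is then the identity. The five lemma (or a direct element chase) then yields surjectivity of the middle vertical map, which is the desired statement.

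The only non-trivial input is the connectedness of $C_q$. The affine curve $C_q$ is smooth by $\Delta$-regularity (Assumption \ref{assump:Regular}), and its natural compactification in the complete toric surface determined by the normal fan of $\Delta$ is a smooth curve linearly equivalent to the ample toric divisor associated to $\Delta$; by the Khovanskii-Kouchnirenko theorem it has geometric genus equal to the number of interior lattice points of $\Delta$ and, in particular, is irreducible. Hence $C_q$ itself is connected. This is the main conceptual obstacle, although in practice it is a standard consequence of the regularity hypothesis and requires no new argument here.
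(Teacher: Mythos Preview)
Your proof is correct and follows essentially the same approach as the paper: compare the long exact sequences of the pairs via a commutative ladder and invoke Lemma \ref{lem:MCH1Surjective} together with the Five Lemma. Your added discussion of the connectedness of $C_q$ makes explicit an ingredient the paper leaves implicit; one small remark is that the genus formula by itself does not directly yield irreducibility---connectedness of the compactified curve follows more cleanly from the fact that it lies in an ample linear system on the toric surface (e.g., via $h^0(\cO_{\bar C})=1$ from Kodaira vanishing), after which smoothness gives irreducibility.
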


\begin{proof}
The lemma follows from Lemma \ref{lem:MCH1Surjective} and the Five Lemma applied to
$$
\xymatrix{
  0 \ar[r] \ar[d] & H_1(C_q; \bZ) \ar[r] \ar[d] & H_1(C_q, P_{q,x}; \bZ) \ar[r] \ar[d] &  H_0(P_{q,x}; \bZ) \ar[r] \ar[d] & H_0(C_q; \bZ) \ar[d]\\
  0 \ar[r] & H_1((\bC^*)^2; \bZ) \ar[r]  & H_1((\bC^*)^2, P_{q,x}; \bZ) \ar[r] &  H_0(P_{q,x}; \bZ) \ar[r]  & H_0(\bC^*; \bZ).
}
$$
\end{proof}

\begin{lemma}\label{lem:SectionExists}
There is a section
$$
  s: H_2(\cY_{q,x}; \bZ) \to H_3(\cX^\vee_q, \cY_{q,x}; \bZ) 
$$
of the boundary map $\partial: H_3(\cX^\vee_q, \cY_{q,x}; \bZ) \to H_2(\cY_{q,x}; \bZ)$, i.e. the composition $\partial \circ s$ is the identity on $H_2(\cY_{q,x}; \bZ)$. In particular, $\partial$ is surjective.
\end{lemma}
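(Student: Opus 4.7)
The plan is to exploit the circle-fibration structures from Section \ref{sect:Conic} and reduce the construction of $s$ to a statement about relative homology on the base tori. Since the inclusion $\cY_{q,x} \subset \cX^\vee_q$ corresponds to the inclusion of $D := \{X^{\fa}Y^{\fb} = -x^{\fa}\}$ into $(\bC^*)^2$ (with $P_{q,x} = D \cap C_q$), and $\pi_0$ is the restriction of $\pi$ over $D$, I would first extend the circle-lifting construction to a natural homomorphism
$$
\alpha_{\mathrm{rel}}: H_2((\bC^*)^2, D \cup C_q; \bZ) \to H_3(\cX^\vee_q, \cY_{q,x}; \bZ),
$$
and check that it fits into a commutative square with the connecting map $\partial_L$ of the triple $((\bC^*)^2, D \cup C_q, C_q)$ (post-composed with the excision isomorphism $H_1(D \cup C_q, C_q; \bZ) \cong H_1(\bC^*, P_{q,x}; \bZ)$) on the left, the boundary $\partial : H_3(\cX^\vee_q, \cY_{q,x}; \bZ) \to H_2(\cY_{q,x}; \bZ)$ on the right, and $\alpha_0$ across the bottom.

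With this square in place, surjectivity of $\partial$ would reduce to surjectivity of $\partial_L$. From the exact sequence of the triple, $\partial_L$ surjects provided $H_1((\bC^*)^2, C_q; \bZ) = 0$. I would verify this vanishing from the long exact sequence of the pair $((\bC^*)^2, C_q)$ using two inputs: connectedness of $C_q$, which follows from $\Delta$-regularity by passing to the projective closure in the toric surface associated to $\Delta$, where the closure is an ample divisor and hence connected by Bertini; and Lemma \ref{lem:MCH1Surjective}, which gives surjectivity of $H_1(C_q; \bZ) \to H_1((\bC^*)^2; \bZ)$.

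Once $\partial_L$ is known to be surjective, since $H_1(\bC^*, P_{q,x}; \bZ)$ is free abelian of rank $\Vol(\Delta_0)$ (as seen from the long exact sequence of the pair $(\bC^*, P_{q,x})$), the surjection $\partial_L$ admits a section $s'$ as a homomorphism. Setting $s := \alpha_{\mathrm{rel}} \circ s' \circ \alpha_0^{-1}$ and using the commutativity of the square, I would obtain $\partial \circ s = \alpha_0 \circ \partial_L \circ s' \circ \alpha_0^{-1} = \mathrm{id}$, which is the desired section property and, in particular, yields surjectivity of $\partial$.

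The main technical hurdle will be rigorously defining $\alpha_{\mathrm{rel}}$ and verifying commutativity of the square at the chain level, given that fibers of $\pi$ collapse over $C_q$; this demands careful bookkeeping of the two kinds of boundary components of a relative 2-chain in $((\bC^*)^2, D \cup C_q)$. My plan is to closely follow and extend the arguments for the absolute lift $\alpha$ from \cite{DK11,CLT13}, tracking the $D$-boundary and $C_q$-boundary separately.
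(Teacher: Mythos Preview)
Your approach is correct and rests on the same core geometric idea as the paper's: circle-lift a 2-chain $\gamma$ in $(\bC^*)^2$ whose boundary decomposes as a piece in $D$ (giving the prescribed $\gamma_0$) plus a piece in $C_q$ (which collapses under $\pi$). The paper carries this out by hand: it invokes Lemma~\ref{lem:MCRelH1Surjective} to produce $\gamma_0' \subset C_q$ homologous to $\gamma_0$ in $((\bC^*)^2, P_{q,x})$, picks a 2-chain $\gamma$ with $\partial\gamma = \gamma_0 - \gamma_0'$, and sets $s(\pi_0^{-1}(\gamma_0)) := \pi^{-1}(\gamma)$. Your packaging via the triple $((\bC^*)^2, D\cup C_q, C_q)$ and the map $\alpha_{\mathrm{rel}}$ is an abstract reformulation of exactly this: the surjectivity of your $\partial_L$ is equivalent to the content of Lemma~\ref{lem:MCRelH1Surjective}, and both hinge on Lemma~\ref{lem:MCH1Surjective} together with the connectedness of $C_q$ (which the paper uses implicitly in its Five Lemma argument for Lemma~\ref{lem:MCRelH1Surjective}, and which you correctly justify via the ample closure $\bar C_q \subset \bP_\Delta$).

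The one practical difference worth noting: the paper's explicit chain-level description of $s$ is not incidental---it is reused verbatim in the proofs of Lemmas~\ref{lem:StokesRel3Cycles} and~\ref{lem:Open4Cycles}, where one needs to know the actual shape of $s(\Gamma_0)$ (as $\pi^{-1}(\gamma)$ for a specific $\gamma$, lying inside a chosen branch of $\log\bar X$) in order to differentiate periods and to deform the cycle into $\tcX^\vee_{\tq}$. Your abstract section, obtained only from freeness of $H_1(\bC^*, P_{q,x};\bZ)$, would not directly furnish that geometric control; you would still have to unwind it back to a concrete $\gamma$ at that stage.
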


\begin{proof}
Recall from Section \ref{sect:Conic} that there is an isomorphism $\alpha_0: H_1(\bC^*, P_{q,x}; \bZ) \xrightarrow{\sim} H_2(\cY_{q,x}; \bZ)$ induced by the circle fibration $\pi_0$. Consider the inclusion \eqref{eqn:Y0Include} over which the inclusion $\cY_{q,x} \to \cX^\vee_q$ is the induced inclusion of conic bundles. Note that the induced map $H_1(\bC^*, P_{q,x}; \bZ) \to H_1((\bC^*)^2, P_{q,x}; \bZ)$ is injective. Let
$$
  \gamma_0 \in H_1(\bC^*, P_{q,x}; \bZ) \subseteq H_1((\bC^*)^2, P_{q,x}; \bZ).
$$
By Lemma \ref{lem:MCRelH1Surjective}, $\gamma_0$ is homologous in $(\bC^*)^2$ to a relative 1-cycle $\gamma_0'$ contained in $C_q$ relative to $P_{q,x}$. Now let $\gamma$ be a 2-chain in $(\bC^*)^2$ such that $\partial \gamma = \gamma_0 - \gamma_0'$. Then the circle fibration map
$$
  \pi: \{|u| = |v|: (u,v,X,Y) \in \cX^\vee_q\} \to (\bC^*)^2
$$
lifts $\gamma$ to a relative 3-chain $\pi^{-1}(\gamma)$ in $\cX^\vee_q$ whose boundary is $\partial (\pi^{-1}(\gamma)) = \pi_0^{-1}(\gamma_0)$. (Note that $\gamma_0'$ is contained in the discriminant locus $C_q$ of $\pi$.) We may then define
$$
  s(\pi_0^{-1}(\gamma_0)) := \pi^{-1}(\gamma).
$$
See Figure \ref{fig:SectionExists} for an illustration of the construction. 
\end{proof}

\begin{figure}[h]
\begin{center}
    \begin{tikzpicture}[scale=1]
        
        \node at (-3.5,0) {$\phantom{\cdot}$};

        \coordinate (0) at (-1, -1);
        \coordinate (1) at (-1, 1);        
        
        \draw[dashed] (-1, -2) -- (-1,2);
        \draw (0) -- (1);
        \draw[dashed] (-2, -1.1) -- (-1,-1);
        \draw[dashed] (-2, 1.1) -- (-1,1);
        \draw (0) .. controls (2, -0.5) and (2, 0.5) .. (1);

        \node[left] at (-1,0) {$\gamma_0$};
        \node at (1.6, 0) {$\gamma_0'$}; 
        \node at (0, 0) {$\gamma$};

        \node[left] at (-2, -1.1) {$C_q$};
        \node[below] at (-1,-2) {$\bC^*$};

        \node at (3, 0) {$\longleftarrow$};
        \node at (3, 0.3) {$\pi$};

        \coordinate (00) at (5.25,-1.5);
        \coordinate (01) at (6.75, 1.5);

        \draw[dashed] (5, -2) -- (7, 2);
        \draw (00) .. controls (10.7, -0.75) and (12.5, 0.55) .. (01);

        \draw (00) .. controls (5.5, 0.75) and (6, 1.25) .. (01);
        \draw[dashed] (00) .. controls (6, -1.25) and (6.5, -0.75) .. (01);
        \draw (00) .. controls (4.5, -0.5) and (5.25, 1.25) .. (01);
        \draw (01) .. controls (7.5, 0.5) and (6.75, -1.25) .. (00);

        \coordinate (1left) at (5.05,0);
        \coordinate (1right) at (6.95,0);

        \draw  (1left) .. controls (5.4, 0.9) and (6.6, 0.9) .. (1right);
        \draw[dashed]  (1left) .. controls (5.4, -0.9) and (6.6, -0.9) .. (1right);

        \coordinate (2down) at (8, -1);
        \coordinate (2up) at (9, 1);


        \draw (2down) .. controls (8.33, 0.5) and (8.5, 0.833) .. (2up);
        \draw[dashed] (2down) .. controls (8.5, -0.833) and (8.67, -0.5) .. (2up);
        \draw (2down) .. controls (7.5, -0.33) and (8, 0.833) .. (2up);
        \draw (2up) .. controls (9.5, 0.33) and (9, -0.833) .. (2down);

        \coordinate (2left) at (7.86,0);
        \coordinate (2right) at (9.14,0);

        \draw  (2left) .. controls (8.1, 0.6) and (8.9, 0.6) .. (2right);
        \draw[dashed]  (2left) .. controls (8.1, -0.6) and (8.9, -0.6) .. (2right);
        
        \node[left] at (1left) {$\pi_0^{-1}(\gamma_0)$};
        \node at (8.5, 1.7) {$\pi^{-1}(\gamma)$};
    \end{tikzpicture}
\end{center}

    \caption{Construction of the section $s$ in Lemma \ref{lem:SectionExists}.}
    \label{fig:SectionExists}
\end{figure}
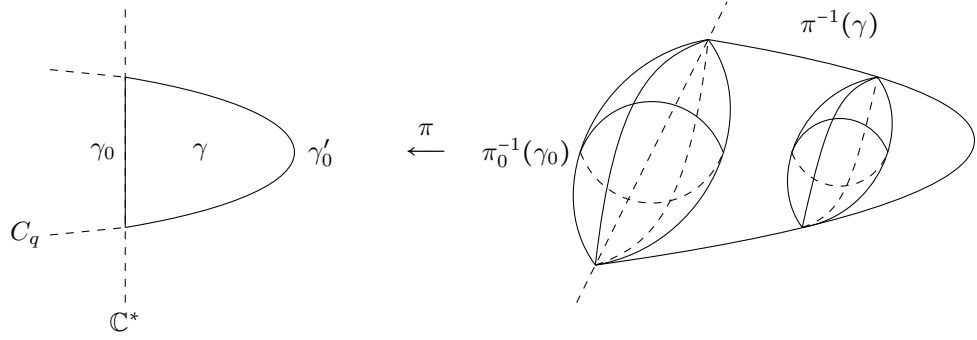

\begin{proof}[Proof of Proposition \ref{prop:RelHomologySES}]
Lemma \ref{lem:CycleInjection} implies that \eqref{eqn:RelHomologySES} is exact at $H_3(\cX^\vee_q; \bZ)$. Lemma \ref{lem:SectionExists} implies that \eqref{eqn:RelHomologySES} is exact at $H_2(\cY_{q,x}; \bZ)$ and is split.
\end{proof}

\subsection{Constructing 4-cycles and matching periods}\label{sect:Construct4Cycles}
In this subsection, we prove Theorem \ref{thm:CycleCorr}. In view of Proposition \ref{prop:RelHomologySES}, we will construct the desired map $\iota: H_3(\cX^\vee_q, \cY_{q,x}; \bZ) \to H_4(\tcX^\vee_{\tq}; \bZ)$ on the direct summands $H_3(\cX^\vee_q; \bZ)$ and $s(H_2(\cY_{q,x}; \bZ))$. Again we fix $(q,x) = \tq \in \tU_\epsilon$.

\begin{lemma}\label{lem:Closed4Cycles}
There is an injective map
$$
  \iota_1: H_3(\cX^\vee_q; \bZ) \to H_4(\tcX^\vee_{\tq}; \bZ)
$$
such that for any $\Gamma \in H_3(\cX^\vee_q; \bZ)$, we have
\begin{equation}\label{eqn:ClosedPeriodCorr}
  \int_{\Gamma} \Omega_q = \frac{1}{2\pi\sqrt{-1}} \int_{\iota_1(\Gamma)}  \tOmega_{\tq}.
\end{equation}
\end{lemma}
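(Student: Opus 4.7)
My plan is to construct $\iota_1$ by transferring the problem to the circle-fibration description of Section \ref{sect:Conic}: use the isomorphisms $\alpha: H_2((\bC^*)^2, C_q; \bZ) \xrightarrow{\sim} H_3(\cX^\vee_q; \bZ)$ and $\talpha: H_3((\bC^*)^3, S_{\tq}; \bZ) \xrightarrow{\sim} H_4(\tcX^\vee_{\tq}; \bZ)$, and build a map at the level of relative cycles in the algebraic tori. The key geometric input is Equation \eqref{eqn:DefEqnRelation}, which exhibits $\tH$ as a one-parameter deformation of $H$ in the coordinate $Z$, giving rise to a family $C_{q,Z}:= \{(X,Y)\in(\bC^*)^2 : \tH(X,Y,Z,\tq) = 0\}$ of hypersurfaces in $(\bC^*)^2$ that extends $C_q = C_{q,0}$ over $Z=0$.

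Given $\gamma \in H_2((\bC^*)^2, C_q; \bZ)$, shrinking $\epsilon'>0$ if necessary so that $C_{q,Z}$ is smooth for $|Z|\le\epsilon'$ (possible by $\Delta$-regularity of $H$, hence of nearby Laurent polynomials), I will use the parameterized tubular-neighborhood theorem to deform $\gamma$ coherently to a family $\gamma(Z) \in H_2((\bC^*)^2, C_{q,Z}; \bZ)$ with $\gamma(0) = \gamma$. Then I define the 3-chain
$$
\tgamma := \bigcup_{Z\in S^1_{\epsilon'}} \gamma(Z) \times \{Z\} \ \subset\ (\bC^*)^3,\qquad S^1_{\epsilon'} := \{Z : |Z| = \epsilon'\},
$$
whose only boundary comes from the relative boundaries $\partial\gamma(Z) \subset C_{q,Z}$, and therefore satisfies $\partial\tgamma \subset S_{\tq}$. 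This produces $[\tgamma] \in H_3((\bC^*)^3, S_{\tq}; \bZ)$, and I set $\iota_1 := \talpha \circ (\gamma\mapsto\tgamma) \circ \alpha^{-1}$.

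To verify the period identity, by Theorem \ref{thm:RelPeriod} it suffices to show $\int_{\tgamma} \tomega = 2\pi\sqrt{-1}\int_{\gamma} \omega$. Writing $\tomega = \omega \wedge \tfrac{dZ}{Z}$ via \eqref{eqn:FormsOnTori} and applying Fubini gives
$$
\int_{\tgamma} \tomega \;=\; \int_{S^1_{\epsilon'}} \frac{dZ}{Z}\, g(Z),\qquad g(Z) := \int_{\gamma(Z)} \omega.
$$
Because the family $\{\gamma(Z)\}$ varies holomorphically in $Z$ near $0$, $g$ is holomorphic in a neighborhood of $0$ with $g(0) = \int_\gamma \omega$, so Cauchy's integral formula yields $2\pi\sqrt{-1}\,\int_\gamma \omega$. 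Chasing the constants through $\alpha$, $\talpha$ and Theorem \ref{thm:RelPeriod} delivers \eqref{eqn:ClosedPeriodCorr}.

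The main obstacle is injectivity over $\bZ$. My plan is to exploit the local product structure at $Z=0$: near the central fiber, the construction $\gamma\mapsto\tgamma$ factors as $\gamma \mapsto \gamma\times[S^1_{\epsilon'}]$ followed by the inclusion into the ambient relative pair, so it agrees with the cross-product with the fundamental class of $S^1_{\epsilon'}$ under a Künneth decomposition $H_3(U, S_{\tq}\cap U;\bZ) \cong H_2((\bC^*)^2, C_q;\bZ) \otimes H_1(S^1_{\epsilon'};\bZ)$, valid on the tubular neighborhood $U = Z^{-1}(\{0<|Z|\le \epsilon'\})$ where the family trivializes smoothly by Ehresmann's theorem. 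Künneth then gives injectivity of the first map, and the inclusion $U \hookrightarrow (\bC^*)^3$ is injective on this summand because the composition with the fiber restriction map recovers the identity. Once injectivity over $\bZ$ is established, the rational isomorphism claim of Theorem \ref{thm:IntroCycleCorr} will follow from rank counting via \eqref{eqn:RankSum} and the parallel construction for the direct summand $s(H_2(\cY_{q,x};\bZ))$ in the next lemma.
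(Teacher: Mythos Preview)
Your construction is essentially the same idea as the paper's---cross the cycle with a small circle in the $Z$-direction---but you route it through the relative cycles in the tori via $\alpha$ and $\talpha$, whereas the paper works directly on the Hori-Vafa mirrors. The paper's route is slightly cleaner: since $\Gamma \in H_3(\cX^\vee_q;\bZ)$ is already an \emph{absolute} cycle, the product $\Gamma \times \{|Z|=\epsilon'\}$ is immediately a closed $4$-cycle in $\tcX^\vee_{\tq}$ with no relative boundary to manage, and the period identity falls out in one line as a residue,
\[
\int_{\Gamma \times \{|Z|=\epsilon'\}}\tOmega_{\tq} = 2\pi\sqrt{-1}\int_\Gamma \Res_{Z=0}\tOmega_{\tq} = 2\pi\sqrt{-1}\int_\Gamma \Omega_q.
\]
Your version instead requires deforming the relative boundary $\partial\gamma$ into the moving curve $C_{q,Z}$ and then arguing that $g(Z)=\int_{\gamma(Z)}\omega$ is holomorphic to apply Cauchy. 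That holomorphicity is true (it is a period of a holomorphic family of MHS), but the phrase ``the family $\{\gamma(Z)\}$ varies holomorphically'' is imprecise: topological chains do not vary holomorphically; it is the pairing $g(Z)$ that does, and that needs a sentence of justification.

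The real gap is your injectivity argument. The K\"unneth step does give injectivity of $\gamma\mapsto[\tgamma]$ into $H_3(U,S_{\tq}\cap U;\bZ)$, but the passage from $U$ to the full $(\bC^*)^3$ is not justified: there is no well-defined ``fiber restriction map'' $H_3((\bC^*)^3, S_{\tq};\bZ) \to H_2((\bC^*)^2, C_q;\bZ)$ on homology that would let you retract, and classes could in principle die under the inclusion $U\hookrightarrow(\bC^*)^3$. The paper's argument is much simpler and uses exactly what you just proved: if $\iota_1(\Gamma)=0$ then the right-hand side of \eqref{eqn:ClosedPeriodCorr} vanishes, hence $\int_\Gamma\Omega_q=0$; but by Theorem \ref{thm:PeriodSoln} the periods $\int_\Gamma\Omega_q$ are linearly independent over $\bC$ as $\Gamma$ runs over a $\bZ$-basis of $H_3(\cX^\vee_q;\bZ)$, forcing $\Gamma=0$. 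This gives injectivity over $\bZ$ immediately and is the argument you should use.
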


\begin{proof}
Recall from Section \ref{sect:HoriVafa} that the projection $Z: \tcX^\vee_{\tq} \to \bC^*$ given by the $Z$-coordinate can be extended over $Z=0$ and the fiber over $Z=0$ is equal to $\cX^\vee_q$. Take a sufficiently small $\epsilon' > 0$ such that over $\{|Z| \le \epsilon'\}$, the projection is a trivial $\cX^\vee_q$-bundle. Then given $\Gamma \in H_3(\cX^\vee_q; \bZ)$, we may extend $\Gamma$ to a locally constant section, and define
$$
  \iota_1(\Gamma) := \Gamma \times \{|Z| = \epsilon'\} \in H_4(\tcX^\vee_{\tq}; \bZ).
$$
Moreover, we confirm \eqref{eqn:ClosedPeriodCorr} by integrating along the $Z$-direction:
$$
  \int_{\Gamma \times \{|Z| = \epsilon'\}} \tOmega_{\tq}  = 2\pi\sqrt{-1} \int_{\Gamma} \Res_{Z=0} \tOmega_{\tq} = 2\pi\sqrt{-1} \int_{\Gamma} \Omega_q.
$$
The injectivity of $\iota_1$ follows from that the corresponding periods \eqref{eqn:ClosedPeriodCorr} are linearly independent as $\Gamma$ ranges through a basis of $H_3(\cX^\vee_q; \bZ)$ (Theorem \ref{thm:PeriodSoln}).
\end{proof}

\begin{lemma}\label{lem:StokesRel3Cycles}
The section $s: H_2(\cY_{q,x}; \bZ) \to H_3(\cX^\vee_q, \cY_{q,x}; \bZ)$ constructed in (the proof of) Lemma \ref{lem:SectionExists} satisfies that for any $\Gamma_0 \in H_2(\cY_{q,x}; \bZ)$, we have
$$
  x\frac{\partial}{\partial x} \int_{s(\Gamma_0)} \Omega_q = \fa \int_{\Gamma_0} \Omega_{q,x}^0.
$$
\end{lemma}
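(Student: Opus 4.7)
The plan is to transport the computation to the algebraic tori via the circle fibrations of Section \ref{sect:Conic}, where the periods become relative periods of the standard forms $\omega, \omega^0$ of \eqref{eqn:FormsOnTori}. The residue identity underlying Theorem \ref{thm:RelPeriod} extends verbatim to relative chains, yielding
$$
  \int_{s(\Gamma_0)} \Omega_q = 2\pi\sqrt{-1} \int_{\gamma_x} \omega, \qquad \int_{\Gamma_0} \Omega_{q,x}^0 = 2\pi\sqrt{-1} \int_{\gamma_0} \omega^0,
$$
where $\gamma_x$ is the 2-chain in $(\bC^*)^2$ constructed in the proof of Lemma \ref{lem:SectionExists} with $\partial \gamma_x = \gamma_0 - \gamma_0'$. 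Here $\gamma_0$ is regarded as a chain on the curve $T_x := \{X^\fa Y^\fb = -x^\fa\} \subset (\bC^*)^2$ via the embedding \eqref{eqn:Y0Include}, and $\gamma_0' \subset C_q$. Thus it suffices to prove
$$
  x \frac{\partial}{\partial x} \int_{\gamma_x} \omega = \fa \int_{\gamma_0} \omega^0.
$$

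Next, I would realize the $x$-dependence of $\gamma_x$ as the flow of a smooth ambient vector field $V$ on $(\bC^*)^2$. I would choose $V$ so that its restriction to each $T_x$ equals the derivative at fixed $Y_0$ of the map $Y_0 \mapsto (e^{\pi\sqrt{-1}/\fa} x Y_0^{-\fb}, Y_0^\fa)$, and so that $V$ is tangent to $C_q$ along $C_q$ (so that the $x$-family $\gamma_0'^{(x)}$ can be transported inside $C_q$). Since $\omega$ is closed, Cartan's formula and Stokes' theorem give
$$
  \frac{\partial}{\partial x} \int_{\gamma_x} \omega = \int_{\gamma_x} L_V \omega = \int_{\gamma_x} d \iota_V \omega = \int_{\partial \gamma_x} \iota_V \omega = \int_{\gamma_0} \iota_V \omega - \int_{\gamma_0'^{(x)}} \iota_V \omega.
$$
The second integral vanishes: on the complex 1-dimensional curve $C_q$ the $(2,0)$-form $\omega$ restricts to zero, so $\iota_V \omega$ annihilates vectors tangent to $C_q$ and hence vanishes when pulled back to $\gamma_0'^{(x)} \subset C_q$.

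A direct computation then finishes the proof. At a point of $T_x$ one has $V = (X/x)\, \partial/\partial X$, whence
$$
  \iota_V \omega = \iota_V\!\left( \frac{dX}{X} \wedge \frac{dY}{Y} \right) = \frac{1}{x}\, \frac{dY}{Y},
$$
and restricting to $T_x$ via $Y = Y_0^\fa$ yields $\iota_V \omega|_{T_x} = (\fa/x)\, dY_0/Y_0 = (\fa/x)\, \omega^0$. Multiplying the Stokes identity by $x$ and by $2\pi\sqrt{-1}$ and converting back via the residue identification recovers the claimed formula. The main — and essentially only — technical point is arranging the smooth family $\{\gamma_x\}$ and the vector field $V$ with the required boundary behavior (moving $T_x$ correctly, preserving $C_q$ setwise, and compatible on $P_{q,x} = T_x \cap C_q$); this is a routine deformation argument using a partition of unity near $\gamma_x$.
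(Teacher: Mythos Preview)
Your argument is correct and arrives at the same identity, but it is organized differently from the paper's proof. The paper introduces the coordinate $\bar{X}:=-X^{\fa}Y^{\fb}$ (together with $Y=Y_0^{\fa}$) directly on $\cX^\vee_q$, so that $\cY_{q,x}=\{\bar{X}=x^{\fa}\}$ and $\frac{dX}{X}\wedge\frac{dY}{Y}=\frac{d\bar{X}}{\bar{X}}\wedge\frac{dY_0}{Y_0}$; the relative 3-chain $s(\Gamma_0)$ is then arranged to lie in a single branch of $\log\bar{X}$, and differentiating the period in $\log x=\tfrac{1}{\fa}\log\bar{X}$ immediately produces the boundary integral of the residue form, which is $\fa\int_{\Gamma_0}\Omega^0_{q,x}$. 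You instead first pass to the algebraic tori via the circle fibration (using the chain-level form of Theorem~\ref{thm:RelPeriod}, which the paper itself invokes in the proof of Lemma~\ref{lem:Open4Cycles}) and then run a Cartan--Stokes variation with a flow field $V$. The two computations are the same in content: the paper's $\partial/\partial\log\bar{X}$ is your contraction with $V_1=(X/x)\partial_X$, and the factor $\fa$ appears either as $\log\bar{X}=\fa\log x$ or as $dY/Y=\fa\,dY_0/Y_0$. The paper's coordinate change sidesteps the construction of a global $V$ with compatible boundary behavior at $P_{q,x}$; your approach is more explicit about the variation mechanism and makes transparent why the $\gamma_0'$-contribution vanishes. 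One small sharpening: your stated condition ``$V=(X/x)\partial_X$ on $T_x$'' cannot hold at the endpoints in $P_{q,x}$ (since the endpoints must flow along $C_q$), but as you implicitly note, any correction to $V$ on $\gamma_0$ that is tangent to $T_x$ contributes nothing to $\int_{\gamma_0}\iota_V\omega$ because $\omega|_{T_x}=0$, so the computation is unaffected.
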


\begin{proof}
Introduce a new coordinate
$$
  \bar{X} := -X^{\fa}Y^{\fb}.
$$
In view of $Y = Y_0^{\fa}$ from \eqref{eqn:Y0Include}, we have
$$
  \frac{dX}{X} \wedge \frac{dY}{Y} = \frac{d\bar{X}}{\bar{X}} \wedge \frac{dY_0}{Y_0}.
$$
The hypersurface $\cY_{q,x}$ in $\cX^\vee_q$ is now defined by the constant equation $\bar{X} = x^{\fa}$. Now we fix a branch of $\log \bar{X}$ defined on the complement of a ray avoiding $x^{\fa}$. Then for any $\Gamma_0 \in H_2(\cY_{q,x}; \bZ)$, the construction of $s(\Gamma_0)$ in the proof of Lemma \ref{lem:SectionExists} may be performed within this branch. (Note that Lemmas \ref{lem:MCH1Surjective} and \ref{lem:MCRelH1Surjective} used in the construction are valid under the pullback $\bC \to \bC^*$ to the universal cover in the $\bar{X}$-coordinate.) Since the boundary $\partial(s(\Gamma_0)) = \Gamma_0$ is contained in $\{\bar{X} = x^{\fa}\}$, or $\{\log \bar{X} = \fa \log x\}$, we have
$$
  \frac{\partial}{\partial \log x} \int_{s(\Gamma_0)} \Omega_q =  \fa \frac{\partial}{\partial \log \bar{X}} \int_{s(\Gamma_0)} \Omega_q = \fa \int_{\Gamma_0} \Res_{\cX^\vee_q \cap \{\bar{X} = x^{\fa}\}} \frac{du \wedge dv \wedge \frac{dY_0}{Y_0}}{H(X, Y, q) \big|_{\bar{X} = x^{\fa}} - uv}  = \fa \int_{\Gamma_0} \Omega_{q,x}^0.
$$
\end{proof}

\begin{lemma}\label{lem:Open4Cycles}
There is an injective map
$$
  \iota_2: s(H_2(\cY_{q,x}; \bZ)) \to H_4(\tcX^\vee_{\tq}; \bZ),
$$
such that for any $\Gamma_0 \in H_2(\cY_{q,x}; \bZ)$, we have
\begin{equation}\label{eqn:OpenPeriodCorr}
  \int_{s(\Gamma_0)} \Omega_q = \frac{1}{2\pi\sqrt{-1}} \int_{\iota_2(s(\Gamma_0))}  \tOmega_{\tq}.
\end{equation}
\end{lemma}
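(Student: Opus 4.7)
The plan is to construct $\iota_2(s(\Gamma_0))$ as a closed 4-cycle in $\tcX^\vee_{\tq}$ by propagating $s(\Gamma_0)$ over the circle $\{|Z|=\epsilon'\}$ in the base of the extended projection $Z:\tcX^\vee_{\tq}\to\bC^*$ and then capping off the resulting boundary by a chain of vanishing period. First, following the set-up of Lemma \ref{lem:Closed4Cycles}, over $\{|Z|\le\epsilon'\}$ the extended projection is a trivial $\cX^\vee_q$-bundle, and the hypersurface $\{X^{\fa}Y^{\fb}=-x^{\fa}\}\cap\tcX^\vee_{\tq}$ has $Z$-independent defining equation $uv=H(X,Y,q)$ and is canonically identified with $\cY_{q,x}\times\bC^*$. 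Propagating $s(\Gamma_0)$ via this trivialization to 3-chains $s(\Gamma_0)_z$ in each fiber over $z\in S^1_{\epsilon'}$, the boundary $\partial s(\Gamma_0)_z=\Gamma_0$ stays in $\cY_{q,x}$ uniformly. The 4-chain $A:=\bigcup_{z\in S^1_{\epsilon'}}s(\Gamma_0)_z$ then has boundary $\Gamma_0\times S^1_{\epsilon'}\subset\cY_{q,x}\times\bC^*$, and its period is computed, exactly as in the proof of Lemma \ref{lem:Closed4Cycles}, by integrating in the $Z$-direction and using $\Res_{Z=0}\tOmega_{\tq}=\Omega_q$:
$$
  \int_A\tOmega_{\tq}=2\pi\sqrt{-1}\int_{s(\Gamma_0)}\Omega_q.
$$

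Next, I would construct a capping 4-chain $N\subset\tcX^\vee_{\tq}$ with $\partial N=\Gamma_0\times S^1_{\epsilon'}$ and $\int_N\tOmega_{\tq}=0$, so that $\iota_2(s(\Gamma_0)):=A-N$ is a closed 4-cycle with the required period. The crucial observation enabling this is the vanishing
$$
  \tOmega_{\tq}\big|_{\cY_{q,x}\times\bC^*}=0,
$$
which holds because the identity $\fa\,\frac{dX}{X}+\fb\,\frac{dY}{Y}=0$ on $\Xi_x=\{X^{\fa}Y^{\fb}=-x^{\fa}\}$ forces the pullback of $\frac{dX}{X}\wedge\frac{dY}{Y}$ along the embedding \eqref{eqn:Y0Include} to vanish; by the same computation, $\tomega$ also restricts to zero on $\Xi_x\times\bC^*\subset(\bC^*)^3$. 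While $\Gamma_0\times S^1_{\epsilon'}$ does not itself bound inside $\cY_{q,x}\times\bC^*$ (being non-trivial by K\"unneth), the cap $N$ can be built using the conic fibration $\tpi$ of Section \ref{sect:Conic}: writing $\Gamma_0\times S^1_{\epsilon'}=\tpi^{-1}(\gamma_0\times S^1_{\epsilon'})$ under the identification of circle fibrations, one seeks a 3-chain $\tilde\gamma\subset(\bC^*)^3$ with $\partial\tilde\gamma=\gamma_0\times S^1_{\epsilon'}$ modulo $S_{\tq}$ and supported near $\Xi_x\times\bC^*$, so that $\int_{\tilde\gamma}\tomega=0$; then $N:=\tpi^{-1}(\tilde\gamma)$ satisfies $\int_N\tOmega_{\tq}=2\pi\sqrt{-1}\int_{\tilde\gamma}\tomega=0$ by Theorem \ref{thm:RelPeriod}.

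Finally, injectivity of $\iota_2$ will follow from Lemma \ref{lem:StokesRel3Cycles}: if $\iota_2(s(\Gamma_0))$ and $\iota_2(s(\Gamma_0'))$ have equal periods as functions of $(q,x)$, then differentiating with respect to $\log x$ gives $\fa\int_{\Gamma_0}\Omega^0_{q,x}=\fa\int_{\Gamma_0'}\Omega^0_{q,x}$, and these relative 2-dimensional periods distinguish classes in $H_2(\cY_{q,x};\bZ)$ by Remark \ref{rem:SmallPFSystem}, forcing $[\Gamma_0]=[\Gamma_0']$. The hardest part will be the explicit construction of the capping 3-chain $\tilde\gamma$ (equivalently $N$) and the verification that its period vanishes; this requires a careful analysis of $S_{\tq}$ via its graph presentation over $(\bC^*)^2\setminus\Xi_x$ together with the extra component $P_{q,x}\times\bC^*$ along which $\tH$ vanishes identically, combined with the vanishing of $\tomega$ on $\Xi_x\times\bC^*$.
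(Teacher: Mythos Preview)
Your strategy—extend $s(\Gamma_0)$ over the circle $\{|Z|=\epsilon'\}$, cap the resulting boundary with a chain of zero period, and deduce injectivity via Lemma~\ref{lem:StokesRel3Cycles} and Remark~\ref{rem:SmallPFSystem}—is exactly the paper's approach; like you, the paper carries out the capping at the torus level via $\talpha$, constructing a relative $3$-cycle $\tgamma=\tgamma_1+\tgamma_2$ in $((\bC^*)^3,S_{\tq})$ and setting $\iota_2(s(\Gamma_0))=\tpi^{-1}(\tgamma)$.

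The gap is in your mechanism for $\int_{\tilde\gamma}\tomega=0$. Being ``supported near $\Xi_x\times\bC^*$'' is not enough: $\tomega$ vanishes only \emph{on} that hypersurface, and no $3$-chain with the required boundary can lie entirely inside it, since over interior points of $\gamma_0\subset\Xi_x$ (where $H\neq 0$) the $Z$-fiber of $S_{\tq}$ is empty while $S^1_{\epsilon'}$ does not bound in $\bC^*$. The paper's resolution uses precisely your ``graph presentation'', but in a specific way: it first perturbs $\gamma_0$ off $\Xi_x$ in the $X$-coordinate only, via $X(t,\theta)=(1-r(t)\epsilon'e^{\theta\sqrt{-1}})^{1/\fa}X(t)$ with $Y(t,\theta)=Y(t)$ unchanged, so that the graph value $Z_1(t,\theta)=-H/(1+\tq_{R-2}^{\fa}X^{-\fa}Y^{-\fb})$ is defined and has the correct winding in $Z$; the cap $\tgamma_2$ is then the homotopy \emph{in the $Z$-coordinate alone} from $Z_0=\epsilon'e^{-\theta\sqrt{-1}}$ to $Z_1$. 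The vanishing of $\int_{\tgamma_2}\tomega$ is argued from this structure (along the homotopy direction $(X,Y)$ is fixed, so that direction contributes nothing to $\frac{dX}{X}\wedge\frac{dY}{Y}$), not from proximity to $\Xi_x\times\bC^*$. You have all the right ingredients, but the explicit perturbation and the period computation for the cap remain to be supplied along these lines.
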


\begin{proof}
Let $\Gamma_0 \in H_2(\cY_{q,x}; \bZ)$. As in the construction of $s(\Gamma_0)$ in the proof of Lemma \ref{lem:SectionExists}, we take $\gamma_0 \in H_1(\bC^*, P_{q,x}; \bZ)$ inducing $\Gamma_0$ under the circle fibration $\pi_0$, $\gamma_0' \in H_1(C_q, P_{q,x}; \bZ)$ homologous to $\gamma_0$ in $(\bC^*)^2$, and 2-chain $\gamma$ in $(\bC^*)^2$ with $\partial \gamma = \gamma_0 - \gamma_0'$. Without loss of generality, we assume that $\gamma_0$ is a smooth path with endpoints in $P_{q,x}$. Since $s(\Gamma_0) = \pi^{-1}(\gamma)$, we have
$$
  \int_{\gamma} \omega = \frac{1}{2\pi\sqrt{-1}} \int_{s(\Gamma_0)} \Omega_q.
$$
See Theorem \ref{thm:RelPeriod} and its proof in \cite[Lemma 13]{CLT13}.

Now, similar to the proof of Lemma \ref{lem:Closed4Cycles}, we view the pair $((\bC^*)^2, C_q)$ in the fibration $Z: ((\bC^*)^3, S_{\tq}) \to \bC^*$ as the extended fiber over $Z = 0$, and deform $\gamma$ to a 3-chain $\tgamma_1 \cong \gamma \times \{|Z| = \epsilon'\}$ over $\{|Z| = \epsilon'\}$ for some sufficiently small $\epsilon' > 0$. This can be done such that the deformation of $\gamma_0' \subset \partial \gamma$ is contained in $S_{\tq}$ and the deformation of $\gamma_0 \subset \partial \gamma$ has the following description: Let $(X(t), Y(t))$, $t \in [0,1]$ be a smooth parametrization of $\gamma_0$ and choose a smooth function $r:[0,1] \to [0,1]$ such that $r(t) = 0$ only at $t = 0,1$. Then the deformation of $\gamma_0$ is parameterized by an angle $\theta \in [0,2\pi]$ as
$$
    X(t, \theta) = (1-r(t)\epsilon'e^{\theta\sqrt{-1}})^{1/\fa}X(t), \qquad Y(t, \theta) = Y(t), \qquad Z(t, \theta) = Z_0(t, \theta) = \epsilon'e^{-\theta\sqrt{-1}}.
$$
Here in the definition of $X(t, \theta)$ we fix the branch of $\fa$-th roots near $1$ such that $1^{1/\fa} = 1$. See Figure \ref{fig:Open4CyclesCorrect} for an illustration of the 3-chain $\tgamma_1$. Note that $Y(t,\theta)$ is independent of $\theta$, and at $t = 0,1$ the point $(X(t,\theta), Y(t, \theta))$ is an original endpoint of $\gamma_0$ contained in $P_{q,x}$. Moreover, we may choose $r(t)$ such that $H(X(t,\theta), Y(t, \theta), q) \neq 0$ whenever $t \neq 0,1$.

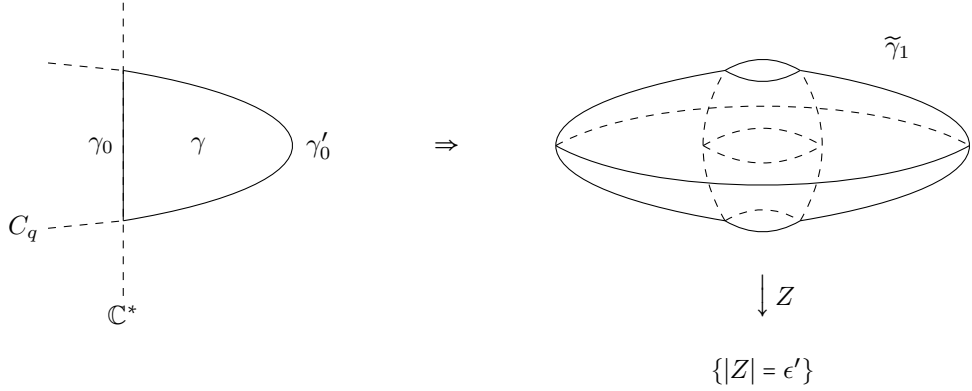
\begin{figure}[h]
\begin{center}
    \begin{tikzpicture}[scale=1]
        
        \node at (-3.5,0) {$\phantom{\cdot}$};

        \coordinate (0) at (-1, -1);
        \coordinate (1) at (-1, 1);        
        
        \draw[dashed] (-1, -2) -- (-1,2);
        \draw (0) -- (1);
        \draw[dashed] (-2, -1.1) -- (-1,-1);
        \draw[dashed] (-2, 1.1) -- (-1,1);
        \draw (0) .. controls (2, -0.5) and (2, 0.5) .. (1);

        \node[left] at (-1,0) {$\gamma_0$};
        \node at (1.6, 0) {$\gamma_0'$}; 
        \node at (0, 0) {$\gamma$};

        \node[left] at (-2, -1.1) {$C_q$};
        \node[below] at (-1,-2) {$\bC^*$};

        \node at (3.3, 0) {$\Rightarrow$};

        \coordinate (10) at (7, -1);
        \coordinate (11) at (7, 1);        
        
        \draw[dashed] (10) to[bend left] (11);
        \draw (10) .. controls (4, -0.5) and (4, 0.5) .. (11);

        \coordinate (20) at (8, -1);
        \coordinate (21) at (8, 1);        
        
        \draw[dashed] (20) to[bend right] (21);
        \draw (20) .. controls (11, -0.5) and (11, 0.5) .. (21);

        \draw (11) to[bend left] (21);
        \draw (11) to[bend right] (21);
        \draw[dashed] (10) to[bend left] (20);
        \draw (10) to[bend right] (20);
        \draw[dashed] (6.71,0) to[bend left] (8.29,0);
        \draw[dashed] (6.71,0) to[bend right] (8.29,0);

        \coordinate (left) at (4.75,0);
        \coordinate (right) at (10.25,0);

        \draw (left) .. controls (6, -0.7) and (9, -0.7) .. (right);
        \draw[dashed] (left) .. controls (6, 0.7) and (9, 0.7) .. (right);
        
        \node at (9.3, 1.3) {$\tgamma_1$};

        \node at (7.5, -2) {$\big\downarrow$};
        \node at (7.8, -2) {$Z$};
        \node at (7.5, -3) {$\{|Z| = \epsilon'\}$};

    \end{tikzpicture}
\end{center}

    \caption{Construction of the 3-chain $\tgamma_1$ in Lemma \ref{lem:Open4Cycles}.}
    \label{fig:Open4CyclesCorrect}
\end{figure}

Then, we add to $\tgamma_1$ a second 3-chain $\tgamma_2$ defined as follows: For any $t \in [0,1]$, $\tgamma_2$ is swept out by a homotopy between the loops $\ell_0 = (X(t, \theta), Y(t), Z_0(t, \theta))$ and $\ell_1 = (X(t, \theta), Y(t, \theta), Z_1(t, \theta))$ where
$$
    Z_1(t, \theta) = -\frac{H(X(t,\theta), Y(t), q)}{1+\tq_{R-2}^{\fa}X(t,\theta)^{-\fa}Y(t)^{-\fb}}.
$$
Note that when $t \in (0,1)$, $(X(t,\theta), Y(t))$ is perturbed off the hypersurface $\{X^{\fa}Y^{\fb} = -\tq_{R-2}^{\fa}\}$ and the denominator above is non-zero. To further validate this definition, we first note that the limit
$$
    \lim_{t \to 0,1} Z_1(t, \theta)
$$
is non-zero for any $\theta$ since the derivative $\frac{\partial H_0(Y_0,q,x)}{\partial Y_0}$ does not vanish at the endpoints $Y(0), Y(1) \in P_{q,x}$ due to the regularity of $H_0$ (Assumption \ref{assump:Regular}). Thus the loop $\ell_1$ is defined in the limit $t \to 0,1$. Moreover, for any $t \in (0,1)$, $X(t)^{\fa}Y(t)^{\fb} = -\tq_{R-2}^{\fa}$ implies that
$$
    -\frac{H(X(t,\theta), Y(t), q)}{1+\tq_{R-2}^{\fa}X(t,\theta)^{-\fa}Y(t)^{-\fb}} = -\frac{H(X(t,\theta), Y(t), q)}{1 - (1-r(t)\epsilon'e^{\theta\sqrt{-1}})^{-1}} = ((r(t)\epsilon')^{-1}e^{-\theta\sqrt{-1}}-1)H(X(t,\theta), Y(t), q).
$$
Since $\epsilon'$ is sufficiently small and all powers of $X$ in $H(X,Y,q)$ (the $m_i$'s) are non-negative, it follows that $\ell_1$ has winding number $-1$ around $Z=0$ and is thus indeed homotopic to $\ell_0$. By construction, $\ell_1$ is contained in $S_{\tq}$ for any $t \in [0,1]$. We may take the homotopy to be one in the $Z$-coordinate only, i.e. it fixes $(X(t,\theta), Y(t))$ for every $t, \theta$.

We take $\tgamma := \tgamma_1 + \tgamma_2$. Its boundary $\partial \tgamma$ consists of three 2-chains: The first is the deformation of $\gamma_0'$ over $\{|Z| = \epsilon'\}$ and is taken to be contained in $S_{\tq}$. The second is the union of all loops $\ell_1$ for $t \in [0,1]$, which is also contained in $S_{\tq}$. The third is the defining homotopy of $\tgamma_2$ restricted to $t =0,1$, which is contained in $P_{q,x} \times \bC^* \subset S_{\tq}$. Therefore, $\tgamma$ is a relative 3-cycle in $H_3((\bC^*)^3, S_{\tq}; \bZ)$, and we may define
$$
  \iota_2(s(\Gamma_0)):= \talpha(\tgamma) = \tpi^{-1}(\tgamma) \in H_4(\tcX^\vee_{\tq}; \bZ).
$$

We have by Theorem \ref{thm:RelPeriod} that
$$
    \int_{\iota_2(s(\Gamma_0))} \tOmega_{\tq} = 2\pi\sqrt{-1}\int_{\tgamma} \tomega = 2\pi\sqrt{-1}\int_{\tgamma_1} \tomega + 2\pi\sqrt{-1}\int_{\tgamma_2} \tomega.
$$
The construction of $\tgamma_1$ implies that
$$
  2\pi\sqrt{-1} \int_{\tgamma_1} \tomega = (2\pi\sqrt{-1})^2 \int_{\gamma} \omega =  2\pi\sqrt{-1} \int_{s(\Gamma_0)} \Omega_q
$$
(again by Theorem \ref{thm:RelPeriod}). Moreover, on $\tgamma_2$, locally $(X,Y)$ are the coordinates of the parameterized curve $\gamma_0$ and the defining homotopy is in the $Z$-coordinate only. Thus we have
$$
    \int_{\tgamma_2} \tomega = 0.
$$
This confirms \eqref{eqn:OpenPeriodCorr}.

Finally, the injectivity of $\iota_2$ is equivalent to that of $\iota_2 \circ s$. By Lemma \ref{lem:StokesRel3Cycles}, applying $\fa^{-1}x \frac{\partial}{\partial x} = \fa^{-1}\tq_{R-2} \frac{\partial}{\partial \tq_{R-2}}$ to the periods \eqref{eqn:OpenPeriodCorr} gives
$$
  \int_{\Gamma_0} \Omega_{q,x}^0,
$$
which are linearly independent as $\Gamma_0$ ranges through a basis of $H_2(\cY_{q,x}; \bZ)$ by an analog of Theorem \ref{thm:PeriodSoln} (see Remark \ref{rem:SmallPFSystem}). The injectivity of $\iota_2 \circ s$ thus follows.
\end{proof}

\begin{proof}[Proof of Theorem \ref{thm:CycleCorr}]
By Proposition \ref{prop:RelHomologySES}, we have
$$
  H_3(\cX^\vee_q, \cY_{q,x}; \bZ) = H_3(\cX^\vee_q; \bZ) \oplus s(H_2(\cY_{q,x}; \bZ)).
$$
We define the desired map $\iota: H_3(\cX^\vee_q, \cY_{q,x}; \bZ) \to H_4(\tcX^\vee_{\tq}; \bZ)$ by the injective maps $\iota_1$ and $\iota_2$ on the direct summands defined in Lemmas \ref{lem:Closed4Cycles} and \ref{lem:Open4Cycles} respectively. Note that $\iota$ is injective, since periods of cycles in the image of $\iota_1$ are independent of $\tq_{R-2} = x$ while periods of cycles in the image of $\iota_2$ have non-trivial dependence on $\tq_{R-2} = x$. Moreover, $\iota$ is surjective over $\bQ$, since \eqref{eqn:RankSum} implies that $\rank(H_3(\cX^\vee_q, \cY_{q,x}; \bZ)) = \rank(H_4(\tcX^\vee_{\tq}; \bZ))$.
\end{proof}

\begin{example}\label{ex:C3Cycles}
\rm{
We illustrate the discussion so far with the basic example $\cX = \bC^3$, $\cL$ an outer brane, and $f=1$, as in Examples \ref{ex:C3}, \ref{ex:C3PicardFuchs}. Among the three linearly independent solutions to $\tcP$ found in Example \ref{ex:C3PicardFuchs}, the constant $1$ is the period of $\tomega$ over the 3-cycle represented by the real 3-torus $(U(1))^3$ in $H_3((\bC^*)^3; \bZ) \subset H_3((\bC^*)^3, S_{\tq}; \bZ)$, and thus the period of $\tOmega_{\tq_1}$ over the corresponding 4-cycle on $\tcX^\vee_{\tq_1}$. As for the other two solutions, we first recall that $\Delta_0 = [-1, 1]$ and $\cX_0 = \Tot(K_{\bP^1})$. At the base of the conic fibration, the set $P_x$ is the set of two points in $\bC^*$ defined by
$$
  0 = H_0(Y_0,x) = -xY_0^{-1} + Y_0 + 1.
$$
In view of Lemmas \ref{lem:StokesRel3Cycles} and \ref{lem:Open4Cycles}, we apply $\tq_1\frac{\partial}{\partial \tq_1}$ to the two additional solutions
$$
  \log \tq_1, \qquad \frac{1}{2} (\log \tq_1)^2 + 2W^{\cX, (\cL, 1)}_{1}(\tq_1)
$$
to $\tcP$ to obtain
$$  
  1, \qquad \log \tq_1 + 2\tq_1\frac{\partial}{\partial \tq_1} W^{\cX, (\cL, 1)}_{1}(\tq_1).
$$
These are the periods of $\omega^0$ over the following 1-cycles in $H_1(\bC^*, P_x; \bZ)$ respectively: a loop around $Y=0$ (based at a point in $P_x$); a path between the two points in $P_x$. They are also the periods of $\Omega_x^0$ over the corresponding 2-cycles on $\cY_x$. We can see that these are the two linearly independent solutions to the Picard-Fuchs system of $\Tot(K_{\bP^1})$: the constant and the closed mirror map (see Remark \ref{rem:SmallPFSystem}).
}
\end{example}

\subsection{Relation to Aganagic-Vafa B-branes}\label{sect:AVBBranes}
We discuss how our constructions and results relate to those based on Aganagic-Vafa B-branes in the literature. As originally proposed by Aganagic-Vafa \cite{AV00}, these are real 2-dimensional subspaces of the Hori-Vafa mirror $\cX^\vee_q$ of $\cX$ parameterized by the open moduli parameter $x$. Moreover, the superpotential of the B-brane should recover the B-model disk function of the corresponding A-brane $\cL$ in $\cX$, which is a first formulation of open mirror symmetry.


For simplicity, in this discussion we assume that $f \in \bZ$, i.e. $\fa = 1$.

We start with the definition of the B-branes. Let $(q,x) \in \tU$. Fix a general reference point $p_* = (X_*, Y_*) \in C_q$. Let $\cC_* \subset \cX^\vee_q$ be the holomorphic curve defined by the equations
$$
  u = H(X, Y, q) = 0, \qquad X = X_*, \qquad Y = Y_*.
$$
Note that $\cC_* \cong \bC$ and $v$ gives a coordinate on $\cC_*$. Now take a point $p = (X_p, Y_p) \in P_{q,x} \subset C_q$ and a path $\gamma_p \subset C_q$ from $p$ to the reference point $p_*$. This data defines a curve $\cC_p$ in
$$
  \{u = H(X, Y, q) = 0\} \subset \cX^\vee_q
$$
consisting of $(v, X, Y)$ parameterized by $v \in \bC$ as follows: The coordinates $(X,Y)$ only depends on the radius $|v|$. As $|v|$ varies from 0 to a fixed, sufficiently large threshold $\Lambda \gg 0$, $(X(|v|), Y(|v|))$ traces out the path $\gamma_p$. When $|v| \ge \Lambda$, $(X(|v|), Y(|v|))$ stays constantly at $p_*$. Note that $\cC_p$ is homeomorphic to $\bC$ but is in general not holomorphic. See Figure \ref{fig:AVBBranes} for an illustration of the B-branes.

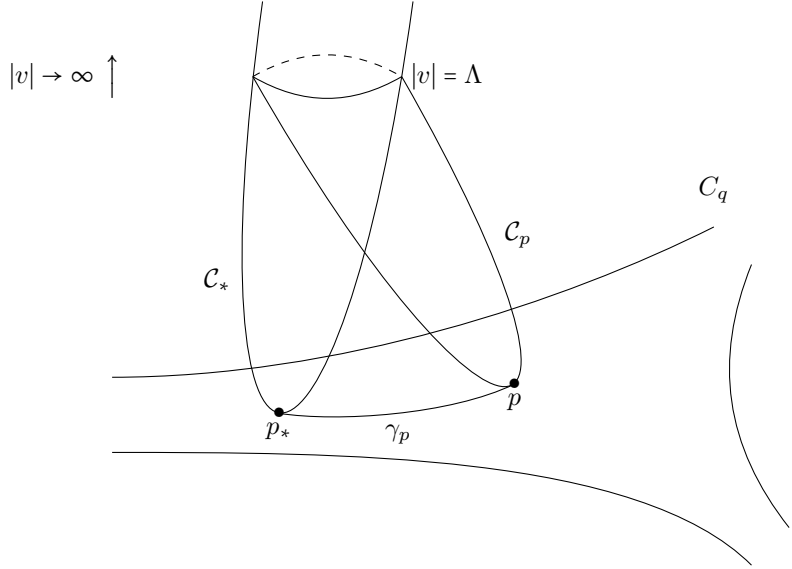
\begin{figure}[h]
\begin{center}
    \begin{tikzpicture}[scale=1]
        
        \draw (0,1) .. controls (3, 1) and (6, 2) .. (8,3);
        \draw (0,0) .. controls (3, 0) and (7, 0) .. (8.5, -1.5);
        \draw (8.5,2.5) to[bend right] (9,-1);

        \draw (2,6) .. controls (1, -1.3) and (3, -1.3) .. (4, 6);
 
        \coordinate (l) at (1.87, 5);
        \coordinate (r) at (3.85, 5);

        \draw (l) .. controls (5, -0.5) and (7, -0.5) .. (r);
        \draw[dashed] (l) to[bend left] (r);
        \draw (l) to[bend right] (r);

        \coordinate (0) at (2.22, 0.52);
        \coordinate (1) at (5.35, 0.91);

        \node at (0) {$\bullet$};
        \node at (1) {$\bullet$};
        \draw (0) .. controls (3, 0.4) and (4.5, 0.5) .. (1);

        \node at (0,5) {$\big \uparrow$};
        \node at (-0.8, 5) {$|v| \to \infty$};
        \node at (1.4, 2.3) {$\cC_*$};
        \node at (5.4, 2.9) {$\cC_p$};
        \node[right] at (r) {$|v| = \Lambda$};

        \node at (8, 3.5) {$C_q$};
        \node[below] at (0) {$p_*$};
        \node[below] at (1) {$p$};
        \node[below] at (3.8, 0.5) {$\gamma_p$};

    \end{tikzpicture}
\end{center}

    \caption{Aganagic-Vafa B-branes $\cC_*$ and $\cC_p$.}
    \label{fig:AVBBranes}
\end{figure}

Fix a branch of $\log Y$ that contains $\{p_*\} \cup P_{q,x}$ and assume that the path $\gamma_p$ is chosen within this branch. The \emph{superpotential} of the brane $\cC_p$ is an integral of a 2-form over all of $\cC_p$ and is reduced, in the radial direction of $v$, to the line integral
$$
  W(\cC_p) = 2\pi\sqrt{-1}\int_{p_*}^p \log Y \frac{d\bar{X}}{\bar{X}}
$$
along $\gamma_p$ in $C_q$. Here we use the coordinate $\bar{X} = -XY^f$ as in the proof of Lemma \ref{lem:StokesRel3Cycles}. Similarly, we have $W(\cC_*) = 0$ for the reference brane $\cC_*$. Then the open mirror theorem, in the case where the A-brane $\cL$ is effective ($\fm=1$), states that for an appropriate choice of $p$ and $\gamma_p$,
\begin{equation}\label{eqn:OpenMirrorOriginal}
  \left(x\frac{\partial}{\partial x} \right)^2 W^{\cX, \cL,f}(q,x) = \frac{1}{2\pi\sqrt{-1}} x\frac{\partial}{\partial x} \left( W(\cC_p) - W(\cC_*) \right) = x\frac{\partial}{\partial x} \int_{p_*}^p \log Y \frac{d\bar{X}}{\bar{X}} = \log Y_p(x).
\end{equation}
See \cite{FLT12} for precise statements that also cover ineffective A-branes in general.

The difference
$$
  W(\cC_p) - W(\cC_*)
$$
can be viewed as an integral on the 2-cycle $\Gamma_0'$ in $\cC_p \cup \cC_*$ defined by $|v| \le \Lambda$ (which generates $H_2(\cC_p \cup \cC_*; \bZ)$). Moreover, $\Gamma_0'$ bounds a relative 3-cycle $\Gamma'$ in $\cX^\vee_q$ consisting of all the points $(v', X, Y)$ such that $(X,Y) \in \gamma_p$ and $|v'|$ is upper-bounded by the radius of $v$ on $\cC_p$. Then we may write
$$
  W(\cC_p) - W(\cC_*) = \int_{\Gamma'} \Omega_q.
$$
Indeed, Mayr \cite{Mayr01} proposed a map 
$$
  H_3(\cX^\vee_q, \cC_p \cup \cC_*; \bZ) \to H_4(\tcX^\vee_{\tq}; \bZ)
$$
under which the periods are related in a way similar to our Theorem \ref{thm:CycleCorr}.

We now relate the above constructions to ours based on the complex hypersurface $\cY_{q,x}$ in $\cX^\vee_q$. Let $p_1, p_2 \in P_{q,x}$ be distinct points and choose a path $\gamma_0$ in $\bC^*$ from $p_1$ to $p_2$ passing through the reference $Y$-coordinate $Y_*$. The relative 1-cycle $\gamma_0 \in H_1(\bC^*, P_{q,x}; \bZ)$ induces a 2-cycle $\Gamma_0 = \alpha_0(\gamma_0) \in H_2(\cY_{q,x}; \bZ)$, which bounds a relative 3-cycle $\Gamma = s(\Gamma_0) \in H_3(\cX^\vee_q, \cY_{q,x}; \bZ)$ (Lemma \ref{lem:SectionExists}). Our Theorem \ref{thm:CycleCorr} (and particularly Lemma \ref{lem:Open4Cycles}) finds a 4-cycle $\tGamma \in H_4(\tcX^\vee_{\tq}; \bZ)$ such that
$$
  \frac{1}{2\pi\sqrt{-1}}\tq_{R-2}\frac{\partial}{\partial \tq_{R-2}} \int_{\tGamma}  \tOmega_{\tq} = x\frac{\partial}{\partial x} \int_{\Gamma} \Omega_q = \int_{\Gamma_0} \Omega_{q,x}^0 = 2\pi\sqrt{-1} \int_{p_1}^{p_2} \frac{dY}{Y}.
$$
We may assume that the above construction may be performed within our choice of branch for $\log Y$. (Note that Lemmas \ref{lem:MCH1Surjective}, \ref{lem:MCRelH1Surjective} used in the construction are valid under the pullback $\bC \to \bC^*$ to the universal cover in the $Y$-coordinate.) This gives
$$
  \frac{1}{(2\pi\sqrt{-1})^2}\left(\tq_{R-2}\frac{\partial}{\partial \tq_{R-2}}\right)^2 \int_{\tGamma}  \tOmega_{\tq} = \frac{1}{2\pi\sqrt{-1}}\left(x\frac{\partial}{\partial x} \right)^2 \int_{\Gamma} \Omega_q = \log Y_{p_2}(x) - \log Y_{p_1}(x).
$$
On the other hand, by Lemma \ref{lem:MCRelH1Surjective}, we lift the path $\gamma_0$ to a path $\gamma_0'$ in $C_q$ from $p_1$ to $p_2$ passing through $p_*$, and use the two segments of $\gamma_0'$ to define B-branes $\cC_{p_1}$ and $\cC_{p_2}$. Similar to above, let $\Gamma_0'$ be the 2-cycle in $\cC_{p_1} \cup \cC_{p_2}$ defined by $|v| \le \Lambda$, which generates $H_2(\cC_{p_1} \cup \cC_{p_2}; \bZ)$ and bounds a relative 3-cycle $\Gamma' \in H_3(\cX^\vee_q, \cC_{p_1} \cup \cC_{p_2}; \bZ)$. Our earlier discussion gives
\begin{equation}\label{eqn:PeriodFromBBranes}
\begin{aligned}
   \frac{1}{2\pi\sqrt{-1}} x\frac{\partial}{\partial x} \int_{\Gamma'} \Omega_q & = \frac{1}{2\pi\sqrt{-1}} x\frac{\partial}{\partial x} \left(W(\cC_{p_2}) - W(\cC_{p_1}) \right) \\
   & = \log Y_{p_2}(x) - \log Y_{p_1}(x) = \frac{1}{(2\pi\sqrt{-1})^2}\left(\tq_{R-2}\frac{\partial}{\partial \tq_{R-2}}\right)^2 \int_{\tGamma} \tOmega_{\tq}.
\end{aligned}
\end{equation}
In particular, we obtain a description of the $\tq_{R-2}$-dependence of the power series part of the period $\int_{\tGamma} \tOmega_{\tq}$ in terms of superpotentials of the B-branes. In certain cases, for instance when $p_1$ and $p_2$ correspond to neighboring effective A-branes on $\cX$, version \eqref{eqn:OpenMirrorOriginal} of the open mirror theorem relates the difference of B-brane superpotentials $W(\cC_{p_2}) - W(\cC_{p_1})$ to the difference of B-model disk functions of the corresponding A-branes. Then the description \eqref{eqn:PeriodFromBBranes} is consistent with Proposition \ref{prop:DiskFnSoln} which describes the $\tq_{R-2}$-dependence of the power series part of certain solutions to $\tcP$ in terms of disk functions of the A-branes.



\section{Variations of mixed Hodge structures}\label{sect:MHS}
On the dual side of Section \ref{sect:Cycles}, in this section, we establish the open/closed correspondence at the level of (variations of) mixed Hodge structures ((V)MHS) (in the sense of Deligne \cite{Deligne71a,Deligne71b}) associated to the Hori-Vafa mirror families (Theorem \ref{thm:MHSCorr}). Our study utilizes a combinatorial presentation of the VMHS which we first review.

\subsection{Vector spaces from Laurent polynomials}
Recall that in Section \ref{sect:Laurent} we defined families of Laurent polynomials
$$
  H(X, Y, q), \qquad \tH(X, Y, Z, \tq), \qquad H_0(Y_0, q, x).
$$
Over the parameter domain $(q,x) = \tq \in \tU_\epsilon$, these polynomials are regular with respect to their Newton polytopes $\Delta, \tDelta, \Delta_0$ respectively (Assumption \ref{assump:Regular}). We now recall the construction of Batyrev \cite{Batyrev93} of graded $\bC$-vector spaces associated to these polynomials. Let
$$
  \cS_{\Delta}, \qquad \cS_{\tDelta}, \qquad \cS_{\Delta_0}
$$
be the subring of
$$
  \bC[X_0, X^{\pm1}, Y^{\pm1}], \qquad \bC[X_0, X^{\pm1}, Y^{\pm1}, Z^{\pm1}], \qquad \bC[X_0, Y_0^{\pm1}]
$$
generated as a $\bC$-vector space by monomials
$$
  \left\{X_0^kX^aY^b : \left(a,b\right) \in k\Delta \right\}, \qquad
  \left\{X_0^kX^aY^bZ^c : \left(a,b,c\right) \in k\tDelta \right\}, \qquad
  \left\{X_0^kY_0^b : b \in k\Delta_0 \right\}
$$
respectively. Here we take the convention that the unit $1$ is included in the sets of monomials above by $k=0$. The subrings are graded by the degree of the variable $X_0$, i.e.
$$
  \deg X_0^kX^aY^b = \deg X_0^kX^aY^bZ^c = \deg X_0^kY_0^b = k.
$$
On $\cS_{\Delta}$, consider the following differential operators defined by $H(X, Y, q)$:
$$  
  \cD_0 := X_0\frac{\partial}{\partial X_0} + X_0H(X, Y, q), \quad 
  \cD_X := X\frac{\partial}{\partial X} + X_0X\frac{\partial H(X, Y, q)}{\partial X}, \quad 
  \cD_Y := Y\frac{\partial}{\partial Y} + X_0Y\frac{\partial H(X, Y, q)}{\partial Y}.
$$
Similarly, on $\cS_{\tDelta}$, consider the following differential operators defined by $\tH(X, Y, Z, \tq)$:
\begin{align*}
  & \tcD_0 := X_0\frac{\partial}{\partial X_0} + X_0\tH(X, Y, Z, \tq), 
  && \tcD_X := X\frac{\partial}{\partial X} + X_0X\frac{\partial \tH(X, Y, Z, \tq)}{\partial X},\\
  & \tcD_Y := Y\frac{\partial}{\partial Y} + X_0Y\frac{\partial \tH(X, Y, Z, \tq)}{\partial Y},
  && \tcD_Z := Z\frac{\partial}{\partial Z} + X_0Z\frac{\partial \tH(X, Y, Z, \tq)}{\partial Z}.
\end{align*}
Finally, on $\cS_{\Delta_0}$, consider the following differential operators defined by $H_0(Y_0, q, x)$:
$$  
  \cD^0_0 := X_0\frac{\partial}{\partial X_0} + X_0H_0(Y_0, q, x), \qquad 
  \cD^0_{Y_0} := Y_0\frac{\partial}{\partial Y_0} + X_0Y_0\frac{\partial H_0(Y_0, q, x)}{\partial Y_0}.
$$
Then, define
\begin{align*}
  & \cR_H := \cS_{\Delta} \big/ (\cD_0\cS_{\Delta} + \cD_X\cS_{\Delta} + \cD_Y\cS_{\Delta}), \\
  & \cR_{\tH} := \cS_{\tDelta} \big/ (\tcD_0\cS_{\tDelta} + \tcD_X\cS_{\tDelta} + \tcD_Y\cS_{\tDelta} + \tcD_Z\cS_{\tDelta}),\\
  & \cR_{H_0} := \cS_{\Delta_0} \big/ (\cD^0_0\cS_{\Delta_0} + \cD^0_{Y_0}\cS_{\Delta_0}).
\end{align*}
It holds that
\begin{equation}\label{eqn:RDimension}
  \dim_{\bC} \cR_H = \Vol(\Delta), \qquad   \dim_{\bC} \cR_{\tH} = \Vol(\tDelta), \qquad \dim_{\bC} \cR_{H_0} = \Vol(\Delta_0).
\end{equation}

\subsection{VMHS of affine hypersurfaces in algebraic tori}
Batyrev \cite{Batyrev93} showed that the $\bC$-vector spaces $\cR_H$, $\cR_{\tH}$, $\cR_{H_0}$ endowed with two filtrations can be used to describe the VMHS arising from the affine hypersurfaces $C_q$, $S_{\tq}$, $P_{q,x}$ in algebraic tori defined by the Laurent polynomials. 
We review the relevant results following \cite{KM10}. The two filtrations on $\cR_H$, $\cR_{\tH}$, $\cR_{H_0}$ will descend from two filtrations on $\cS_{\Delta}$, $\cS_{\tDelta}$, $\cS_{\Delta_0}$ respectively.

First, the \emph{$\cE$-filtration} is a decreasing filtration
$$
  \cdots \supseteq \cE^{-k} \supseteq \cdots \supseteq \cE^{-1} \supseteq \cE^0 = \bC 1 \supset \cE^1 = 0
$$
where $\cE^{-k}$ is the $\bC$-vector subspace spanned by monomials of degree $\le k$. It holds that
$$  
  \cE^{-2}\cR_H = \cR_H, \qquad   \cE^{-3}\cR_{\tH} = \cR_{\tH}, \qquad \cE^{-1}\cR_{H_0} = \cR_{H_0}.
$$

Second, the \emph{$\cI$-filtration} is an increasing filtration defined as follows: In $\cS_{\Delta}$, for $0 \le l \le 3$, let $\cI_l\cS_{\Delta}$ be the homogenous ideal generated as a $\bC$-vector subspace by monomials $X_0^kX^aY^b$ such that $\left(a, b\right)$ does not belong to any codimension-$l$ face of $k\Delta$. Note that $\cI_3\cS_{\Delta}$ is generated by all monomials with positive degrees. We set $\cI_4\cS_{\Delta} = \cS_{\Delta}$. Descending to $\cR_H$, and applying the same construction to $\cR_{\tH}$ and $\cR_{H_0}$, we have
\begin{align*}
  & 0 = \cI_0\cR_H \subseteq \cdots \subseteq \cI_3\cR_H \subset \cI_4\cR_H = \cR_H;\\
  & 0 = \cI_0\cR_{\tH} \subseteq \cdots \subseteq \cI_4\cR_{\tH} \subset \cI_5\cR_{\tH} = \cR_{\tH};\\
  & 0 = \cI_0\cR_{H_0} \subseteq \cI_1\cR_{H_0} \subseteq \cI_2\cR_{H_0} \subset \cI_3\cR_{H_0} = \cR_{H_0}.
\end{align*}

As observed by Stienstra \cite[Theorem 7]{Stienstra98} and Konishi-Minabe \cite[Theorem 4.2]{KM10}, the results of Batyrev \cite{Batyrev93} give isomorphisms between $\cR_H$, $\cR_{\tH}$, $\cR_{H_0}$ and
$$
  \cH := H^2((\bC^*)^2, C_q; \bC), \qquad \tcH := H^3((\bC^*)^3, S_{\tq}; \bC), \qquad \cH_0 := H^1(\bC^*, P_{q,x}; \bC)
$$
respectively under which the $\cE$- and $\cI$-filtrations induce the Hodge filtrations $\cF^\bullet$ and weight filtrations $\cW_\bullet$ respectively.

\begin{theorem}[\cite{Batyrev93,Stienstra98,KM10}]\label{thm:BatMHS}
There are $\bC$-vector space isomorphisms
$$
  \rho: \cR_H \xrightarrow{\sim} \cH, \qquad
  \trho: \cR_{\tH} \xrightarrow{\sim} \tcH, \qquad
  \rho_0: \cR_{H_0} \xrightarrow{\sim} \cH_0
$$
under which the following hold:
\begin{itemize} 
  \item We have
  $$
    \rho(1) = [\omega], \qquad \trho(1) = [\tomega], \qquad \rho_0(1) = [\omega^0]
  $$
  where the forms $\omega, \tomega, \omega^0$ are defined in \eqref{eqn:FormsOnTori}.

  \item For any $k$, we have
  $$   
    \rho(\cE^{-k}\cR_H) = \cF^{-k+2}\cH, \qquad 
    \trho(\cE^{-k}\cR_{\tH}) = \cF^{-k+3}\tcH, \qquad
    \rho_0(\cE^{-k}\cR_{H_0}) = \cF^{-k+1}\cH_0.
  $$

  \item We have
  $$
    \cW_1\cH = \rho(\cI_1\cR_H), \quad \cW_2\cH = \cW_3\cH = \rho(\cI_3\cR_H), \quad \cW_4\cH = \rho(\cI_4\cR_H) = \cH;
  $$
  $$
    \cW_{2}\tcH = \trho(\cI_1\cR_{\tH}), \quad \cW_{3}\tcH = \trho(\cI_2\cR_{\tH}), \quad \cW_{4}\tcH = \cW_{5}\tcH = \trho(\cI_4\cR_{\tH}), \quad \cW_{6}\tcH = \trho(\cI_5\cR_{\tH}) = \tcH;
  $$
  $$
    \cW_0\cH_0 = \cW_1\cH_0 = \rho_0(\cI_2\cR_{H_0}), \quad  \cW_2\cH_0 = \rho_0(\cI_3\cR_{H_0}) = \cH_0.
  $$
\end{itemize}
Moreover, the isomorphisms respect the variations over the parameters $q, \tq, (q, x)$ respectively.
\end{theorem}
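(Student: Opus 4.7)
The plan is to assemble the three isomorphisms from the cited sources, unifying notation, and then verify that the filtrations and the marked forms correspond as stated. I will treat the three cases in parallel since the constructions are formally the same, differing only in dimension.

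First, I would recall the explicit residue-theoretic construction of the maps $\rho$, $\trho$, $\rho_0$. For the middle case, associate to a monomial $X_0^k X^a Y^b Z^c \in \cS_{\tDelta}$ the meromorphic form
\[
  \eta_{k,a,b,c} := \frac{(-1)^{k-1}(k-1)!\, X^a Y^b Z^c}{\tH(X,Y,Z,\tq)^k}\, \tomega
\]
on $(\bC^*)^3$, which has a pole of order $k$ along $S_{\tq}$ (and is regular when $k=0$). The map sending a polynomial to the corresponding sum of forms lands in $H^3((\bC^*)^3, S_{\tq};\bC)$ via the long exact sequence of the pair and the Leray residue. The key computations of Batyrev \cite{Batyrev93} show that this construction kills $\tcD_0\cS_{\tDelta} + \tcD_X\cS_{\tDelta} + \tcD_Y\cS_{\tDelta} + \tcD_Z\cS_{\tDelta}$: the three operators $\tcD_X, \tcD_Y, \tcD_Z$ produce forms exact on $(\bC^*)^3$, while $\tcD_0$ produces a form whose cohomology class vanishes in the relative group. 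This yields a well-defined map $\trho: \cR_{\tH} \to \tcH$. The analogous residue maps produce $\rho$ and $\rho_0$. The value at $1$ is the zeroth-order term, which by construction is $[\tomega]$, $[\omega]$, $[\omega^0]$ respectively.

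Next, I would argue that each map is an isomorphism. By \eqref{eqn:RDimension} we have $\dim_{\bC}\cR_{\tH} = \Vol(\tDelta)$, while Batyrev's calculation of the mixed Hodge numbers for $\Delta$-regular Laurent polynomials gives $\dim_{\bC}\tcH = \Vol(\tDelta)$ as well; injectivity then follows from the nondegeneracy of the pairing on the filtered pieces established in \cite{Batyrev93,Stienstra98,KM10}. The parallel dimension counts dispatch the remaining two cases. The identification of the $\cE$-filtration with the Hodge filtration is immediate from the construction: a monomial of degree $\le k$ produces a form with pole order $\le k$ along the hypersurface, which is precisely the Griffiths pole-order filtration, known to coincide with $\cF^{\bullet}$ after the appropriate shift (the shift being the complex dimension of the ambient torus minus $k$, which matches the stated degree conventions).

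The more delicate step is the identification of the $\cI$-filtration with the weight filtration $\cW_\bullet$; this is the heart of Batyrev's theorem and its refinements by Stienstra and Konishi-Minabe. I would import this directly from \cite{Batyrev93,Stienstra98,KM10}: the faces of $\tDelta$ of codimension $l$ correspond, under the toric compactification, to boundary strata along which the hypersurface meets the toric boundary with multiplicity controlling the weight, and the monomial ideal $\cI_l$ is designed exactly so that its image in $\cR_{\tH}$ corresponds to classes supported on the codimension-$l$ strata. Combined with the weight spectral sequence of the open pair $((\bC^*)^3, S_{\tq})$ and Deligne's purity, this produces the stated correspondence of filtrations; the analogous statements for $\cR_H$ and $\cR_{H_0}$ follow by the same mechanism in dimensions $2$ and $1$. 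Finally, compatibility with variations over $\tq \in \tU_\epsilon$ is automatic from the construction, since both the residue forms and the filtered pieces depend algebraically on the coefficients of the Laurent polynomials. The main obstacle, if one wishes to give a self-contained proof rather than cite, is the weight-filtration identification, which requires the full combinatorial machinery of \cite{Batyrev93}; since our regularity is imposed by Assumption \ref{assump:Regular}, the hypotheses of the cited theorems are verified and we may invoke them directly.
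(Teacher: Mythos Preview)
The paper does not provide a proof of this theorem at all: it is stated as a result from the literature and attributed directly to Batyrev \cite{Batyrev93}, Stienstra \cite[Theorem 7]{Stienstra98}, and Konishi--Minabe \cite[Theorem 4.2]{KM10}, with no argument beyond the sentence ``As observed by Stienstra and Konishi--Minabe, the results of Batyrev give isomorphisms\ldots''. So there is no in-paper proof to compare against; your proposal goes further than the paper by sketching how the cited constructions actually work.

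As a sketch of the underlying argument, your outline is broadly faithful to the Batyrev--Stienstra--Konishi--Minabe machinery: the residue map from $\cS_\Delta$ to relative cohomology via Griffiths pole-order forms, the verification that the $\cD$-operators map to exact forms, the dimension count against $\Vol(\Delta)$, and the identification of $\cE^\bullet$ with the pole-order (hence Hodge) filtration are all standard and correctly recalled. The weight-filtration identification is, as you acknowledge, the substantive content one must import from \cite{Batyrev93}. One small correction: your injectivity argument (``nondegeneracy of the pairing on the filtered pieces'') is vaguer than what the sources do; Batyrev establishes the isomorphism by computing both sides as graded pieces and matching dimensions stratum by stratum, rather than via a pairing argument. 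But since the paper itself treats the theorem as a citation, your sketch is already more than what is required here.
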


The variations of $[\omega], [\tomega]$ in the relative cohomology are governed by the Picard-Fuchs systems $\cP, \tcP$ respectively. A similar remark holds for $[\omega^0]$ (see Remark \ref{rem:SmallPFSystem}).



\subsection{VMHS of Hori-Vafa mirrors}
Similarly, Konishi-Minabe \cite{KM10} showed that if the 2-dimensional polytope $\Delta$ is \emph{reflexive}\footnote{For the 2-dimensional polytope $\Delta$, this means that $\vzero$ is in the interior of $\Delta$ and the distance between $\vzero$ and the line generated by each codimension-1 face of $\Delta$ is 1. See \cite[Section 4]{Batyrev94} for a more general and precise definition.}, so that the toric Calabi-Yau 3-orbifold $\cX$ is a local surface, $\cR_H$ also describes the VMHS on the middle-dimensional cohomology
$$
  H^3(\cX^\vee_q; \bC)
$$
of the Hori-Vafa mirror $\cX^\vee_q$. In upcoming joint work \cite{AY25} with Aleshkin, we extend this result to general polytopes of any dimension based on the methods of \cite{Batyrev93,KM10}. In particular, we give isomorphisms between $\cR_H$, $\cR_{\tH}$, $\cR_{H_0}$ and
$$
  \cH^{\HV} := H^3(\cX^\vee_q; \bC), \qquad \tcH^{\HV} := H^4(\tcX^\vee_{\tq}; \bC), \qquad \cH_0^{\HV} := H^2(\cY_{q,x}; \bC)
$$
respectively under which the $\cE$- and $\cI$-filtrations induce the Hodge filtrations $\cF^\bullet$ and weight filtrations $\cW_\bullet$ respectively.

\begin{theorem}[\cite{KM10, AY25}]\label{thm:HVMHS}
There are $\bC$-vector space isomorphisms
$$
  \rho: \cR_H \xrightarrow{\sim} \cH^{\HV}, \qquad
  \trho: \cR_{\tH} \xrightarrow{\sim} \tcH^{\HV}, \qquad
  \rho_0: \cR_{H_0} \xrightarrow{\sim} \cH_0^{\HV}
$$
under which the following hold:
\begin{itemize} 
  \item We have
  $$
    \rho(1) = [\Omega_q], \qquad \trho(1) = [\tOmega_{\tq}], \qquad \rho_0(1) = [\Omega_{q,x}^0]
  $$
  where the forms $\Omega_q, \tOmega_{\tq}, \Omega_{q,x}^0$ are defined in Section \ref{sect:HoriVafa}.

  \item For any $k$, we have
  $$   
    \rho(\cE^{-k}\cR_H) = \cF^{-k+3}\cH^{\HV}, \qquad 
    \trho(\cE^{-k}\cR_{\tH}) = \cF^{-k+4}\tcH^{\HV}, \qquad
    \rho_0(\cE^{-k}\cR_{H_0}) = \cF^{-k+2}\cH_0^{\HV}.
  $$

  \item We have
  $$
    \cW_3\cH^{\HV} = \rho(\cI_1\cR_H), \quad \cW_4\cH^{\HV} = \cW_5\cH^{\HV} = \rho(\cI_3\cR_H), \quad \cW_6\cH^{\HV} = \rho(\cI_4\cR_H) = \cH^{\HV};
  $$
  $$
    \cW_{4}\tcH^{\HV} = \trho(\cI_1\cR_{\tH}), \quad \cW_{5}\tcH^{\HV} = \trho(\cI_2\cR_{\tH}), \quad \cW_{6}\tcH^{\HV} = \cW_{7}\tcH^{\HV} = \trho(\cI_4\cR_{\tH}), \quad \cW_{8}\tcH^{\HV} = \trho(\cI_5\cR_{\tH}) = \tcH^{\HV};
  $$
  $$
    \cW_2\cH_0^{\HV} = \cW_3\cH_0^{\HV} = \rho_0(\cI_2\cR_{H_0}), \quad  \cW_4\cH_0^{\HV} = \rho_0(\cI_3\cR_{H_0}) = \cH_0^{\HV}.
  $$
\end{itemize}
Moreover, the isomorphisms respect the variations over the parameters $q, \tq, (q, x)$ respectively.
\end{theorem}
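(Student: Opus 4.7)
The plan is to factor each isomorphism through Theorem~\ref{thm:BatMHS} by first establishing isomorphisms of VMHS
\begin{align*}
  \cH^{\HV} = H^3(\cX^\vee_q;\bC) &\cong H^2((\bC^*)^2, C_q;\bC)(-1) = \cH(-1),\\
  \tcH^{\HV} = H^4(\tcX^\vee_{\tq};\bC) &\cong H^3((\bC^*)^3, S_{\tq};\bC)(-1) = \tcH(-1),\\
  \cH_0^{\HV} = H^2(\cY_{q,x};\bC) &\cong H^1(\bC^*, P_{q,x};\bC)(-1) = \cH_0(-1),
\end{align*}
where $(-1)$ denotes the inverse Tate twist, shifting Hodge degree by $+1$ and weight by $+2$. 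The homology version of these statements was essentially the content of the isomorphisms $\alpha$, $\talpha$, $\alpha_0$ in Section~\ref{sect:Conic} via the $U(1)$-conic fibration structure. Given these cohomological upgrades, the desired isomorphisms $\rho$, $\trho$, $\rho_0$ are obtained by composing the isomorphisms of Theorem~\ref{thm:BatMHS} with the Tate-twisted isomorphisms above. The Hodge filtration shift by $+1$ converts $\cF^{-k+2}\cH$, $\cF^{-k+3}\tcH$, $\cF^{-k+1}\cH_0$ into $\cF^{-k+3}\cH^{\HV}$, $\cF^{-k+4}\tcH^{\HV}$, $\cF^{-k+2}\cH_0^{\HV}$, and the weight shift by $+2$ converts the weight filtration indices from Theorem~\ref{thm:BatMHS} to those stated here. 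A direct check against each formula confirms consistency.

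The main step is therefore the cohomological Tate-twisted comparison, which I would establish as follows. The Hori--Vafa mirror $\cX^\vee_q$ sits over $(\bC^*)^2$ as a family of affine conics $\{uv = H(X,Y,q)\}$, smooth away from $C_q$ where the fiber degenerates to two lines meeting at the origin; the smooth fibers are isomorphic to $\bC^*$. From the Leray spectral sequence for the projection $\cX^\vee_q \to (\bC^*)^2$, or equivalently the Gysin/tube long exact sequence associated to the conic degeneration along $C_q$, one obtains a Tate-twisted identification of $H^3(\cX^\vee_q;\bC)$ with $H^2((\bC^*)^2, C_q;\bC)$. The Tate twist arises from the fiberwise factor $H^1(\bC^*;\bC) = \bC(-1)$. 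The same argument works in one higher dimension for $\tcX^\vee_{\tq}$ and in one lower dimension for $\cY_{q,x}$.

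Next, to identify the class of the holomorphic volume form, I would compute the image of $[\Omega_q]$ under the above isomorphism using its residue presentation: pushing forward along the circle action in the conic direction integrates out $S^1$ and produces $2\pi\sqrt{-1}\,[\omega]$, matching Theorem~\ref{thm:RelPeriod}. Combined with $\rho(1) = [\omega]$ from Theorem~\ref{thm:BatMHS}, this gives $\rho(1) = [\Omega_q]$ after the appropriate normalization of the comparison isomorphism, and similarly for $\trho(1) = [\tOmega_{\tq}]$ and $\rho_0(1) = [\Omega_{q,x}^0]$. Compatibility with the variations over $q, \tq, (q,x)$ follows because the entire construction is natural and works in families: the conic fibration, its discriminant, the circle subfibration, and the residue computation of $\Omega_q$ all vary holomorphically.

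The hard part will be establishing the Tate-twisted MHS isomorphism $\cH^{\HV} \cong \cH(-1)$ (and its higher-dimensional analogs) for \emph{general} polytopes $\Delta$, not only the reflexive case handled in \cite{KM10}. For reflexive $\Delta$, Konishi--Minabe compactify the Hori--Vafa mirror to a projective family and control the MHS through a well-behaved boundary divisor; without reflexivity such a compactification is not readily available, and one must argue directly on the non-compact geometry. The key technical input, provided by $\Delta$-regularity (Assumption~\ref{assump:Regular}), is that the relative de Rham complex of the torus pair is well-behaved and the monomial classes $X_0^kX^aY^b \cdot \Omega_q$ already span $\cH^{\HV}$ within the stated filtered pieces; combined with the dimension count $\dim_\bC \cR_H = \Vol(\Delta)$ from \eqref{eqn:RDimension} and the known dimension of $\cH^{\HV}$, this forces surjectivity and hence bijectivity of $\rho$. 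The details, carried out uniformly for all three cases, are the subject of \cite{AY25}.
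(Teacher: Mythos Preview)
The paper does not prove this theorem: it is stated as a result imported from \cite{KM10} (for reflexive $\Delta$) and the forthcoming \cite{AY25} (for general polytopes in any dimension), with no argument given in the paper itself beyond the remark that the resulting isomorphisms $\cH \cong \cH^{\HV} \otimes T(1)$ etc.\ are ``a dual version of Theorem~\ref{thm:RelPeriod} on homology and periods.'' Your sketch is therefore not being compared against any proof in this paper.

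That said, your outline is consistent with what the paper signals. The paper explicitly says the extension to general polytopes is ``based on the methods of \cite{Batyrev93,KM10},'' and immediately after the theorem records the Tate-twisted isomorphisms $\cH \cong \cH^{\HV} \otimes T(1)$ as a consequence --- exactly the comparison you propose to establish directly via the conic fibration and then compose with Theorem~\ref{thm:BatMHS}. You also correctly identify the main difficulty (removing the reflexivity hypothesis of \cite{KM10}) and correctly defer its resolution to \cite{AY25}, which is precisely what the paper does.
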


In particular, we have isomorphisms of VMHS
$$
  \cH \cong \cH^{\HV} \otimes T(1), \qquad \tcH \cong \tcH^{\HV} \otimes T(1), \qquad \cH_0 \cong \cH_0^{\HV} \otimes T(1)
$$
where $\otimes T(1)$ is the Tate twist by $1$. This a dual version of Theorem \ref{thm:RelPeriod} on homology and periods.

\begin{notation} \rm{
For $n \in \bZ$, let $T(-n)$ denote the $(-n)$-th \emph{Tate} Hodge structure which is a pure Hodge structure on the lattice $(2\pi\sqrt{-1})^{-n} \bZ \subset \bC$ of weight $2n$ and type $(n,n)$.
}
\end{notation}

The variations of $[\Omega_q], [\tOmega_{\tq}]$ in the cohomology are governed by the Picard-Fuchs systems $\cP, \tcP$ respectively. A similar remark holds for $[\Omega_{q,x}^0]$ (see Remark \ref{rem:SmallPFSystem}).

\subsection{Open/closed correspondence of VMHS}
Our main technical result of the section is the that $\cR_{\tH}$ is an extension of $\cR_H$ by $\cR_{H_0}$, under which the $\cE$- and $\cI$-filtrations are shifted appropriately.

\begin{proposition}\label{prop:RExtension}
For $(q,x) = \tq \in \tU_\epsilon$, there is a short exact sequence of $\bC$-vector spaces
\begin{equation}\label{eqn:MHSSES}
    \xymatrix{
        0 \ar[r] & \cR_{H_0} \ar[r]^\iota & \cR_{\tH} \ar[r]^\pi & \cR_H \ar[r] & 0
    }
\end{equation}
such that 
$$
  \iota(\cE^{-k}\cR_{H_0}) \subseteq \cE^{-k-1}\cR_{\tH}, \quad 
\pi(\cE^{-k}\cR_{\tH}) \subseteq \cE^{-k}\cR_H, \qquad \text{for any $k$};
$$
$$
  \iota(\cI_2\cR_{H_0}) \subseteq \cI_{1}\cR_{\tH}, \qquad \iota(\cI_3\cR_{H_0}) \subseteq \cI_4\cR_{\tH}; \qquad \qquad
  \pi(\cI_l\cR_{\tH}) \subseteq \cI_{l-1}\cR_H \qquad \text{for any $l$}.
$$
\end{proposition}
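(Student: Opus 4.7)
The plan is to construct $\pi$ and $\iota$ combinatorially on the level of the polynomial rings $\cS_{\tDelta}$, $\cS_{\Delta_0}$, $\cS_\Delta$, to establish the short exact sequence via a dimension count, and to read off the filtration properties from the face structures of the polytopes.

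First, I would define $\pi: \cS_{\tDelta} \to \cS_\Delta$ as the projection onto the $Z$-degree zero part: $X_0^k X^a Y^b Z^c \mapsto X_0^k X^a Y^b$ if $c = 0$ and $0$ otherwise. The decomposition $\tH = H + \tq_{R-2}^{\fa}X^{-\fa}Y^{-\fb}Z + Z$ from \eqref{eqn:DefEqnRelation} immediately gives $\pi \circ \tcD_\bullet = \cD_\bullet \circ \pi$ for $\bullet \in \{0, X, Y\}$ and $\pi \circ \tcD_Z = 0$, so $\pi$ descends to a surjection $\cR_{\tH} \to \cR_H$. It preserves $X_0$-degree, giving $\pi(\cE^{-k}\cR_{\tH}) \subseteq \cE^{-k}\cR_H$. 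Since $\Delta$ is a codim-1 face of $\tDelta$, any codim-$l$ face of $k\tDelta$ meeting $k\Delta$ either sits inside $k\Delta$ as a codim-$(l-1)$ face, or cuts $k\Delta$ in a face of codim $\ge l$; contrapositively, a monomial in $\pi(\cI_l\cR_{\tH})$ has $(a,b)$ avoiding every codim-$(l-1)$ face of $k\Delta$, yielding $\pi(\cI_l\cR_{\tH}) \subseteq \cI_{l-1}\cR_H$.

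For $\iota$, I would exploit the parametrization $X = e^{\pi\sqrt{-1}/\fa} \tq_{R-2} Y_0^{-\fb}$, $Y = Y_0^{\fa}$ already used to define $H_0$. For a monomial $X_0^k Y_0^t$ with $t \in k\Delta_0 \cap \bZ$, choose $(a,b) \in \bZ^2$ with $\fa b - \fb a = t$ (possible since $\gcd(\fa,\fb) = 1$) and $(a,b,1) \in (k+1)\tDelta$; the slice $(k+1)\tDelta \cap \{Z = 1\}$ is the Minkowski sum $k\Delta + \{(-\mu\fa, -\mu\fb) : \mu \in [0,1]\}$ and projects surjectively onto $k\Delta_0$ under $(a,b) \mapsto \fa b - \fb a$, guaranteeing a lift. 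Set
\begin{equation*}
  \iota(X_0^k Y_0^t) := e^{-\pi\sqrt{-1} a/\fa}\, \tq_{R-2}^{-a}\, X_0^{k+1} X^a Y^b Z \quad \in \cR_{\tH}.
\end{equation*}
The identity $X_0^{k+1} X^a Y^b Z \equiv -\tq_{R-2}^{\fa} X_0^{k+1} X^{a-\fa} Y^{b-\fb} Z$ in $\cR_{\tH}$, obtained from $\tcD_Z(X_0^k X^a Y^b)$, shows the image is independent of the lift modulo $(\fa,\fb)$-shifts. I would then verify $\iota(\cD^0_0 F) = \iota(\cD^0_{Y_0} F) = 0$ in $\cR_{\tH}$ for $F \in \cS_{\Delta_0}$; this is a direct calculation using that $H$ restricted to the hypersurface $\{X^{\fa}Y^{\fb} = -\tq_{R-2}^{\fa}\}$ equals $H_0$, so $\cD^0_0$ and $\cD^0_{Y_0}$ pull back (after rescaling by $\fa$) to $\bZ$-linear combinations of $\tcD_0, \tcD_X, \tcD_Y$ modulo the $\tcD_Z$-relation. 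Hence $\iota$ descends to $\cR_{H_0} \to \cR_{\tH}$.

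Exactness then follows from a dimension count. The composition $\pi \circ \iota = 0$ is immediate as $\iota$ lands in $Z$-degree 1. By \eqref{eqn:RDimension} and \eqref{eqn:PolyVolSum}, $\dim_{\bC}\cR_{\tH} = \dim_{\bC}\cR_H + \dim_{\bC}\cR_{H_0}$, so it suffices to show $\ker\pi \subseteq \mathrm{im}(\iota)$: given a class in $\ker\pi$, pick a representative supported in $Z$-degree $\ge 1$, then iterate the $\tcD_Z$-relation to bring $Z$-degrees $\ge 2$ down to $Z$-degree $1$; every such monomial $X_0^{k+1} X^a Y^b Z$ is then a scalar multiple of $\iota(X_0^k Y_0^t)$ with $t = \fa b - \fb a$, forcing injectivity of $\iota$ and equality $\mathrm{im}(\iota) = \ker\pi$. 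The $\cE$-shift for $\iota$ is visible from the $+1$ shift in $X_0$-degree; since $\iota$ lands in positive $X_0$-degree, $\iota(\cI_3\cR_{H_0}) \subseteq \cI_4\cR_{\tH}$. The inclusion $\iota(\cI_2\cR_{H_0}) \subseteq \cI_1\cR_{\tH}$ follows from the combinatorial fact that when $t$ lies in the relative interior of $k\Delta_0$, some lift $(a,b,1)$ can be chosen in the relative interior of the slice (using $(a_0,b_0)$ in the interior of $\Delta$ projecting to $t/k$ and $\mu \in (0,1)$), hence in the interior of $(k+1)\tDelta$.

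The main obstacle I anticipate is the descent of $\iota$ to $\cR_{H_0}$: the $(\fa,\fb)$-shift ambiguity in the lift interacts nontrivially with the operator relations $\cD^0_0, \cD^0_{Y_0}$, so one must carefully track the correction terms produced by $\tcD_Z$ when translating between the $Y_0$-variable and the $(X,Y,Z)$-variables, and check that these corrections sum to zero in $\cR_{\tH}$. A parallel subtlety arises for the $\cI$-filtration, where one must confirm that $(\fa,\fb)$-shifting the lift does not force $(a,b,1)$ across a facet of $(k+1)\tDelta$ adjacent to the top edge $[\tb_{R+1}, \tb_{R+2}]$, necessitating a case analysis of the additional facets described in Section \ref{sect:ExtraCones}.
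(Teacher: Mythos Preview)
Your approach is essentially the one taken in the paper: the projection $\pi$ is defined by $Z\mapsto 0$, the map $\iota$ sends $X_0^kY_0^{b_0}$ to a scalar multiple of $X_0^{k+1}X^aY^bZ$ with $\fa b-\fb a=b_0$, descent is checked by comparing $\iota\circ\cD^0_\bullet$ with linear combinations of $\tcD_\bullet$ modulo $\tcD_Z$-corrections, and exactness comes from the dimension identity $\Vol(\tDelta)=\Vol(\Delta)+\Vol(\Delta_0)$ together with the characterization of $\ker\pi$ as the span of monomials with positive $Z$-degree. The paper differs only in making a canonical choice of lift $(a,b)$ on the relevant facet $\tau^s\subset\partial\Delta$ rather than arguing lift-independence, and in using $\tcD_0$ together with $\tcD_X$ (not $\tcD_Z$ alone) for the induction reducing $Z$-degree $\ge 2$ to $Z$-degree $1$.

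There is one genuine gap in your treatment of $\iota(\cI_2\cR_{H_0})\subseteq\cI_1\cR_{\tH}$. Your combinatorial argument (``when $t$ lies in the relative interior of $k\Delta_0$, some lift can be chosen in the interior'') only establishes $\iota(\cI_1\cR_{H_0})\subseteq\cI_1\cR_{\tH}$. But $\cI_2\cS_{\Delta_0}$ is all of positive degree and thus contains the boundary monomials $X_0^kY_0^{kc_0}$ and $X_0^kY_0^{kc_S}$, for which \emph{no} lift $(a,b,1)$ lies in the interior of $(k+1)\tDelta$: such a lift is forced onto the $2$-face spanned by $\tb_{R+2}$ and the extreme vertex $\tb_{i_2^1}$ or $\tb_{i_3^S}$. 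So no purely combinatorial choice suffices. The paper closes this gap by proving $\cI_1\cR_{H_0}=\cI_2\cR_{H_0}$ as an equality in the quotient, using the $\Delta_0$-regularity of $H_0$: the two relations $\cD_0^0(X_0^{k-1}Y_0^{(k-1)c_0})=0$ and $\cD_{Y_0}^0(X_0^{k-1}Y_0^{(k-1)c_0})=0$ in $\cR_{H_0}$ (and similarly at $c_S$) allow one to express the boundary monomial $X_0^kY_0^{kc_0}$ as a linear combination of interior monomials, by induction on $k$. Your anticipated ``case analysis of the additional facets'' will not resolve this without invoking regularity.
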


The proof of the proposition will be given in Section \ref{sect:RExtensionProof} below. In view of Theorems \ref{thm:BatMHS}, \ref{thm:HVMHS}, it directly translates into the following result.

\begin{corollary}\label{cor:MHSExtension}
Over $(q,x) = \tq \in \tU_\epsilon$, there are short exact sequences of VMHS
$$
  \xymatrix{
        0 \ar[r] & H^1(\bC^*, P_{q,x}; \bC) \ar[r] & H^3((\bC^*)^3, S_{\tq}; \bC) \otimes T(1) \ar[r] & H^2((\bC^*)^2, C_q; \bC) \ar[r] & 0,
    }
$$
\begin{equation}\label{eqn:HVSES}
    \xymatrix{
        0 \ar[r] & H^2(\cY_{q,x}; \bC) \ar[r] & H^4(\tcX^\vee_{\tq}; \bC) \otimes T(1) \ar[r] & H^3(\cX^\vee_q; \bC) \ar[r] & 0.
    }
\end{equation}
\end{corollary}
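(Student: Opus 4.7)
The plan is to deduce the corollary from Proposition \ref{prop:RExtension} by transporting the short exact sequence \eqref{eqn:MHSSES} along the isomorphisms of Theorems \ref{thm:BatMHS} and \ref{thm:HVMHS}, and then verifying that with a Tate twist $T(1)$ applied to the middle term, the induced maps $\iota$ and $\pi$ respect the Hodge and weight filtrations, hence define morphisms of (V)MHS. Exactness of the transported short exact sequences of $\bC$-vector spaces is automatic from Proposition \ref{prop:RExtension} together with the dimension count $\dim_{\bC} \cR_{\tH} = \dim_{\bC} \cR_H + \dim_{\bC} \cR_{H_0}$ provided by \eqref{eqn:RDimension} and \eqref{eqn:PolyVolSum}.

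For the first sequence, I would apply Theorem \ref{thm:BatMHS} to identify $\cR_H \cong \cH$, $\cR_{\tH} \cong \tcH$, $\cR_{H_0} \cong \cH_0$ with their respective Hodge and weight filtrations. For the Hodge filtration, the shift condition $\iota(\cE^{-k}\cR_{H_0}) \subseteq \cE^{-k-1}\cR_{\tH}$ of Proposition \ref{prop:RExtension} translates, under $\cE^{-k}\cR_{H_0} \cong \cF^{-k+1}\cH_0$ and $\cE^{-k}\cR_{\tH} \cong \cF^{-k+3}\tcH$, into $\iota(\cF^{-k+1}\cH_0) \subseteq \cF^{-k+2}\tcH$. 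Since $T(1)$ has type $(-1,-1)$, we have $\cF^p(\tcH \otimes T(1)) = \cF^{p+1}\tcH$, so this reads as $\iota(\cF^{-k+1}\cH_0) \subseteq \cF^{-k+1}(\tcH \otimes T(1))$, i.e.\ $\iota$ respects the Hodge filtration as a map $\cH_0 \to \tcH \otimes T(1)$. The analogous check for $\pi$ follows from $\pi(\cE^{-k}\cR_{\tH}) \subseteq \cE^{-k}\cR_H$ in the same way. For the weight filtration I would use $\cW_m(\tcH \otimes T(1)) = \cW_{m+2}\tcH$ together with the listed values of $\cW_\bullet$ on $\cH, \tcH, \cH_0$ in Theorem \ref{thm:BatMHS} to reduce the morphism-of-MHS condition to the two inclusions $\iota(\cI_2\cR_{H_0}) \subseteq \cI_1\cR_{\tH}$, $\iota(\cI_3\cR_{H_0}) \subseteq \cI_4\cR_{\tH}$ and to $\pi(\cI_l\cR_{\tH}) \subseteq \cI_{l-1}\cR_H$ for each relevant $l$, each of which is contained in Proposition \ref{prop:RExtension}. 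The verification is a short case-by-case check over the finite list of weights appearing.

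For the second sequence \eqref{eqn:HVSES}, I would repeat the argument using Theorem \ref{thm:HVMHS} in place of Theorem \ref{thm:BatMHS}. The Hodge-filtration shifts on $(\cH^{\HV}, \tcH^{\HV}, \cH_0^{\HV})$ are each uniformly larger by one, and the weight-filtration values each uniformly larger by two, than those on $(\cH, \tcH, \cH_0)$; since these increments are uniform across all three terms, the compatibility check with $\iota$ and $\pi$ after twisting the middle term by $T(1)$ is formally identical, up to a relabeling of indices. Finally, since the isomorphisms of Theorems \ref{thm:BatMHS} and \ref{thm:HVMHS} are stated to respect the parameter variation, and the maps in Proposition \ref{prop:RExtension} are constructed uniformly over $\tU_\epsilon$, the resulting sequences are short exact sequences of VMHS over $\tU_\epsilon$, not merely of MHS fibrewise. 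In effect the substantive work lies entirely in Proposition \ref{prop:RExtension}; granting that, the corollary is bookkeeping, and the main thing to be careful about is the Tate-twist convention that aligns the uniform shift by one in $\cE$-degree between $\cR_{H_0}$ and $\cR_{\tH}$ with the corresponding Hodge and weight shifts on the cohomology side.
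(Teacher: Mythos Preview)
Your proposal is correct and takes essentially the same approach as the paper, which simply states that Proposition \ref{prop:RExtension}, in view of Theorems \ref{thm:BatMHS} and \ref{thm:HVMHS}, ``directly translates'' into the corollary. You have merely made explicit the bookkeeping the paper leaves to the reader: matching the $\cE$- and $\cI$-shifts in Proposition \ref{prop:RExtension} against the Hodge and weight indexing in Theorems \ref{thm:BatMHS} and \ref{thm:HVMHS}, and accounting for the Tate twist $T(1)$ on the middle term.
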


We compare \eqref{eqn:HVSES} with the sequence coming from the relative cohomology of the pair $(\cX^\vee_q, \cY_{q,x})$. We have the long exact sequence
\begin{equation}\label{eqn:RelCohomologyLES}
  \cdots \to H^2(\cX^\vee_q; \bC) \to H^2(\cY_{q,x}; \bC) \to H^3(\cX^\vee_q, \cY_{q,x}; \bC) \to H^3(\cX^\vee_q; \bC) \to H^3(\cY_{q,x}; \bC) \to \cdots.
\end{equation}
The following lemma is dual to Proposition \ref{prop:RelHomologySES} on integral homology.

\begin{lemma}\label{prop:RelCohomologySES}
The following sequence induced by \eqref{eqn:RelCohomologyLES}
\begin{equation}\label{eqn:RelCohomologySES}
  \xymatrix{
    0 \ar[r] & H^2(\cY_{q,x}; \bC) \ar[r] & H^3(\cX^\vee_q, \cY_{q,x}; \bC) \ar[r] & H^3(\cX^\vee_q; \bC) \ar[r] & 0
  }
\end{equation}
is a short exact sequence.
\end{lemma}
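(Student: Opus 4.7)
The plan is to deduce this as the $\bC$-linear dual of the split short exact sequence on integral homology established in Proposition \ref{prop:RelHomologySES}.

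First, I would take the split short exact sequence
$$
  0 \to H_3(\cX^\vee_q; \bZ) \to H_3(\cX^\vee_q, \cY_{q,x}; \bZ) \to H_2(\cY_{q,x}; \bZ) \to 0
$$
from Proposition \ref{prop:RelHomologySES} and tensor with $\bC$ over $\bZ$. Since the sequence is split, tensoring preserves exactness, so we obtain a split short exact sequence of $\bC$-vector spaces
$$
  0 \to H_3(\cX^\vee_q; \bC) \to H_3(\cX^\vee_q, \cY_{q,x}; \bC) \to H_2(\cY_{q,x}; \bC) \to 0.
$$

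Next, I would apply $\Hom_{\bC}(-, \bC)$ to this sequence. Dualization is exact on $\bC$-vector spaces, so the resulting sequence is again short exact. By the universal coefficient theorem over the field $\bC$, the dual of $H_n(-;\bC)$ is canonically identified with $H^n(-;\bC)$, and these identifications are compatible with the maps in the long exact sequence of the pair $(\cX^\vee_q, \cY_{q,x})$ (they intertwine the connecting homomorphisms and the maps induced by inclusion). This yields exactly
$$
  0 \to H^2(\cY_{q,x}; \bC) \to H^3(\cX^\vee_q, \cY_{q,x}; \bC) \to H^3(\cX^\vee_q; \bC) \to 0,
$$
which is the induced segment of the long exact cohomology sequence \eqref{eqn:RelCohomologyLES}, proving the claim.

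Alternatively, one could argue directly from the long exact sequence in cohomology: Lemma \ref{lem:CycleInjection} gives $H_3(\cY_{q,x};\bZ) = 0$, hence by universal coefficients $H^3(\cY_{q,x};\bC) = 0$, which provides exactness on the right; exactness on the left, namely the vanishing of $H^2(\cX^\vee_q;\bC) \to H^2(\cY_{q,x};\bC)$, would follow by dualizing the surjectivity of $\partial: H_3(\cX^\vee_q,\cY_{q,x};\bZ) \to H_2(\cY_{q,x};\bZ)$ established in Lemma \ref{lem:SectionExists}. Either route is routine; the real content is Proposition \ref{prop:RelHomologySES}, so I do not expect any genuine obstacle in this dualization step.
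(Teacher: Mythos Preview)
Your proof is correct, but the paper takes a different route. Rather than dualizing Proposition~\ref{prop:RelHomologySES}, the paper argues directly in cohomology: right exactness comes (as in your alternative) from $H^3(\cY_{q,x};\bC)=0$, but left exactness is obtained by invoking a Lefschetz-type theorem of Danilov--Khovanski\^{i} to compute $H^2(\cX^\vee_q;\bC)\cong\bC$, generated by the class of $\frac{dX}{X}\wedge\frac{dY}{Y}$, and then observing that this form vanishes on $\cY_{q,x}$ (since $X^{\fa}Y^{\fb}$ is constant there), so it lifts to $H^2(\cX^\vee_q,\cY_{q,x};\bC)$ and hence maps to zero in $H^2(\cY_{q,x};\bC)$.

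Your dualization argument is cleaner and more economical, since Proposition~\ref{prop:RelHomologySES} is already in hand and universal coefficients over $\bC$ immediately turns the homology long exact sequence of the pair into the cohomology one. The paper's argument, by contrast, is logically independent of the homology splitting and yields the extra information that $H^2(\cX^\vee_q;\bC)$ is one-dimensional with an explicit generator; this is not needed elsewhere, but it makes the exactness on the left transparently geometric rather than a formal duality consequence.
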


\begin{proof}
On the right, we have $H^3(\cY_{q,x}; \bC) = 0$ by the analysis in Lemma \ref{lem:CycleInjection} applied to cohomology, or alternatively by that $\cY_{q,x}$ is a smooth affine algebraic variety of dimension 2. On the left, for the inclusion $\cX^\vee_q \subset \bC^2 \times (\bC^*)^2$, a Lefschetz-type theorem of Danilov-Khovanski\^{i} \cite[Corollary 3.8]{DK78} states that 
$$
  H^2(\cX^\vee_q; \bC) \cong H^2(\bC^2 \times (\bC^*)^2; \bC) \cong \bC
$$
which is generated by the class of the 2-form $\frac{dX}{X} \wedge \frac{dY}{Y}$. This form also represents a non-trivial class in the previous term $H^2(\cX^\vee_q, \cY_{q,x}; \bC)$ in the sequence \eqref{eqn:RelCohomologyLES}.
\end{proof}

The sequence \eqref{eqn:RelCohomologySES} is indeed an extension of VMHS. By standard arguments (e.g. \cite[Section 3.1]{PS08}), we may compare it to the extension \eqref{eqn:HVSES} and obtain the following result, which is the open/closed correspondence on the level of VMHS. 

\begin{theorem}\label{thm:MHSCorr}
Over $(q,x) = \tq \in \tU_\epsilon$, there is an isomorphism of VMHS
$$
  H^3(\cX^\vee_q, \cY_{q,x}; \bC) \cong H^4(\tcX^\vee_{\tq}; \bC) \otimes T(1)
$$
that identifies $[\Omega_q]$ with $[\tOmega_{\tq}]$.
\end{theorem}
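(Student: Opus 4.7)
The plan is to realize both the relative cohomology $H^3(\cX^\vee_q, \cY_{q,x}; \bC)$ and the Tate-twisted absolute cohomology $H^4(\tcX^\vee_{\tq}; \bC) \otimes T(1)$ as extensions of $H^3(\cX^\vee_q; \bC)$ by $H^2(\cY_{q,x}; \bC)$ in the category of VMHS, and then identify the two extension classes. The short exact sequence \eqref{eqn:RelCohomologySES} obtained from Lemma \ref{prop:RelCohomologySES} is naturally a sequence of MHS by Deligne's theory applied to the pair $(\cX^\vee_q, \cY_{q,x})$, and the sequence \eqref{eqn:HVSES} from Corollary \ref{cor:MHSExtension} is the Tate twist of the analogous combinatorial extension provided by Proposition \ref{prop:RExtension}. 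Since both sequences have the same left and right terms, the theorem reduces to matching the two extensions and tracking the holomorphic volume forms.

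First I would unpack the sequence \eqref{eqn:RelCohomologySES}: the geometric inclusion $\cY_{q,x} \subset \cX^\vee_q$ (as the central fiber over $Z=0$ in the family of deformations discussed in Section \ref{sect:HoriVafa}, or equivalently as $\cX^\vee_q \cap \{X^{\fa}Y^{\fb} = -x^{\fa}\}$) induces a morphism of MHS, and the associated long exact sequence of the pair carries a canonical MHS structure. Using the conic fibration structure and the Danilov-Khovanski\u{\i} theorem as in Lemma \ref{prop:RelCohomologySES}, the only potentially non-vanishing boundary terms come from $H^2(\cX^\vee_q) \cong \bC$ (a Tate class of type $(1,1)$), which remains non-trivial in $H^2(\cX^\vee_q, \cY_{q,x})$ and hence contributes nothing to the failure of exactness. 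This yields \eqref{eqn:RelCohomologySES} as a short exact sequence of VMHS, and the residue construction identifies $[\Omega_q]$ in $H^3(\cX^\vee_q, \cY_{q,x}; \bC)$ as a lift of the absolute class $[\Omega_q] \in H^3(\cX^\vee_q; \bC)$.

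Next I would compare with \eqref{eqn:HVSES}. Both sequences fit into a diagram with $H^3(\cX^\vee_q; \bC)$ on the right and $H^2(\cY_{q,x}; \bC)$ on the left. By construction, the map on the right of \eqref{eqn:HVSES} sends $[\tOmega_{\tq}]$ to $[\Omega_q]$ (this follows from the combinatorial projection $\pi: \cR_{\tH} \to \cR_H$ in Proposition \ref{prop:RExtension} sending $1 \mapsto 1$, combined with Theorem \ref{thm:HVMHS}), and the map on the right of \eqref{eqn:RelCohomologySES} also sends $[\Omega_q]$ to $[\Omega_q]$ tautologically. Therefore it suffices to verify that both extensions are classified by the same element in $\Ext^1_{\mathrm{MHS}}(H^3(\cX^\vee_q; \bC), H^2(\cY_{q,x}; \bC))$; equivalently, to exhibit a morphism of VMHS filling the diagram and invoke the five lemma. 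The cleanest way is to construct such a morphism directly by residue along $Z=0$: the tube map $\Gamma \mapsto \iota(\Gamma)$ of Theorem \ref{thm:IntroCycleCorr} dualizes, via Poincar\'e-Lefschetz duality on the Hori-Vafa mirrors, to a map $H^4(\tcX^\vee_{\tq}; \bC) \to H^3(\cX^\vee_q, \cY_{q,x}; \bC)$, and the period identity of Theorem \ref{thm:IntroCycleCorr} together with the Tate-twist factor $\frac{1}{2\pi\sqrt{-1}}$ guarantees that this map carries $[\tOmega_{\tq}]$ to $[\Omega_q]$ and is compatible with the Hodge and weight filtrations (as one checks on the $\cR$-side using Proposition \ref{prop:RExtension}).

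The main obstacle will be the strictness of the comparison morphism with respect to both filtrations. While the rational (or integral) comparison is dictated by the cycle correspondence in Theorem \ref{thm:IntroCycleCorr}, and the $\cE$-filtration shift matches the Tate twist by the weight count in Proposition \ref{prop:RExtension}, checking the weight filtrations requires a careful bookkeeping of the $\cI$-filtration shifts along the sequence. I would handle this by reducing to the combinatorial model: apply Theorems \ref{thm:BatMHS} and \ref{thm:HVMHS} to express both the relative and absolute VMHS in terms of the quotient rings $\cR_H$, $\cR_{\tH}$, $\cR_{H_0}$, and then the filtration shifts in Proposition \ref{prop:RExtension} translate directly into the weight and Hodge filtration shifts required for the VMHS isomorphism, completing the proof once the five lemma is applied.
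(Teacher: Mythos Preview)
Your overall strategy is the same as the paper's: realize both $H^3(\cX^\vee_q,\cY_{q,x};\bC)$ and $H^4(\tcX^\vee_{\tq};\bC)\otimes T(1)$ as extensions of $H^3(\cX^\vee_q;\bC)$ by $H^2(\cY_{q,x};\bC)$ via the sequences \eqref{eqn:RelCohomologySES} and \eqref{eqn:HVSES}, and then compare the two. The paper is extremely terse at this step, simply asserting that \eqref{eqn:RelCohomologySES} is an extension of VMHS and that ``standard arguments'' (citing \cite[Section 3.1]{PS08}) allow the comparison with \eqref{eqn:HVSES}.

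Your proposal supplies more detail, which is helpful, but one step needs tightening. You suggest checking that the comparison morphism respects the Hodge and weight filtrations ``on the $\cR$-side using Proposition \ref{prop:RExtension}.'' However, the paper provides combinatorial models $\cR_H,\cR_{\tH},\cR_{H_0}$ only for the \emph{absolute} groups $\cH^{\HV},\tcH^{\HV},\cH_0^{\HV}$ (Theorems \ref{thm:BatMHS}, \ref{thm:HVMHS}); there is no direct $\cR$-description of the relative group $H^3(\cX^\vee_q,\cY_{q,x};\bC)$. So the filtration check cannot literally be reduced to the combinatorial rings on both sides simultaneously. Likewise, the cycle map $\iota$ of Theorem \ref{thm:CycleCorr} is built by an ad hoc geometric deformation over $\{|Z|=\epsilon'\}$ rather than by a morphism of pairs, so its linear dual is not a priori a morphism of MHS. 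The cleaner route---and presumably what the paper's reference to \cite{PS08} intends---is to work directly at the level of $\Ext^1$ in the abelian category of (V)MHS: once both sequences have the same ends, the extension class is determined by period data, and the period identity in Theorem \ref{thm:CycleCorr} (together with the identification of $[\Omega_q]$ and $[\tOmega_{\tq}]$ under the projections) forces the classes to coincide.
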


Theorem \ref{thm:MHSCorr} is dual to Theorem \ref{thm:CycleCorr} on cycles and periods. It implies that the variations of $[\Omega_q]$ in the relative cohomology $H^3(\cX^\vee_q, \cY_{q,x}; \bC)$ correspond to the variations of $[\tOmega_{\tq}]$ in the usual cohomology $H^4(\tcX^\vee_{\tq}; \bC) \otimes T(1)$, and are thus governed by the extended Picard-Fuchs system $\tcP$. 

\begin{example}\label{ex:C3MHS}
\rm{
Let $\cX = \bC^3$, $\cL$ be an outer brane, and $f=1$, as in Examples \ref{ex:C3}, \ref{ex:C3PicardFuchs}, \ref{ex:C3Cycles}. In this case, the Laurent polynomials are
\begin{align*}
  & H(X,Y) = X + Y + 1,\\
  & \tH(X, Y, Z, \tq_1) = X + Y + 1 + (\tq_1X^{-1}Y^{-1}+1)Z,\\
  & H_0(Y_0,x) = -xY_0^{-1} + Y_0 + 1.
\end{align*}
A direct computation from the definitions shows that
$$
  \cR_H \cong \bC 1, \qquad \cR_{\tH} \cong \bC 1 \oplus \bC X_0Z \oplus \bC X_0^2Z, \qquad \cR_{H_0} \cong \bC 1 \oplus \bC X_0.
$$
The $\cE$-filtrations are specified by the powers of $X_0$, while the $\cI$-filtrations are given by
$$
\begin{matrix}
  \cI_0\cR_H = \cdots = \cI_3\cR_H = 0, \qquad \cI_4\cR_H = \cR_H,\\
  \cI_0\cR_{\tH} = 0, \qquad \cI_1\cR_{\tH} = \cI_2\cR_{\tH} = \cI_3\cR_{\tH} = \bC X_0^2Z, \qquad \cI_4\cR_{\tH} = \bC X_0Z + \bC X_0^2Z, \qquad \cI_5\cR_{\tH} = \cR_{\tH},\\
  \cI_0\cR_{H_0} = 0, \qquad \cI_1\cR_{H_0} = \cI_2\cR_{H_0} = \bC X_0, \qquad \cI_3\cR_{H_0} = \cR_{H_0}.
\end{matrix}
$$
In Proposition \ref{prop:RExtension}, the map $\iota: \cR_{H_0} \to \cR_{\tH}$ (to be constructed in Lemma \ref{lem:MHSInjection} below) maps $1$ to $X_0Z$ and $X_0$ to $X_0^2Z$; the map $\pi: \cR_{\tH} \to \cR_H$ (to be constructed in Lemma \ref{lem:MHSSurjection} below) maps 1 to 1 and $X_0Z$, $X_0^2Z$ to 0. For MHS of Hori-Vafa mirrors, we have
$$
  H^3(\cX^\vee_q; \bC) \cong T(-3), \qquad H^4(\tcX^\vee_{\tq}; \bC) \cong T(-2) \oplus T(-3) \oplus T(-4), \qquad H^2(\cY_{q,x}; \bC) \cong T(-1) \oplus T(-2).
$$
The isomorphism in Theorem \ref{thm:MHSCorr} reads
$$
  H^3(\cX^\vee_q, \cY_{q,x}; \bC) \cong H^4(\tcX^\vee_{\tq}; \bC) \otimes T(1) \cong T(-1) \oplus T(-2) \oplus T(-3).
$$
}
\end{example}

\subsection{Proof of Proposition \ref{prop:RExtension}}\label{sect:RExtensionProof}
We construct the maps $\iota$, $\pi$ in \eqref{eqn:MHSSES} separately as follows.

\begin{lemma}\label{lem:MHSSurjection}
There is a surjective $\bC$-linear map
$$
  \pi: \cR_{\tH} \to \cR_H
$$
such that for any $k$, $l$,
$$
  \pi(\cE^{-k}\cR_{\tH}) \subseteq \cE^{-k}\cR_H, \qquad \pi(\cI_l\cR_{\tH}) \subseteq \cI_{l-1}\cR_H.
$$
\end{lemma}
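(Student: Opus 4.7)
The plan is to construct $\pi$ by ``restriction to $Z=0$''. At the level of generating rings, define the $\bC$-linear map $\bar\pi: \cS_{\tDelta} \to \cS_{\Delta}$ sending the monomial $X_0^k X^a Y^b Z^c$ to $X_0^k X^a Y^b$ if $c=0$ and to $0$ if $c>0$. This is well-defined on generators because $\Delta$ is the facet of $\tDelta$ cut out by $u_4=0$ and $u_4\ge 0$ on all of $\tDelta$ (every $\tb_i$ has fourth coordinate $0$ or $1$), so the monomials with $c=0$ are precisely the generators of $\cS_\Delta$.

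The next step is to show $\bar\pi$ descends to the quotients. From \eqref{eqn:DefEqnRelation} we have $\tH = H + Z(1 + \tq_{R-2}^{\fa} X^{-\fa} Y^{-\fb})$, so $\tH|_{Z=0} = H$ and $(\partial_X \tH)|_{Z=0} = \partial_X H$, $(\partial_Y \tH)|_{Z=0} = \partial_Y H$. A direct expansion then gives $\bar\pi(\tcD_i f) = \cD_i(\bar\pi f)$ for $i \in \{0, X, Y\}$ and any $f \in \cS_{\tDelta}$. On the other hand $\tcD_Z = Z\partial_Z + X_0 Z(1 + \tq_{R-2}^{\fa} X^{-\fa} Y^{-\fb})$ carries a factor of $Z$ in every term, so $\bar\pi(\tcD_Z \cS_{\tDelta}) = 0$. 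Hence $\bar\pi$ descends to a well-defined map $\pi: \cR_{\tH} \to \cR_H$, which is surjective since $\bar\pi$ restricted to the subspace $\cS_\Delta \subset \cS_{\tDelta}$ (monomials with $c=0$) is the identity.

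Compatibility with the $\cE$-filtration is immediate since $\bar\pi$ preserves the $X_0$-degree. For the $\cI$-shift, the key combinatorial observation is that, for $k\ge 1$, every codimension-$(l-1)$ face of $k\Delta$ has dimension $3-l$ and, as a face of $k\tDelta$, sits there in the same dimension, hence in codimension $l$ in the 3-polytope $k\tDelta$. Consequently, if $X_0^k X^a Y^b Z^c$ is a generator of $\cI_l\cS_{\tDelta}$ with $c=0$, the point $(a,b,0)$ avoids every codimension-$l$ face of $k\tDelta$ and in particular every codimension-$(l-1)$ face of $k\Delta$, so $\bar\pi$ of this monomial lies in $\cI_{l-1}\cS_\Delta$. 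Monomials with $c>0$ go to $0$, trivially in $\cI_{l-1}\cS_\Delta$. This yields $\bar\pi(\cI_l\cS_{\tDelta}) \subseteq \cI_{l-1}\cS_\Delta$ and hence the desired shift after passing to the quotients.

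I do not anticipate a serious obstacle here: the construction is essentially forced by the geometric picture that $\cX^\vee_q$ is the $Z=0$ fiber of the deformation family $Z: \tcX^\vee_{\tq} \to \bC$, and the combinatorics of $\Delta$ sitting as a facet of $\tDelta$ makes the filtration compatibilities automatic. The only care needed is to verify the $\cI$-shift by exactly one index, which relies on the fact that $\Delta$ is a codimension-one face rather than a face of higher codimension.
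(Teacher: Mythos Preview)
Your proof is correct and follows essentially the same approach as the paper's: define $\pi$ as the ``set $Z=0$'' map on $\cS_{\tDelta}$, check that the differential operators $\tcD_0,\tcD_X,\tcD_Y$ restrict to $\cD_0,\cD_X,\cD_Y$ while $\tcD_Z$ restricts to zero, and deduce the filtration compatibilities from the fact that $\Delta$ is the facet of $\tDelta$ with last coordinate zero. In fact you supply more detail than the paper does on the $\cI$-shift, spelling out explicitly why a codimension-$(l-1)$ face of $k\Delta$ is a codimension-$l$ face of $k\tDelta$; the paper simply asserts this implication.
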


\begin{proof}
By construction, the last coordinate of any point in $\tDelta$ is non-negative. Therefore, $\cS_{\tDelta}$ is a subring of $\bC[X_0, X^{\pm1}, Y^{\pm1}, Z]$. We then define
$$
  \pi: \cS_{\tDelta} \to \cS_{\Delta},  \qquad Z \mapsto 0.
$$
Note that this definition is valid because $\Delta$ is the facet of $\tDelta$ where the last coordinate is 0. This also implies that for any $k$, $l$,
$$
  \pi(\cE^{-k}\cS_{\tDelta}) \subseteq \cE^{-k}\cS_{\Delta}, \qquad \pi(\cI_l\cS_{\tDelta}) \subseteq \cI_{l-1}\cS_{\Delta}.
$$
Under the restriction $Z=0$, the operator $\tcD_Z$ is trivial, while \eqref{eqn:DefEqnRelation} implies that $\tcD_0$, $\tcD_X$, $\tcD_Y$ restrict to $\cD_0$, $\cD_X$, $\cD_Y$ respectively. Thus $\pi$ descends to the desired map from $\cR_{\tH}$ to $\cR_H$.
\end{proof}

\begin{lemma}\label{lem:MHSInjection}
There is a $\bC$-linear map
$$
  \iota: \cR_{H_0} \to \cR_{\tH}
$$
such that:
\begin{itemize}
  \item for any $k$, we have $\iota(\cE^{-k}\cR_{H_0}) \subseteq \cE^{-k-1}\cR_{\tH}$;

  \item we have $\iota(\cI_2\cR_{H_0}) \subseteq \cI_{1}\cR_{\tH}$, $\iota(\cI_3\cR_{H_0}) \subseteq \cI_4\cR_{\tH}$;

  \item  the image of $\iota$ is equal to the span of all monomials $X_0^kX^aY^bZ^c$ with $c \ge 1$.

\end{itemize}
\end{lemma}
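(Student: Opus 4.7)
I would construct $\iota$ in two stages: first a lift $\bar\iota: \cS_{\Delta_0} \to \cS_{\tDelta}$ between the Laurent polynomial rings, then descent to the quotient rings $\cR$. The guiding geometry is the decomposition $\tH = H + Zg(X,Y)$ with $g(X,Y) = 1 + \tq_{R-2}^{\fa} X^{-\fa} Y^{-\fb}$, together with $H_0 = H\big|_{\{X^{\fa} Y^{\fb} = -\tq_{R-2}^{\fa}\}}$ under the parametrization $Y = Y_0^{\fa}$, $X = e^{\pi\sqrt{-1}/\fa}\tq_{R-2}Y_0^{-\fb}$. Combinatorially $\tDelta$ is built from $\Delta$ by coning off to the edge $E = \mathrm{conv}\{\tb_{R+1}, \tb_{R+2}\}$ at $u_4 = 1$, and the monomials with $c \geq 1$ in $\cS_{\tDelta}$ naturally span a ``cap'' subspace that will serve as the image of $\iota$.

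\textbf{Key steps.} (1) For each $X_0^k Y_0^b \in \cS_{\Delta_0}$, choose a lift $(a_b, b_b', c_b) \in (k+1)\tDelta \cap \bZ^3$ with $c_b \geq 1$ and $\fa b_b' - \fb a_b = b$ (existence follows from the structure of $\tDelta$, allowing $c_b \geq 2$ when needed to accommodate lattice-point constraints), and set $\bar\iota(X_0^k Y_0^b) := X_0^{k+1} X^{a_b} Y^{b_b'} Z^{c_b}$. (2) Show that $\bar\iota$ descends to $\iota: \cR_{H_0} \to \cR_{\tH}$. The computation expands $\tcD_0 \bar\iota(h)$ using $\tH = H + Zg$: the contribution of $H$ reproduces $\bar\iota(X_0 H_0 h)$ up to correction terms that are multiples of $X^{\fa} Y^{\fb} + \tq_{R-2}^{\fa}$ (since $H$ and $H_0$ agree on the locus $X^{\fa}Y^{\fb} = -\tq_{R-2}^{\fa}$), which are absorbed by the $\tcD_Z$-relation $X_0 g(X,Y)Z \equiv -\partial_Z$; the $Zg$ contribution is already of $\tcD_Z$-type. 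For $\cD^0_{Y_0}$, the $Y_0\partial_{Y_0}$ action translates into a linear combination of $X\partial_X$ and $Y\partial_Y$ dictated by the lift $(a_b, b_b') \in \bZ^2$, matching the combination $\fa\tcD_Y - \fb\tcD_X$ modulo $\tcD_Z$. The same manipulations establish that the image in $\cR_{\tH}$ does not depend on the choice of lifts, because any two lifts differ by a $(\fa, \fb)$-multiple whose effect is exactly the $\tcD_Z$ correction. (3) Filtration properties: the $\cE$-shift by $-1$ is immediate from the factor $X_0^{k+1}$; the $\cI$-shifts follow from choosing the lifts in strata of the appropriate codimension, using that $(a_b, b_b', c_b)$ with $c_b \geq 1$ automatically lies off the facet $\Delta \subset \tDelta$. (4) Image identification: containment in the span of $c \geq 1$ monomials is by construction; for the reverse containment, apply $\tcD_Z$ iteratively to reduce any monomial with $c \geq 2$ to a combination of $c = 1$ monomials plus lower-order corrections, and match these with $\bar\iota$-images via the lattice-point surjectivity provided in (1).

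\textbf{Main obstacle.} The most delicate step is (2), the descent. The verification requires organizing an algebraic bookkeeping so that the ``off-locus'' discrepancy between evaluating $H$ at the chosen lifts $X^{a_b} Y^{b_b'}$ and $H_0$ at $Y_0^b$ --- a polynomial divisible by $X^{\fa} Y^{\fb} + \tq_{R-2}^{\fa}$ --- is systematically absorbed by $\tcD_Z$-relations, uniformly in $(k, b)$ and independently of the chosen lifts. A secondary delicate point is the $\cI_2 \to \cI_1$ inclusion, which requires refined lattice-point-in-polytope arguments to ensure the chosen lifts land on interior strata of $(k+1)\tDelta$ of the required codimension; this constrains the choice of lifts in step (1) beyond mere existence, and making these constraints consistent with choice-independence modulo $(\tcD_0, \tcD_X, \tcD_Y, \tcD_Z)\cS_{\tDelta}$ is what I expect to take most of the work.
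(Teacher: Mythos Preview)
Your overall strategy is essentially the paper's: define a lift $\cS_{\Delta_0}\to\cS_{\tDelta}$ by $X_0^kY_0^{b_0}\mapsto (\text{scalar})\cdot X_0^{k+1}X^aY^bZ^c$, verify descent by showing that $\iota(\cD_0^0 h)$ and $\iota(\cD_{Y_0}^0 h)$ differ from $\tcD_0\iota(h)$ and $(\fa\tcD_Y-\fb\tcD_X)\iota(h)$ by elements of $\tcD_Z\cS_{\tDelta}$, and identify the image by inducting on the $Z$-power. Two points deserve comment.

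First, the paper does not allow arbitrary lifts: it always takes $c=1$ and chooses $(a,b)$ canonically as the convex combination of $b_{i_2(\tsi^s)}$ and $b_{i_3(\tsi^s)}$ determined by where $b_0/k$ sits in the subdivision of $\Delta_0$. This makes the containment $(a,b,1)\in (k+1)\tDelta$ automatic and eliminates your choice-independence step entirely. Your more flexible setup is not wrong, but it adds work without buying anything.

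Second, and more importantly, your plan for $\iota(\cI_2\cR_{H_0})\subseteq\cI_1\cR_{\tH}$ has a real gap. You propose to arrange this by choosing lifts in interior strata of $(k+1)\tDelta$, but for the boundary monomials $X_0^kY_0^{kc_0}$ and $X_0^kY_0^{kc_S}$ (which lie in $\cI_2\cS_{\Delta_0}\setminus\cI_1\cS_{\Delta_0}$) the natural lift with $c=1$ lands on the edge of $\tDelta$ joining $\tb_{R+2}$ to $\tb_{i_2(\tsi^1)}$ or $\tb_{i_3(\tsi^S)}$, hence not in $\cI_1\cS_{\tDelta}$; there is no reason to expect an alternative interior lift at degree $k+1$ satisfying $\fa b-\fb a=kc_0$. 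The paper does not attempt this. Instead it proves the stronger fact $\cI_1\cR_{H_0}=\cI_2\cR_{H_0}$ directly in the quotient: using the $\Delta_0$-regularity of $H_0$, the two relations $\cD_0^0(X_0^{k-1}Y_0^{(k-1)c_0})=0$ and $\cD_{Y_0}^0(X_0^{k-1}Y_0^{(k-1)c_0})=0$ in $\cR_{H_0}$ let one solve for $X_0^kY_0^{kc_0}$ as a combination of interior monomials (and similarly at $c_S$), by induction on $k$. Once this is established, the easier inclusion $\iota(\cI_1\cR_{H_0})\subseteq\cI_1\cR_{\tH}$ suffices. You should replace your ``refined lattice-point-in-polytope argument'' with this regularity argument.
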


\begin{proof}
Denote $\xi:= e^{\pi\sqrt{-1}/\fa}$. We start by defining a $\bC$-linear map $\iota: \cS_{\Delta_0} \to \cS_{\tDelta}$. First we set
$$
  \iota(1) = X_0Z
$$
which corresponds to the element $(0,0,1)$ (or $\tb_{R+2}$) in $\tDelta$. Now consider $X_0Y_0^{b_0}$ where $b_0 \in \Delta_0 \cap \bZ$. Recall from Section \ref{sect:ExtraCones} that 
$$
  \Delta_0 = [c_S, c_0] = [\fa n_{i_3}(\tsi^S) - \fb m_{i_3}(\tsi^S), \fa n_{i_2}(\tsi^1) - \fb m_{i_2}(\tsi^1)]
$$
is the union of intervals
$$
  [\fa n_{i_3}(\tsi^s) - \fb m_{i_3}(\tsi^s), \fa n_{i_2}(\tsi^s) - \fb m_{i_2}(\tsi^s)], \qquad s = 1, \dots, S,
$$ 
whose length is $|G_{\tsi^s}|$ (see Lemma \ref{lem:ExtraStab}). Take $s$ such that the above interval contains $b_0$, and we may write
$$
  b_0 = \frac{c_0}{|G_{\tsi^s}|} \left(\fa n_{i_3}(\tsi^s) - \fb m_{i_3}(\tsi^s) \right) + \frac{|G_{\tsi^s}| - c_0}{|G_{\tsi^s}|} \left(\fa n_{i_2}(\tsi^s) - \fb m_{i_2}(\tsi^s) \right)
$$
for some $c_0 \in \{0, \dots, |G_{\tsi^s}|\}$. Then we set
$$
  \iota(X_0Y^{b_0}) = (\xi x)^{-a}X_0^2X^aY^bZ 
$$
where
$$
  a = \frac{|G_{\tsi^s}| - c_0}{|G_{\tsi^s}|} m_{i_2}(\tsi^s) + \frac{c_0}{|G_{\tsi^s}|}m_{i_3}(\tsi^s), \qquad b = \frac{|G_{\tsi^s}| - c_0}{|G_{\tsi^s}|} n_{i_2}(\tsi^s) + \frac{c_0}{|G_{\tsi^s}|}n_{i_3}(\tsi^s).
$$
Note that $(a,b,1)$ corresponds to $c_0$ times the generator presented in Lemma \ref{lem:ExtraStab}. When $c_0 = 0$ (resp. $c_0 = |G_{\tsi^s}|$), $\left(\frac{a}{2},\frac{b}{2},\frac{1}{2}\right)$ is the the middle point of the edge between $\tb_{i_2(\tsi^s)}$ (resp. $\tb_{i_3(\tsi^s)}$) and $\tb_{R+2}$; otherwise $\left(\frac{a}{2},\frac{b}{2},\frac{1}{2}\right)$ lies in the interior of the cone $\tsi^s$. In particular, $\left(\frac{a}{2},\frac{b}{2},\frac{1}{2}\right)$ lies in the interior of $\tDelta$ unless $b_0$ is one of the two boundary points $c_S$ or $c_0$ of $\Delta_0$, in which case $\left(\frac{a}{2},\frac{b}{2},\frac{1}{2}\right)$ is the middle point of the edge between $\tb_{R+2}$ and $\tb_{i_2(\tsi^1)}$ or $\tb_{i_3(\tsi^S)}$.

In general, consider $X_0^kY_0^{b_0}$ where $\frac{b_0}{k} \in [\fa n_{i_3}(\tsi^s) - \fb m_{i_3}(\tsi^s), \fa n_{i_2}(\tsi^s) - \fb m_{i_2}(\tsi^s)]$ for some $s \in \{1, \dots, S\}$. We may write
$$
  b_0 = c_3 \left( \fa n_{i_3}(\tsi^s) - \fb m_{i_3}(\tsi^s) \right) + c_2 \left(\fa n_{i_2}(\tsi^s) - \fb m_{i_2}(\tsi^s) \right)
$$
for $c_2, c_3 \in \bQ_{\ge 0}$ with $c_2 + c_3 = k$. Then we set
$$
  \iota(X_0^kY_0^{b_0}) = (\xi x)^{-a}X_0^{k+1}X^aY^bZ
$$
where
$$
  a = c_2 m_{i_2}(\tsi^s) + c_3m_{i_3}(\tsi^s), \qquad b = c_2 n_{i_2}(\tsi^s) + c_3n_{i_3}(\tsi^s).
$$
Note that this recovers the definitions for $k = 0,1$ above. Moreover, we have
\begin{equation}\label{eqn:ABtoB0}
  b_0 = \fa b - \fb a.
\end{equation}
It follows from the construction that $\iota(\cE^{-k}\cS_{\Delta_0}) \subseteq \cE^{-k-1}\cS_{\tDelta}$ for any $k$ and $\iota(\cI_3\cS_{\Delta_0}) \subseteq \cI_4\cS_{\tDelta}$.

Now we verify that $\iota$ descends to a map $\cR_{H_0} \to \cR_{\tH}$. Let $X_0^kY_0^{b_0} \in \cS_{\Delta_0}$ mapping to $(\xi x)^{-a}X_0^{k+1}X^aY^bZ$ as above. We compute that
\begin{equation}\label{eqn:H0Derivative}
  \begin{aligned}
    & \cD_0^0 (X_0^kY_0^{b_0}) = k X_0^kY_0^{b_0} + H_0(Y_0,q,x) X_0^{k+1}Y_0^{b_0} 
    = k X_0^kY_0^{b_0} + \sum_{i=1}^R  s_i(q)(\xi x)^{m_i} X_0^{k+1}Y_0^{\fa n_i - \fb m_i + b_0},\\
    & \cD_{Y_0}^0 (X_0^kY_0^{b_0}) = b_0 X_0^kY_0^{b_0} + \sum_{i=1}^R (\fa n_i - \fb m_i )s_i(q)(\xi x)^{m_i} X_0^{k+1}Y_0^{\fa n_i - \fb m_i + b_0}.
  \end{aligned}
\end{equation}
We compare
$$
  \iota(\cD_0^0 (X_0^kY_0^{b_0}))  = k(\xi x)^{-a} X_0^{k+1}X^aY^bZ + \sum_{i=1}^R  s_i(q)(\xi x)^{m_i} \iota(X_0^{k+1}Y_0^{\fa n_i - \fb m_i + b_0})
$$
to
\begin{align*}
  (\xi x)^{-a}\tcD_0(X_0^{k+1}X^aY^bZ) = &  (k+1)(\xi x)^{-a}X_0^{k+1}X^aY^bZ + (\xi x)^{-a}\tH(X, Y, Z, \tq)X_0^{k+2}X^aY^bZ \\
  = & (k+1)(\xi x)^{-a}X_0^{k+1}X^aY^bZ + (\xi x)^{-a}(x^{\fa}X^{-\fa}Y^{-\fb}+1)X_0^{k+2}X^aY^bZ^2\\
    & +  \sum_{i=1}^R  s_i(q)(\xi x)^{-a}X_0^{k+2}X^{m_i+a}Y^{n_i+b}Z. 
\end{align*}
We show below that the difference
\begin{align*}
  (\xi x)^{-a}&\tcD_0(X_0^{k+1}X^aY^bZ) -  \iota(\cD_0^0 (X_0^kY_0^{b_0})) \\
  = & (\xi x)^{-a} \left( X_0^{k+1}X^aY^bZ + (x^{\fa}X^{-\fa}Y^{-\fb}+1)X_0^{k+2}X^aY^bZ^2 \right) \\
  & +  \sum_{i=1}^R  s_i(q)\left( (\xi x)^{-a}X_0^{k+2}X^{m_i+a}Y^{n_i+b}Z - (\xi x)^{m_i} \iota(X_0^{k+1}Y_0^{\fa n_i - \fb m_i + b_0}) \right)
\end{align*}
is contained in $\tcD_Z\cS_{\tDelta}$, where recall that
$$
  \tcD_Z = Z\frac{\partial}{\partial Z} + X_0Z\frac{\partial \tH(X, Y, Z, \tq)}{\partial Z} = Z\frac{\partial}{\partial Z} + (x^{\fa}X^{-\fa}Y^{-\fb}+1)X_0Z.
$$
First, we have
$$
  \tcD_Z(X_0^{k+1}X^aY^bZ) = X_0^{k+1}X^aY^bZ + (x^{\fa}X^{-\fa}Y^{-\fb}+1)X_0^{k+2}X^aY^bZ^2.
$$
Now for each $i = 1, \dots, R$, we have
$$
  \iota(X_0^{k+1}Y_0^{\fa n_i - \fb m_i + b_0}) = (\xi x)^{a_i'}X_0^{k+2}X^{a_i'}Y^{b_i'}Z  
$$
for some $a_i'$, $b_i'$ satisfying that
$$
  \fa b_i'-\fb a_i' = \fa n_i - \fb m_i + b_0 = \fa(n_i+b) - \fb(m_i+a)
$$
(see \eqref{eqn:ABtoB0}). Now note that for any $k, a', b'$,
$$
  \tcD_Z(X_0^{k+1}X^{a'}Y^{b'}) = (x^{\fa}X^{-\fa}Y^{-\fb}+1)X_0^{k+2}X^{a'}Y^{b'}Z = X_0^{k+2}X^{a'}Y^{b'}Z - (\xi x) X_0^{k+2}X^{a'-\fa}Y^{b'-\fb}Z.
$$
Applying this repeatedly as $(a', b')$ ranges through the integral points on the segment between $(m_i+a, n_i+b)$ and $(a_i', b_i')$, we see that
$$
  (\xi x)^{-a}X_0^{k+2}X^{m_i+a}Y^{n_i+b}Z - (\xi x)^{m_i-a_i'} X_0^{k+2}X^{a_i'}Y^{b_i'}Z \in \tcD_Z\cS_{\tDelta}. 
$$
Therefore, we have verified that 
$$
  (\xi x)^{-a}\tcD_0(X_0^{k+1}X^aY^bZ) - \iota(\cD_0^0 (X_0^kY_0^{b_0})) \in \tcD_Z\cS_{\tDelta}.
$$
A similar computation shows that
$$
  (\xi x)^{-a}(\fa\tcD_Y - \fb\tcD_X)(X_0^{k+1}X^aY^bZ) - \iota(\cD_{Y_0}^0 (X_0^kY_0^{b_0})) \in \tcD_Z\cS_{\tDelta}.
$$
In other words, we have
$$
  \iota(\cD_0^0 (X_0^kY_0^{b_0})), \iota(\cD_{Y_0}^0 (X_0^kY_0^{b_0})) \in \tcD_0\cS_{\tDelta} + \tcD_X\cS_{\tDelta} + \tcD_Y\cS_{\tDelta} + \tcD_Z\cS_{\tDelta}
$$
for any $k, b_0$. This implies that $\iota$ descends to a map $\cR_{H_0} \to \cR_{\tH}$.

Next, we show that $\iota(\cI_2\cR_{H_0}) \subseteq \cI_{1}\cR_{\tH}$. It follows from the construction that for $X_0^kY_0^{b_0}$ where $\frac{b_0}{k} \neq c_S, c_0$, $\iota(X_0^kY_0^{b_0})$ is a monomial corresponding to a point in the interior of $\tDelta$ and thus lies in $\cI_{1}\cR_{\tH}$. In other words, $\iota(\cI_1\cR_{H_0}) \subseteq \cI_{1}\cR_{\tH}$. We show in fact that
$$
  \cI_1\cR_{H_0} = \cI_2\cR_{H_0}.
$$
For $k=1$, the two expressions
$$
  \cD_0^0 (1) = \sum_{i=1}^R  s_i(q)(\xi x)^{m_i} X_0Y_0^{\fa n_i - \fb m_i}, \quad \cD_{Y_0}^0 (1) = \sum_{i=1}^R (\fa n_i - \fb m_i )s_i(q)(\xi x)^{m_i} X_0Y_0^{\fa n_i - \fb m_i}
$$
computed in \eqref{eqn:H0Derivative} are both zero in $\cR_{H_0}$. By the regularity of $H_0$, the two monomials $X_0Y_0^{c_S}$, $X_0Y_0^{c_0}$ are both linear combinations of monomials of form $X_0Y_0^{b_0}$ for $b_0 \neq c_S, c_0$, and are thus contained in $\cI_1\cR_{H_0}$. Now for a general $k \ge 2$, we may proceed by induction and use the relations
\begin{align*}
  & \cD_0^0 (X_0^{k-1}Y_0^{(k-1)c_S}) = (k-1)X_0^{k-1}Y_0^{(k-1)c_S} + \sum_{i=1}^R  s_i(q)(\xi x)^{m_i} X_0^k Y_0^{\fa n_i - \fb m_i + (k-1)c_S},\\
  & \cD_{Y_0}^0 (X_0^{k-1}Y_0^{(k-1)c_S}) = (k-1)c_S X_0^{k-1}Y_0^{(k-1)c_S} + \sum_{i=1}^R (\fa n_i - \fb m_i )s_i(q)(\xi x)^{m_i} X_0^k Y_0^{\fa n_i - \fb m_i + (k-1)c_S}
\end{align*}
to express $X_0^k Y_0^{kc_S}$ in terms of elements in $\cI_1\cR_{H_0}$, and similarly for $X_0^k Y_0^{kc_0}$.

Finally, we characterize the image of $\iota$ by showing that for any $k,a,b,c$ with $c \ge 1$,
$$
  X_0^kX^aY^bZ^c \in \iota(\cS_{\Delta_0}) + \tcD_0\cS_{\tDelta} + \tcD_X\cS_{\tDelta} + \tcD_Y\cS_{\tDelta} + \tcD_Z\cS_{\tDelta}.
$$
We induct on $c$. The base case $c=1$ follows from the argument above, which shows that any $X_0^kX^aY^bZ$ can be modified via elements of the form $\tcD_Z(X_0^{k-1}X^{a'}Y^{b'})$ into a multiple of $\iota(X_0^{k-1}Y^{\fa b-\fb a})$. Now consider a general $c>1$. We have
\begin{align*}
  & \tcD_0(X_0^{k-1}X^aY^bZ^{c-1}) = (k-1)X_0^{k-1}X^aY^bZ^{c-1} + H(X,Y,q)X_0^kX^aY^bZ^{c-1} + (\tq_{R-2}^{\fa}X^{-\fa}Y^{-\fb}+1)X_0^kX^aY^bZ^c,\\
  & \tcD_X(X_0^{k-1}X^aY^bZ^{c-1}) = aX_0^{k-1}X^aY^bZ^{c-1} + X\frac{\partial H(X,Y,q)}{\partial X}X_0^kX^aY^bZ^{c-1} - \fa \tq_{R-2}^{\fa}X^{-\fa}Y^{-\fb} \cdot X_0^kX^aY^bZ^c.
\end{align*}
Note that $X_0^kX^aY^bZ^c$ differs from $\tcD_0(X_0^{k-1}X^aY^bZ^{c-1}) + \tcD_X(X_0^{k-1}X^aY^bZ^{c-1})$ by monomials where $Z$ has power $c-1$. We may then conclude by the inductive hypothesis.
\end{proof}

\begin{proof}[Proof of Proposition \ref{prop:RExtension}]
We use the maps $\pi$, $\iota$ constructed in Lemmas \ref{lem:MHSSurjection}, \ref{lem:MHSInjection} respectively. Lemma \ref{lem:MHSInjection} verifies that the image of $\iota$ is equal to the kernel of $\pi$, i.e. the sequence \eqref{eqn:MHSSES} is exact in the middle. Moreover, \eqref{eqn:PolyVolSum} and \eqref{eqn:RDimension} imply that
$$
  \dim_{\bC} \cR_{\tH} = \dim_{\bC} \cR_H + \dim_{\bC} \cR_{H_0}.
$$
Thus \eqref{eqn:MHSSES} is exact on the left as well.
\end{proof}



\end{document}